\definecolor{forestgreen(traditional)}{rgb}{0.0, 0.27, 0.13}
\definecolor{forestgreen(web)}{rgb}{0.13, 0.55, 0.13}
\definecolor{airforceblue}{rgb}{0.36, 0.54, 0.66}
\newcommand\OPLUS{\sideset{}{_2}\bigoplus\limits}
\newcommand\abs[1]{\lvert#1\rvert}
\newcommand\GF{\operatorname{GF}}
\newenvironment{subproof}[1][\proofname]{%
  \begin{proof}[#1]%
}{%
  \end{proof}%
}
\newtheorem{theorem}{Theorem}[section]
\newtheorem{lemma}[theorem]{Lemma}
\newtheorem{claim}{Claim}
\newtheorem{proposition}[theorem]{Proposition}
\theoremstyle{definition}
\theoremstyle{remark}
\title{Characterizing matroids whose bases form graphic delta-matroids}
\author[2,1]{Duksang Lee\thanks{Supported by the Institute for Basic Science (IBS-R029-C1).}}
\author[$*$1,2]{Sang-il Oum}
\affil[1]{Discrete Mathematics Group,
Institute for Basic Science (IBS),
Daejeon,~South~Korea}
\affil[2]{Department of Mathematical Sciences, KAIST, Daejeon, South~Korea}
\affil[ ]{Email: \texttt{duksang@kaist.ac.kr}, \texttt{sangil@ibs.re.kr}}
\date{\today}							%
\begin{document}
\maketitle
\begin{abstract}
We introduce delta-graphic matroids, which are matroids whose bases form graphic delta-matroids. The class of delta-graphic matroids contains graphic matroids as well as cographic matroids and is a proper subclass of the class of regular matroids. 
We give a structural characterization of the class of delta-graphic matroids. We also show that every forbidden minor for the class of delta-graphic matroids has at most $48$ elements.
\end{abstract}
\section{Introduction}
\label{sec:intro}
Bouchet~\cite{Bouchet1987sym} introduced delta-matroids which are set systems admitting a certain exchange axiom,  generalizing matroids.
Oum~\cite{Oum2009} introduced graphic delta-matroids as minors of binary delta-matroids having line graphs as their fundamental graphs and proved that bases of graphic matroids form graphic delta-matroids.
We introduce  \emph{delta-graphic} matroids as matroids whose bases form a graphic delta-matroid. 
Since the class of delta-graphic matroids 
is closed under taking dual and minors, it contains  both graphic matroids and cographic matroids.
Alternatively one can define delta-graphic matroids as binary matroids whose fundamental graphs are pivot-minors of line graphs. See Section~\ref{sec:prelim} for the definition of pivot-minors.

Our first theorem provides a structural characterization of delta-graphic matroids.
A \emph{wheel} graph~$W$ is a graph having a vertex $s$ adjacent to all other vertices such that $W\setminus s$ is a cycle. In a wheel graph, a \emph{spoke} is an edge incident with $s$ and a \emph{rim edge} is an edge which is not a spoke. 
A \emph{generalized wheel} is a matroid obtained from the cycle matroid of a minor of a wheel graph by a sequence of $2$-sums
where the other part of each $2$-sum is graphic if the corresponding basepoint is a rim edge
and cographic otherwise.

\begin{theorem}
\label{thm:main}
A connected matroid is delta-graphic if and only if it is a generalized wheel.
\end{theorem}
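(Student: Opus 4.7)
I would prove the two directions of the theorem separately, using the ``pivot-minor of a line graph'' description of delta-graphic matroids as the main technical tool.

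\emph{Sufficiency: every generalized wheel is delta-graphic.}
By Oum's result cited in the introduction, graphic matroids are delta-graphic; closure of the delta-graphic property under duality and minors (which I would check from the fundamental-graph definition) then yields that cographic matroids and all cycle matroids of minors of wheel graphs are delta-graphic. The substantive step is a 2-sum lemma: if $M_1$ is delta-graphic with basepoint $e$ and $M_2$ is graphic (respectively cographic), and if $e$ plays the role of a rim edge (respectively a spoke) in a wheel decomposition of $M_1$, then $M_1 \oplus_2 M_2$ is delta-graphic. I would prove this by choosing compatible bases on the two sides, expressing the fundamental graph of the 2-sum as a suitable gluing of the fundamental graphs of $M_1$ and $M_2$, and exhibiting this glued graph as a pivot-minor of a line graph $L(H')$. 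The rim/spoke distinction controls whether the glueing is along an edge of $H$ (handled directly by line-graph construction) or at a pendant/twin vertex produced by a pivot, which is why the two basepoint types behave dually.

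\emph{Necessity: every connected delta-graphic matroid is a generalized wheel.}
Let $M$ be a connected delta-graphic matroid with fundamental graph $G$ (for some basis $B$) realized as a pivot-minor of some line graph $L(H)$. I would use the Cunningham--Edmonds canonical tree decomposition to write $M$ as an iterated 2-sum of its $3$-connected components, and reduce the theorem to a classification statement: every $3$-connected delta-graphic matroid is graphic, cographic, or $M(W_n)$ for some wheel $W_n$. Given this classification, I would read off the tree decomposition, place the unique (possibly trivial) non-graphic, non-cographic wheel piece at the root, and then verify that each 2-sum is performed at a basepoint of the correct rim/spoke type, matching the definition of generalized wheel. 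To obtain the classification, I would translate each $3$-connected component back into a pivot-minor of a line graph, then use the Krausz clique partition of $L(H)$ together with the effect of a sequence of pivots and vertex deletions on that partition.

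\emph{Main obstacle.}
The hardest step will be the classification of $3$-connected delta-graphic matroids. Pivots can alter local structure drastically, while line graphs carry the rigid Krausz skeleton, so the challenge is to understand which pivot sequences on $L(H)$ preserve $3$-connectivity of the underlying matroid rather than splitting it across a $2$-separation. I expect that a careful case analysis based on the location of pivoted vertices inside the Krausz cliques, combined with Whitney/Beineke-type constraints on line graphs, will force any new $3$-connected example beyond graphic and cographic matroids to be a wheel, with each pivot step corresponding to the spoke-versus-rim dichotomy needed for the 2-sum lemma.
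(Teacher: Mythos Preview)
Your plan has a genuine gap in the necessity direction, and it begins with a misconception. You propose to classify $3$-connected delta-graphic matroids as ``graphic, cographic, or $M(W_n)$'' and then place ``the unique (possibly trivial) non-graphic, non-cographic wheel piece at the root.'' But $M(W_n)$ is graphic (indeed planar and self-dual), so no such piece exists; the correct $3$-connected classification is simply ``graphic or cographic'' (Lemma~\ref{lem:3c} in the paper), and this is the \emph{easy} step, not the main obstacle you identify. The real difficulty, which your outline does not address, is that knowing every $3$-connected piece of the canonical tree decomposition lies in $\mathcal G\cup\mathcal G^*$ is nowhere near enough to force $M$ to be a generalized wheel: an arbitrary tree of $2$-sums of graphic and cographic matroids need not be delta-graphic, and you give no mechanism by which the delta-graphic hypothesis constrains the global shape of the tree or singles out a central wheel and assigns rim/spoke roles to the basepoints. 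Your Krausz-partition analysis of pivot sequences on line graphs is aimed at the wrong target.

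The paper's route is quite different from the one you sketch. It works through the graft description $\mathcal G(G,T)$ of graphic delta-matroids: using Duchamp's characterisation (Lemma~\ref{MK3}) and the forbidden graft minors $\Delta_1,\Delta_2,\Delta_3$, it shows that the grafts arising from connected delta-graphic matroids admit a ``cyclic decomposition'' over an even cycle or a path (Lemmas~\ref{evencycle}, \ref{path}). From this combinatorial decomposition it explicitly builds the matroid $M^{H,\mathcal B}$ as an iterated $2$-sum over $M(W_k)$ or $M(\Pi_\ell)$ with alternating graphic and cographic parts (Propositions~\ref{prop:main}, \ref{prop:main2}), and then reads off the possible shapes of the canonical tree decomposition (conditions \ref{item:t1}--\ref{item:t4} of Proposition~\ref{prop:deltagraphictree}). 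Each of these shapes is visibly a generalized wheel, which gives the forward direction. Sufficiency is then short: one extends a generalized wheel to a matroid with a wheel decomposition and invokes closure of the class under minors.
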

It follows that the class of delta-graphic matroids includes graphic matroids, cographic matroids, and the $2$-sum of a graphic matroid and a cographic matroid.
So all $3$-connected delta-graphic matroids are graphic or cographic. Furthermore, delta-graphic matroids are regular. Later we will see that not all regular matroids are delta-graphic.

As a side remark, there is a class of delta-matroids called \emph{regular} delta-matroids, see~{\cite[Chapter 6]{Geelen1996}}. As regular matroids are precisely matroids whose bases form regular delta-matroids, there are many open problems regarding regular delta-matroids extending theorems on regular matroids~{\cite[page 66]{Geelen1996}}. 

Our second theorem discusses forbidden minors for the class of delta-graphic matroids.
Since delta-graphic matroids are binary, by the well-quasi-ordering theorem for binary matroids announced by Geelen, Gerards, and Whittle~\cite{Geelen2014}, there exists a finite list of binary forbidden minors for the class of delta-graphic matroids. Together with $U_{2,4}$, there exists a finite list of forbidden minors for the class of delta-graphic matroids. However, their theorem does not provide an explicit list
or an upper bound on the size of forbidden minors. 
We prove that each forbidden minor for the class of delta-graphic matroids has at most $48$ elements. 

\begin{theorem}
\label{thm:exclude}
If $M$ is a matroid which is minor-minimally not delta-graphic, then $|E(M)|\leq 48$.
\end{theorem}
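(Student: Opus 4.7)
The strategy is to analyze a minor-minimal non-delta-graphic matroid $M$ by its connectivity, leveraging Theorem~\ref{thm:main} as the main structural tool. First, the class of delta-graphic matroids is closed under direct sums, since the fundamental graph of $M_1\oplus M_2$ (with respect to a basis that is the disjoint union of bases of $M_1$ and $M_2$) is the disjoint union of the fundamental graphs of $M_1$ and $M_2$, and a disjoint union of pivot-minors of line graphs is again a pivot-minor of a line graph. Hence any forbidden minor $M$ is connected.

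Suppose first that $M$ is $3$-connected. The remark following Theorem~\ref{thm:main} asserts that $3$-connected delta-graphic matroids are exactly the $3$-connected graphic and cographic matroids, so $M$ must be $3$-connected and neither graphic nor cographic while every proper minor of $M$ is one or the other. If $M$ is non-binary then $M=U_{2,4}$. Otherwise $M$ is binary; if $M$ contains $F_7$ or $F_7^*$ as a minor, minor-minimality forces $M\in\{F_7,F_7^*\}$. In the remaining case $M$ is $3$-connected, regular, neither graphic nor cographic, with every proper minor delta-graphic; Seymour's decomposition theorem together with the splitter theorem then pin such minor-minimal $M$ to a short explicit list headed by $R_{10}$, all of size well below $48$.

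The heart of the argument is the case when $M$ is connected but not $3$-connected. Write $M=M_1\oplus_2 M_2$ along a basepoint $p$; each $M_i$ is a proper minor of $M$, hence delta-graphic, and by Theorem~\ref{thm:main} each $M_i$ is a generalized wheel. Since $M$ itself fails to be a generalized wheel, there must be an incompatibility at $p$: the role $p$ plays in the wheel-plus-pieces decomposition of $M_1$ (as a rim edge of the central wheel, as a spoke, or as an element of some graphic or cographic piece $2$-summed to the wheel) determines the required type (graphic or cographic) of the opposite side, and $M_2$ violates this requirement. The plan is to show the obstruction is local in each $M_i$: once $|E(M_i)|$ exceeds an explicit threshold, there must be an element $e\in E(M_i)\setminus\{p\}$ whose deletion or contraction preserves both the generalized wheel decomposition of $M_i$ and the mismatch at $p$, producing a strictly smaller forbidden minor and contradicting minimality. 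This bounds $|E(M_1)|$ and $|E(M_2)|$ separately, and summing yields $|E(M)|\leq 48$.

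The main obstacle is carrying out this localization step uniformly. One must enumerate the possible positions of $p$ within the generalized wheel decomposition of each $M_i$ --- rim, spoke, or deep inside an attached graphic or cographic piece, with further subcases depending on iterated $2$-sums within the piece --- and, in each subcase, exhibit a removable element far from $p$ whose removal preserves the type of the piece containing $p$ together with the overall wheel structure. This will combine Tutte's wheels-and-whirls theorem applied to the central wheel with elementary minor analysis inside attached graphic and cographic components, with each subcase contributing a small explicit constant adding up to the claimed bound of $48$.
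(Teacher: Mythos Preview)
Your plan for the $3$-connected case is essentially correct (the list is $U_{2,4}$, $F_7$, $F_7^*$, $R_{10}$, $R_{12}$, each of size at most $12$), but the non-$3$-connected case has a genuine gap.

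The central claim --- that after writing $M=M_1\oplus_2 M_2$ the failure of $M$ to be a generalized wheel is witnessed by a local ``type mismatch'' at the basepoint $p$, and that one can shrink each $M_i$ while preserving this mismatch --- is neither proved nor clearly true. A generalized wheel has a global structure (a central wheel-minor with graphic pieces at rim edges and cographic pieces at spokes), and the $2$-sum of two generalized wheels can fail to be one for global reasons: for instance, both $M_1$ and $M_2$ may carry their own nontrivial central wheel with $p$ lying inside an attached piece on each side, and no single wheel-minor of the combined matroid accommodates both configurations. In that situation the obstruction is not a type requirement at $p$ at all, and there is no evident element whose removal preserves it. Your final paragraph acknowledges this as ``the main obstacle'' but does not resolve it; invoking wheels-and-whirls on ``the'' central wheel presupposes a canonical central wheel, which a single $2$-sum split does not provide. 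The asserted bound of $48$ is never derived from your case analysis.

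The paper's proof avoids this by working with the full canonical tree decomposition $(T,\rho)$ rather than a single $2$-sum. It first translates delta-graphicness into four explicit tree conditions \ref{item:t1}--\ref{item:t4} (Proposition~\ref{prop:deltagraphictree}), then shows that a minor-minimal counterexample forces one of a handful of concrete small configurations in $T$: a \emph{tripod} (bound $30$, Lemma~\ref{lem:flexible}), an $H$- or $H'$-matroid arising from a \emph{core} vertex (bound $40$, Lemma~\ref{lem:core}), an $(m,k)$-\emph{bench} (bound $47$, Lemma~\ref{lem:bench}), or a \emph{starlike} matroid (a $K_{1,4}$-shaped tree with a planar $3$-connected centre). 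The constant $48$ emerges only in the starlike case, where the centre is bounded by $16$ edges (Lemma~\ref{lem:notwheel}) and each of the four leaves by $10$ (Lemma~\ref{Reducingleaf}), giving $4\cdot 10 + 16 - 8 = 48$. None of these structures or constants appear in your outline.
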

We remark that it is unknown whether the list of forbidden minors for the class of graphic delta-matroids is finite. Oum~\cite{Oum2012} conjectured that graphs are well-quasi-ordered under the pivot-minor relation and if this conjecture holds, then the list of forbidden minors for the class of graphic delta-matroids is finite.

This paper is organized as follows. In Section~\ref{sec:prelim}, we review some terminologies and results about graphs, matroids, and delta-matroids.
In Section~\ref{sec:graft}, we characterize grafts defining delta-graphic matroids. In Section~\ref{sec:str}, we describe the structure of delta-graphic matroids. We describe the canonical tree decomposition of connected delta-graphic matroids and prove Theorem~\ref{thm:main} in Section~\ref{sec:td}. In Section~\ref{sec:exclude}, we prove Theorem~\ref{thm:exclude}.

\section{Preliminaries}\label{sec:prelim}
In this paper, all graphs are finite and may have parallel edges and loops. A graph is \emph{simple} if it has no loops and no parallel edges. 
For an integer $k$, a graph $G$ is \emph{$k$-connected} if
it has more than $k$ vertices
and $G\setminus X$ is connected for all subsets $X$ of $V(G)$ with $\abs{X}<k$.
Given a set $A$ of vertices, a path $P$ is an \emph{$A$-path} if $V(P)\cap A$ is the set of endpoints of $P$. 

For a graph $G$, \emph{contracting} an edge $e$ in $G$ is an operation to obtain a new graph, denoted by $G/e$, from $G$ 
by deleting $e$ and identifying ends of $e$. A \emph{minor} of a graph $G$ is a graph obtained from $G$ by contracting edges and deleting vertices and edges. For two sets $A$ and $B$, let $A\triangle B=(A- B)\cup(B- A)$. 
Observe that $(A\triangle B)\triangle C=A\triangle(B\triangle C)$ for sets $A$, $B$, and $C$ and therefore we can write $A\triangle B\triangle C$.
\subsection{Matroids}
\label{subsec:matroid}
A pair $M=(E,\mathcal{B})$ is a \emph{matroid} if $E=E(M)$ is a finite set and $\mathcal{B}$ is a nonempty set of subsets of $E$ 
such that if $B_{1},B_{2}\in\mathcal{B}$ and $x\in B_{1}-B_{2}$, then there is an element $y\in B_{2}-B_{1}$ such that $(B_{1}-\{x\})\cup\{y\}\in\mathcal{B}$.
An element of $\mathcal{B}$ is called a \emph{base} of $M$. We denote the set of bases of $M$ by $\mathcal{B}(M)$. 
A subset of $E$ is \emph{independent} if it is a subset of some base
and dependent if it is not independent.
A \emph{circuit} is a minimally dependent set. We denote the set of circuits of $M$ by $\mathcal{C}(M)$. An element $e$ of $M$ is a \emph{loop} if $\{e\}$ is a circuit in $M$. 
The \emph{rank} $r_{M}(X)$ of a subset $X$ of $E$  is the size of a maximal independent subset of $X$.

The \emph{dual} matroid $M^{*}$ of $M$ is a matroid on the same ground set $E$ such that $\mathcal{B}(M^{*})=\{E-B: B\in\mathcal{B}(M)\}$. A \emph{coloop} of $M$ is a loop of $M^{*}$.  A set $D$ is a \emph{cocircuit} of $M$ if $D$ is a circuit of $M^{*}$. A \emph{hyperplane} of $M$ is a complement of a cocircuit of $M$. 

Let $e$ be an element of $M$.
Then $M\setminus e$ is a matroid on $E-\{e\}$ whose bases are maximal independent subsets of $E-\{e\}$.
This operation of obtaining $M\setminus e$ is called the \emph{deletion} of $e$.
For a subset $X$ of $E(M)$, we denote $M\setminus(E(M)- X)$ by $M|X$.
We define $M/e=(M^*\setminus e)^*$, which is a matroid obtained by \emph{contracting} $e$.
A \emph{minor} of $M$ is a matroid obtained by a sequence of deletions and contractions from $M$. 
A minor $N$ of $M$ is \emph{proper} if $E(N)$ is a proper subset of $E(M)$.
A class $\mathcal{M}$ of matroids is \emph{minor-closed} if for all $M\in\mathcal{M}$,  all minors of $M$ belong to $\mathcal{M}$.
A matroid $N$ is a \emph{forbidden} minor for a minor-closed class $\mathcal{M}$ if $N$ is not in $\mathcal{M}$ but all proper minors of $N$ are in $\mathcal{M}$.

The \emph{cycle matroid} $M(G)$ of a graph $G$ is a matroid on $E(G)$ such that $F\subseteq E(G)$ is a base if and only if $F$ is a maximal edge set which does not contain a cycle. A matroid $M$ is \emph{graphic} if $M$ is isomorphic to $M(G)$ for a graph $G$. A matroid is \emph{cographic} if its dual matroid is graphic.

\begin{lemma}[Whitney~\cite{Whitney1933}; see Oxley~{\cite[Theorem 5.3.1]{Oxley2011}}]
\label{lem:whitney}
Let $G$ be a $3$-connected graph and $H$ be a loopless graph without isolated vertices. If $M(G)$ is isomorphic to $M(H)$, then $G$ is isomorphic to $H$.
\end{lemma}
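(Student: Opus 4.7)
The plan is to reduce the statement to Whitney's $2$-isomorphism theorem, which says that two loopless graphs without isolated vertices have isomorphic cycle matroids if and only if one can be obtained from the other by a finite sequence of three graph operations: vertex identification (fusing two vertices from different connected components), vertex cleaving (its inverse at a cut vertex), and a Whitney twist at a $2$-vertex-cut. Since both $G$ and $H$ are eligible for the theorem, the isomorphism $M(G)\cong M(H)$ produces a sequence of Whitney moves transforming $G$ into $H$.

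The key observation is then that none of these three operations can be applied to $G$. Each move has a topological prerequisite: the first requires $G$ to be disconnected, the second a cut vertex, and the third a $2$-vertex-cut. All three are forbidden by the $3$-connectivity of $G$, which under the paper's convention also forces $|V(G)|\ge 4$ so no degenerate small-graph exception appears. Consequently the Whitney sequence from $G$ to $H$ must be empty and $G\cong H$.

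A more self-contained alternative, if one wishes to avoid invoking Whitney's theorem as a black box, is to recover the vertex structure of $G$ directly from $M(G)$. For every vertex $v$ of $G$ the set of edges incident with $v$ is a cocircuit of $M(G)$, because $G\setminus v$ is connected by $2$-connectivity. The matroid isomorphism transports each such cocircuit to a cocircuit of $M(H)$, i.e.\ a minimal edge cut of $H$; one then argues that in a $3$-connected graph these ``vertex-star'' cocircuits are matroidally distinguished, for instance through the property that two distinct vertex stars of $G$ intersect precisely in the set of parallel edges between the two vertices. After verifying that $H$ is also $3$-connected, which follows once one checks that graph $3$-connectivity is a matroid invariant under the side hypotheses on $H$, the transported vertex stars yield a bijection $V(G)\to V(H)$ compatible with the edge bijection, and hence an isomorphism $G\cong H$.

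The main obstacle in either approach is the production of a purely matroidal handle on the vertex structure. In the first route this is absorbed into Whitney's $2$-isomorphism theorem; in the second it surfaces as a combinatorial characterization of vertex-star cocircuits inside a $3$-connected graph. Once that obstacle is cleared, the $3$-connectivity hypothesis does the rest of the work by eliminating all nontrivial rearrangements of the graph that could preserve the cycle matroid.
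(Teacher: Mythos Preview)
The paper does not supply its own proof of this lemma; it is quoted as a classical result with references to Whitney and to Oxley's textbook. Your first approach, reducing to Whitney's $2$-isomorphism theorem and then observing that $3$-connectivity rules out every nontrivial Whitney move, is exactly the standard argument and is correct. One small refinement: a Whitney twist at a pair $\{u,v\}$ is formally still available in a $3$-connected graph when one side of the separation consists only of edges joining $u$ and $v$, but swapping $u$ and $v$ on that side yields an isomorphic graph, so the conclusion $G\cong H$ is unaffected. Your second sketch, recovering vertices as distinguished cocircuits, is also a legitimate route, though it requires more care to pin down the matroidal characterization of vertex stars; the first approach is cleaner for the purpose here.
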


A matroid $M$ is \emph{represented by} a matrix $A$ over $\mathbb{F}$
if there is a bijection from $E(M)$ to the columns of $A$ such that a set $X$ is a base of $M$ if and only if the set of column vectors corresponding to~$X$ is a basis of the column space of $A$. 
A matroid $M$ is \emph{representable over} a field $\mathbb{F}$ 
if there is a matrix $A$ over $\mathbb{F}$ such that $M$ is represented by $A$. 
A matroid $M$ is \emph{binary} if it is representable over $\GF(2)$ and is \emph{regular} if it is representable over every field. 
The \emph{Fano matroid} $F_{7}$ is the matroid represented by a matrix $A_{7}$ over $\GF(2)$ where
\[
A_{7}=
\begin{bmatrix}
1 & 0 & 0 & 1 & 0 & 1 & 1 \\
0 & 1 & 0 & 1 & 1 & 0 & 1 \\
0 & 0 & 1 & 0 & 1 & 1 & 1 
\end{bmatrix}.
\]
\begin{theorem}[Tutte~\cite{Tutte1959}]
\label{thm:tutte}
A matroid $M$ is regular if and only if $M$ has no minor isomorphic to $U_{2,4}$, $F_{7}$, or $F^{*}_{7}$.
\end{theorem}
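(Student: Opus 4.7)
The plan is to prove both implications. For the easy direction, I would verify that each of $U_{2,4}$, $F_7$, and $F_7^*$ is not regular and then invoke closure of regularity under minors and duality. The matroid $U_{2,4}$ is not even binary, since over $\GF(2)$ a two-dimensional vector space has only three nonzero vectors and therefore cannot host four pairwise-independent columns. The Fano matroid $F_7$ is binary but requires characteristic~$2$: a direct calculation using $A_7$ and the seven triangles of the Fano plane shows that any lift of $A_7$ to a field $\mathbb{F}$ forces $2=0$ in $\mathbb{F}$. Hence $F_7$ is not regular, and $F_7^*$ inherits this by duality.

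For the converse, assume $M$ has no minor isomorphic to $U_{2,4}$, $F_7$, or $F_7^*$. The absence of a $U_{2,4}$-minor gives, by Tutte's theorem that binary matroids are exactly those with no $U_{2,4}$-minor, that $M$ is binary; fix a standard representation $[I\mid D]$ over $\GF(2)$. A matroid is regular precisely when it admits a totally unimodular representation, so it suffices to produce a $\{0,\pm 1\}$-signing $\widetilde D$ of $D$ over $\mathbb{Z}$ such that $[I\mid \widetilde D]$ is totally unimodular. By a classical criterion essentially due to Camion, such a signing exists if and only if $D$ is \emph{balanceable}: every square submatrix of $D$ whose row and column sums are all even has entry sum divisible by~$4$.

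The main obstacle is to show that the hypothesis of no $F_7$ or $F_7^*$ minor forces balanceability. I would take a minimum-size unbalanced submatrix $D'$ and use minimality together with pivot operations to prune rows and columns until every row and column of $D'$ has exactly three nonzero entries, at which point the binary matroid on the elements indexing $D'$ is forced to be $F_7$ or $F_7^*$, contradicting the hypothesis. This localization of the obstruction to a $7$-element minor is the heart of Tutte's theorem and can be executed either via Tutte's Homotopy Theorem on binary chain groups or through the more elementary inductive argument later given by Gerards. I expect this step to absorb essentially all the technical effort, with the remaining ingredients being standard linear algebra and matroid duality.
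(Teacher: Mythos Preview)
The paper does not prove this theorem at all: it is stated as a classical result of Tutte and simply cited, with no proof supplied. There is therefore nothing in the paper to compare your argument against.

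That said, your sketch is a reasonable outline of one standard modern route to Tutte's theorem. The forward direction is fine. For the converse, invoking the excluded-minor characterization of binary matroids to get a $\GF(2)$-representation, and then appealing to a balanceability/signing criterion to reduce regularity to a combinatorial condition on the $0$--$1$ matrix $D$, is essentially the approach of Gerards' simplified proof as presented, for instance, in Oxley's textbook. The step you correctly identify as the crux---showing that a minimal unbalanced square submatrix forces an $F_7$ or $F_7^*$ minor---is genuine work and needs more than a sentence, but your description of the strategy (prune via pivots until all row and column sums equal three, then recognize the Fano configuration) matches the known argument. If you were actually writing this up you would need to be careful that the pivoting preserves the minor relation and the unbalanced condition, and that the final $3$-regular case really pins down $F_7$ or its dual; but as a plan it is sound.
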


Let $\simeq$ be an equivalence relation on $E(M)$ such that $x\simeq y$ if either $x=y$ or $M$ has a circuit containing both $x$ and $y$. A \emph{component} of $M$ is an equivalence class of $E(M)$ by $\simeq$. A matroid is \emph{connected} if $E(M)$ is a component or $E(M)=\emptyset$.

The \emph{connectivity function} $\lambda_{M}$ of a matroid $M$ maps a subset $X$ of $E$ to $\lambda_{M}(X)=r_{M}(X)+r_{M}(E-X)-r_{M}(E)$. A partition $(X,Y)$ of $E$ is a \emph{$k$-separation} of a matroid $M$ if $\lambda_{M}(X)<k$ and $\min\{|X|,|Y|\}\geq k$. For $n>1$, a matroid is \emph{n-connected} if it has no $k$-separation for all $k<n$. Let $R_{10}$, $R_{12}$ be matroids represented by matrices $A_{10}$, $A_{12}$ over $\GF(2)$, respectively, where
\begin{align*}
A_{10}&=
\left[
\begin{array}{c|c}
\begin{matrix}
 1 & 0 & 0 & 0 & 0 \\
 0 & 1 & 0 & 0 & 0 \\
 0 & 0 & 1 & 0 & 0 \\
 0 & 0 & 0 & 1 & 0 \\
 0 & 0 & 0 & 0 & 1 \\
\end{matrix} &
\begin{matrix}
 1 & 1 & 0 & 0 & 1 \\
 1 & 1 & 1 & 0 & 0 \\
 0 & 1 & 1 & 1 & 0 \\
 0 & 0 & 1 & 1 & 1 \\
 1 & 0 & 0 & 1 & 1 \\
\end{matrix} \\
\end{array}
\right],
\\
A_{12}&=
\left[
\begin{array}{c|c}
\begin{matrix}
1 & 0 & 0 & 0 & 0 & 0  \\
0 & 1 & 0 & 0 & 0 & 0 \\
0 & 0 & 1 & 0 & 0 & 0  \\
0 & 0 & 0 & 1 & 0 & 0  \\
0 & 0 & 0 & 0 & 1 & 0  \\
0 & 0 & 0 & 0 & 0 & 1 
\end{matrix} &
\begin{matrix}
1 & 1 & 1 & 0 & 0 & 0  \\
1 & 1 & 0 & 1 & 0 & 0 \\
1 & 0 & 0 & 0 & 1 & 0  \\
0 & 1 & 0 & 0 & 0 & 1  \\
0 & 0 & 1 & 0 & 1 & 1  \\
 0 & 0 & 0 & 1 & 1 & 1 
\end{matrix}
\end{array}
\right].
\end{align*}
The following lemma will be used for proving Theorem~\ref{thm:exclude}.

\begin{lemma}[Seymour~{\cite[(14.2)]{Seymour1980}}]
\label{regular}
Let $M$ be a $3$-connected regular matroid. Then $M$ is graphic or cographic, or $M$ has a minor isomorphic to $R_{10}$ or $R_{12}$.
\end{lemma}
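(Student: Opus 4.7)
The plan is to argue by induction on $|E(M)|$. For the base, one verifies by finite search that, up to a certain size, the only $3$-connected regular matroids that are neither graphic nor cographic are $R_{10}$ and $R_{12}$ themselves; this handles all sufficiently small $M$ and gives the induction something to bite on.

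For the inductive step, assume $M$ is $3$-connected regular and neither graphic nor cographic. Apply Tutte's Wheels-and-Whirls Theorem (or Seymour's Splitter Theorem) to obtain an element $e\in E(M)$ such that one of $N:=M\setminus e$ or $N:=M/e$ is $3$-connected, unless $M$ itself is a wheel or whirl; wheels are graphic and no regular matroid is a whirl by Theorem~\ref{thm:tutte}, so the exceptional case does not occur. Since $N$ is a minor of $M$, it is regular. If $N$ is not graphic and not cographic, induction produces an $R_{10}$ or $R_{12}$ minor of $N$, and hence of $M$. Otherwise we are reduced to the key configuration: $M$ is a single-element extension or coextension of a $3$-connected graphic or cographic matroid. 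Dualizing and using $(M^*)^*=M$ halves the cases, and after further dualizing within the reduction, it suffices to treat the case $N=M\setminus e$ is $3$-connected and graphic.

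By Whitney's theorem (Lemma~\ref{lem:whitney}), such an $N$ is the cycle matroid of a unique $3$-connected graph $G$. The extension $M=N+e$ is encoded by the cocircuit of $M$ containing $e$, equivalently by a subset $S\subseteq E(G)$ whose edges form the ``attachment pattern'' of $e$. Using regularity (Theorem~\ref{thm:tutte}) one restricts $S$ severely: any binary extension creating an $F_7$ or $F_7^*$ minor is forbidden. Working in $G$ and exploiting $3$-connectivity (so that $G$ has no small vertex cuts to hide structure), one then enumerates the possible patterns $S$ up to automorphisms of $G$ and shows that each either (a) yields a graphic or cographic $M$, or (b) allows one to exhibit an explicit sequence of deletions and contractions in $M$ producing $R_{10}$ or $R_{12}$.

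The main obstacle is this last enumeration. The difficulty is not conceptual but combinatorial: one must control how the added element $e$ interacts with the cycle space of $G$, rule out all ``accidental'' regular extensions that are simultaneously non-graphic and non-cographic but avoid both $R_{10}$ and $R_{12}$, and carry out the analogous analysis when $N$ is cographic (dualizing to a graph $G^*$). This is precisely the content of the chain-type argument in Section~14 of Seymour's decomposition paper, and any direct attack would essentially reproduce that calculation; from our perspective the clean way forward is to quote the result as stated.
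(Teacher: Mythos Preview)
The paper does not prove this lemma; it simply quotes Seymour's result \cite[(14.2)]{Seymour1980} as a black box. Your proposal sketches an inductive strategy but correctly concludes that carrying it through would reproduce Seymour's original chain-type analysis and that the clean way forward is to cite the result --- which is precisely what the paper does.
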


We now define the $2$-sum of matroids as in \cite{Seymour1980}.
For two matroids $M_{1}$, $M_{2}$ with $E(M_{1})\cap E(M_{2})=\{e\}$, if $e$ is neither a loop nor a coloop of $M_{1}$ or $M_{2}$, then the \emph{$2$-sum} of $M_{1}$ and $M_{2}$ is a matroid $M_{1}\oplus_{2}M_{2}$  on $E(M_{1})\triangle E(M_{2})$ such that 
\[\mathcal{C}(M_{1}\oplus_{2}M_{2})=\mathcal{C}(M_{1}\setminus e)\cup\mathcal{C}(M_{2}\setminus e)\cup\{C_{1}\triangle C_{2}:C_{1}\in\mathcal{C}(M_{1}), C_{2}\in\mathcal{C}(M_{2}), e\in C_{1}\cap C_{2}\}.\]
Though some authors require both $M_1$ and $M_2$ have at least $3$ elements, we will not require that in this paper. 

\begin{lemma}
\label{lem:circuittobase}
Let $M_{1}$, $M_{2}$ be matroids such that $E(M_{1})\cap E(M_{2})=\{e\}$ and   $e$ is neither a loop nor a coloop of $M_{1}$ or $M_{2}$. Then $\mathcal{B}(M_{1}\oplus_{2}M_{2})=\{B_{1}\triangle B_{2}\triangle\{e\} : B_{1}\in\mathcal{B}(M_{1}), B_{2}\in\mathcal{B}(M_{2}), e\in B_{1}\triangle B_{2}\}$. 
\end{lemma}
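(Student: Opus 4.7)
The plan is to derive the base characterization directly from the circuit definition in two steps: first establish the rank identity $r(M_1\oplus_2 M_2)=r(M_1)+r(M_2)-1$, then match both inclusions by cardinality together with independence. A preliminary observation is that when $e\in B_1\triangle B_2$, exactly one of $B_1,B_2$ contains $e$, so (since $E(M_1)\cap E(M_2)=\{e\}$) the sets $B_1$ and $B_2$ are disjoint, giving $|B_1\triangle B_2\triangle\{e\}|=|B_1|+|B_2|-1=r(M_1)+r(M_2)-1$.

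For one inclusion I would check that each $B:=B_1\triangle B_2\triangle\{e\}$ of the claimed form is independent in $M_1\oplus_2 M_2$. Assuming some circuit $C$ of the $2$-sum lies in $B$, I split by the three types in the definition of $\mathcal{C}(M_1\oplus_2 M_2)$. If $C\in\mathcal{C}(M_i\setminus e)$, intersecting with $E(M_i)-\{e\}$ places $C$ inside $B_i$, contradicting $B_i\in\mathcal{B}(M_i)$. If $C=C_1\triangle C_2$ with $C_i\in\mathcal{C}(M_i)$ and $e\in C_1\cap C_2$, then without loss of generality $e\in B_1$, so $B=(B_1-\{e\})\sqcup B_2$, and intersecting $C$ with $E(M_1)$ yields $C_1-\{e\}\subseteq B_1-\{e\}$; combined with $e\in B_1$ this forces $C_1\subseteq B_1$, again a contradiction.

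Next I would establish the rank bound. For any independent set $I$ in $M_1\oplus_2 M_2$, split $I=I^{(1)}\sqcup I^{(2)}$ with $I^{(i)}\subseteq E(M_i)-\{e\}$. Since circuits of $M_i\setminus e$ are circuits of the $2$-sum, each $I^{(i)}$ is independent in $M_i\setminus e$; and since $e$ is not a coloop of $M_i$, $r(M_i\setminus e)=r(M_i)$. If $|I|=r(M_1)+r(M_2)$, both $I^{(i)}$ are bases of $M_i$, and since $e$ is not a loop of $M_i$ the dependent set $I^{(i)}\cup\{e\}$ contains a circuit $C_i$ through $e$. But then $C_1\triangle C_2$ is a circuit of the $2$-sum contained in $I$, contradicting independence. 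Hence $r(M_1\oplus_2 M_2)\leq r(M_1)+r(M_2)-1$, and combined with the cardinality calculation above this proves equality and the forward inclusion.

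For the reverse inclusion, given $B\in\mathcal{B}(M_1\oplus_2 M_2)$, split $B=B^{(1)}\sqcup B^{(2)}$. As before each $B^{(i)}$ is independent in $M_i$, and $|B|=r(M_1)+r(M_2)-1$ together with $|B^{(i)}|\leq r(M_i)$ forces, up to swapping indices, $|B^{(1)}|=r(M_1)$ and $|B^{(2)}|=r(M_2)-1$. To promote $B^{(2)}\cup\{e\}$ to a base of $M_2$ it then suffices by cardinality to show independence: otherwise some circuit $C_2$ of $M_2$ through $e$ lies in $B^{(2)}\cup\{e\}$, and combining it with the fundamental circuit $C_1$ of $e$ with respect to the $M_1$-base $B^{(1)}$ gives $C_1\triangle C_2\subseteq B$, a circuit contained in the base $B$, contradiction. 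Setting $B_1:=B^{(1)}$ and $B_2:=B^{(2)}\cup\{e\}$ yields $B=B_1\triangle B_2\triangle\{e\}$ with $e\in B_1\triangle B_2$. The hypothesis that $e$ is neither a loop nor a coloop of either $M_i$ is used essentially in both the rank bound and the reverse inclusion, to guarantee the existence of the fundamental circuits through $e$; this is the only mildly delicate point, since without it the final symmetric-difference arguments collapse.
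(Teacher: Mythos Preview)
Your proof is correct and takes a genuinely different route from the paper's. The paper never computes the rank of the $2$-sum; instead, for the forward inclusion it shows $B_1\triangle B_2\triangle\{e\}$ is a \emph{maximal} independent set by checking that adjoining any element $f$ produces a circuit (with separate cases for $f\in E(M_1)$ and $f\in E(M_2)$), and for the reverse inclusion it argues through a sequence of claims---including a cocircuit argument to show that at least one of $B_1\cup\{e\}$, $B_2\cup\{e\}$ is dependent---to pin down the structure of an arbitrary base. Your approach front-loads the work into the rank identity $r(M_1\oplus_2 M_2)=r(M_1)+r(M_2)-1$, after which both inclusions reduce to independence checks plus cardinality counting; in particular your reverse inclusion sidesteps the cocircuit trick entirely. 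Your argument is shorter and more elementary, while the paper's is more hands-on and never needs to name the rank explicitly.
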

\begin{proof}
Let $M:=M_{1}\oplus_{2}M_{2}$.
First we prove that 
if $B_1$, $B_2$ are bases of $M_1$, $M_2$ respectively and $e\in B_1\triangle B_2$, then $B_1\triangle B_2\triangle \{e\}$ is a base of $M$. Let $B=B_1\triangle B_2\triangle \{e\}$.
By symmetry, we may assume that $e\in B_{1}$ and $e\notin B_{2}$. 
Suppose that $B$ is dependent in $M$. 
Since $B_{1}-\{e\}$ contains no circuits of $M_{1}\setminus e$ and $B_{2}$ contains no circuits of $M_{2}\setminus e$,  
$B$ contains $C_1\triangle C_2$ as a subset for a circuit $C_1$ of $M_1$ and a circuit $C_2$ of $M_2$ with $e\in C_1\cap C_2$.
It follows that $C_1$ is a subset of $B_1$, contradicting the assumption that $B_1$ is independent.
So $B$ is an independent set of $M$. 

Suppose that $B\cup \{f\}$ is independent in $M$ for some $f\in E(M)-B$.
If $f\in E(M_{2})$, then $B_{2}\cup\{f\}$ contains a circuit $C$ of $M_{2}\setminus e$, which is a circuit of $M$, contradicting the assumption that $B\cup \{f\}$ is independent in $M$.
So $f\in E(M_{1})$. Then $B_{1}\cup\{f\}$ contains a circuit $C'$ of $M_{1}$. If $e\notin C'$, then $C'$ is a circuit of $M_{1}\setminus e$, which is also a circuit of $M$, contradicting the assumption on $B\cup \{f\}$. 
So $e\in C'$. Let $C''\subseteq B_{2}\cup\{e\}$ be a circuit of $M_2$. Then $B\cup\{f\}$ contains $C'\cup C''-\{e\}$ and so it is dependent in $M$, contradicting the assumption. This proves that $B$ is a base of $M$.

Conversely, let us show that if $B$ is a base of $M$, then 
$B=B_1'\triangle B_2'\triangle \{e\}$ for a base $B_1'$ of $M_1$ and a base $B_2'$ of $M_2$ with $e\in B_1'\triangle B_2'$.
For $i\in\{1,2\}$, let $B_{i}=B\cap E(M_{i})$. Then $B_{i}$ contains no circuit of $M_{i}\setminus e$ and therefore $B_i$ is independent in $M_i$.

If $B_i\cup\{e\}$ is dependent in $M_i$ for each $i=1,2$, then let $C_i$ be a circuit of $
M_i$ that is a subset of $B_i\cup\{e\}$ and contains $e$. Then $C_1\triangle C_2$ is a circuit of $M$  but a subset of $B$, contradicting the assumption on $B$. Hence $B_{1}\cup\{e\}$ is independent in $M_1$ or $B_{2}\cup\{e\}$ is independent in $M_2$.

We first claim that if $B_1\cup \{e\}$ is independent in $M_1$, then $B_1\cup\{e\}$ is a base of $M_1$. If not, then there is $f\in E(M_1)-B_1-\{e\}$ such that $B_1\cup\{e,f\}$ is independent in $M_1$. 
However, $B\cup \{f\}$ contains a circuit $C$ of $M$ containing $f$. 
Then $C$ is not a circuit of $M_1\setminus e$, because otherwise  $C\subseteq B_{1}\cup\{e,f\}$, contradicting the assumption that $B_1\cup\{e,f\}$ is independent in $M_1$.
Thus, $C=C_1\triangle C_2$ for a circuit $C_1$ of $M_1$ and a circuit $C_2$ of $M_2$ with $C_1\cap C_2=\{e\}$. 
But still, $C_1\subseteq B_1\cup\{e,f\}$, contradicting the assumption that $B_1\cup \{e,f\}$ is independent in $M_1$.
This proves the first claim. 
By symmetry, if $B_2\cup \{e\}$ is independent in $M_2$, then $B_2\cup\{e\}$ is a base of $M_2$. 

We are now going to show that $B_1$ or $B_1\cup \{e\}$ is a base of $M_1$. 
Suppose not. 
Then 
$B_1\cup\{e\}$ is dependent in~$M_1$,
$B_2\cup\{e\}$ is a base of $M_2$,  and 
$B_1\cup \{g\}$ is independent in~$M_1$ for some $g\in E(M_1)-B_1$
because $B_1$ is independent in~$M_1$.
Since $B_{1}\cup\{e\}$ is dependent in $M_1$,
we have $g\neq e$ and so $B\cup\{g\}$ contains a circuit $D$ of $M$ containing~$g$.
Since $B_1\cup\{g\}$ is independent in~$M_1$, $D$ is not a circuit of $M_1\setminus e$. 
Thus, $D=D_1\triangle D_2$ for a circuit $D_1$ of $M_1$ and a circuit $D_2$ of $M_2$ with $D_1\cap D_2=\{e\}$. Then $D_2$ is a subset of $B_2\cup \{e\}$, contradicting the fact that $B_2\cup\{e\}$ is a base of~$M_2$. 
Therefore $B_1$ or $B_1\cup \{e\}$ is a base of $M_1$.
Similarly, $B_2$ or $B_2\cup \{e\}$ is a base of $M_2$.

Now we claim that either $B_1\cup\{e\}$ is dependent in $M_1$ or $B_2\cup\{e\}$ is dependent in $M_2$. Suppose not. Then $B_1\cup\{e\}$ is independent $M_1$ and $B_2\cup\{e\}$ is independent in $M_2$. Let $\tilde{B}$ be a base of $M_{1}$ which contains $B_{1}\cup\{e\}$.
Since $E(M_1)-\tilde{B}$ is a cobase of $M_1$, there exists a cocircuit $X$ of $M_1$ that is a subset of $(E(M_1)-\tilde{B})\cup\{e\}$ and contains $e$.
As $e$ is not a coloop of $M_1$, $X$ has an element $f\neq e$. 
Since $\tilde{B}$ is a base of $M_1$ and $f\notin \tilde{B}$, there exists a circuit $Y$ of $M_1$ that is a subset of $\tilde{B}\cup\{f\}$.
Then $Y$ must contain $e$ because $\abs{X\cap Y}\neq 1$ \cite[Proposition 2.1.11]{Oxley2011}.
Since such a circuit $Y$ is unique by the circuit exchange axiom, 
${B}_1\cup \{f\}$ is independent in $M_1$. 
As $B\cup\{f\}$ is dependent in~$M$, 
$B\cup\{f\}$ contains a circuit $C$ of $M$ containing $f$.
We can observe that $C$ is not a circuit of $M_1\setminus e$ because ${B}_1\cup\{f\}$ is independent in $M_1\setminus e$. 
Thus, $C=C_1\triangle C_2$ for a circuit $C_1$ of $M_1$ and a circuit $C_2$ of $M_2$ with $C_1\cap C_2=\{e\}$.
This contradicts the assumption that $B_2\cup\{e\}$ is independent in $M_2$, because $C_2\subseteq B_2\cup\{e\}$. 

Thus, either $B_1$ is a base of $M_1$ and $B_2\cup\{e\}$ is a base of $M_2$
or $B_1\cup\{e\}$ is a base of $M_1$ and $B_2$ is a base of $M_2$.
This concludes the proof.
\end{proof}
We list a few useful lemmas on $2$-sums of matroids.

\begin{lemma}[Seymour~{\cite[(2.6)]{Seymour1980}}]
\label{twosumminor}
If $M$ is the $2$-sum of $M_1$ and $M_2$, then 
both $M_1$ and $M_2$ are isomorphic to minors of $M$.
\end{lemma}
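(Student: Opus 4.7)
By symmetry it suffices to exhibit $M_1$ as a minor of $M:=M_1\oplus_2 M_2$. The plan is to find a minor of $M_2$ on exactly two elements $\{e,f\}$ isomorphic to $U_{1,2}$; taking the corresponding minor inside $M$ then turns the $2$-sum into a $2$-sum with $U_{1,2}$, which is just $M_1$ with the basepoint relabeled.

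Since $e$ is not a coloop of $M_2$, there is a circuit $C$ of $M_2$ containing $e$; since $e$ is not a loop, $\abs{C}\geq 2$, so we may pick some $f\in C-\{e\}$. Set $X:=E(M_2)-C$ and $D:=C-\{e,f\}$. Because $C$ is a circuit of $M_2$, no proper subset of $C$ is dependent, and therefore $M_2|C=M_2\setminus X$ has $C$ as its unique circuit, i.e.\ $M_2\setminus X\cong U_{\abs{C}-1,\abs{C}}$. Contracting the independent set $D$ of size $\abs{C}-2$ then yields $U_{1,2}$ on $\{e,f\}$, in which $e$ is neither a loop nor a coloop, so the $2$-sum below is defined.

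Next, I would verify the commutation identity
\[
(M_1\oplus_2 M_2)\setminus X/D \;=\; M_1\oplus_2 (M_2\setminus X/D).
\]
This follows by a direct inspection of the circuit description of the $2$-sum given just before Lemma~\ref{lem:circuittobase}. Since $X\cup D\subseteq E(M_2)-\{e\}$, the operation does not touch any circuit of $M_1\setminus e$; circuits of $M_2\setminus e$ correspond (after deletion of $X$ and contraction of $D$) to circuits of $(M_2\setminus X/D)\setminus e$; and a symmetric difference $C_1\triangle C_2$ with $e\in C_1\cap C_2$ survives as a circuit on one side if and only if it does on the other.

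Finally, Lemma~\ref{lem:circuittobase} applied to $M_1\oplus_2 U_{1,2}$ with basepoint $e$ and other element $f$ shows that its bases are exactly the sets obtained from bases $B_1$ of $M_1$ by replacing $e$ with $f$ whenever $e\in B_1$ and leaving $B_1$ unchanged otherwise. Hence the bijection fixing $E(M_1)-\{e\}$ and sending $e\mapsto f$ is an isomorphism $M_1\to M_1\oplus_2 U_{1,2}$. Combining the three steps gives $M\setminus X/D\cong M_1$, and symmetrically $M_2$ is a minor of $M$. The main obstacle is the commutation identity above: although routine, one has to chase the three types of circuits in the $2$-sum through both the deletion and the contraction, and check at each intermediate stage that $e$ remains neither a loop nor a coloop so that every $2$-sum in sight is well defined.
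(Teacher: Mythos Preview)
The paper does not supply its own proof of this lemma; it simply cites Seymour~\cite{Seymour1980}. Your argument is correct and is the standard one: pick a circuit $C\ni e$ in $M_2$, delete $E(M_2)-C$ and contract $C-\{e,f\}$ to reduce $M_2$ to $U_{1,2}$, commute these operations through the $2$-sum, and then note that $M_1\oplus_2 U_{1,2}$ is $M_1$ with $e$ relabeled $f$.

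The commutation step you flag as the ``main obstacle'' is exactly what the paper packages, a few lines later, as Lemma~\ref{2summinor} (proved via Lemma~\ref{lem:twosomeminor}): if $N'$ is a minor of $N$ in which the basepoint is neither a loop nor a coloop, then $M_1\oplus_2 N'$ is a minor of $M_1\oplus_2 N$. Your check that $e$ stays neither a loop nor a coloop throughout --- because $C$ remains a circuit after each deletion, and the contractions take place inside $U_{\abs{C}-1,\abs{C}}$ --- is precisely the hypothesis that inductive proof needs at each step. So your approach and the paper's own machinery coincide.
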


\begin{lemma}[see Oxley~{\cite[Proposition 7.1.22~(ii)]{Oxley2011}}]
\label{2sumconnected}
Let $M$, $N$ be matroids with $|E(M)|,|E(N)|\geq 2$. Then $M\oplus_{2}N$ is connected if and only if both $M$ and $N$ are connected.
\end{lemma}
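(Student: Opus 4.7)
My plan is to work directly from the three-part circuit description of $M\oplus_{2}N$ in the definition, together with the standard fact that a matroid on at least two elements is connected if and only if any two of its ground-set elements lie in a common circuit.

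For the forward direction I argue the contrapositive. Suppose $M$ is disconnected (the case of $N$ is symmetric). Let $A$ be a component of $M$ that does not contain the basepoint $e$; such a component exists because $e$ lies in exactly one component and there are at least two. Since every circuit of $M$ is contained in a single component, every circuit of $M$ that meets $A$ is a subset of $A$ and in particular avoids $e$. Inspecting the three types of circuits of $M\oplus_{2}N$, the only ones that can meet $A$ are the circuits of $M\setminus e$ lying inside $A$, since any circuit of $M$ containing $e$ lies outside $A$ and the third type $C_{1}\triangle C_{2}$ has $C_{1}\subseteq E(M)-A$. Hence no circuit of $M\oplus_{2}N$ meets both $A$ and its complement in $E(M\oplus_{2}N)$, and that complement is nonempty because $\abs{E(N)}\geq 2$ guarantees at least one element of $E(N)-\{e\}$. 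So $M\oplus_{2}N$ is disconnected.

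For the backward direction, assume both $M$ and $N$ are connected. Pick any two distinct elements $f,g\in E(M\oplus_{2}N)$; it suffices to find a circuit of $M\oplus_{2}N$ containing both. Since $e$ is neither a loop nor a coloop of $M$ or $N$, both $M$ and $N$ contain a circuit through $e$. If $f,g\in E(M)-\{e\}$, take a circuit $C$ of $M$ containing both; if $e\notin C$ then $C$ is already a circuit of $M\oplus_{2}N$, and if $e\in C$, then for any circuit $D$ of $N$ through $e$ the set $C\triangle D$ is a circuit of $M\oplus_{2}N$ containing $f$ and $g$. The symmetric case $f,g\in E(N)-\{e\}$ is identical. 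If $f\in E(M)-\{e\}$ and $g\in E(N)-\{e\}$, pick circuits $C$ of $M$ and $D$ of $N$ with $\{f,e\}\subseteq C$ and $\{g,e\}\subseteq D$; then $C\triangle D$ is a circuit of $M\oplus_{2}N$ containing both $f$ and $g$.

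The only real piece of bookkeeping is in the forward direction, where one must verify that none of the three types of 2-sum circuits can straddle the boundary of $A$; this reduces to the observation that $e$ lies in a different component of $M$ from $A$, so every circuit of type $C_{1}\triangle C_{2}$ is disjoint from $A$. The non-loop/non-coloop hypothesis on $e$ is needed only in the backward direction to supply the auxiliary circuits through $e$ in $M$ and $N$.
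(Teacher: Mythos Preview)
The paper does not prove this lemma; it simply cites Oxley~\cite[Proposition~7.1.22~(ii)]{Oxley2011}. Your direct argument from the circuit description of the $2$-sum is correct and is essentially the standard proof: the forward direction isolates a component of $M$ avoiding $e$ and checks it is a separator of $M\oplus_{2}N$, and the backward direction builds a circuit through any two prescribed elements by splicing circuits of $M$ and $N$ through $e$ when needed. There is nothing to compare against in the paper itself.
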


For matroids $M_{1},\ldots,M_{k}$, we denote $M_{1}\oplus_{2}M_{2}\oplus_{2}\cdots\oplus_{2}M_{k}$ by $\OPLUS_{i=1}^{k}M_{i}$.

\begin{lemma}[Oxley~{\cite[Proposition 7.1.22 (i)]{Oxley2011}}]
\label{lem:twosumdual}
Let $M$, $N$ be matroids. Then $(M\oplus_{2}N)^{*}=M^{*}\oplus_{2}N^{*}$.
\end{lemma}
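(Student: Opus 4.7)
The plan is to prove the equality of these two matroids by showing directly that they have the same set of bases, using the basis formula for a $2$-sum supplied by Lemma~\ref{lem:circuittobase}.

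First I verify that $M^*\oplus_{2}N^*$ is well-defined. Let $\{e\}=E(M)\cap E(N)$. Because an element is a loop of a matroid precisely when it is a coloop of the dual, and vice versa, the hypothesis that $e$ is neither a loop nor a coloop of $M$ or $N$ transfers to $M^*$ and $N^*$. Hence $M^*\oplus_{2}N^*$ is defined on $E(M^*)\triangle E(N^*)=E(M)\triangle E(N)$, which is also the ground set of $(M\oplus_{2}N)^*$.

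Next I describe a generic base $B$ of $(M\oplus_{2}N)^*$. By Lemma~\ref{lem:circuittobase}, the complement of $B$ in $E(M)\triangle E(N)$ has the form $B_1\triangle B_2\triangle\{e\}$ with $B_1\in\mathcal{B}(M)$, $B_2\in\mathcal{B}(N)$, and $e\in B_1\triangle B_2$. The condition $e\in B_1\triangle B_2$ forces $B_1\triangle B_2\triangle\{e\}\subseteq E(M)\triangle E(N)$, so complementation inside $E(M)\triangle E(N)$ is just another symmetric difference, and associativity of $\triangle$ gives
\[
B=(E(M)\triangle E(N))\triangle(B_1\triangle B_2\triangle\{e\})=(E(M)-B_1)\triangle(E(N)-B_2)\triangle\{e\}.
\]
Setting $B_1^{*}:=E(M)-B_1\in\mathcal{B}(M^*)$ and $B_2^{*}:=E(N)-B_2\in\mathcal{B}(N^*)$, I would then check that $e\in B_1^{*}\triangle B_2^{*}$ if and only if $e\in B_1\triangle B_2$, using that $e$ lies in both $E(M)$ and $E(N)$, so complementing $B_i$ toggles $e$'s membership. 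Applying Lemma~\ref{lem:circuittobase} to $M^*\oplus_{2}N^*$ now identifies $B$ as a base of $M^*\oplus_{2}N^*$. Every step is reversible, so conversely every base of $M^*\oplus_{2}N^*$ is a base of $(M\oplus_{2}N)^*$, which gives the required equality.

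The main work is essentially set-theoretic bookkeeping: establishing the symmetric-difference identity displayed above and verifying that the side condition $e\in B_1\triangle B_2$ is preserved under complementation. There is no deeper obstacle, since all nontrivial matroid content is already packaged in Lemma~\ref{lem:circuittobase}, and loop/coloop duality takes care of the well-definedness condition for the $2$-sum on the right-hand side.
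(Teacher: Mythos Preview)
Your argument is correct. The paper does not supply its own proof of this lemma; it simply cites Oxley~\cite[Proposition 7.1.22 (i)]{Oxley2011}. Your route via Lemma~\ref{lem:circuittobase} is a clean self-contained alternative: since the paper has already established the basis description of a $2$-sum from scratch, your symmetric-difference computation gives the duality statement without appealing to the external reference. The only thing worth noting is that your proof depends on Lemma~\ref{lem:circuittobase}, whereas the paper later uses Lemma~\ref{lem:twosumdual} inside the proof of Lemma~\ref{lem:twosomeminor}; there is no circularity, but be aware of the dependency order.
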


\begin{lemma}
\label{lem:twosomeminor}
Let $M$, $N$ be matroids such that $E(M)\cap E(N)=\{e\}$ and $e$ is neither a loop nor a coloop of $M$ or $N$. Then, for $f\in E(N)-\{e\}$, the following hold:
\begin{enumerate}[label=\rm (\roman*)]
\item If $e$ is neither a loop nor a coloop of $N\setminus f$, then $(M\oplus_{2}N)\setminus f=M\oplus_{2}(N\setminus f)$.
\item If $e$ is neither a loop nor a coloop of $N/f$, then $(M\oplus_{2}N)/f=M\oplus_{2}(N/f)$.
\end{enumerate}
\end{lemma}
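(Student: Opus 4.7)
The plan is to establish (i) directly from the circuit characterization of the $2$-sum given in the definition, and then to derive (ii) from (i) by matroid duality using Lemma~\ref{lem:twosumdual}.

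For (i), I would compare $\mathcal{C}((M\oplus_{2}N)\setminus f)$ with $\mathcal{C}(M\oplus_{2}(N\setminus f))$. By the definition of deletion, the former consists of circuits of $M\oplus_{2}N$ that avoid $f$. Splitting the three families making up $\mathcal{C}(M\oplus_{2}N)$: every circuit of $M\setminus e$ already avoids $f$ because $f\in E(N)-\{e\}$, and these are exactly the circuits of $M\setminus e$ appearing in $\mathcal{C}(M\oplus_{2}(N\setminus f))$; the circuits of $N\setminus e$ that avoid $f$ are precisely the circuits of $(N\setminus f)\setminus e$; and for combined circuits $C_{1}\triangle C_{2}$ with $C_{1}\in\mathcal{C}(M)$, $C_{2}\in\mathcal{C}(N)$, and $e\in C_{1}\cap C_{2}$, avoiding $f$ is equivalent to $f\notin C_{2}$, which, because circuits of $N\setminus f$ are precisely circuits of $N$ contained in $E(N)-\{f\}$, is equivalent to $C_{2}$ being a circuit of $N\setminus f$ containing $e$. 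Comparing with the formula for $\mathcal{C}(M\oplus_{2}(N\setminus f))$ (which is well defined thanks to the hypothesis that $e$ is neither a loop nor a coloop of $N\setminus f$) gives equality of circuit sets, hence of matroids.

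For (ii), I would apply (i) to $M^{*}$ and $N^{*}$ and the same element $f$. Since a loop, respectively coloop, of $N^{*}$ is a coloop, respectively loop, of $N$, the assumption that $e$ is neither a loop nor a coloop of $M$ or $N$ transfers to $M^{*}$ and $N^{*}$. Likewise, $e$ is neither a loop nor a coloop of $N^{*}\setminus f=(N/f)^{*}$, because the hypothesis of (ii) asserts the same for $N/f$. Part (i) then yields $(M^{*}\oplus_{2}N^{*})\setminus f=M^{*}\oplus_{2}(N^{*}\setminus f)$. Taking duals of both sides and invoking Lemma~\ref{lem:twosumdual}, together with the identities $(N^{*}\setminus f)^{*}=N/f$ and $((M\oplus_{2}N)^{*}\setminus f)^{*}=(M\oplus_{2}N)/f$ that come from the definition of contraction, concludes (ii).

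The only nontrivial step is the bookkeeping for (i), namely the clean identification of combined circuits after deleting $f$. The role of the hypothesis that $e$ is not a loop or coloop of $N\setminus f$ is mainly to guarantee that the right-hand side $M\oplus_{2}(N\setminus f)$ is defined; once that is granted, the three-way partition of circuits matches up automatically, so the main obstacle is essentially notational rather than conceptual.
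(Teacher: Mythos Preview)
Your proposal is correct and matches the paper's own proof: the paper also reduces (ii) to (i) via Lemma~\ref{lem:twosumdual} and then verifies (i) by checking that the three families of circuits of $M\oplus_{2}N$ avoiding $f$ coincide with those of $M\oplus_{2}(N\setminus f)$. Your write-up simply spells out the duality step for (ii) in slightly more detail than the paper does.
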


\begin{proof}
  By Lemma~\ref{lem:twosumdual}, it is enough to prove (i). A set $C$ is a circuit of $(M\oplus_{2}N)\setminus f$
  if and only if 
  $C$ is a circuit of $M\setminus e$, a circuit of $N\setminus e\setminus f$, or 
  the symmetric difference of a circuit $C_1$ of $M$ and a circuit $C_2$ of $N$ with $C_1\cap C_2=\{e\}$ and $f\notin C_2$.  
  It is easy to see that this is equivalent to the statement that $C$ is a circuit of $M\oplus_2(N\setminus f)$. 
\end{proof}

\begin{lemma}
\label{2summinor}
Let $M$, $N$, $N'$ be matroids such that $E(M)\cap E(N)=\{e\}$ 
and $e$ is neither a loop nor a coloop of $M$ or $N'$.
If $N'$ is a minor of $N$, then $M\oplus_{2}N'$ is a minor of $M\oplus_{2}N$. 
\end{lemma}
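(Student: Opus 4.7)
The plan is to proceed by induction on $|E(N)|-|E(N')|$, peeling off one element at a time using Lemma~\ref{lem:twosomeminor}. The base case $N=N'$ is immediate, since then $M\oplus_{2}N'=M\oplus_{2}N$. For the inductive step, write $N'=N\setminus S/T$ for disjoint sets $S,T\subseteq E(N)-\{e\}$ that are not both empty, and split on whether $S$ or $T$ is non-empty.

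Suppose first $S\neq\emptyset$ and pick any $s\in S$. The crucial subclaim is that $e$ is neither a loop nor a coloop of $N\setminus s$, which is exactly what allows Lemma~\ref{lem:twosomeminor}(i) to fire. For the loop condition, loops are preserved under every minor, so $e$ not a loop of $N'$ forces $e$ not a loop of $N$, and deleting $s$ preserves this. For the coloop condition, a coloop of a matroid remains a coloop after contracting any other element; hence if $e$ were a coloop of $N\setminus S$, it would also be a coloop of $(N\setminus S)/T=N'$, contradicting the hypothesis. Thus $e$ is not a coloop of $N\setminus S$, which yields a circuit $C$ of $N$ containing $e$ and disjoint from $S$. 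This $C$ is then a circuit of $N\setminus s$ containing $e$, so $e$ is not a coloop of $N\setminus s$.

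Applying Lemma~\ref{lem:twosomeminor}(i) now gives $(M\oplus_{2}N)\setminus s=M\oplus_{2}(N\setminus s)$. Since $N'=(N\setminus s)\setminus(S-\{s\})/T$ is a minor of $N\setminus s$ and $|E(N\setminus s)|-|E(N')|<|E(N)|-|E(N')|$, applying the inductive hypothesis to the triple $(M,N\setminus s,N')$ shows that $M\oplus_{2}N'$ is a minor of $M\oplus_{2}(N\setminus s)=(M\oplus_{2}N)\setminus s$, hence a minor of $M\oplus_{2}N$. The case $S=\emptyset$, $T\neq\emptyset$ is handled symmetrically via Lemma~\ref{lem:twosomeminor}(ii): pick $t\in T$, dually verify that $e$ is neither a loop nor a coloop of $N/t$ using that $e$ is not a coloop of $N$ and that $e$ not a loop of $N'$ produces a cocircuit of $N$ through $e$ disjoint from $T$, and then induct.

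The main obstacle is verifying these intermediate loop/coloop conditions on $N\setminus s$ (and dually on $N/t$); that is the only input needed beyond Lemma~\ref{lem:twosomeminor}. Once one notices that any single circuit (resp.\ cocircuit) of $N$ through $e$ avoiding the full set $S$ (resp.\ $T$) already certifies the required condition after removing just one element from that set, the induction closes with no further difficulty.
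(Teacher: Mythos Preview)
Your proof is correct and follows essentially the same inductive strategy as the paper: induct on $|E(N)|-|E(N')|$, peel off one element, verify the loop/coloop hypothesis on the intermediate matroid, and apply Lemma~\ref{lem:twosomeminor}. The only difference is cosmetic: the paper invokes Lemma~\ref{lem:twosumdual} to reduce the contraction case to the deletion case by duality, whereas you treat both cases explicitly; and the paper dispatches the intermediate loop/coloop check in one line (``by the assumption'', using that loops and coloops persist in minors, so $e$ not a loop/coloop in $N'$ forces the same in $N\setminus f$), whereas you route through an explicit circuit/cocircuit avoiding $S$ or $T$.
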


\begin{proof}
We prove by induction on $|E(N)- E(N')|$. We may assume that there is an element $f\in E(N)- E(N')$. Then $N'$ is a minor of $N\setminus f$ or $N/f$. 
By Lemma~\ref{lem:twosumdual}, we may assume that $N'$ is a minor of $N\setminus f$ by symmetry.
By the assumption, $e$ is neither a loop nor a coloop of $N\setminus f$. Lemma~\ref{lem:twosomeminor} implies that $M\oplus_{2}(N\setminus f)=(M\oplus_{2}N)\setminus f$. By the induction hypothesis, $M\oplus_{2}N'$ is a minor of $M\oplus_{2}(N\setminus f)$ and therefore $M\oplus_{2}N'$ is a minor of $M\oplus_{2}N$.
\end{proof}

\paragraph{Tree decomposition of matroids.}
We review the concept of a tree decomposition of matroids introduced by 
Cunningham and Edmonds~\cite{Cunningham1980}.
A \emph{matroid-labelled tree} is a pair $(T,\rho)$ of a tree $T$ and a map $\rho$ mapping each vertex of $T$ to a matroid such that, for all $u\neq v\in V(T)$,
\begin{itemize}
\item if $u$ and $v$ are joined by an edge $e$ of $T$, then $E(\rho(u))\cap E(\rho(v))=\{e\}$ and $e$ is neither a loop nor a coloop of $\rho(u)$ or $\rho(v)$, 
\item if $u$, $v$ are not adjacent, then $E(\rho(u))\cap E(\rho(v))=\emptyset$.
\end{itemize}

For a matroid-labelled tree $(T,\rho)$ and a subtree $S$ of $T$ with $V(S)=\{v_{1},\ldots, v_{n}\}$, let $\rho(S)=\rho(v_{1})\oplus_{2}\rho(v_{2})\oplus_{2}\cdots\oplus_{2}\rho(v_{n})$. We require that a subtree has at least one vertex.

\medskip

A pair  $\{u, v\}$ of vertices of a matroid-labelled tree $(T,\rho)$ is \emph{bad} if $u$ is adjacent to $v$ in $T$, and both $\rho(u)$ and $\rho(v)$ are uniform matroids of rank $1$ or are uniform matroids of corank $1$. 

Let $(T,\rho)$ be a matroid-labelled tree and $e=uv$ be an edge of $T$. We define $(T,\rho)/e$ as a matroid-labelled tree $(T/e,\rho^{*})$ such that 
\[
\rho^{*}(x)=
\begin{cases}
\rho(x) & \text{if $x\in V(T)-\{u,v\}$,} \\
\rho(u)\oplus_{2}\rho(v) & \text{otherwise.}
\end{cases}
\]

\medskip

A \emph{tree decomposition} of a connected matroid $M$ is a matroid-labelled tree $(T,\rho)$ such that the following hold:
\begin{itemize}
\item $E(M)=\displaystyle\bigcup_{v\in V(T)} E(\rho(v))- E(T)$.
\item For $v\in V(T)$, $|E(\rho(v))|\geq 3$ unless $|E(M)|<3$, in which case $|V(T)|=1$.
\item $M=\rho(T)$.
\end{itemize}

A \emph{canonical tree decomposition} of a connected matroid $M$ is a tree decomposition $(T,\rho)$ which satisfies the following:
\begin{itemize}
\item For each $v\in V(T)$, $\rho(v)$ is $3$-connected or $\rho(v)$ is isomorphic to $U_{1,n}$ or $U_{n-1,n}$ for some $n\geq 3$.
\item There is no bad pair of vertices of $T$.
\end{itemize}

\begin{theorem}[Cunningham and Edmonds~{\cite[Theorem 18]{Cunningham1980}}]
\label{thm:canonical}
Every connected matroid has a canonical tree decomposition $(T,\rho)$. Moreover, $T$ is unique up to relabelling edges.
\end{theorem}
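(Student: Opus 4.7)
The plan is to prove existence by a greedy splitting procedure and uniqueness by identifying the edges of $T$ with a canonically determined family of $2$-separations of $M$.

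For existence I would induct on $|E(M)|$. If $|E(M)|\le 2$, the single-vertex tree labelled by $M$ works. Otherwise, if $M$ is $3$-connected or isomorphic to $U_{1,n}$ or $U_{n-1,n}$ for some $n\ge 3$, a single-vertex tree again suffices. Otherwise $M$ has a nontrivial $2$-separation $(X,Y)$ with $|X|,|Y|\ge 2$, and a standard argument produces a decomposition $M=M_1\oplus_2 M_2$ with $E(M_1)=X\cup\{e\}$, $E(M_2)=Y\cup\{e\}$, each $M_i$ connected by Lemma~\ref{2sumconnected} and strictly smaller than $M$. Applying the induction hypothesis to each $M_i$ yields canonical tree decompositions $(T_i,\rho_i)$, and we glue them along a new edge $e$ joining the vertex of $T_i$ whose label contains $e$ (such a vertex exists and is unique). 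This produces a tree decomposition of $M$ each of whose vertex labels is $3$-connected or of the form $U_{1,m}$, $U_{m-1,m}$. Finally I would eliminate bad pairs by contracting edges: since $U_{1,m}\oplus_2 U_{1,n}\cong U_{1,m+n-2}$ and dually $U_{m-1,m}\oplus_2 U_{n-1,n}\cong U_{m+n-3,m+n-2}$, each contraction preserves the vertex-label condition while strictly decreasing $|V(T)|$, hence terminates.

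For uniqueness, I would show that each edge $e=uv$ of a canonical tree decomposition $(T,\rho)$ induces a $2$-separation $(X_e,Y_e)$ of $M$, where $X_e=\bigl(\bigcup_{w\in V(T_u)}E(\rho(w))\bigr)- E(T)$ and $T_u$ is the component of $T\setminus e$ containing $u$. Conversely I would characterise intrinsically which $2$-separations arise this way, namely those which are \emph{displayed} in every canonical tree decomposition: a $2$-separation $(A,B)$ is displayed if and only if it is not obtained by splitting a single $3$-connected piece and it is not one of the many $2$-separations internal to a single uniform piece. Defining an equivalence relation on $2$-separations by declaring $(A,B)\sim(A',B')$ when they lie on the "same side" of every other displayed $2$-separation, the equivalence classes correspond bijectively to the edges of $T$, and the containment order among their sides recovers the tree structure of $T$. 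Since this construction depends only on $M$, the tree $T$ is unique up to relabelling its edges.

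The main obstacle is the uniqueness argument in the presence of uniform vertices: a single $U_{1,n}$ piece with $n\ge 4$ inside $M$ carries many internal $2$-separations, and we must argue that they are all equivalent and collectively correspond to exactly one vertex of $T$, not several. This is where the no-bad-pair condition is essential: without it, one could further split a uniform piece into two same-type pieces, destroying uniqueness, while with it, each uniform vertex is intrinsically recognisable as a maximal set of elements whose induced $2$-separations are pairwise equivalent in the above sense. Proving that this maximality depends only on $M$, and not on any chosen decomposition, is the crux of the argument.
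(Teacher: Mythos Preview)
The paper does not prove Theorem~\ref{thm:canonical}; it is quoted from Cunningham and Edmonds~\cite{Cunningham1980} and used as a black box. So there is no ``paper's own proof'' to compare against, and your proposal should be judged on its own merits.

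Your existence argument is essentially the standard one and is fine. One small point: after gluing the two inductively obtained canonical decompositions along the new edge~$e$, you may create a bad pair at~$e$; contracting it cannot create further bad pairs, since every neighbour of the merged vertex was already a neighbour of one of the two original vertices in a tree with no bad pairs. So a single contraction suffices, and you need not iterate.

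The uniqueness sketch, however, has a real gap. Your definition of a ``displayed'' $2$-separation is circular: you say $(A,B)$ is displayed if and only if it is not internal to a single $3$-connected or uniform piece, but ``piece'' already presupposes a decomposition, which is what you are trying to show is unique. Likewise, your equivalence relation (``lie on the same side of every other displayed $2$-separation'') is not made precise, and you yourself waver on whether an equivalence class corresponds to an edge of~$T$ or to a vertex: you first claim a bijection with edges, then say the many $2$-separations inside a uniform piece form one class corresponding to a vertex. These cannot both be right.

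The standard route out of this is to work with the notion of \emph{crossing} $2$-separations: $(A,B)$ and $(A',B')$ cross if all four of $A\cap A'$, $A\cap B'$, $B\cap A'$, $B\cap B'$ are nonempty. One proves that two $2$-separations of a connected matroid cross only when they are both ``inside'' a common series or parallel class, and that the non-crossing $2$-separations naturally organise into a tree. The uniform vertices then arise as maximal families of mutually crossing $2$-separations, and this description is intrinsic to~$M$. Your final paragraph gestures at exactly this phenomenon, but without the crossing/non-crossing dichotomy you do not have a mechanism to carry it out.
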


The following lemma is immediate from Lemma~\ref{lem:twosumdual}.

\begin{lemma}[see Oxley~{\cite[Lemma 8.3.8]{Oxley2011}}]
\label{dualtree}
Let $(T,\rho)$ be a canonical tree decomposition of a connected matroid $M$ and $\rho^{*}$ be a map from $V(T)$ to the class of all matroids such that $\rho^{*}(v)=\rho(v)^{*}$. Then $(T,\rho^{*})$ is a canonical tree decomposition of $M^{*}$.
\end{lemma}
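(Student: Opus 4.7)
The plan is to verify the three requirements of a canonical tree decomposition for $(T,\rho^{*})$ by reducing each of them to a self-dual property that is already immediate from the definitions plus Lemma~\ref{lem:twosumdual}. The main observation driving everything is that duality is an involution, $E(N)=E(N^{*})$, loops and coloops swap roles under duality, and $U_{k,n}^{*}=U_{n-k,n}$.

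First I would check that $(T,\rho^{*})$ is a matroid-labelled tree. For $u\neq v$ in $V(T)$, the intersection condition $E(\rho^{*}(u))\cap E(\rho^{*}(v))=E(\rho(u))\cap E(\rho(v))$ is automatic. For an edge $e=uv$ of $T$, the hypothesis that $e$ is neither a loop nor a coloop of $\rho(u)$ or $\rho(v)$ is self-dual, so it also holds for $\rho^{*}(u)$ and $\rho^{*}(v)$. Next I would verify that $(T,\rho^{*})$ is a tree decomposition of $M^{*}$: the condition $|E(\rho^{*}(v))|\geq 3$ is identical to the corresponding condition for $\rho$, the ground-set equation $E(M^{*})=\bigcup_{v\in V(T)}E(\rho^{*}(v))-E(T)$ is the same equation as for $M$, and $\rho^{*}(T)=M^{*}$ follows from Lemma~\ref{lem:twosumdual} by an easy induction on the number of vertices of $T$: collapsing a leaf $v$ adjacent to $u$ via the identity $\rho(u)^{*}\oplus_{2}\rho(v)^{*}=(\rho(u)\oplus_{2}\rho(v))^{*}$ reduces the claim to a smaller tree.

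To conclude that $(T,\rho^{*})$ is canonical, I would use the two standard facts that $3$-connectivity of a matroid is preserved under duality and that $U_{1,n}^{*}=U_{n-1,n}$ and $U_{n-1,n}^{*}=U_{1,n}$. Hence each $\rho^{*}(v)$ is $3$-connected or isomorphic to $U_{1,n}$ or $U_{n-1,n}$ for some $n\geq 3$. Moreover, a pair $\{u,v\}$ of adjacent vertices is bad in $(T,\rho^{*})$ precisely when the same pair is bad in $(T,\rho)$, because dualization interchanges ``both endpoints uniform of rank $1$'' with ``both endpoints uniform of corank $1$''. Since $(T,\rho)$ has no bad pair, neither does $(T,\rho^{*})$, completing the verification.

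Since every step reduces to an elementary symmetry under duality, no step is a real obstacle; the only item that even requires a short argument is the identity $\rho^{*}(T)=M^{*}$, which is handled by the inductive application of Lemma~\ref{lem:twosumdual} described above.
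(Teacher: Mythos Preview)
Your proposal is correct and takes essentially the same approach as the paper, which simply states that the lemma ``is immediate from Lemma~\ref{lem:twosumdual}'' without spelling out the details. You have filled in exactly the routine verifications that justify that remark.
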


\begin{lemma}
\label{subtreeminor}
Let $M$ be a connected matroid with a canonical tree decomposition $(T,\rho)$. If $S$ is a subtree of $T$, then $\rho(S)$ is isomorphic to a minor of $M$ with a canonical tree decomposition $(S,\rho|_{V(S)})$.
\end{lemma}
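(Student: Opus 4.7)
The plan is to induct on $n := |V(T)| - |V(S)|$. The base case $n = 0$ forces $S = T$, whence $\rho(S) = M$ and the decomposition $(T,\rho)$ itself serves as the required canonical tree decomposition.

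For $n \geq 1$ the first step is to locate a leaf $v$ of $T$ outside $V(S)$. Since $T$ is connected and $V(S) \subsetneq V(T)$, there is an edge $xy \in E(T)$ with $x \in V(S)$ and $y \notin V(S)$; a leaf of the component of $T \setminus xy$ containing $y$ (taking any leaf other than $y$ when the component has more than one vertex) is then a leaf of $T$ lying outside $V(S)$. Writing $u$ for its neighbor, $e = uv$, and $T' := T - v$, the tree $T'$ still contains $S$ as a subtree. The central identity is
\[
\rho(T) \;=\; \rho(T') \oplus_2 \rho(v),
\]
with 2-sum taken along $e$; this follows from associativity of the iterated 2-sum $\OPLUS$ along $T$, since we are free to apply $\oplus_2$ in any order compatible with the tree and may process the pendant edge $e$ last. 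Lemma~\ref{twosumminor} then gives $\rho(T')$ as a minor of $\rho(T) = M$, and since $|V(T')| - |V(S)| = n-1$ the induction hypothesis provides $\rho(S)$ as a minor of $\rho(T')$ with canonical tree decomposition $(S, \rho|_{V(S)})$. Composing minor relations completes the induction.

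The only thing left to justify is that $(S, \rho|_{V(S)})$ really is a canonical tree decomposition of $\rho(S)$, and this is routine inheritance from $(T,\rho)$: the requirement that each vertex label be $3$-connected or isomorphic to $U_{1,m}$ or $U_{m-1,m}$ restricts from $T$ to $S$; the no-bad-pair condition restricts because adjacency in $S$ implies adjacency in $T$; the ground-set identity $E(\rho(S)) = \bigcup_{w \in V(S)} E(\rho(w)) - E(S)$ is the defining property; and connectedness of $\rho(S)$ follows from Lemma~\ref{2sumconnected} together with the fact that every vertex label in a canonical decomposition is itself connected. The only genuinely non-routine point in the whole argument is the associativity identity $\rho(T) = \rho(T') \oplus_2 \rho(v)$, which I would either take as a direct consequence of the definition of the iterated 2-sum $\OPLUS$ along a tree or dispatch by a short separate induction on $|V(T)|$; everything else is bookkeeping.
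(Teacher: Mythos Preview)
Your proof is correct and follows essentially the same approach as the paper: induct on $|V(T)|-|V(S)|$, peel off a leaf of $T$ outside $V(S)$, use $\rho(T)=\rho(T\setminus v)\oplus_2\rho(v)$ together with Lemma~\ref{twosumminor}, and finish by transitivity. Your version is slightly more explicit about locating the leaf and about why $(S,\rho|_{V(S)})$ inherits the canonical properties, while the paper separates the cases $n=1$ and $n>1$; these are cosmetic differences only.
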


\begin{proof}
We proceed by induction on $|V(T)|-|V(S)|$. It is trivial when $V(T)=V(S)$. If $|V(T)|-|V(S)|=1$, let $v$ be a vertex in $V(T)- V(S)$. Since $S$ is connected, $v$ is a leaf of $T$.
Let $N:=\rho(v)$. Then $M=\rho(T)=\rho(T\setminus v)\oplus_{2}N$ and, by Lemma~\ref{twosumminor}, $\rho(T\setminus v)$ is isomorphic to a minor of $M$.

Suppose that $|V(T)|-|V(S)|>1$. Let $v$ be a leaf of $T$ which is not in $V(S)$. By the induction hypothesis, $\rho(S)$ is isomorphic to a minor of $\rho(T\setminus v)$ and $\rho(T\setminus v)$ is isomorphic to a minor of $M$. Therefore, $\rho(S)$ is isomorphic to a minor of $M$. It is easy to check that $(S,\rho|_{V(S)})$ is a canonical tree decomposition of $\rho(S)$.
\end{proof}

\paragraph{Graph minors and matroid minors.}
\label{par:minors}
We introduce some useful facts about graph minors and matroid minors.

\begin{lemma}[Hall~\cite{Hall1943}]
\label{rm1}
Every $3$-connected simple nonplanar graph is isomorphic to $K_{5}$ or has a minor isomorphic to $K_{3,3}$.
\end{lemma}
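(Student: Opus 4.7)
The plan is to apply Kuratowski's theorem to reduce to analyzing a $K_5$-minor, and then show that any $3$-connected graph with a $K_5$-minor that is not isomorphic to $K_5$ must contain a $K_{3,3}$-minor. Since $G$ is nonplanar, Kuratowski's theorem yields a subdivision, hence a minor, of $K_5$ or $K_{3,3}$ in $G$. If $G$ already has a $K_{3,3}$-minor we are done, so suppose $G$ has a $K_5$-minor and, for contradiction, that $G\not\cong K_5$; the goal is then to produce a $K_{3,3}$-minor.

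Fix a $K_5$-model $(V_1,\dots,V_5)$ of $G$---pairwise disjoint connected vertex sets with at least one edge between each pair---chosen to maximize $|V_1|+\cdots+|V_5|$. A short argument using $3$-connectivity forces $V_1\cup\cdots\cup V_5=V(G)$: if some vertex lay outside the union, the component $C$ of $G-\bigcup V_i$ containing it would satisfy $N(C)\subseteq \bigcup V_i$ and $|N(C)|\ge 3$ by $3$-connectivity, so either $N(C)$ meets a single $V_i$ (allowing us to absorb $C$ into that $V_i$ and contradict maximality) or $N(C)$ meets at least three of the $V_i$'s, yielding a $K_{3,3}$-minor directly by placing $C$ together with two unreached $V_j$'s opposite the three $V_i$'s touched by $N(C)$.

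Given $\bigcup V_i=V(G)$ and $G\not\cong K_5$, some $|V_i|\ge 2$; say $|V_1|\ge 2$. Pick a non-cut vertex $v$ of $G[V_1]$ so that $V_1\setminus\{v\}$ remains connected, and set $J=\{j\in\{2,3,4,5\}: v\text{ has a neighbor in }V_j\}$. The plan is to exhibit a $K_{3,3}$-model on the six connected sets $\{v\}, V_1\setminus\{v\}, V_2, V_3, V_4, V_5$ via a suitable $3$/$3$ bipartition. When $|J|=2$, say $J=\{a,b\}$ with $\{c,d\}=\{2,3,4,5\}\setminus J$, placing $\{v\}, V_c, V_d$ on one side and $V_1\setminus\{v\}, V_a, V_b$ on the other yields all nine required adjacencies, since the $K_5$-edges from $V_1$ to $V_c, V_d$ must avoid $v$ (as $v$ has no neighbor in $V_c\cup V_d$) and hence live in $V_1\setminus\{v\}$, while the other adjacencies are immediate. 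For $|J|\ne 2$ I plan to re-select the witness vertex $v$, or transfer $v$ between branch sets to obtain a new maximum model with a different adjacency pattern, iterating until reaching the configuration handled above.

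The main obstacle is precisely this final case analysis, in particular configurations where every non-cut vertex of every $V_i$ has $|J|\ne 2$ (for example when $v$ is the unique vertex of $V_1$ adjacent to several $V_j$'s, so the natural $K_{3,3}$-branch sets fail to have the needed edges). A conceptually cleaner but heavier alternative would be to invoke Wagner's structural theorem characterizing $K_{3,3}$-minor-free graphs as $2$-clique-sums of planar graphs and copies of $K_5$: for $3$-connected $G$ this immediately yields the desired dichotomy without any case analysis.
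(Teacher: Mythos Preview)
The paper does not supply a proof; the lemma is simply quoted as a result of Hall. So your proposal stands on its own.

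Your covering step has a small omission: the dichotomy ``$N(C)$ meets one $V_i$ or at least three'' skips the case where $N(C)$ meets exactly two of the $V_i$. That case is harmless, since $C$ can be absorbed into either of those two branch sets, again contradicting maximality; but it should be said.

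The real gap is the one you flag yourself. After reducing to a covering maximal $K_5$-model and picking a non-cut vertex $v\in V_1$ with $|V_1|\ge 2$, only $|J|=2$ is actually handled, and your proposed remedy for $|J|\ne 2$---reselect $v$ or transfer it to another branch set and iterate---is not shown to terminate or to succeed. For instance, when $|J|\ge 3$, say $J\supseteq\{2,3,4\}$, any bipartition placing $\{v\}$ and $V_5$ on the same side (forced, since they are nonadjacent) needs a third set adjacent to each of $V_2,V_3,V_4$; the only candidate among your six pieces is $V_1\setminus\{v\}$, but if $v$ is the unique endpoint in $V_1$ of the model edges to $V_2,V_3,V_4$ this fails. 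One can then appeal to $3$-connectivity (if $V_1\setminus\{v\}$ had neighbours only in $\{v\}\cup V_5$ and $V_5$ were a singleton, two vertices would separate $G$), but when $|V_5|\ge 2$ the argument must recurse into $V_5$, and a genuine induction is needed rather than an informal iteration. The cases $|J|\in\{0,1\}$, which occur as soon as $|V_1|\ge 3$, are not touched at all.

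Your fallback via the structure theorem for $K_{3,3}$-minor-free graphs (such graphs are obtained from planar graphs and copies of $K_5$ by clique-sums of order at most $2$) is correct and immediately gives the result for $3$-connected $G$. Just watch for circularity: several expositions of that structure theorem derive it from Hall's lemma itself. If you cite a self-contained source (e.g.\ Wagner's original argument), the deduction is legitimate, though it is much heavier machinery than the statement warrants.
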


The following lemma is due to Brylawski, who proved it for matroids. We state its version for graphs, as in \cite[Theorem 5.1]{Ding2016}.

\begin{lemma}[Brylawski~\cite{Brylawski1972}]
\label{lem:Brylawski}
Let $G$ be a $2$-connected loopless graph and $H$ be a $2$-connected loopless minor of $G$. If $e\in E(G)- E(H)$, then $G/e$ or $G\setminus e$ is loopless, $2$-connected, and contains $H$ as a minor.
\end{lemma}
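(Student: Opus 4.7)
The plan is to do a case analysis based on how $e$ sits in a minor-producing sequence from $G$ to $H$. Since $H$ is a minor of $G$ and $e\notin E(H)$, the edge $e$ is either deleted or contracted in any such sequence, so $H$ is a minor of at least one of $G\setminus e$ and $G/e$. I treat the case that $H$ is a minor of $G\setminus e$ in detail; the case that $H$ is a minor of $G/e$ follows by an analogous argument, with the extra subtlety of possible loops created by contraction handled by exchanging $e$ with a parallel edge when needed.

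Assume $H$ is a minor of $G\setminus e$. This graph is automatically loopless, so if it is also $2$-connected we are done. Otherwise $G\setminus e$ is connected but has a cut vertex $v$, and decomposes as $A\cup B$ with $V(A)\cap V(B)=\{v\}$ and each of $V(A)$, $V(B)$ containing a vertex other than $v$. Because $G$ itself is $2$-connected, the endpoints of $e=xy$ must lie on opposite sides, i.e., $x\in V(A)-\{v\}$ and $y\in V(B)-\{v\}$; otherwise $v$ would be a cut vertex of $G$ as well. Since $H$ is $2$-connected, a standard branch-set argument then shows that $H$ is a minor of one of $A$ or $B$: any branch set of the minor realization meeting both $V(A)-\{v\}$ and $V(B)-\{v\}$ must contain $v$, and if two distinct branch sets lay on opposite sides then no edge of $H$ could join them. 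Say $H$ is a minor of $A$.

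I then verify that $G/e$ is loopless, $2$-connected, and contains $H$ as a minor. Looplessness holds because if $e$ had a parallel edge $f$ in $G$, then $f$ would still join the endpoints of $e$ in $G\setminus e$, forcing $G\setminus e$ to be $2$-connected, contrary to assumption. The minor containment holds because $A$ sits as a subgraph of $G/e$ with only its vertex $x$ renamed to the identified vertex $z$, so $H$ remains a minor of $G/e$. For $2$-connectedness of $G/e$: for any vertex $w\neq z$, $G/e\setminus w$ is a contraction of the connected graph $G\setminus w$ and hence connected; for $w=z$, one shows that $A\setminus x$ is connected, for if it had a component $A'$ not containing $v$ then $A'$ would be disconnected from the rest of $G\setminus x$, contradicting the $2$-connectedness of $G$, and likewise $B\setminus y$ is connected, so $G\setminus\{x,y\}=G/e\setminus z$ is the union of these two connected graphs sharing $v$ and is therefore connected. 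The main obstacle is precisely this verification of the $2$-connectedness of $G/e$, which relies on the delicate interplay between the cut-vertex structure of $G\setminus e$ and the $2$-connectedness of $G$, and an analogous ``side-component'' analysis is the crux of the symmetric case as well.
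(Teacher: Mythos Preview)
The paper does not prove this lemma; it merely cites Brylawski. Your argument for the case where $H$ is a minor of $G\setminus e$ is correct and carefully done.

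The gap is in the other case. Suppose $H$ is a minor of $G/e$, that $G/e$ is loopless, but that $G/e$ is not $2$-connected. One checks that the only possible cut vertex of $G/e$ is the contracted vertex $z$, and since $H$ is $2$-connected it is a minor of a single block $D$ of $G/e$, with $z\in V(D)$. The analogue of your key step ``$A$ sits as a subgraph of $G/e$'' would be ``$D$ sits as a subgraph of $G\setminus e$,'' and this fails: in $G\setminus e$ the vertices $x$ and $y$ are distinct, so the subgraph of $G\setminus e$ spanned by $E(D)$ has one more vertex than $D$ and need not contain $D$ as a minor by itself. What rescues the argument is that $G/e$ has at least one other block, corresponding to a second component $C'$ of $G\setminus\{x,y\}$; since $G$ is $2$-connected, $C'$ contains neighbours of both $x$ and $y$, giving an $x$--$y$ path in $G\setminus e$ that is edge-disjoint from $E(D)$. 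Contracting that path identifies $x$ with $y$ and exhibits $D$, hence $H$, as a minor of $G\setminus e$. Your parallel-edge remark handles only the sub-case in which $G/e$ acquires a loop, not this one. Alternatively, one can sidestep the asymmetry by proving the matroid version of the statement, where deletion and contraction are exchanged by duality and the second case genuinely reduces to the first.
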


\begin{lemma}[folklore; see Oxley~{\cite[Proposition 4.3.6]{Oxley2011}}]
\label{3circuit}
Let $M$ be a matroid with $|E(M)|\geq 3$. Then $M$ is connected if and only if $M$ has a circuit or a cocircuit containing $X$ for each subset $X$ of $E(M)$ with $|X|=3$.
\end{lemma}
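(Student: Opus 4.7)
The plan is to prove the two directions separately, with the nontrivial (``only if'') direction handled by induction on $|E(M)|$.

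For the ``if'' direction I argue the contrapositive. If $M$ is not connected, then $E(M)$ partitions into at least two components, and every circuit of $M$ lies within a single component. Dualising the direct-sum decomposition $M=(M|E_{1})\oplus(M|E_{2})$ shows the same for cocircuits. Since $|E(M)|\geq 3$, some component has at least two elements; picking $x,y$ from that component and $z$ from a different component, the triple $X=\{x,y,z\}$ is contained in no circuit and in no cocircuit of $M$.

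For the ``only if'' direction I induct on $|E(M)|$. The base case $|E(M)|=3$ is handled by noting that a connected matroid on three elements has no loops and no coloops, so it is isomorphic to $U_{1,3}$ (and then $E(M)$ is itself a cocircuit) or to $U_{2,3}$ (and then $E(M)$ is itself a circuit); in either case the unique 3-element set is as required. For the inductive step $|E(M)|\geq 4$, fix a triple $X$ and pick $e\in E(M)-X$. I invoke the standard fact (Oxley~\cite[Corollary~4.3.5]{Oxley2011}) that in a connected matroid with at least two elements, at least one of $M\setminus e$ and $M/e$ is connected; let $N$ be such a connected minor. Then $|E(N)|\geq 3$ and $X\subseteq E(N)$, so by the inductive hypothesis $N$ has a circuit or a cocircuit $S$ with $X\subseteq S$. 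Lifting $S$ back to $M$ is immediate from the standard descriptions of minors: a circuit of $M\setminus e$ is already a circuit of $M$; a cocircuit of $M\setminus e$ has the form $D-\{e\}$ for some cocircuit $D$ of $M$, and such a $D$ still contains $X$ since $e\notin X$; and the symmetric statements hold for $M/e$. In every case we obtain a circuit or cocircuit of $M$ containing $X$.

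The main obstacle is justifying that one of $M\setminus e$ and $M/e$ remains connected; this is a classical consequence of the matroid connectivity function, for which I would simply cite Oxley. After that is granted, the lifting step and the base case are routine, and the counterexample in the contrapositive of the easy direction essentially writes itself.
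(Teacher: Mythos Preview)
The paper does not give its own proof of this lemma; it is stated as folklore and attributed to Oxley~\cite[Proposition~4.3.6]{Oxley2011}. So there is no argument in the paper to compare against, and your write-up supplies a standard proof where the paper simply cites one. Your inductive argument for the ``only if'' direction is correct: the key input (that for a connected matroid $M$ with $|E(M)|\geq 2$ and any element $e$, at least one of $M\setminus e$ and $M/e$ is connected) is exactly the right tool, and the lifting of circuits and cocircuits from the minor back to $M$ is handled properly by the standard descriptions of $\mathcal{C}(M/e)$ and $\mathcal{C}(M\setminus e)$ together with the fact that $e\notin X$.

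There is one small slip in the easy direction. From ``$M$ disconnected and $|E(M)|\geq 3$'' you cannot conclude that some component has at least two elements: take $M$ to be three loops, or three coloops, or any direct sum of three rank-$0$ or rank-$1$ matroids on singletons. The repair is immediate and does not affect the structure of your argument: since $M$ has at least two components and at least three elements, any triple $X$ must meet at least two distinct components (or simply choose $X$ to do so); as every circuit and every cocircuit of a direct sum lies within a single summand, no circuit or cocircuit of $M$ can contain such an $X$.
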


\begin{lemma}[Bixby and Coullard~\cite{Bixby1987}]
\label{lem: bixby}
Let $M$ be a $3$-connected matroid and $N$ be a $3$-connected minor of $M$ with $|E(N)|\geq 4$. If $e\in E(M)- E(N)$, then there exists a $3$-connected minor $N'$ of $M$ such that $e\in E(N')$, $|E(N')- E(N)|\leq 4$, and $N$ is a minor of $N'$.
\end{lemma}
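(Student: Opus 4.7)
My plan is to run a minimal counterexample argument on top of Seymour's Splitter Theorem. Let $\mathcal{F}$ be the family of all $3$-connected minors $N'$ of $M$ such that $e\in E(N')$ and $N$ is isomorphic to a minor of $N'$. Since $M\in\mathcal{F}$, this family is nonempty, so I choose $N'\in\mathcal{F}$ with $|E(N')|$ minimum. It then suffices to prove $|E(N')-E(N)|\leq 4$.

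Suppose for contradiction that $|E(N')-E(N)|\geq 5$. Because $N$ is a $3$-connected proper minor of the $3$-connected matroid $N'$ with $|E(N)|\geq 4$, Seymour's Splitter Theorem applies: either $N'$ is a wheel or a whirl with $N$ as a suitable minor, or there exists $y\in E(N')-E(N)$ such that one of $N'\setminus y$ or $N'/y$ is $3$-connected and still has $N$ as a minor. Whenever such a splitter element $y$ can be chosen distinct from $e$, the resulting matroid lies in $\mathcal{F}$ and has smaller ground set than $N'$, contradicting minimality.

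The main obstacle is the case in which $e$ is the \emph{only} element allowed by the Splitter Theorem. To handle this, I would analyze what forces any other candidate $x\in E(N')-E(N)-\{e\}$ to fail: both $N'\setminus x$ and $N'/x$ must either violate $3$-connectivity or lose $N$ as a minor. Using the standard decomposition of elements of a $3$-connected matroid into those lying in a triangle or a triad (à la Tutte's Triangle Lemma), a careful case check shows that the obstructions cluster into a triangle or triad through $e$, and hence the number of such ``blocking'' elements other than $e$ is at most three. Together with $e$ itself this yields $|E(N')-E(N)|\leq 4$. In the remaining wheel/whirl case, the rim/spoke structure lets us explicitly exhibit an element $y\neq e$ whose removal keeps $N$ as a minor whenever $|E(N')|-|E(N)|\geq 5$, again contradicting minimality.

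The technical heart of the argument, and the step I expect to take the most work, is this local $2$-separation analysis around $e$: classifying the $2$-separations $(X,Y)$ of $N'\setminus e$ or $N'/e$ with a small side, and showing their small sides collectively use at most three elements of $E(N')-E(N)-\{e\}$. Everything else is a bookkeeping application of Lemma~\ref{twosumminor} and the Splitter Theorem.
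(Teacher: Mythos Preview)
The paper does not prove this lemma at all; it is quoted as a known result of Bixby and Coullard and cited to their 1987 \emph{Combinatorica} paper. So there is no ``paper's own proof'' to compare against---the authors simply invoke the statement as a black box.

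As for your proposal itself: what you have written is a plan rather than a proof, and the plan hides essentially all of the difficulty. The Splitter Theorem step is fine, but the sentence ``a careful case check shows that the obstructions cluster into a triangle or triad through $e$, and hence the number of such `blocking' elements other than $e$ is at most three'' is precisely the content of the Bixby--Coullard theorem, not a consequence of anything you have established. There is no a priori reason the elements $x\in E(N')-E(N)-\{e\}$ for which both $N'\setminus x$ and $N'/x$ fail must sit in a single triangle or triad containing $e$; proving that they do (and that the wheel/whirl case behaves as claimed) requires a substantial local connectivity analysis that constitutes the bulk of the original paper. Your final paragraph acknowledges this is ``the technical heart,'' but you have not indicated how the classification of $2$-separations of $N'\setminus e$ and $N'/e$ actually yields the bound of three; absent that, the argument is circular.

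If you intend to reprove the result, you should expect to reproduce something close in length and detail to Bixby and Coullard's original argument; the Splitter Theorem alone does not shortcut it.
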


\begin{lemma}[Oxley~{\cite[Corollary 3.7]{Oxley1987}}]
\label{MK4minor}
Let $M$ be a $3$-connected binary matroid such that $r(M), r(M^{*})\geq 3$. If $x,y,z$ are distinct elements of $E(M)$, then $M$ has a minor isomorphic to $M(K_{4})$ using $x,y,z$. 
\end{lemma}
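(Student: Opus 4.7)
The plan is induction on $|E(M)|$. The hypotheses force $|E(M)| \geq 6$, and for the base case $|E(M)| = 6$ both ranks equal $3$; since any simple binary matroid of rank $3$ embeds as a restriction of the Fano matroid $F_7$, and the only $3$-connected $6$-element such restriction is $F_7 \setminus f \cong M(K_4)$, we conclude $M \cong M(K_4)$, which is itself the desired minor.

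For the inductive step $|E(M)| \geq 7$, I would locate an element $e \in E(M) - \{x,y,z\}$ such that $M \setminus e$ or $M/e$ is $3$-connected with rank and corank still at least $3$, and then invoke the induction hypothesis. When $r(M), r(M^*) \geq 4$, the rank conditions survive any single-element deletion or contraction, so the real task is to find a \emph{removable} element---one whose deletion or contraction preserves $3$-connectedness---outside $\{x,y,z\}$. The low-rank corner cases are easy: if $r(M) = 3$ then $|E(M)| = 7$ and $M \cong F_7$, so deleting any $f \in E(M) - \{x,y,z\}$ (four choices) gives $F_7 \setminus f \cong M(K_4)$, and the case $r(M^*) = 3$ is symmetric by contraction. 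When both ranks are at least $4$, I would separate the wheel case $M \cong M(W_n)$, handled by a direct reduction on rim or spoke elements outside $\{x,y,z\}$, from the generic case, where I would invoke a strengthening of Tutte's wheels-and-whirls theorem that guarantees at least four removable elements---so at least one must avoid $\{x,y,z\}$.

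The main obstacle is precisely this generic case: ensuring that a removable element lies outside the prescribed triple $\{x,y,z\}$. Tutte's plain wheels-and-whirls theorem supplies only a single removable element, which could coincide with one of $x$, $y$, $z$; the refinement to at least four removable elements in non-wheel $3$-connected binary matroids (due in various forms to Seymour, Oxley, and others) is the non-trivial ingredient. An alternative route that avoids this refinement is to apply Lemma~\ref{lem: bixby} three times, starting from some $M(K_4)$-minor of $M$ and enlarging to include $x$, $y$, $z$ in turn, thereby reducing the problem to a $3$-connected binary minor $N$ of $M$ with $\{x,y,z\} \subseteq E(N)$, $|E(N)| \leq 18$, and $M(K_4)$ as a minor of $N$; the residual problem on the bounded matroid $N$ can then be dispatched by a finite case analysis.
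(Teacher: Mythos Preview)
The paper does not prove this lemma; it is quoted verbatim from Oxley~\cite[Corollary~3.7]{Oxley1987} and used as a black box, so there is no in-paper argument to compare against. What follows is an assessment of your sketch on its own merits.

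The base case and the low-rank corners are fine. If $|E(M)|=6$ then $r(M)=r(M^*)=3$, and a simple cosimple binary rank-$3$ matroid on six elements is indeed $M(K_4)$. If $r(M)=3$ and $|E(M)|\geq 7$ then simplicity forces $M\cong F_7$, and any single-element deletion gives $M(K_4)$ through the remaining six elements, in particular through $x,y,z$. The wheel case also goes through, although your description is thin: for $M(W_n)$ with $n\geq 4$ one can always find a rim vertex $t_i$ whose spoke $st_i$ and at least one incident rim edge both avoid $\{x,y,z\}$, and suppressing that vertex yields $M(W_{n-1})$ still containing $x,y,z$.

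The genuine gap is the generic non-wheel step. The assertion that a $3$-connected binary non-wheel matroid has at least four elements $e$ with $M\setminus e$ or $M/e$ $3$-connected is not a standard off-the-shelf statement; the quantitative refinements of the wheels-and-whirls theorem in the literature have hypotheses and conclusions that do not line up exactly with what you need here, and you have not pinned down which theorem you are invoking or why its hypotheses are met. Your fallback via Lemma~\ref{lem: bixby} does not close the gap either: after three applications you land in a $3$-connected binary minor $N$ with $\{x,y,z\}\subseteq E(N)$, $|E(N)|\leq 18$, and \emph{some} $M(K_4)$-minor, but you still owe an $M(K_4)$-minor of $N$ \emph{using} $x,y,z$---which is exactly the original statement for $N$. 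Calling this ``a finite case analysis'' is true in principle but is not a proof; the class of $3$-connected binary matroids on at most $18$ elements is large, and you have not carried out the check or indicated how it would be organized.
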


\begin{lemma}[Truemper~\cite{Truemper1984}]
\label{lem:Truemper}
Let $M$ be a $3$-connected matroid and $N$ be a $3$-connected minor of $M$. Then there are $3$-connected matroids $M_{0},M_{1},\ldots,M_{n}$ such that $M_{0}=N$, $M_{n}=M$, and $M_{i}$ is a minor of $M_{i+1}$ with $|E(M_{i+1})|-|E(M_{i})|\leq 3$ for each $i\in\{0,\ldots,n-1\}$.
\end{lemma}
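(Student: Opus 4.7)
The plan is to proceed by induction on $k := |E(M)| - |E(N)|$, the base case $k = 0$ being trivial with $n = 0$. The goal in the inductive step is to produce a $3$-connected matroid $M'$ that is a proper minor of $M$, contains $N$ as a minor, and satisfies $|E(M)| - |E(M')| \leq 3$. The induction hypothesis applied to the pair $(M', N)$ then yields a chain $N = M_0, \ldots, M_{n-1} = M'$, to which we append $M_n = M$.

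The natural tool for producing such an $M'$ with $|E(M)| - |E(M')| = 1$ is Seymour's splitter theorem: in all but a very restricted situation there exists $e \in E(M) - E(N)$ such that $M \setminus e$ or $M / e$ is $3$-connected and still has $N$ as a minor. The exceptional situation is when $M$ and $N$ both belong to the same family of wheels or whirls, with $N$ a smaller member; there the splitter theorem provides no single-element reduction at all.

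The main obstacle is therefore the wheel/whirl case. There $M$ is either the cycle matroid of a wheel graph $W_r$ or a whirl of rank $r$, and $N$ is the analogous smaller object. One must argue explicitly that a two-element deletion/contraction takes $M$ to the next smaller wheel or whirl while preserving $N$ as a minor: in the wheel case, delete a carefully chosen spoke $e$ and contract an adjacent rim edge $f$ to obtain $M(W_{r-1})$, and the whirl case is handled analogously. This produces a jump of size $2 \leq 3$, and the induction completes.

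If one prefers to stay closer to the lemmas already cited in the excerpt, an alternative plan is to iterate Lemma~\ref{lem: bixby} (Bixby--Coullard), which produces a $3$-connected intermediate $N'$ with $|E(N')| - |E(N)| \leq 4$. The remaining technical step, which is precisely the content by which Truemper's theorem improves Bixby--Coullard, is to show that whenever $|E(N')| - |E(N)| = 4$ some element of $E(N') - E(N)$ can be deleted or contracted from $N'$ while keeping the matroid $3$-connected and retaining $N$ as a minor. Carrying out this refined case analysis -- controlling when $3$-connectivity is preserved under removing one of the four extra elements -- is the delicate part that I would expect to be the bulk of a full proof.
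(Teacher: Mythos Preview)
The paper does not supply a proof of this lemma; it is quoted from Truemper~\cite{Truemper1984} and used as a black box, so there is no in-paper argument to compare your proposal against.

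For what it is worth, your first plan via Seymour's splitter theorem is the standard derivation, and your identification of the exceptional case is correct: the chain form of the splitter theorem yields single-element steps whenever $M$ is not a wheel or a whirl, and when $M$ is a wheel or whirl the two-element spoke-deletion/rim-contraction move handles the descent, so one in fact obtains the sharper bound $|E(M_{i+1})|-|E(M_i)|\le 2$. Two technical points your sketch elides are the hypothesis $|E(N)|\ge 4$ required to invoke the splitter theorem (the small cases $N\cong U_{1,3}$ or $U_{2,3}$ need a separate easy treatment) and the distinction between the conclusion $M_0\cong N$ delivered by the usual chain splitter statement and the stronger $M_0=N$ asserted here; tracking labels through the chain needs a word of justification. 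Your second alternative via Bixby--Coullard is not the usual route and, as you acknowledge, would require additional case analysis that essentially reproves the splitter theorem.
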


Lemma~\ref{lem:Truemper} implies the following corollary for graphs.

\begin{lemma}
\label{lem:Truemper2}
Let $G$ be a simple $3$-connected graph and $H$ be a simple $3$-connected minor of $G$. Then there are simple $3$-connected graphs $G_{0},G_{1},\ldots,G_{n}$ such that $G_{0}=G$, $G_{n}=H$, and $G_{i}$ is a minor of $G_{i+1}$ with $|E(G_{i+1})|-|E(G_{i})|\leq 3$ for each $i\in\{0,\ldots,n-1\}$.
\end{lemma}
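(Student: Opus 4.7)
The plan is to lift Lemma~\ref{lem:Truemper} to graphs via cycle matroids. Since $G$ and $H$ are simple $3$-connected graphs on at least four vertices, both $M(G)$ and $M(H)$ are $3$-connected matroids, and $M(H)$ is a minor of $M(G)$ because graph deletion and contraction correspond to matroid deletion and contraction. Applying Lemma~\ref{lem:Truemper} to the pair $M(G)$ and $M(H)$ yields a sequence of $3$-connected matroids $M(H)=N_{0},N_{1},\ldots,N_{n}=M(G)$ such that $N_{i}$ is a minor of $N_{i+1}$ and $|E(N_{i+1})|-|E(N_{i})|\leq 3$.

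Each $N_{i}$ is a minor of the graphic matroid $M(G)$, so every $N_{i}$ is graphic. Because $N_{i}$ is $3$-connected with $|E(N_{i})|\geq |E(H)|\geq 4$, it has no loops and no parallel pairs; hence there is a simple graph $H_{i}$ with $M(H_{i})=N_{i}$, and $H_{i}$ must in fact be $3$-connected as a graph, since any vertex cut of order at most $2$ in $H_{i}$ would induce a $2$-separation of $M(H_{i})=N_{i}$. By Lemma~\ref{lem:whitney}, $H_{i}$ is determined up to isomorphism among loopless graphs with no isolated vertices.

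To lift the matroid minor relation to a graph minor relation, choose disjoint $X,Y\subseteq E(H_{i+1})$ with $N_{i}=M(H_{i+1})\setminus X/Y=M(H_{i+1}\setminus X/Y)$. Since $N_{i}$ has no loops and no parallel pairs, the graph $H_{i+1}\setminus X/Y$, after removing any isolated vertices, is simple; Lemma~\ref{lem:whitney} then identifies this graph with $H_{i}$. Consequently $H_{i}$ is a graph minor of $H_{i+1}$ with $|E(H_{i+1})|-|E(H_{i})|\leq 3$, and reindexing yields the required sequence $G_{0},\ldots,G_{n}$. The main technical point is this last translation from matroid minors to graph minors, and it succeeds precisely because the $3$-connectedness of every $N_{i}$ (with at least four elements) forbids loops and parallel elements and thereby eliminates any Whitney-flip ambiguity in passing back to graphs.
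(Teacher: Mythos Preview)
Your proof is correct and follows exactly the route the paper indicates. The paper does not prove this lemma explicitly but simply states that it follows from Lemma~\ref{lem:Truemper}; your argument---passing to cycle matroids, applying Truemper's result, and using Whitney's theorem (Lemma~\ref{lem:whitney}) to return to graphs---is precisely how one makes that implication rigorous.
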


See the book of Oxley~\cite{Oxley2011} for more about matroids.
\subsection{Delta-matroids} 
\label{subsec:deltamatroid}
Bouchet~\cite{Bouchet1987sym} introduced delta-matroids.
A \emph{delta-matroid} is a pair $(V,\mathcal{F})$ of a finite set $V$ and a nonempty set $\mathcal{F}$ of subsets of $V$ such that if $X,Y\in\mathcal{F}$ and $x\in X\triangle Y$, then there is $y\in X\triangle Y$ such that $X\triangle\{x,y\}\in\mathcal{F}$.
An element of $\mathcal{F}$ is called a \emph{feasible} set. 

\paragraph{Minors.}
For a delta-matroid $M=(V,\mathcal{F})$ and a subset $X$ of $V$, we can obtain a new delta-matroid $M\triangle X=(V,\mathcal{F}\triangle X)$ from $M$ 
where $\mathcal{F}\triangle X=\{F\triangle X : F\in\mathcal{F}\}$. This operation is called \emph{twisting}. A delta-matroid $N$ is \emph{equivalent} to $M$ if $N$ can be obtained from $M$ by twisting.

If there is a feasible subset of $V- X$, then $M\setminus X=(V- X,\mathcal{F}\setminus X)$ is a delta-matroid 
where $\mathcal{F}\setminus X=\{F\in\mathcal{F} : F\cap X=\emptyset\}$. This operation of obtaining $M\setminus X$ is called the \emph{deletion} of $X$ in $M$. A delta-matroid $N$ is a \emph{minor} of a delta-matroid $M$ if $N=M\triangle X\setminus Y$ for some subsets $X,Y$ of $V$. 
We remark that $Y$ could be empty. 

A delta-matroid is \emph{even} if $|X\triangle Y|$ is even for all feasible sets $X$ and $Y$. 
It is easy to see that all minors of even delta-matroids are even.

\paragraph{Fundamental graphs and connectedness.}
For a delta-matroid $M=(V,\mathcal{F})$ and a feasible set $F$ of~$M$, 
the \emph{fundamental graph} of $M$ with respect to $F$ is a graph $G_{M,F}$ without parallel edges on $V$ 
such that two vertices $u$ and $v$ are adjacent if and only if 
$F\triangle \{u,v\}$ is feasible. 
Note that if $F\triangle \{u\}$ is feasible, then in $G_{M,F}$, the vertex $u$ is incident with a loop and if $M$ is even, then $G_{M,F}$ is a simple graph.
Let $A_{G}$ be the adjacency matrix of $G$ over $\GF(2)$.

Let $M=(V,\mathcal{F})$ be a delta-matroid. A partition $(V_{1},V_{2})$ of $V$ is a \emph{separation} of $M$ if $V_{1},V_{2}\neq\emptyset$ and there exist delta-matroids $M_{1}=(V_{1},\mathcal{F}_{1})$ and $M_{2}=(V_{2},\mathcal{F}_{2})$ such that $\mathcal{F}=\{F_{1}\cup F_{2}: F_{1}\in\mathcal{F}_{1}, F_{2}\in\mathcal{F}_{2}\}$.

A delta-matroid $M$ is \emph{connected} if it has no separation. If $(V_{1},V_{2})$ is a separation of $M$, then it is a separation of every delta-matroid equivalent to $M$~{\cite[Proposition 3]{Bouchet1991}}. Hence, if $M$ is connected, then every delta-matroid equivalent to $M$ is connected. It is easy to observe that a matroid is connected if and only if its bases form a connected delta-matroid. 

\begin{lemma}[Geelen~{\cite[Lemma 3.5]{Geelen1996}}]
\label{evenfundgraph}
Let $M=(V,\mathcal{F})$ be an even delta-matroid, let $F\in\mathcal{F}$, and let $uv\in E(G_{M,F})$. Then, for $x,y\in V-\{u,v\}$, the following hold:
\begin{enumerate}[label=\rm (\arabic*)]
\item $ux\in E(G_{M,F})$ if and only if $vx\in E(G_{M,F\triangle\{u,v\}})$.
\item If $x,y\notin N_{G_{M,F}}(v)$, then $xy\in E(G_{M,F})$ if and only if $xy\in E(G_{M,F\triangle\{u,v\}})$.
\end{enumerate}
\end{lemma}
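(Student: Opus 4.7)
The plan is to handle (1) by direct unfolding of the definition of the fundamental graph, and to handle (2) by two parallel applications of the symmetric exchange axiom, chosen so that the non-feasibility hypotheses on $v$ force the ``correct'' exchange partner.

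For (1), note that $F':=F\triangle\{u,v\}$ is feasible because $uv$ is an edge of $G_{M,F}$. Since $x\notin\{u,v\}$, the element $v$ cancels and $F'\triangle\{v,x\}=F\triangle\{u,v\}\triangle\{v,x\}=F\triangle\{u,x\}$, so
\[
vx\in E(G_{M,F'}) \Leftrightarrow F'\triangle\{v,x\}\in\mathcal{F} \Leftrightarrow F\triangle\{u,x\}\in\mathcal{F} \Leftrightarrow ux\in E(G_{M,F}).
\]
This part uses neither the symmetric exchange axiom nor the evenness of $M$.

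For (2), set $F_2:=F\triangle\{u,v,x,y\}$, so the claim becomes $F\triangle\{x,y\}\in\mathcal{F}$ if and only if $F_2\in\mathcal{F}$ under the hypotheses $F\triangle\{v,x\},F\triangle\{v,y\}\notin\mathcal{F}$. If $x=y$, both sides reduce to the feasibility of $F\triangle\{x\}$, which is impossible in an even delta-matroid, so both sides fail and the statement holds trivially; we may therefore assume $x\neq y$. For the forward direction, apply the symmetric exchange axiom to the feasible pair $(F\triangle\{x,y\},F')$ at the element $v$ of their symmetric difference $\{u,v,x,y\}$. Evenness forces the exchange partner $z$ to differ from $v$, so $z\in\{u,x,y\}$; the choices $z=x$ and $z=y$ would respectively yield feasibility of the forbidden sets $F\triangle\{v,y\}$ and $F\triangle\{v,x\}$, so $z=u$ is forced and $F_2=(F\triangle\{x,y\})\triangle\{u,v\}$ is feasible. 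The backward direction is the exact mirror: apply the exchange axiom to $(F_2,F)$ at the element $u$, so by evenness $z\in\{v,x,y\}$, but the choices $z=x$ and $z=y$ again recreate the forbidden sets $F\triangle\{v,y\}$ and $F\triangle\{v,x\}$, forcing $z=v$ and hence $F\triangle\{x,y\}=F_2\triangle\{u,v\}\in\mathcal{F}$.

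The only real design choice, and thus the main obstacle, is picking the pivot element for the exchange axiom: since the hypotheses forbid feasibility of $F\triangle\{v,x\}$ and $F\triangle\{v,y\}$, one must pivot on an element whose exchange with $x$ or $y$ lands exactly on one of these two forbidden sets, which points to $v$ in the forward direction and $u$ in the backward direction. Once that choice is made, the three-case analysis of the exchange partner is immediate, and evenness appears only to exclude $z$ equal to the pivot itself.
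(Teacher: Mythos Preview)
The paper does not supply its own proof of this lemma; it is quoted from Geelen's thesis without argument. Your proof is correct and is essentially the standard one: part~(1) is a one-line unwinding of the definition, and part~(2) is the expected application of the symmetric exchange axiom with the pivot chosen so that the two ``wrong'' exchange partners land on the forbidden sets $F\triangle\{v,x\}$ and $F\triangle\{v,y\}$.
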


\begin{lemma}[Bouchet~\cite{Bouchet1991}]
\label{bouchetconnected}
For an even delta-matroid $M=(V,\mathcal{F})$ and a nonempty proper subset $X$ of $V$, the following are equivalent:
\begin{enumerate}[label=\rm (\roman*)]
\item\label{item:bc1} $(X, V- X)$ is a separation of $M$.
\item\label{item:bc2} There is no edge between $X$ and $V- X$ in $G_{M,F}$ for some $F\in\mathcal{F}$.
\item\label{item:bc3} There is no edge between $X$ and $V- X$ in $G_{M,F}$ for all $F\in\mathcal{F}$.
\end{enumerate}
\end{lemma}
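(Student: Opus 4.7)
My plan is to prove the cycle (i)$\Rightarrow$(iii)$\Rightarrow$(ii)$\Rightarrow$(iii)$\Rightarrow$(i), with (iii)$\Rightarrow$(ii) being immediate. For (i)$\Rightarrow$(iii), write $\mathcal{F} = \{F_{1} \cup F_{2} : F_{1} \in \mathcal{F}_{1},\, F_{2} \in \mathcal{F}_{2}\}$, take any $F \in \mathcal{F}$, and decompose $F = (F \cap X) \cup (F \cap (V-X))$. A hypothetical crossing edge $uv$ of $G_{M,F}$ with $u \in X$ and $v \in V-X$ would force $(F \cap X) \triangle \{u\} \in \mathcal{F}_{1}$, and hence $F \triangle \{u\} \in \mathcal{F}$; since $|F \triangle (F \triangle \{u\})| = 1$, this contradicts evenness.

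For (ii)$\Rightarrow$(iii), I would show that the property ``$G_{M,F}$ has no edge between $X$ and $V-X$'' is preserved whenever $F$ is replaced by $F \triangle \{u,v\}$ for an edge $uv$ of $G_{M,F}$. Combined with the exchange axiom, which allows one to move between any two feasible sets by a sequence of such pivots, this propagates the property from one feasible set to every feasible set. So assume $F$ has no crossing edge and $uv \in E(G_{M,F})$, so $u$ and $v$ lie on the same side, say $u, v \in X$. If a crossing edge $ab$ of $G_{M, F \triangle \{u,v\}}$ with $a \in X$ and $b \in V-X$ existed, then $F \triangle \{u,v,a,b\}$ would be feasible. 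Applying the exchange axiom to $F$ and $F \triangle \{u,v,a,b\}$ with distinguished element $b$ yields some $w \neq b$ in their symmetric difference with $F \triangle \{b,w\} \in \mathcal{F}$; but then $w \in \{u,v,a\} \subseteq X$, so $bw$ would be a crossing edge of $G_{M,F}$, contradicting (ii). The degenerate case $a \in \{u,v\}$ already exhibits a crossing in $G_{M,F}$ directly.

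For (iii)$\Rightarrow$(i), define $\mathcal{F}_{1} := \{F \cap X : F \in \mathcal{F}\}$ and $\mathcal{F}_{2} := \{F \cap (V-X) : F \in \mathcal{F}\}$. The plan is to show $\mathcal{F} = \{F_{1} \cup F_{2} : F_{1} \in \mathcal{F}_{1},\, F_{2} \in \mathcal{F}_{2}\}$ and then verify that each $M_{i}$ satisfies the delta-matroid exchange axiom. The nontrivial inclusion is $\supseteq$: given $A, B \in \mathcal{F}$, the set $(A \cap X) \cup (B \cap (V-X))$ equals $A \triangle D$ for $D = (A \triangle B) \cap (V-X)$, and I would induct on $|D|$, using (iii) together with the exchange axiom in $M$ to pair up elements of $D$ and shrink $|D|$ by $2$ at each step. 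The exchange axiom for each $M_{i}$ then follows by applying exchange in $M$ to two feasibles of the form $F_{1} \cup F_{2}$ and $F_{1}' \cup F_{2}$, with (iii) ensuring the returned element stays on the correct side. The main obstacle is the (ii)$\Rightarrow$(iii) step: one might try to track the pivot transformation of $G_{M,F}$ directly via Lemma~\ref{evenfundgraph}, but that lemma does not control edges through common neighbors of $u$ and $v$, so the exchange-axiom argument above is needed to sidestep this combinatorial difficulty.
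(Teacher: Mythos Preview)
Your proof is correct. The overall architecture matches the paper's cycle of implications, and your arguments for (i)$\Rightarrow$(iii) and (iii)$\Rightarrow$(i) are close in spirit to the paper's, though you work directly with an arbitrary $F$ rather than twisting to $\emptyset\in\mathcal{F}$ first, and you define $\mathcal{F}_i$ as projections rather than as feasible subsets contained in one side; these are cosmetic differences.

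The genuine divergence is in (ii)$\Rightarrow$(iii). The paper takes a minimal counterexample $F'\notin\mathcal{S}$ closest to some $F\in\mathcal{S}$, pivots $F'$ toward $F$ along an edge $uv$, and then invokes Lemma~\ref{evenfundgraph} twice to show that the crossing edge $xy$ of $G_{M,F'}$ survives (in some form) in $G_{M,F'\triangle\{u,v\}}\in\mathcal{S}$, a contradiction. Your approach is dual and cleaner: you pivot \emph{away} from $F\in\mathcal{S}$ and show directly, via a single application of the exchange axiom to $F$ and $F\triangle\{u,v,a,b\}$ at the element $b$, that no crossing edge can appear. This bypasses Lemma~\ref{evenfundgraph} entirely, which is a real gain since, as you note, that lemma does not control edges among common neighbours of $u$ and $v$ and forces the paper into a case analysis. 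The price is only that you must observe (which you do) that the exchange axiom lets one walk from $F$ to any other feasible set by successive pivots decreasing the symmetric difference.
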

We include a proof motivated by Cunningham in \cite[Theorem 3.7]{Geelen1996}.

\begin{proof}
We first prove that \ref{item:bc1} implies \ref{item:bc2}. By twisting, we may assume that $\emptyset\in\mathcal{F}$. 
By \ref{item:bc1}, there are delta-matroids $M_{1}=(X,\mathcal{F}_{1})$ and $M_{2}=(V- X,\mathcal{F}_{2})$ such that $\mathcal{F}=\{F_{1}\cup F_{2}:F_{1}\in\mathcal{F}_{1}, F_{2}\in\mathcal{F}_{2}\}$. 
Suppose that $G_{M,\emptyset}$ has an edge $xy$ for some $x\in X$ and $y\in V- X$. Then $\{x,y\}\in\mathcal{F}$.  By the construction of $\mathcal{F}_{1}$ and $\mathcal{F}_{2}$, we have $\{x\}\in\mathcal{F}_{1}$ and $\emptyset\in\mathcal{F}_{2}$. Hence, $\{x\}\in\mathcal{F}$, contradicting our assumption that $M$ is even. So \ref{item:bc2} holds.

We now show that \ref{item:bc2} implies \ref{item:bc3}. Let $\mathcal S$
be the set of all feasible sets $F$ such that there is no edge between $X$ and $V- X$ in $G_{M,F}$. By \ref{item:bc2}, there exists $F\in \mathcal S$. Suppose that \ref{item:bc3} does not hold. Then there exists $ F'\in\mathcal{F}- \mathcal S$. Among all such sets, we choose $F'$ with minimum $|F\triangle F'|$. Choose $u\in F\triangle F'$. 
Then there is an element $v\in F\triangle F'$ such that $u\neq v$ and $F'\triangle\{u,v\}\in\mathcal{F}$. 
Since $|F\triangle(F'\triangle\{u,v\})|<|F\triangle F'|$, $F'\triangle\{u,v\}\in \mathcal S$ and therefore $uv\in E(G_{M,F'\triangle\{u,v\}})$. 
By symmetry, we may assume that $u,v\in X$. 
Since $F'\notin \mathcal S$, there exist $x\in X$, $y\in V- X$ such that $xy\in E(G_{M,F'})$.  If $x\in\{u,v\}$, then assume $u=x$ without loss of generality. Then, by Lemma~\ref{evenfundgraph}, $vy\in E(G_{M,F'\triangle\{u,v\}})$, contradicting our assumption. Therefore, we may assume that in $G_{M,F'}$, neither $u$ nor $v$ has neighbors in $V- X$. Then $u,v\notin N_{G_{M,F'}}(y)$ and, by Lemma~\ref{evenfundgraph},  $uv\in E(G_{M,F'\triangle\{x,y\}})$. It implies that $xy\in E(G_{M,F'\triangle\{u,v\}})$, contradicting our assumption. So \ref{item:bc3} holds.

So it remains to prove that \ref{item:bc3} implies \ref{item:bc1}. By twisting, we may assume that $\emptyset\in\mathcal F$. Let $\mathcal{F}_{1}=\{F_{1}\in\mathcal{F}:F_{1}\subseteq X\}$ and $\mathcal{F}_{2}=\{F_{2}\in\mathcal{F}:F_{2}\subseteq V- X\}$. Suppose that there is no edge between $X$ and $V- X$ in $G_{M,F}$ for all $F\in\mathcal{F}$. Suppose that there exist $F_{1},F_{2}\in\mathcal{F}$ such that $(F_{1}\cap X)\cup (F_{2}\cap (V- X))\notin\mathcal{F}$. Among such pairs, we choose $F_{1}$, $F_{2}$ such that $|(F_{1}\triangle F_{2})\cap X|$ is minimum. Since $(F_{1}\triangle F_{2})\cap X\neq\emptyset$, there exists $x\in (F_{1}\triangle F_{2})\cap X$. So there is $y\in F_{1}\triangle F_{2}$ such that $F_{2}\triangle\{x,y\}\in\mathcal{F}$. So $xy\in E(G_{M,F_{2}})$ and $y\in X$ by \ref{item:bc3}. However, $|F_{1}\triangle(F_{2}\triangle\{x,y\})|<|F_{1}\triangle F_{2}|$ and $(F_{1}\cap X)\cup(F_{2}\cap(V- X))=(F_{1}\cap X)\cup((F_{2}\triangle\{x,y\})\cap(V- X))\in\mathcal{F}$, which is a contradiction. So $\{F_{1}\cup F_{2} : F_{1}\in\mathcal{F}_{1}, F_{2}\in\mathcal{F}_{2}\}\subseteq\mathcal{F}$.

If there exists $F\in\mathcal{F}$ such that $F\cap X\notin\mathcal{F}$ or $F\cap (V- X)\notin\mathcal{F}$, we choose a minimal set $F$ among all such sets. Observe that $F\cap X, F\cap(V- X)\neq\emptyset$. We choose $x_{1}\in F\cap X$. Then there exists $y_{1}\in F$ such that $F':=F\triangle\{x_{1},y_{1}\}\in\mathcal{F}$. Since $M$ is even, $x_{1}\neq y_{1}$ and $x_{1}y_{1}\in E(G_{M,F})$. By \ref{item:bc3}, $y_{1}\in F\cap X$. Since $F'\subsetneq F$, we have $F'\cap (V- X)\in\mathcal{F}$ by the minimality assumption of $F$. Therefore, $F\cap (V- X)\in\mathcal{F}$ and, by symmetry, $F\cap X\in\mathcal{F}$, contradicting our assumption. Hence, $\mathcal{F}\subseteq\{F_{1}\cup F_{2} : F_{1}\in\mathcal{F}_{1}, F_{2}\in\mathcal{F}_{2}\}$.

Let $M_{1}=(V,\mathcal{F}_{1})$ and $M_{2}=(V,\mathcal{F}_{2})$. Then $M_{1}$ and $M_{2}$ are delta-matroids and $(V,V- X)$ is a separation of $M$.
\end{proof}

\paragraph{Representable delta-matroids.}
Let $V$ be a finite set. For a $V\times V$ symmetric or skew-symmetric matrix $A$ over a field $\mathbb{F}$ and a subset $X$ of $V$, let $A[X]$ be an $X\times X$ submatrix of $A$. Let \(\mathcal{F}(A)=\left\{ X\subseteq V : A[X] \text{ is nonsingular} \right\}\). 
We assume that $A[\emptyset]$ is nonsingular and so $\emptyset\in\mathcal{F}(A)$.
Bouchet~\cite{Bouchet1988} proved that $(V,\mathcal{F}(A))$ is a delta-matroid. 
A delta-matroid is \emph{representable over} a field~$\mathbb{F}$ if it is a delta-matroid whose set of feasible sets is $\mathcal{F}(A)\triangle X$ for a skew-symmetric or symmetric matrix $A$ over $\mathbb{F}$ and a subset $X$ of $V$.  
Since $\emptyset\in\mathcal{F}(A)$ for all symmetric or skew-symmetric matrices $A$, it is natural to define representable delta-matroids with twisting so that the empty set is not necessarily feasible in representable delta-matroids.

A delta-matroid is \emph{binary} if it is representable over $\GF(2)$. 

\paragraph{Pivoting.} For a finite set $V$ and a symmetric or skew-symmetric $V\times V$ matrix 
\[
A=
\begin{blockarray}{ccc}
  & X & Y \\
 \begin{block}{c(cc)}
 X& \alpha &\beta \\
 Y & \gamma & \delta \\
\end{block}
\end{blockarray}~,
\]
if $A[X]=\alpha$  is nonsingular, then let
\[
A*X=
\begin{blockarray}{ccc}
  & X & Y \\
 \begin{block}{c(cc)}
 X& \alpha^{-1} &\alpha^{-1}\beta \\
 Y & -\gamma \alpha^{-1} & \delta-\gamma\alpha^{-1}\beta \\
\end{block}
\end{blockarray}~.
\]
This operation is called \emph{pivoting}. Tucker \cite{Tucker1960} proved that when $A[X]$ is nonsingular, $A*X[Y]$ is nonsingular if and only if $A[X\triangle Y]$ is nonsingular for each subset $Y$ of $V$. Hence, if $M=(V,\mathcal{F}(A))$ and $X$ is a feasible set of $M$, then $M\triangle X=(V,\mathcal{F}(A*X))$. 

For a simple graph $G$ and an edge $uv\in E(G)$, a graph $G\wedge uv$ is obtained from $G$ by \emph{pivoting} an edge $uv$ if $A_{G\wedge uv}=A_G*\{u,v\}$. A combinatorial description can be found in \cite{Oum2004}.
A \emph{pivot-minor} of a simple graph $G$ is a graph obtained from $G$ by pivoting and deleting vertices repeatedly. The following result gives a relation between minors of delta-matroids and pivot-minors of graphs.

\begin{lemma}[Bouchet \cite{Bouchet1988}]
\label{pminor}
Let $M$ be an even binary delta-matroid. Then, a simple graph $G$ is a pivot-minor of a fundamental graph of $M$ if and only if $G$ is a fundamental graph of a minor of $M$.
\end{lemma}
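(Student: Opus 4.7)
The plan is to set up a direct correspondence between operations on fundamental graphs (pivoting an edge, deleting a vertex) and operations defining minors of the delta-matroid (twisting, deletion). Throughout, I use the fact that for an even binary delta-matroid $M$ and a feasible set $F$, the adjacency matrix of $G_{M,F}$ over $\GF(2)$ represents the twisted delta-matroid $M\triangle F$ (its diagonal is zero because $M$ is even), so Tucker's identity translates pivoting of matrices into pivoting of fundamental graphs.

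The first step is to record two basic translations. (a) If $uv \in E(G_{M,F})$, then $F\triangle\{u,v\}$ is feasible in $M$, and $G_{M,F}\wedge uv = G_{M, F\triangle\{u,v\}}$; that is, an edge pivot in the graph is exactly a change of reference feasible set in the same delta-matroid. (b) For vertex deletion: if $v\notin F$, then $F$ is feasible in $M\setminus v$ and an edge-by-edge check gives $G_{M\setminus v,\,F} = G_{M,F}\setminus v$; if $v\in F$, then $F-v$ is feasible in the minor $M\triangle\{v\}\setminus v$ and $G_{M\triangle\{v\}\setminus v,\, F-v} = G_{M,F}\setminus v$.

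For the $(\Leftarrow)$ direction, let $N = M\triangle X\setminus Y$ be a minor and $F'$ a feasible set of $N$, so that $F := F'\triangle X \in \mathcal{F}(M)$. A direct check using the definition of the fundamental graph shows $G_{N,F'} = G_{M,F}\setminus Y$: for $u,w\in V-Y$, the edge $uw$ lies in $G_{N,F'}$ iff $F'\triangle\{u,w\}\triangle X = F\triangle\{u,w\} \in \mathcal{F}(M)$, i.e.\ iff $uw\in E(G_{M,F})$. Hence the fundamental graph of the minor is obtained from $G_{M,F}$ by vertex deletions only, a pivot-minor operation.

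For the $(\Rightarrow)$ direction, I would induct on the length of the sequence of pivots and deletions producing $G$ from a fundamental graph of $M$. At each stage we maintain the invariant that the current graph equals $G_{N',F'}$ for some minor $N'$ of $M$ and some feasible set $F'$ of $N'$. Each edge pivot preserves $N'$ and updates $F'$ by the symmetric-difference of the pivoted pair, thanks to translation (a); each vertex deletion replaces $N'$ by either $N'\setminus v$ or $N'\triangle\{v\}\setminus v$ according to whether $v\in F'$, thanks to translation (b). This carries the invariant through the entire sequence and yields the conclusion. The main work is therefore the bookkeeping in translations (a) and (b); there is no deeper obstacle, since once the correspondence between twisting by a pair and pivoting an edge is in place, the rest of the argument is a routine induction.
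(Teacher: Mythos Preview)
The paper does not supply its own proof of this lemma; it is stated with attribution to Bouchet and used as a black box. Your argument is correct and is exactly the standard way to unpack Bouchet's result: translation~(a) follows because over $\GF(2)$ a zero-diagonal symmetric matrix is determined by its $2\times 2$ principal minors, so $A_{G_{M,F}}*\{u,v\}$ and $A_{G_{M,F\triangle\{u,v\}}}$ both represent $M\triangle(F\triangle\{u,v\})$ and hence coincide; translation~(b) is the direct feasibility check you wrote out. The induction in the $(\Rightarrow)$ direction is then routine, and the $(\Leftarrow)$ direction reduces to vertex deletions only, as you observed. One small point worth making explicit for completeness: minors of even binary delta-matroids are again even and binary, so at every stage of the induction the fundamental graph remains simple and the matrix--graph correspondence continues to apply.
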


\paragraph{Twisted matroids.} A \emph{twisted matroid} $N$ is a delta-matroid such that $N=(E,\mathcal{B}\triangle X)$ for some matroid $M=(E,\mathcal{B})$ and some set $X\subseteq E$.  

Let us first see that every (delta-matroid) minor of a twisted matroid is a twisted matroid. It is enough to prove that, if $N'=N\triangle X\setminus\{e\}$ for a matroid $N=(E,\mathcal{B})$ and a set $X\subseteq E$, then $N'$ is a twisted matroid. We may assume that $e\notin X$ by replacing $N$ with $N^{*}$ and $X$ with $E- X$. Then $N'=N\triangle X\setminus\{e\}=(N\setminus e)\triangle X$.

The following lemma characterizes twisted matroids. We remark that Bouchet~\cite{Bouchet1987matching} proved a weaker version for even delta-matroids.

\begin{lemma}[Geelen {\cite[Theorem 3.11]{Geelen1996}}]
\label{g1} 
Let $M$ be a delta-matroid and $F$ be a feasible set of $M$. 
Then $G_{M,F}$ is bipartite if and only if $M$ is a twisted matroid.
\end{lemma}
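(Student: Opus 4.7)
My plan is to prove both directions of the biconditional, with the converse hinging on a bipartite-preservation claim for single pivots.

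For the easy direction, assume $M$ is a twisted matroid with $\mathcal{F}(M)=\mathcal{B}\triangle X$ for some matroid $(E,\mathcal{B})$ and $X\subseteq E$. Writing $F=B_0\triangle X$ with $B_0\in\mathcal{B}$, the feasibility of $F\triangle\{u,v\}$ reduces to $B_0\triangle\{u,v\}\in\mathcal{B}$, which (since bases have equal cardinality) requires exactly one of $u,v$ to lie in $B_0$; similarly, $F\triangle\{v\}$ is never feasible. Hence $G_{M,F}$ is bipartite with parts $(B_0,E\setminus B_0)$.

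For the converse, suppose $G_{M,F}$ is bipartite with parts $(A,B)$, set $X=F\triangle A$, and let $\mathcal{B}'=\mathcal{F}(M)\triangle X$. I claim $\mathcal{B}'$ is the set of bases of a matroid. The key step is the following single-pivot preservation claim: if $G_{M,F_0}$ is bipartite with parts $(A_0,B_0)$ and $uv$ is an edge with $u\in A_0$, $v\in B_0$, then $G_{M,F_0\triangle\{u,v\}}$ is bipartite with parts $(A_0\triangle\{u,v\},B_0\triangle\{u,v\})$. Given this, iterated applications of the delta-matroid exchange axiom (combined with the resulting absence of loops at every step) produce a walk $F=F_0,F_1,\dots,F_k=F'$ of $2$-element pivots reaching any prescribed feasible $F'$, where each $F_{i+1}$ is obtained from $F_i$ by swapping a vertex of $A_i$ with one of $B_i$. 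Consequently $|A\triangle(F\triangle F')|=|A|$ for every $F'\in\mathcal{F}(M)$, so every element of $\mathcal{B}'=\{F'\triangle X:F'\in\mathcal{F}(M)\}$ has cardinality $|A|$. The matroid base exchange axiom for $\mathcal{B}'$ then follows directly from the delta-matroid exchange axiom for $\mathcal{F}(M)$, since equal cardinality forces the exchanged element to lie on the correct side.

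The main obstacle is the single-pivot claim, which I prove by directly invoking the delta-matroid exchange axiom rather than Lemma~\ref{evenfundgraph}. Absence of loops at $u$ or $v$ in $G_{M,F_0\triangle\{u,v\}}$ is immediate, since the corresponding twists reduce to single-element twists of $F_0$ forbidden by its bipartiteness, and edges of the new graph incident with $u$ or $v$ are handled by a direct rewrite of their feasibility criteria in terms of $G_{M,F_0}$. To exclude a loop at $w\notin\{u,v\}$, set $F''=F_0\triangle\{u,v,w\}$ and apply the axiom to the pair $F'',F_0$ once with distinguished element $u$ and once with $v$: in both applications the only surviving choice forces $uw,vw\in E(G_{M,F_0})$, which would place $w$ on both sides of $(A_0,B_0)$, a contradiction. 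To exclude an edge $ab$ with $a,b\notin\{u,v\}$ on the same side (say $a,b\in A_0$), set $F''=F_0\triangle\{u,v,a,b\}$ and apply the axiom to $F'',F_0$ with element $v$: every candidate $z\in\{u,a,b\}$ corresponds to a same-side non-edge of $G_{M,F_0}$, so $z=v$ is forced, producing $F_0\triangle\{u,a,b\}\in\mathcal{F}(M)$; applying the axiom once more to $F_0,F_0\triangle\{u,a,b\}$ with element $a$ then eliminates $z=a$ (no loop at $a$) and $z=u,b$ (same-side non-edges), a contradiction. The case $a,b\in B_0$ is symmetric with $u$ in place of $v$.
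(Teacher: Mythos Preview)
Your proof is correct. The paper does not prove this lemma at all; it simply cites it from Geelen's thesis, so there is no in-paper argument to compare against.

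A brief comment on your approach: you establish the pivot-preservation step directly from the delta-matroid exchange axiom, rather than invoking the pivot-update rules recorded in Lemma~\ref{evenfundgraph} (which the paper only states for even delta-matroids). This is a small but genuine advantage, since your argument never needs to assume evenness up front; the absence of loops is carried along as part of the bipartiteness invariant. Your case analysis for ruling out loops at $w\notin\{u,v\}$ and same-side edges $ab$ is clean, and the two-step exchange argument for the $ab$ case (first forcing $F_0\triangle\{u,a,b\}\in\mathcal{F}$, then exhausting the remaining exchange choices) is exactly what is needed. The final deduction that $\mathcal{B}'$ satisfies the base exchange axiom is also correct: the equicardinality you established via the walk forces the delta-matroid exchange to produce a genuine swap, landing on the correct side of the symmetric difference.
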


Duchamp implicitly characterizes the class of twisted matroids 
in terms of forbidden (delta-matroid) minors.

\begin{lemma}[Duchamp~\cite{Duchamp1992}]
\label{MK3}
A delta-matroid $M$ is a twisted matroid if and only if it has no minor isomorphic to $D_{1}=(\{1\},\{\emptyset,\{1\}\})$ or 
$MK_{3}=(\{1,2,3\},\{\emptyset,\{1,2\},\{1,3\},\{2,3\}\})$.
\end{lemma}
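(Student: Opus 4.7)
The plan is to prove both directions using Lemma~\ref{g1} (the criterion that a delta-matroid is a twisted matroid if and only if its fundamental graph is bipartite).

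\textbf{Necessity.} The class of twisted matroids is closed under taking (delta-matroid) minors, as observed in the paragraph preceding the lemma, so it suffices to verify that neither $D_1$ nor $MK_3$ is a twisted matroid. The fundamental graph $G_{D_1,\emptyset}$ is a single vertex with a loop, and $G_{MK_3,\emptyset}$ is the triangle $K_3$. Both are non-bipartite, so Lemma~\ref{g1} yields the conclusion.

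\textbf{Sufficiency.} I argue by minor-minimality: assume $M = (V,\mathcal{F})$ is not a twisted matroid but every proper minor is, and show $M$ is isomorphic to $D_1$ or $MK_3$. By Lemma~\ref{g1}, every fundamental graph of $M$ is non-bipartite while every fundamental graph of every proper minor is bipartite. After twisting, assume $\emptyset \in \mathcal{F}$ and set $G = G_{M,\emptyset}$. For each $v \in V$, the fundamental graph of the proper minor $M \setminus v$ (or $(M\triangle\{v\})\setminus v$ if $v$ lies in every feasible set) is $G - v$, which is therefore bipartite. Hence $G$ is non-bipartite but $G - v$ is bipartite for every $v$. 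If $G$ has a loop at some $v$, then $\{v\} \in \mathcal{F}$ and $M\setminus(V\setminus\{v\})$ is isomorphic to $D_1$, so by minimality $M \cong D_1$. Otherwise $G$ is loopless, so $M$ is even; a shortest odd cycle of $G$ is Hamiltonian (else some vertex lies outside it, making $G$ minus that vertex still non-bipartite) and chord-free (a chord would create a shorter odd cycle), so $G$ is an induced odd cycle $C_{2k+1}$ spanning $V$. If $k = 1$, then $G = K_3$ and evenness excludes singletons and the full triple from $\mathcal{F}$, forcing $M \cong MK_3$.

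To finish, I must rule out $k \geq 2$. The plan is to apply the exchange axiom to produce additional feasible sets and then exhibit a proper minor isomorphic to $MK_3$, contradicting minor-minimality. For example, with $X = \{v_i,v_{i+1}\}$ and $Y = \{v_j,v_{j+1}\}$ two disjoint cycle edges of cyclic gap at least $2$, applying the exchange axiom to $x \in X\triangle Y$: each alternative candidate $y$ yields either a non-edge pair (not feasible, since $G$ has no chords) or a set of odd size (excluded by evenness), which forces $X\triangle Y \in \mathcal{F}$. In the $k = 2$ case this forces all five $4$-element subsets of $V$ to be feasible, after which twisting by $X = \{v_1,v_3,v_4,v_5\}$ and deleting $\{v_4,v_5\}$ produces a minor on $\{v_1,v_2,v_3\}$ whose feasible sets are exactly $\{\emptyset,\{v_1,v_2\},\{v_1,v_3\},\{v_2,v_3\}\}$, namely $MK_3$. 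Since $MK_3$ is not a twisted matroid, this proper minor contradicts minimality. The main obstacle is handling $k \geq 3$: the analogous construction of a feasible $4$-twist together with an appropriate deletion producing $MK_3$ is more intricate, and requires repeated applications of the exchange axiom since Lemma~\ref{evenfundgraph} leaves ``chord'' adjacencies undetermined in the non-binary setting.
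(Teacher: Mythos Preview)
The paper cites this result from Duchamp without proof, so there is no in-paper argument to compare against.

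Your necessity direction and the reduction of the minor-minimal case to ``$G_{M,\emptyset}$ is an induced odd cycle $C_{2k+1}$'' are correct, apart from one small slip: ``$G$ loopless, so $M$ is even'' does not follow directly, since looplessness at $\emptyset$ only excludes feasible singletons. The standard fix is to handle the non-even case separately first, minimizing $|F_1\triangle F_2|$ over opposite-parity feasible pairs; the exchange axiom drives this down to $1$, yielding a $D_1$ minor and hence $M\cong D_1$ by minimality.

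Your $k=2$ argument is mis-stated but salvageable. In $C_5$ no two disjoint edges have cyclic gap at least $2$, so your hypothesis is vacuous as written. The correct move is to pick $x\in Y$ not adjacent on the cycle to either vertex of $X$; such an endpoint exists for every pair of disjoint edges of $C_5$, and with that choice your exchange computation does force $X\triangle Y\in\mathcal F$, after which your explicit twist-and-delete yields $MK_3$.

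The genuine gap is $k\ge 3$, which you concede. Note that your own argument shows that under minor-minimality \emph{every} fundamental graph $G_{M,F}$ is a $(2k{+}1)$-cycle, so pivoting cannot produce a visible triangle, and one forced $4$-set does not obviously cascade into enough feasible sets to locate an $MK_3$ minor by direct construction. Duchamp's proof sidesteps this by dropping minor-minimality: he minimizes odd-cycle length over all fundamental graphs $G_{M,F}$ and shows, via a unique-perfect-matching argument rather than the bare exchange axiom, that a cycle of length $\ge 5$ in some $G_{M,F}$ always yields a strictly shorter odd cycle in some $G_{M,F'}$. That matching step is the missing ingredient here, and Lemma~\ref{evenfundgraph} alone does not supply it.
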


The following lemma appeared in Chun, Moffatt, Noble, and Rueckriemen~\cite{Chun2019} with a short proof and is also implied by an old theorem of Holzmann, Norton, and Tobey~\cite{Holzmann1973} and was stated in the Ph.D.~thesis of Geelen~\cite[line 18 of page 15]{Geelen1996} without a proof.

\begin{lemma}[Chun, Moffatt, Noble, and Rueckriemen~{\cite[Theorem 3.11]{Chun2019}}]
\label{tmatroid}
Let $M_{1}$, $M_{2}$ be matroids on $E$. If $M_{1}$ is connected and $\mathcal{B}(M_{1})\triangle X=\mathcal{B}(M_{2})$ for some $X\subseteq E$, then $X=\emptyset$ or $X=E$. In other words, $M_{1}=M_{2}$ or $M_{1}=M_{2}^{*}$.
\end{lemma}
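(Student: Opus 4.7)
The plan is to reduce to the classical fact that for a matroid $M$ on $E$ and a subset $X\subseteq E$, the intersection size $|B\cap X|$ is constant as $B$ ranges over $\mathcal{B}(M)$ if and only if $X$ is a union of connected components of $M$. Since $M_{1}$ is connected, this will force $X\in\{\emptyset,E\}$, and the ``in other words'' conclusion is then immediate: $X=\emptyset$ gives $\mathcal{B}(M_{2})=\mathcal{B}(M_{1})$, while $X=E$ gives $\mathcal{B}(M_{2})=\{E-B:B\in\mathcal{B}(M_{1})\}=\mathcal{B}(M_{1}^{*})$.

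For the reduction, observe that all bases of $M_{2}$ have the same cardinality, so $|B\triangle X|$ is constant as $B$ varies in $\mathcal{B}(M_{1})$. Using the identity $|B\triangle X|=|B|+|X|-2|B\cap X|$ together with the fact that all bases of $M_{1}$ also have the same size, we obtain that $|B\cap X|$ is constant on $\mathcal{B}(M_{1})$. This is the one small trick of the argument: it converts the equal-cardinality axiom for $M_{2}$ into a nontrivial constraint on the bases of $M_{1}$.

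Now suppose for contradiction that $X\neq\emptyset$ and $X\neq E$. Pick $x\in X$ and $y\in E-X$. Because $M_{1}$ is connected, it has a circuit $C$ containing both $x$ and $y$. The set $C-\{x\}$ is independent, so we extend it to a basis $B_{1}$ of $M_{1}$; independence of $B_{1}$ forces $x\notin B_{1}$, and hence $y\in B_{1}$. Then $C\subseteq B_{1}\cup\{x\}$, and in fact $C$ is the unique circuit contained in $B_{1}\cup\{x\}$ (the fundamental circuit of $x$ with respect to $B_{1}$), so $B_{2}:=(B_{1}\cup\{x\})-\{y\}$ is a basis of $M_{1}$. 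But then $|B_{2}\cap X|=|B_{1}\cap X|+1$, contradicting the constancy established above.

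Connectedness is used in exactly one place, namely to produce a circuit meeting both $X$ and $E-X$; after that, the fundamental circuit delivers two bases whose $X$-intersections differ by $1$. I do not anticipate a real obstacle. The only edge case to mention is $|E|\leq 1$, where $X$ is vacuously in $\{\emptyset,E\}$ and the statement holds trivially.
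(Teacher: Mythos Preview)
Your proof is correct and self-contained. The paper does not give its own proof of this lemma; it simply cites Chun, Moffatt, Noble, and Rueckriemen~\cite{Chun2019} (noting their ``short proof''), the older theorem of Holzmann, Norton, and Tobey~\cite{Holzmann1973}, and Geelen's thesis. So there is nothing in the paper to compare against beyond observing that your argument is exactly the kind of short elementary proof the paper alludes to: reduce to constancy of $|B\cap X|$ via the equicardinality of bases, then use connectedness to exhibit a single basis exchange across the boundary of $X$.
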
  

\paragraph{Graphic delta-matroids.} Oum~\cite{Oum2009} introduced graphic delta-matroids. Let $G=(V,E)$ be a graph and $T$ be a subset of the set of vertices of~$G$. A subgraph~$H$ of~$G$ is called a \emph{T-spanning} subgraph of~$G$ if $V(G)=V(H)$ and for each component $C$ of~$H$, either
\begin{enumerate}
\item[(i)] $|V(C)\cap T|$ is odd, or
\item[(ii)] $V(C)\cap T=\emptyset$ and $G[V(C)]$ is a component of $G$.
\end{enumerate}
A \emph{graft} is a pair $(G,T)$ of a graph $G$ and a subset $T$ of the set of vertices of $G$. A set $F$ of edges of $G$ is \emph{feasible} in $(G,T)$ if it is the edge set of a $T$-spanning forest of $G$. Let $\mathcal{G}(G,T)=(E(G),\mathcal{F})$ where $\mathcal{F}$ is the set of all feasible sets of $(G,T)$. Oum \cite{Oum2009} proved that $\mathcal{G}(G,T)$ is an even binary delta-matroid. A delta-matroid $M$ is \emph{graphic} if there exist a graft $(G,T)$ and a subset $X$ of $E(G)$ such that $M=\mathcal{G}(G,T)\triangle X$. 
We remark that deleting isolated vertices from $G$ does not change $\mathcal G(G,T)$.

The following proposition is one of the major motivations to introduce graphic delta-matroids.
The \emph{binary line graph} $\operatorname{BL}(G)$ of a graph $G$ is a simple graph on $E(G)$ such that two edges $e$, $f$ of $G$ are adjacent in $\operatorname{BL}(G)$
if and only if $e$ and $f$ are non-loops that share exactly one end.
Note that if $G$ is simple, then the binary line graph of $G$ is equal to the line graph of $G$.
\begin{proposition}[Oum~\cite{Oum2009}]
  A binary delta-matroid is graphic if and only if 
  its fundamental graph is a pivot-minor of a binary line graph of some graph.
\end{proposition}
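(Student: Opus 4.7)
The plan is to use Lemma~\ref{pminor}, which converts between pivot-minors of fundamental graphs and fundamental graphs of delta-matroid minors, reducing the proposition to showing that a binary delta-matroid $M$ is graphic if and only if $M$ is equivalent (via twisting) to a minor of $\mathcal{G}(H, V(H))$ for some loopless graph $H$. The linchpin is the following computation: for every loopless graph $H$, the empty set is feasible in $\mathcal{G}(H, V(H))$ (since each singleton vertex has odd intersection with $T = V(H)$), and the fundamental graph $G_{\mathcal{G}(H, V(H)), \emptyset}$ equals $\operatorname{BL}(H)$. Indeed, a pair $\{e, f\} \subseteq E(H)$ is feasible with respect to $(H, V(H))$ exactly when the two-edge subgraph $(V(H), \{e, f\})$ has every component of odd size, which, since $H$ is loopless, happens precisely when $e$ and $f$ share a vertex, i.e., when $ef$ is an edge of $\operatorname{BL}(H)$.

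With this in hand, the backward direction is essentially automatic. If some fundamental graph of $M$ is a pivot-minor of $\operatorname{BL}(H)$, then one may assume $H$ is loopless (since deleting loops does not change $\operatorname{BL}(H)$); by the linchpin observation and Lemma~\ref{pminor}, this fundamental graph is a fundamental graph of a minor $N$ of $\mathcal{G}(H, V(H))$, so $M$ is equivalent to $N$. To conclude that $M$ is graphic, one checks that the class of graphic delta-matroids is closed under twisting (immediate from the definition) and under deletion and contraction of a single element: $\mathcal{G}(G, T) \setminus e = \mathcal{G}(G \setminus e, T)$, and contracting an element $e$ yields another graft (possibly modulo a unit twist on $e$) by a short case analysis on whether each endpoint of $e$ lies in $T$.

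The forward direction is the main work: given a graphic delta-matroid $M = \mathcal{G}(G, T) \triangle X$, I would construct a loopless graph $H$ such that $\mathcal{G}(G, T)$ is a minor of $\mathcal{G}(H, V(H))$. The basic construction attaches a pendant edge $e_v = vv^{\ast}$ at each $v \in V(G) - T$, obtaining $H_0$ with $T$-set $V(H_0)$; a direct computation of feasible sets (using the formula $N/Y = (N \triangle Y) \setminus Y$) shows that contracting every pendant edge in $\mathcal{G}(H_0, V(H_0))$ recovers $\mathcal{G}(G, T)$, provided every connected component of $G$ meets $T$. The main obstacle is a component $D$ of $G$ disjoint from $T$, for which clause~(ii) in the definition of a $T$-spanning subgraph forces the restriction of each feasible set to $D$ to be a spanning tree of $D$; the naive pendant construction fails here, since no feasible set of $\mathcal{G}(H_0, V(H_0))$ contains all the new pendant edges on such a component. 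I would resolve this by treating each connected component of $G$ separately (feasible sets of $\mathcal{G}(G, T)$ split across components) and, for each component $D$ disjoint from $T$, using a different gadget---for instance, attaching a two-edge path from a vertex of $D$ to a new $T$-vertex---so that the resulting graphic delta-matroid contracts down to $\mathcal{G}(D, \emptyset) = M(D)$. Assembling the per-component gadgets yields the required loopless $H$, and Lemma~\ref{pminor} then converts the minor relation back into the desired pivot-minor of $\operatorname{BL}(H)$.
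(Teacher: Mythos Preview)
The paper does not prove this proposition; it is quoted from \cite{Oum2009} without proof. Your outline is the natural argument and is essentially what one finds there: the key identity $G_{\mathcal{G}(H,V(H)),\emptyset}=\operatorname{BL}(H)$ for loopless $H$, together with Lemma~\ref{pminor}, reduces the proposition to showing that every $\mathcal{G}(G,T)$ arises as a delta-matroid minor of some $\mathcal{G}(H,V(H))$.

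Two small comments. First, the obstacle you flag for a component $D$ of $G$ disjoint from $T$ evaporates if you phrase the reduction via graft operations rather than via the raw formula $N\triangle Y\setminus Y$. Lemmas~\ref{lem:graftdeletion} and~\ref{lem:graftcontraction} guarantee that \emph{every} graft contraction corresponds to \emph{some} delta-matroid minor operation (when the contracted edge is a $T$-tunnel it simply becomes a deletion rather than a twist-then-delete). Hence the naive pendant construction already suffices: graft-contracting every pendant in $(H_0,V(H_0))$ yields $(G,T)$, and this chain exhibits $\mathcal{G}(G,T)$ as a minor of $\mathcal{G}(H_0,V(H_0))$ regardless of whether some components of $G$ avoid $T$. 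Even more simply, one checks directly that adding to $T$ one vertex from each $T$-disjoint component of $G$ does not change $\mathcal{G}(G,T)$, so one may assume every component meets $T$ from the outset. Your two-edge-path gadget would also work but is not needed.

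Second, in the linchpin computation ``share a vertex'' should read ``share exactly one endpoint'': for parallel non-loop edges $e,f$ the set $\{e,f\}$ is not a forest and hence not feasible, matching the fact that $ef\notin E(\operatorname{BL}(H))$.
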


For a graft $(G,T)$ and an edge $e$ of $G$, let $(G,T)\setminus e=(G\setminus e,T)$. This operation is called a \emph{deletion} of an edge $e$ in a graft $(G,T)$. For an isolated vertex $v$ of $G$, let $(G,T)\setminus v$ be $(G\setminus v, T-\{v\})$. This is a deletion of a vertex $v$. 
For an edge $e=uv$ of $G$, we define $(G,T)/e$ to be a graft $(G/e,T')$ such that 
\[
T'=
\begin{cases}
(T-\{u,v\})\cup\{e^{*}\} & \text{if }\lvert T\cap \{u,v\} \rvert=1, \\
T-\{u,v\} & \text{otherwise,}
\end{cases}
\]
where $e^*$ is the vertex corresponding to both ends of $e$ in $G/e$. 
This operation is called a \emph{contraction} of an edge $e$ in a graft $(G,T)$. 
Note that if $e$ is a loop,  then $(G,T)/e=(G,T)\setminus e$.
A graft $(G',T')$ is a \emph{minor} of a graft $(G,T)$ if $(G',T')$ is obtained from $(G,T)$ by a sequence of deletions and contractions. Oum~\cite{Oum2009} proved minors of graphic delta-matroids are graphic.
Let $\kappa(G,T)$ be the number of components of $G$ having no vertices in $T$. An edge $e$ is a \emph{T-bridge} of a graft $(G,T)$ if $\kappa(G\setminus e,T)>\kappa(G,T)$. An edge $e=uv$ is a \emph{$T$-tunnel} of a graft $(G,T)$ if $V(C)\cap T=\{u,v\}$ for the component $C$ of $G$ containing $e$.

\begin{lemma}[Oum {\cite[Proposition 8]{Oum2009}}]
\label{lem:Tbridge}
All feasible sets of $\mathcal{G}(G,T)$ contain an edge $e$ if and only if $e$ is a $T$-bridge.
\end{lemma}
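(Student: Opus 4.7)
The plan is to prove the two directions separately, peeling off the easy loop/parity cases first and reducing the non-trivial work to a construction of a $T$-spanning forest of $(G\setminus e,T)$.

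For the ``if'' direction, assume $e$ is a $T$-bridge. A loop never changes $\kappa$, so $e=uv$ cannot be a loop, and because $\kappa$ strictly increases after deleting $e$, the edge $e$ must be a bridge of its component $C$ in $G$, splitting it into $C_u\ni u$ and $C_v\ni v$. A short parity comparison of $\kappa(G,T)$ and $\kappa(G\setminus e,T)$ shows that at least one side, say $C_v$, satisfies $V(C_v)\cap T=\emptyset$ and $V(C_v)\subsetneq V(C)$. If some feasible set $F$ omitted $e$, then in the forest $(V(G),F)$ the component $K$ containing $v$ would be trapped inside $V(C_v)$, since $e$ is the only edge of $G$ between the two sides of the bridge. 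Consequently $V(K)\cap T=\emptyset$, and condition~(ii) of the $T$-spanning forest definition would force $V(K)$ to be the vertex set of a component of $G$, which is impossible because $V(K)\subseteq V(C_v)\subsetneq V(C)$.

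For the ``only if'' direction, assume $e$ is not a $T$-bridge. If $e$ is a loop, then a forest forbids it and any feasible set (which exists because $\mathcal{G}(G,T)$ is a delta-matroid) already avoids $e$. Otherwise, the strategy is to build a $T$-spanning forest of $(G\setminus e,T)$ and then check that it still witnesses feasibility in $(G,T)$. For the construction, I handle each component $D$ of $G\setminus e$ separately: if $V(D)\cap T=\emptyset$ or if $|V(D)\cap T|$ is odd, take any spanning tree of $D$; if $|V(D)\cap T|$ is a positive even number, take a spanning tree $S$ of $D$, root it at any $T$-vertex, and remove the edge immediately above a deepest $T$-vertex $y$, producing a subtree whose only $T$-vertex is $y$ (odd count) and a complement with an odd number of $T$-vertices.

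The main obstacle is the final verification that this forest $H'$, built inside $(G\setminus e,T)$, still satisfies the definition in $(G,T)$. Condition~(i) transfers verbatim, so only condition~(ii) needs attention: any component $K$ of $H'$ with $V(K)\cap T=\emptyset$ whose vertex set is a component of $G\setminus e$ must be shown to be a component of $G$ as well. The only way this can fail is if $V(K)$ equals $V(C_u)$ or $V(C_v)$ for the bridge decomposition of the component of $G$ containing $e$; but $V(K)\cap T=\emptyset$ then forces that side to have no $T$-vertex, which makes $e$ a $T$-bridge and contradicts the hypothesis. Tracking exactly which components of $G\setminus e$ differ from components of $G$ is the step where the non-$T$-bridge hypothesis is actually used, and will be the most delicate part of the write-up.
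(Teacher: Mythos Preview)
The paper does not supply its own proof of this lemma; it is quoted verbatim from Oum~\cite[Proposition~8]{Oum2009} without argument, so there is nothing in the present paper to compare against. Your proof is correct and self-contained.

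Two minor remarks on presentation. First, the phrase ``parity comparison'' in the ``if'' direction is a misnomer: the conclusion that at least one of $C_u,C_v$ misses $T$ comes from a straight case split on whether $V(C)\cap T$ is empty, not from any parity. Second, in the ``only if'' direction your final paragraph tacitly assumes $e$ is a bridge of $G$ when you invoke ``the bridge decomposition of the component of $G$ containing $e$''; it would read more cleanly to dispose first of the case that $e$ is not a bridge (then the components of $G\setminus e$ and of $G$ coincide and there is nothing to check) before handling the bridge case. Neither of these is a genuine gap.
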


\begin{lemma}[Oum {\cite[Proposition 9]{Oum2009}}] 
\label{lem:graftdeletion}
Let $(G,T)$ be a graft. For an edge $e$ of $G$, %
\[
\mathcal{G}((G,T)\setminus e)=
\begin{cases}
\mathcal{G}(G,T)\setminus\{e\} & \text{if $e$ is not a $T$-bridge,} \\
\mathcal{G}(G,T)\triangle\{e\}\setminus\{e\} & \text{otherwise.}
\end{cases}
\]
\end{lemma}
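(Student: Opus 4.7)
The plan is to unpack the definitions and compare the feasible sets directly. Recall that $F$ is feasible for $\mathcal{G}(G,T)$ iff each component $C$ of $(V(G),F)$ either has $|V(C)\cap T|$ odd (clause (i)), or has $V(C)\cap T=\emptyset$ and $G[V(C)]$ a component of $G$ (clause (ii)). Among these conditions, only clause~(ii) depends on the ambient graph, and the $T$-bridge hypothesis is exactly what controls how (ii) survives the deletion of $e$. I split into the two cases of the statement.

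When $e$ is not a $T$-bridge, the feasible sets of $\mathcal{G}(G,T)\setminus\{e\}$ are by definition the $T$-spanning forests of $G$ avoiding $e$, and I must identify these with the $T$-spanning forests of $G\setminus e$. Since $V(G)=V(G\setminus e)$, for $F\subseteq E(G)-\{e\}$ the component structure of $(V(G),F)$ is the same in either ambient graph, so clause~(i) is invariant. For clause~(ii) I verify that, whenever $W\cap T=\emptyset$, $G[W]$ is a component of $G$ iff $(G\setminus e)[W]$ is a component of $G\setminus e$; the only failure modes (one endpoint of $e$ in $W$ and the other outside $W$, or $e$ a bridge lying inside $G[W]$) would each force $\kappa(G\setminus e,T)>\kappa(G,T)$, contradicting the hypothesis. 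This gives the desired equality of feasible sets.

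When $e$ is a $T$-bridge, Lemma~\ref{lem:Tbridge} gives $e\in F$ for every feasible $F$, so $\mathcal{G}(G,T)\triangle\{e\}\setminus\{e\}=\{F-\{e\}:F \text{ a $T$-spanning forest of }G\}$. I establish the bijection $F\leftrightarrow F-\{e\}$ with $T$-spanning forests of $G\setminus e$. Let $C^*$ be the $G$-component containing $e=uv$ and let $A,B$ be the two components of $G\setminus e$ into which $C^*$ splits; the $T$-bridge hypothesis forces at least one of $V(A)\cap T$, $V(B)\cap T$ to be empty. In either subcase (both empty, or exactly one empty), I trace the tree $C$ of $F$ through $e$ and its split into trees $C_1\ni u$ and $C_2\ni v$, then verify that $F-\{e\}$ meets clauses~(i) and~(ii) in $G\setminus e$; the reverse direction merges the trees of $F'$ containing $u$ and $v$ via $e$ and verifies the conditions in $G$.

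The main obstacle will be the rigidity argument in the second case: showing that any tree of $F$ or $F'$ whose vertex set lies inside $V(A)$ must in fact equal $V(A)$. Such a tree has no $T$-vertex (as $V(A)\cap T=\emptyset$), so clause~(i) fails and clause~(ii) demands that it fill a full $G$-component; since $V(A)\subsetneq V(C^*)$, this is only possible when that tree equals $V(A)$ and another tree fills $V(B)$. Once this rigidity is in place, the parity of $|V(C_2)\cap T|$ (odd, inherited from $V(C)\cap T$) and the component-of-$G$ status of the merged tree follow directly, completing both directions of the bijection.
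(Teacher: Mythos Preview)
The paper does not give its own proof of this lemma; it is quoted as Proposition~9 of Oum~\cite{Oum2009} and simply cited. Your direct verification from the definitions is correct: in the non-$T$-bridge case you correctly identify that only clause~(ii) is sensitive to the ambient graph and rule out both failure modes via the $\kappa$-count, and in the $T$-bridge case your rigidity argument (any $T$-free tree contained in $V(A)$ must fill all of $V(A)$) is exactly what is needed to run the bijection $F\leftrightarrow F\setminus\{e\}$ in both directions.
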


\begin{lemma}[Oum {\cite[Proposition 10]{Oum2009}}]
\label{lem:Ttunnel}
No feasible set of $\mathcal{G}(G,T)$ contains an edge $e$ if and only if $e$ is a loop or a $T$-tunnel.
\end{lemma}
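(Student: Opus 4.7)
The plan is to prove both directions directly from the definitions.

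The forward direction is quick: a loop lies in no forest, so no feasible set can contain it. If $e=uv$ is a $T$-tunnel and some feasible set $F$ contains $e$, then the tree-component $C'$ of the corresponding $T$-spanning forest through $e$ lies inside the component $C$ of $G$ with $V(C)\cap T=\{u,v\}$, so $\{u,v\}\subseteq V(C')\cap T\subseteq \{u,v\}$; thus $V(C')\cap T=\{u,v\}$ has even nonzero size, violating both clauses (i) and (ii) in the definition of a $T$-spanning forest.

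For the backward direction, suppose $e=uv$ is neither a loop nor a $T$-tunnel, and let $C$ be the component of $G$ containing $e$. Since $\mathcal{G}(G,T)$ is a delta-matroid, hence nonempty, fix some feasible set $F_{0}$. I will replace the edges of $F_{0}$ inside $C$ by a subforest $H_{C}$ of $C$ that contains $e$ and whose tree-components satisfy clause (i) or (ii); since feasibility is decided component by component within $G$, the resulting edge set $(F_{0}- E(C))\cup E(H_{C})$ remains feasible and contains $e$. Start from a spanning tree $S$ of $C$ containing $e$, which exists since $C$ is connected and $e$ is not a loop, and construct $H_{C}$ by deleting a suitable subset $D\subseteq E(S)-\{e\}$ from $S$.

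The substantive step is choosing $D$. If $V(C)\cap T=\emptyset$, take $D=\emptyset$: the unique tree-component spans $V(C)$ and satisfies clause (ii). Otherwise I invoke the folklore tree-partition lemma (proved by induction, cutting a shortest path between two designated vertices): any tree with $m\geq 1$ designated vertices admits a partition of its vertex set into $m$ connected subtrees, each containing exactly one designated vertex. Apply this to the contracted tree $S/e$, where the merged vertex $[uv]$ carries weight $|\{u,v\}\cap T|$ and every other vertex $x$ carries weight $1$ if $x\in T$ and $0$ otherwise. If $|\{u,v\}\cap T|\leq 1$, designate every weight-$1$ vertex (including $[uv]$ when it has weight $1$), and the lemma yields subtrees each of weight $1$. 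If $|\{u,v\}\cap T|=2$, designate only the weight-$1$ vertices in $V(C)-\{u,v\}$ — at least one exists since $e$ is not a $T$-tunnel — and $[uv]$ is absorbed into some designated subtree, giving that subtree weight $3$ while all others have weight $1$. Uncontracting $e$ turns this partition into the required $D$: every tree-component of $S\setminus D$ has odd $T$-count, and $e$ survives since $u$ and $v$ both lie in the subtree that absorbed $[uv]$. The key anticipated obstacle — the case $|\{u,v\}\cap T|=2$ with no other $T$-vertex available to make $[uv]$'s subtree odd — is precisely the $T$-tunnel case that the hypothesis rules out.
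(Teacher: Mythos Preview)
Your proof is correct. The paper does not supply its own argument for this lemma; it is quoted from Oum~\cite{Oum2009} by citation only, so there is nothing in-paper to compare against. Your direct construction---taking a spanning tree of the relevant component through $e$ and pruning it via the tree-partition lemma so that every piece meets $T$ in an odd set---is a clean self-contained route, and the case analysis on $|\{u,v\}\cap T|$ correctly isolates the $T$-tunnel hypothesis as exactly the obstruction. One cosmetic point: your direction labels are swapped relative to the statement as written (what you call ``forward'' is the implication from ``loop or $T$-tunnel'' to ``no feasible set contains $e$''), but both implications are established.
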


\begin{lemma}[Oum {\cite[Proposition 11]{Oum2009}}] 
\label{lem:graftcontraction}
Let $(G,T)$ be a graft. For an edge $e$ of $G$, 
\[
\mathcal{G}((G,T)/e)=
\begin{cases}
\mathcal{G}(G,T)\triangle\{e\}\setminus\{e\} & \text{if $e$ is neither a $T$-tunnel nor a loop,} \\
\mathcal{G}(G,T)\setminus\{e\} & \text{otherwise.}
\end{cases}
\]
\end{lemma}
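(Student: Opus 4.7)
The plan is to verify the equality of feasible sets directly by comparing $T$-spanning forests of $G$ with $T'$-spanning forests of $G/e$, handling the three cases ($e$ a loop, $e$ a $T$-tunnel, $e$ neither) separately.

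First I would dispatch the easy cases. If $e$ is a loop, then by the definition of graft contraction we have $(G,T)/e=(G,T)\setminus e$, and a loop is never a $T$-bridge, so Lemma~\ref{lem:graftdeletion} immediately gives $\mathcal{G}((G,T)/e)=\mathcal{G}(G,T)\setminus\{e\}$. If $e=uv$ is a $T$-tunnel, then Lemma~\ref{lem:Ttunnel} tells us $e$ is in no feasible set of $\mathcal{G}(G,T)$ and the component $C$ of $G$ containing $e$ has both $u,v\in T$, from which $e$ is also not a $T$-bridge; so $\mathcal{G}(G,T)\setminus\{e\}=\mathcal{G}((G,T)\setminus e)$ by Lemma~\ref{lem:graftdeletion}. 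It then suffices to produce a bijection between $T$-spanning forests of $G\setminus e$ and $T'$-spanning forests of $G/e$; this is essentially the same bijection needed in the main case below, restricted to forests avoiding $e$.

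The main work is the case where $e$ is neither a loop nor a $T$-tunnel. I claim $F'\subseteq E(G)-\{e\}$ is the edge set of a $T'$-spanning forest of $G/e$ if and only if $F'\cup\{e\}$ is the edge set of a $T$-spanning forest of $G$. Let $H'$ be the spanning subgraph of $G/e$ with edge set $F'$ and let $H$ be the spanning subgraph of $G$ with edge set $F'\cup\{e\}$. A counting argument using $|V(H)|=|V(H')|+1$ and $|E(H)|=|E(H')|+1$, together with the observation that every component of $H'$ either is disjoint from $e^*$ (and equals a component of $H$) or contains $e^*$ (and corresponds to the union of the components of $H\setminus e$ containing $u$ and $v$, merged by $e$), shows that $H$ is a forest in $G$ if and only if $H'$ is a forest in $G/e$ and $u,v$ lie in different components of $H\setminus e$.

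Next I would verify the $T$-spanning conditions component by component. Components of $H'$ avoiding $e^*$ have the same vertex set, same $T$-intersection (since $T\cap(V(G)-\{u,v\})=T'\cap(V(G/e)-\{e^*\})$), and the same ``induced subgraph is a component of the ambient graph'' property in $G$ and $G/e$. For the component $D'$ of $H'$ containing $e^*$ and the component $C_e$ of $H$ containing $e$, a direct case split on $|T\cap\{u,v\}|\in\{0,1,2\}$ using the definition of $T'$ yields $|V(C_e)\cap T|\equiv|V(D')\cap T'|\pmod{2}$, so odd parities match. The delicate subcase is when $V(D')\cap T'=\emptyset$ and $(G/e)[V(D')]$ is a component of $G/e$: if $|T\cap\{u,v\}|=1$, then $e^*\in T'\cap V(D')$ gives a contradiction; if $|T\cap\{u,v\}|=0$, then the analogous condition for $C_e$ in $G$ is immediate; and if $|T\cap\{u,v\}|=2$, the configuration forces $V(C)\cap T=\{u,v\}$ for the component $C$ of $G$ containing $e$, meaning $e$ is a $T$-tunnel, contradicting the hypothesis. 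This last contradiction is the crux of the argument and is precisely what selects the $T$-tunnel exception in the statement. The hard part is this parity and ``isolated component'' bookkeeping across the three subcases; the rest is routine.
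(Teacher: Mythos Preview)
The paper does not prove this lemma; it is quoted from Oum~\cite{Oum2009} without argument, so there is no paper proof to compare against. Your direct verification of the main case is correct: the forest equivalence between $H$ and $H'=H/e$, the parity computation $|V(C_e)\cap T|\equiv|V(D')\cap T'|\pmod 2$ across the three values of $|T\cap\{u,v\}|$, and the identification of the $T$-tunnel hypothesis as precisely what excludes the problematic configuration $|T\cap\{u,v\}|=2$ with $V(D')\cap T'=\emptyset$ and $(G/e)[V(D')]$ a component of $G/e$, all go through as you describe.

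Your treatment of the $T$-tunnel case, however, is not accurate as written. You say it is ``essentially the same bijection needed in the main case below, restricted to forests avoiding $e$,'' but the main-case correspondence is $F'\leftrightarrow F'\cup\{e\}$, and in the $T$-tunnel case no feasible set of $\mathcal{G}(G,T)$ contains $e$, so that restriction is vacuous. What is actually required here is the identity map $F'\leftrightarrow F'$ between $T$-spanning forests of $G$ and $T'$-spanning forests of $G/e$, and the component analysis differs genuinely from the main case: since $V(C)\cap T=\{u,v\}$ for the component $C$ of $G$ containing $e$, the vertices $u$ and $v$ must lie in \emph{distinct} components $D_u,D_v$ of $(V(G),F')$ with $V(D_u)\cap T=\{u\}$ and $V(D_v)\cap T=\{v\}$ (each odd), and one must check that their merger $D^*$ in $G/e$ has $V(D^*)\cap T'=\emptyset$ and that $V(D^*)=V(C/e)$ (the latter because any further $F'$-component inside $V(C)$ avoiding $\{u,v\}$ would have empty $T$-intersection without being a $G$-component, violating $T$-spanning). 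This is routine, but it is not the main-case argument restricted; it deserves its own short paragraph.
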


For a good coverage of delta-matroids, we refer the reader to the survey by Moffatt~\cite{Moffatt2019}.

\section{Delta-graphic matroids and grafts}
\label{sec:graft}
A matroid $M$ is \emph{delta-graphic} if there exist a graft $(G,T)$ and a subset $X$ of $E(G)$ such that the set of bases of $M$ is equal to $\mathcal{F}(\mathcal{G}(G,T))\triangle X$. 
The following two lemmas imply that the class of delta-graphic matroids is 
clsoed under taking dual and minor.

\begin{lemma}
\label{lem:dualclosed}
If a matroid $M$ is delta-graphic, then so is $M^{*}$.
\end{lemma}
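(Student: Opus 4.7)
The plan is to observe that matroid duality corresponds exactly to twisting by the full ground set, and then invoke the hypothesis directly. Concretely, since $\mathcal{B}(M^{*}) = \{E(M) - B : B \in \mathcal{B}(M)\}$ and, for any subset $B$ of $E(M)$, the complement $E(M) - B$ equals $E(M) \triangle B$, we have the identity
\[
\mathcal{B}(M^{*}) = \mathcal{B}(M) \triangle E(M).
\]

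Starting from this, I would use the hypothesis that $\mathcal{B}(M) = \mathcal{F}(\mathcal{G}(G,T)) \triangle X$ for some graft $(G,T)$ and some $X \subseteq E(G)$. Note that both $M$ and $\mathcal{G}(G,T)$ have ground set $E(G)$, since twisting does not change the ground set. Then, using the associativity of the symmetric difference, I compute
\[
\mathcal{B}(M^{*}) = \mathcal{B}(M) \triangle E(G) = \bigl(\mathcal{F}(\mathcal{G}(G,T)) \triangle X\bigr) \triangle E(G) = \mathcal{F}(\mathcal{G}(G,T)) \triangle (X \triangle E(G)) = \mathcal{F}(\mathcal{G}(G,T)) \triangle (E(G) - X).
\]
Setting $X' := E(G) - X \subseteq E(G)$ therefore exhibits $M^{*}$ as delta-graphic via the same graft $(G,T)$ and the twisting set $X'$, completing the proof.

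There is essentially no obstacle here; the only thing to be careful about is keeping track of twists versus complements and confirming that $\mathcal{B}(M) \triangle E(M)$, viewed as a family of subsets, really is the set of bases of the dual matroid. The upshot is that the graft is preserved under dualization of the associated delta-graphic matroid, and only the twisting subset is complemented.
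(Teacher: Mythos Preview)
Your proof is correct and follows essentially the same approach as the paper's own proof, which simply states in one line that if $\mathcal{B}(M)=\mathcal{F}(\mathcal{G}(G,T))\triangle X$ then $\mathcal{B}(M^{*})=\mathcal{F}(\mathcal{G}(G,T))\triangle (E(G)- X)$. You have merely spelled out the intermediate steps (the identity $\mathcal{B}(M^{*})=\mathcal{B}(M)\triangle E(M)$ and the associativity of symmetric difference) that the paper leaves implicit.
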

\begin{proof}
If $\mathcal{B}(M)=\mathcal{F}(\mathcal{G}(G,T))\triangle X$
for a graft $(G,T)$ and a subset $X$ of $E(G)$, 
then $\mathcal{B}(M^{*})=\mathcal{F}(\mathcal{G}(G,T))\triangle (E(G)- X)$.
\end{proof}

\begin{lemma}
\label{tgraphicminorclosed}
If a matroid $M$ is delta-graphic, then every (matroid) minor of $M$ is delta-graphic.
\end{lemma}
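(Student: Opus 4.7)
The plan is to reduce to deletion using duality and then do a small case analysis using the graft minor lemmas. By Lemma~\ref{lem:dualclosed} and the identity $M/e=(M^{*}\setminus e)^{*}$, it suffices to show that for every $e\in E(M)$, the matroid $M\setminus e$ is delta-graphic. Fix a graft $(G,T)$ with $E(G)=E(M)$ and a set $X\subseteq E(G)$ such that
\[
\mathcal{B}(M)=\mathcal{F}(\mathcal{G}(G,T))\triangle X,
\]
and for brevity write $\mathcal{F}=\mathcal{F}(\mathcal{G}(G,T))$.

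First I would record the basic dictionary between the twist and the graft operations. Because $B=F\triangle X$ for $F\in\mathcal{F}$, membership of $e$ in $B$ is controlled by the parity of $\lvert\{e\}\cap X\rvert$ and $\lvert\{e\}\cap F\rvert$. So whether $e$ belongs to every base, to some base, or to no base is determined by the analogous question for $F$ (flipped if $e\in X$). Combining this with Lemmas~\ref{lem:Tbridge} and~\ref{lem:Ttunnel}, the behavior of $e$ in $M$ is completely described by the type of $e$ in $(G,T)$ together with whether $e\in X$.

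Next I would split into two cases depending on whether $e$ is a coloop of $M$. If $e$ is not a coloop, then $\mathcal{B}(M\setminus e)=\{B\in\mathcal{B}(M):e\notin B\}$. If $e\notin X$ this equals $\{F\in\mathcal{F}:e\notin F\}\triangle X$; the observation above forces $e$ to be neither a $T$-bridge (so Lemma~\ref{lem:graftdeletion} gives $\mathcal{F}(\mathcal{G}((G,T)\setminus e))=\{F\in\mathcal{F}:e\notin F\}$) and the graft $(G,T)\setminus e$ with twist $X$ witnesses that $M\setminus e$ is delta-graphic. If instead $e\in X$, the same set of bases equals $\{F\in\mathcal{F}:e\in F\}\triangle X$, $e$ is neither a loop nor a $T$-tunnel, and Lemma~\ref{lem:graftcontraction} gives $\mathcal{F}(\mathcal{G}((G,T)/e))=\{F-\{e\}:F\in\mathcal{F},\ e\in F\}$; the graft $(G,T)/e$ with twist $X-\{e\}$ works. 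If $e$ is a coloop, then $M\setminus e=M/e$ and $\mathcal{B}(M\setminus e)=\{B-\{e\}:B\in\mathcal{B}(M)\}$; a symmetric argument shows that $(G,T)\setminus e$ (when $e\notin X$, in which case $e$ is a $T$-bridge) or $(G,T)/e$ (when $e\in X$, in which case $e$ is a $T$-tunnel or a loop) together with the appropriate residual twist realizes $M\setminus e$ as delta-graphic.

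The argument is essentially bookkeeping—the only place where one has to be careful is matching the parity conventions in the twist with the $\triangle\{e\}$ that sometimes appears in Lemmas~\ref{lem:graftdeletion} and~\ref{lem:graftcontraction}. I do not expect any real obstacle; the case analysis above covers every combination exactly once, and the conclusion follows by iterating on a sequence of deletions and contractions.
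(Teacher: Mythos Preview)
Your proof is correct and follows essentially the same approach as the paper: reduce to deletion via duality, then split on whether $e\in X$ and invoke Lemmas~\ref{lem:Tbridge}--\ref{lem:graftcontraction}. The only cosmetic difference is that the paper disposes of the coloop case by replacing $M$ with $M^{*}$ (so that $e$ may be assumed not a coloop), whereas you handle the coloop case directly; both arguments are equally short and use the same ingredients.
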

\begin{proof}
Let $M$ be a delta-graphic matroid. Then there exist a graft $(G,T)$ and a subset $X$ of $E(G)$ such that $\mathcal{B}(M)=\mathcal{F}(\mathcal{G}(G,T))\triangle X$. 
We claim that every minor $N$ is delta-graphic. By Lemma~\ref{lem:dualclosed}, we may assume that $N=M\setminus e$. 
If $e$ is a coloop of $M$, then $M\setminus e= M/e=(M^*\setminus e)^*$ and so we replace $M$ by $M^*$ to assume that $e$ is not a coloop of $M$.
Then, we have 
\[
\mathcal{B}(M\setminus e)=\mathcal{F}(\mathcal{G}(G,T))\triangle X\setminus\{e\}=
\begin{cases}
(\mathcal{F}(\mathcal{G}(G,T))\setminus\{e\})\triangle X & \text{if $e\notin X$,} \\
(\mathcal{F}(\mathcal{G}(G,T))\triangle\{e\}\setminus\{e\})\triangle (X\triangle\{e\}) & \text{if $e\in X$.} 
\end{cases}
\]
Let us consider the case when $e\notin X$ first. Since $e$ is not a coloop of $M$, $e$ is not a $T$-bridge by Lemma~\ref{lem:Tbridge}. Then, $\mathcal{B}(M\setminus e)=\mathcal{F}(\mathcal{G}((G,T)\setminus e))\triangle X$ by Lemma~\ref{lem:graftdeletion}. 

If $e\in X$, since $e$ is not a coloop of $M$, $e$ is neither a $T$-tunnel nor a loop by Lemma~\ref{lem:Ttunnel}. So $\mathcal{B}(M\setminus e)=\mathcal{F}(\mathcal{G}((G,T)/e))\triangle(X\triangle\{e\})$ by Lemma~\ref{lem:graftcontraction}. 
\end{proof}

\begin{lemma}
\label{Tlessthan2}
Let $G$ be a graph and $T$ be a subset of $V(G)$.
If $|T|\leq 2$, then $\mathcal{G}(G,T)$ is the cycle matroid of a graph that is obtained from $G$ by identifying all vertices in $T$.
\end{lemma}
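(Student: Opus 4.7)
The plan is to let $G'$ denote the graph obtained from $G$ by identifying all vertices of $T$ into a single vertex $v^*$ (when $|T|\geq 1$; otherwise $G'=G$). I want to show that for every $F\subseteq E(G)=E(G')$, the set $F$ is feasible in the graft $(G,T)$ if and only if $F$ is a base of $M(G')$, so that $\mathcal{G}(G,T)=M(G')$ as set systems on $E(G)$.

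The cases $|T|\leq 1$ are almost immediate: the identification does nothing (or identifies a single vertex with itself), and condition (i) of the $T$-spanning subgraph definition reduces to $|V(C)\cap T|=1$, saying that the component contains the unique vertex of $T$ (if any). Together with condition (ii) applied to the other components, this makes $T$-spanning forests of $(G,T)$ coincide exactly with ordinary spanning forests of $G=G'$.

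For $|T|=2$, write $T=\{s,t\}$. The key graph-theoretic observation is that every cycle of $G'$ lifts to either a cycle of $G$ or an $s$-$t$-path of $G$; equivalently, writing $H=(V(G),F)$ and $H'=(V(G'),F)$, the subgraph $H'$ is acyclic if and only if $H$ is acyclic and $s,t$ lie in different components of $H$. With this in hand I prove both directions directly. If $F$ is a $T$-spanning forest, then $H$ is a forest and the odd-parity condition (i) forces $s$ and $t$ into different components of $H$, so $H'$ is acyclic; a routine count of $|E(H)|$, splitting into subcases according to whether $s$ and $t$ lie in the same component of $G$, shows that $|F|=|V(G')|-c(G')$, so $H'$ is a spanning forest of $G'$. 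Conversely, if $F$ is a base of $M(G')$, then $H$ is acyclic and $s,t$ are in distinct components. Any component $C$ of $H$ with $V(C)\cap T=\emptyset$ does not meet $v^*$ in $G'$, so it coincides with a component of $H'$, which must equal a component of $G'$, and hence of $G$; this yields condition (ii). A component of $H$ that does meet $T$ meets it in exactly one vertex, so $|V(C)\cap T|=1$ is odd and condition (i) holds.

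The main technical nuisance is the bookkeeping of vertex, edge, and component counts in the two sub-cases of $|T|=2$ — whether $s$ and $t$ lie in the same component of $G$ or in different ones — since $|V(G')|=|V(G)|-1$ in both cases but $c(G')$ equals $c(G)$ or $c(G)-1$ respectively, and the number of components of a $T$-spanning forest changes correspondingly; one must verify that the two adjustments cancel so that $|V(G')|-c(G')$ matches the number of edges in a $T$-spanning forest.
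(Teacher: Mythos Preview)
Your proposal is correct and follows the same approach as the paper: define $G'$ by identifying the vertices of $T$ and check that $T$-spanning forests of $(G,T)$ are exactly the maximal spanning forests of $G'$. The paper's own proof is a two-line sketch (``trivial when $|T|\le 1$; if $|T|=2$ then $F$ is the edge set of a $T$-spanning forest of $G$ if and only if $F$ is the edge set of a maximal spanning forest of $G'$''), so your write-up is simply a fleshed-out version of the same argument, including the edge-count verification that the paper leaves implicit.
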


\begin{proof}
  Let $G'$ be the graph obtained from $G$ by identifying all vertices in $T$.
The lemma is trivial when $|T|\leq 1$. If $|T|=2$, then $F$ is the edge set of a $T$-spanning forest of $G$ if and only if $F$ is the edge set of a maximal spanning forest of $G'$. The conclusion follows easily.
\end{proof}
  
\begin{lemma}
\label{graftconnected}
Let $G$ be a graph with no isolated vertices and $T$ be a subset of $V(G)$.
If $\mathcal{G}(G,T)$ is connected, then $G$ is connected.
\end{lemma}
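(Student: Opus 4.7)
The plan is to prove the contrapositive: assuming $G$ is disconnected, I will exhibit an explicit separation of $\mathcal{G}(G,T)$. Partition $V(G)=V_{1}\sqcup V_{2}$ where each $V_{i}$ is a nonempty union of components of $G$, and let $E_{i}=E(G[V_{i}])$ and $T_{i}=T\cap V_{i}$. Since $G$ has no isolated vertices, every component of $G$ contains an edge, so both $E_{1}$ and $E_{2}$ are nonempty, and $(E_{1},E_{2})$ is a partition of $E(G)$ because there are no edges between $V_{1}$ and $V_{2}$.

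The central step is to verify that $F\subseteq E(G)$ is a $T$-spanning forest of $G$ if and only if $F\cap E_{i}$ is a $T_{i}$-spanning forest of $G[V_{i}]$ for each $i\in\{1,2\}$. Indeed, because no edge of $G$ crosses the partition $V_{1}\sqcup V_{2}$, every component of the spanning subgraph determined by $F$ lies entirely inside one of $V_{1}$, $V_{2}$. Hence the forest property and the parity condition (i) in the definition of $T$-spanning localize component by component. The only delicate point is condition (ii): if a component $C$ of the subgraph $(V(G),F)$ with $V(C)\cap T=\emptyset$ lies in $V_{i}$, then $G[V(C)]$ being a component of $G$ is equivalent to its being a component of $G[V_{i}]$; this is because $V_{i}$ itself is a union of components of $G$, so any component of $G[V_{i}]$ is already a component of $G$, and vice versa.

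Once this correspondence is established, it follows that
\[
\mathcal{F}(\mathcal{G}(G,T))=\{F_{1}\cup F_{2}:F_{1}\in\mathcal{F}(\mathcal{G}(G[V_{1}],T_{1})),\ F_{2}\in\mathcal{F}(\mathcal{G}(G[V_{2}],T_{2}))\}.
\]
Since each $\mathcal{G}(G[V_{i}],T_{i})$ is a delta-matroid on $E_{i}$ by Oum's theorem recalled earlier, the pair $(E_{1},E_{2})$ is a separation of $\mathcal{G}(G,T)$, contradicting the hypothesis that $\mathcal{G}(G,T)$ is connected. The only step requiring real care is the localization of condition (ii), and this is handled cleanly by choosing $V_{1}$ and $V_{2}$ as unions of entire components of $G$ at the outset.
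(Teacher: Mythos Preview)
Your proof is correct and follows essentially the same approach as the paper's: both argue the contrapositive, partition $V(G)$ into two nonempty vertex sets with no crossing edges, and show that the feasible sets of $\mathcal{G}(G,T)$ factor as unions of feasible sets of the two induced grafts, yielding a separation. Your version is slightly more explicit in justifying why condition~(ii) of the $T$-spanning definition localizes (by insisting each $V_i$ is a union of components of $G$), a point the paper leaves implicit since any vertex bipartition with no crossing edges automatically has this property.
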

\begin{proof}
Suppose that $G$ is disconnected. Then there is a partition $(X,Y)$ of $V(G)$ such that $X,Y\neq\emptyset$ and there is no edge between $X$ and $Y$. Let $G_{1}=G[X],~G_{2}=G[Y], ~T_{1}=T\cap X, ~T_{2}=T\cap Y$.
Then $E(G)=E(G_{1})\cup E(G_{2})$ and $E(G_{1})\cap E(G_{2})=\emptyset$. Since $G$ has no isolated vertices, we have $E(G_{1}), E(G_{2})\neq\emptyset$. We show that $\mathcal{F}(\mathcal{G}(G,T))=\{F_{1}\cup F_{2} : F_{1}\in\mathcal{F}(\mathcal{G}(G_{1},T_{1})), F_{2}\in\mathcal{F}(\mathcal{G}(G_{2},T_{2}))\}$. 

Let $F_{1}$ be an edge set of a $T_{1}$-spanning forest and $F_{2}$ be an edge set of a $T_{2}$-spanning forest. Then, it is obvious that $F_{1}\cup F_{2}$ is the set of edges of a $T$-spanning forest. 
Conversely, let $F$ be an edge set of a $T$-spanning forest. Then $F_{1}=F\cap E(G_{1})$ is the set of edges of a $T_{1}$-spanning forest and $F_{2}=F\cap E(G_{2})$ is the set of edges of a $T_{2}$-spanning forest. 
Then, $(E(G_{1}), E(G_{2}))$ is a separation of $\mathcal{G}(G,T)$, contradicting our assumption.
\end{proof}

Recall that, in Lemma~\ref{MK3}, $MK_{3}$ is a delta-matroid defined by $(\{1,2,3\},\{\emptyset,\{1,2\},\{1,3\},\{2,3\}\})$. Let $\Delta_{1}=(K_{3},V(K_{3}))$, $\Delta_{2}=(K_{1,3},V(K_{1,3}))$, $\Delta_{3}=(K_{1,3},S)$ where $S$ is the set of vertices of degree~$1$ in $K_{1,3}$.

\begin{lemma}
\label{MK3graft}
For a graft $(G,T)$ with no isolated vertices, $\mathcal{G}(G,T)$ is equivalent to $MK_{3}$ up to isomorphism if and only if $(G,T)$ is isomorphic to one of $\Delta_{1}$, $\Delta_{2}$, $\Delta_{3}$.
\end{lemma}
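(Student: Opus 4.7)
The plan is to prove both directions by direct analysis, exploiting the rigidity of $MK_{3}$: it has exactly four feasible sets and each ground-set element lies in exactly two of them. For the forward direction, I would verify each of $\Delta_{1}$, $\Delta_{2}$, $\Delta_{3}$ by enumerating $T$-spanning forests. For $\Delta_{1}=(K_{3},V(K_{3}))$, the feasible sets are $\emptyset$ (three singleton components of odd $T$-intersection) and the three two-edge spanning paths (a single component of odd size~$3$); for $\Delta_{2}=(K_{1,3},V(K_{1,3}))$, they are $\emptyset$ and the three two-edge subsets. In both cases the family coincides with $\mathcal{F}(MK_{3})$ after relabeling edges. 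For $\Delta_{3}=(K_{1,3},S)$, the feasible sets are the three single edges together with $E(K_{1,3})$, and twisting by any one edge recovers $MK_{3}$.

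For the reverse direction, assume $\mathcal{G}(G,T)$ is equivalent to $MK_{3}$ up to isomorphism. Then $|E(G)|=3$ and, because $MK_{3}$ is connected (a property preserved by twisting and relabeling), Lemma~\ref{graftconnected} forces $G$ to be connected. A direct inspection of $MK_{3}$ shows that each element lies in exactly two feasible sets; this property too is preserved by twisting and relabeling, so no edge of $G$ belongs to all or to none of the feasible sets of $\mathcal{G}(G,T)$. By Lemmas~\ref{lem:Tbridge} and~\ref{lem:Ttunnel}, this rules out loops, $T$-bridges, and $T$-tunnels in $(G,T)$. Since $G$ is connected, loopless, and has three edges, $|V(G)|\in\{2,3,4\}$.

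The proof then finishes by a short case analysis. If $|V(G)|=2$, then $G$ is a triple edge and any two edges form a $2$-cycle, so at most three edge subsets are forests and $|\mathcal{F}|\leq 3$. If $|V(G)|=3$, then either $G=K_{3}$, in which case enumeration shows $T=V(K_{3})$ is the only choice producing four feasible sets, giving $\Delta_{1}$; or $G$ consists of a parallel pair and a pendant edge, in which case the no-$T$-bridge and no-$T$-tunnel conditions on the pendant leave at most two candidate values of $T$, each of which by direct enumeration gives fewer than four feasible sets. If $|V(G)|=4$, then $G$ is a tree, either $P_{4}$ or $K_{1,3}$; in $P_{4}$, the no-$T$-bridge condition forces both endpoints into $T$, and each of the four such $T$ produces at most three feasible sets; in $K_{1,3}$, the same condition forces all three leaves into $T$, and the only two remaining candidates $T=S$ and $T=V(K_{1,3})$ give precisely $\Delta_{3}$ and $\Delta_{2}$. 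The main obstacle is keeping this casework short rather than conceptual; the decisive simplification is the upfront exclusion of loops, $T$-bridges, and $T$-tunnels via the element-multiplicity observation, which drastically prunes the list of graft candidates.
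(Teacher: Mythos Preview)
Your argument is essentially correct and complete, but it takes a somewhat different route from the paper, and there is one small slip to fix.

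The slip: in the case $|V(G)|=2$, you write ``any two edges form a $2$-cycle, so at most three edge subsets are forests.'' In fact there are four forests (the empty set plus the three singletons). What you need is that at most three of them are \emph{feasible}: since $|T|=2$ would make every edge a $T$-tunnel, you have $|T|\le 1$, and then Lemma~\ref{Tlessthan2} gives $\mathcal{G}(G,T)=M(G)$, whose bases are the three single edges. This gives $|\mathcal{F}|=3<4$ as desired.

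Comparing approaches: the paper first observes that twisting $MK_3$ by a set $X$ yields one of exactly two feasible families, namely $\{\emptyset,\{1,2\},\{1,3\},\{2,3\}\}$ or $\{\{1\},\{2\},\{3\},\{1,2,3\}\}$, depending on the parity of $|X|$. From the presence of $\{1,2,3\}$ or of all three two-element sets it deduces at once that $G$ is simple, so $G\in\{K_3,P_4,K_{1,3}\}$, and then finishes quickly (using, e.g., that $\emptyset$ feasible forces $T=V(G)$, and that $|T|\le 2$ would make $\mathcal{G}(G,T)$ a matroid). Your approach instead extracts the invariant ``each element lies in exactly two feasible sets'' and feeds it through Lemmas~\ref{lem:Tbridge} and~\ref{lem:Ttunnel} to exclude loops, $T$-bridges, and $T$-tunnels up front. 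This is a clean and reusable idea, but it does not by itself rule out parallel edges, so you end up checking two extra graph shapes (the triple edge and the parallel-pair-plus-pendant) that the paper avoids. The trade-off is that the paper's route is shorter here, while your multiplicity observation is a nice structural lemma in its own right.
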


\begin{proof}
  Let $\mathcal F$ be the set of feasbile sets in $\mathcal G(G,T)$.
Let us prove the forward direction because the backward direction is trivial. Suppose that $\mathcal{G}(G,T)\triangle X=MK_{3}$ for a set $X\subseteq\{1,2,3\}$.
We deduce that $|E(G)|=3$ and $\mathcal{F}\triangle X=\left\{\emptyset,\left\{1,2\right\},\left\{2,3\right\},\left\{1,3\right\}\right\}$. If $|X|$ is  even, then $\mathcal{F}=\left\{\emptyset,\left\{1,2\right\},\left\{2,3\right\},\left\{1,3\right\}\right\}$ and $\mathcal{F}=\left\{\left\{1\right\},\left\{2\right\},\left\{3\right\},\left\{1,2,3\right\}\right\}$ otherwise. 
Since $MK_{3}$ is connected, $\mathcal{G}(G,T)$ is connected and by Lemma~\ref{graftconnected}, $G$ is connected. Since $\{1,2,3\}\in\mathcal{F}$ or $\{\{1,2\},\{1,3\},\{2,3\}\}\subseteq\mathcal{F}$, $G$ cannot contain loops or parallel edges. So $G$ is simple and is isomorphic to one of $K_{3}$, $P_{4}$, $K_{1,3}$. 

If $\mathcal{F}=\left\{\emptyset,\left\{1,2\right\},\left\{2,3\right\},\left\{1,3\right\}\right\}$, then $T=V(G)$ because $\emptyset$ is feasible. It is easy to check that $\mathcal{G}(P_{4},V(P_{4}))$ is not isomorphic to $\mathcal G(G,T)$. So $(G,T)$ is not isomorphic to $(P_{4},V(P_{4}))$ and therefore $(G,T)$ is isomorphic to $(K_{3},V(K_{3}))$ or $(K_{1,3},V(K_{1,3}))$.

If $\mathcal{F}=\left\{\left\{1\right\},\left\{2\right\},\left\{3\right\},\left\{1,2,3\right\}\right\}$, then $G$ cannot be isomorphic to $K_{3}$ since $\left\{1,2,3\right\}$ is a feasible set.
So $G$ is isomorphic to $P_{4}$ or $K_{1,3}$. If $|T|\leq 2$, then by Lemma \ref{Tlessthan2}, $\mathcal{G}(G,T)$ is a matroid, contradicting the fact that $\mathcal{F}$ has feasible sets of different size. Observe that $|T|$ is odd because $\{1,2,3\}$ is feasible. Hence, $|T|=3$ and we can check that $G$ is isomorphic to $K_{1,3}$ and $T=S$ where $S$ is the set of vertices of degree $1$ in $K_{1,3}$. Therefore, $(G,T)$ is isomorphic to $(K_{1,3},S)$.
\end{proof}

\begin{lemma}
\label{inter}
Let $(G,T)$ be a graft and $u$ be a vertex of $G$. If there exist distinct vertices $u_{1},u_{2},u_{3}\in T-\{u\}$ and three internally disjoint paths $P_{1},P_{2},P_{3}$ from $u$ to $u_{1},u_{2},u_{3}$ respectively, then $(G,T)$ has a minor isomorphic to $\Delta_{2}$ or $\Delta_{3}$.
\end{lemma}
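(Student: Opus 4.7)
The plan is to extract from $(G,T)$ the spider formed by the three paths and then collapse each path to a single edge using graft minor operations. Let $H=P_{1}\cup P_{2}\cup P_{3}$; since the paths are internally disjoint and share only $u$, the graph $H$ is a subdivision of $K_{1,3}$ with center $u$ and leaves $u_{1},u_{2},u_{3}$.

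First I would delete every edge of $G$ not in $E(H)$; after this operation every vertex of $V(G)- V(H)$ is isolated and can be deleted as well, leaving the graft $(H,\,T\cap V(H))$ as a minor of $(G,T)$. Next, write each path as $P_{i}=v_{i,0}v_{i,1}\cdots v_{i,k_{i}}$ with $v_{i,0}=u$ and $v_{i,k_{i}}=u_{i}$, and contract the edge $v_{i,j}v_{i,j+1}$ for every $0\le j\le k_{i}-2$ and every $i\in\{1,2,3\}$. Because $P_{1},P_{2},P_{3}$ are internally disjoint, these contractions create no loops, and they merge $u$ together with every internal vertex of every $P_{i}$ into one vertex $u^{*}$. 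What remains of each path is the single edge from $u^{*}$ to $u_{i}$, so the underlying graph is exactly $K_{1,3}$ with center $u^{*}$ and leaves $u_{1},u_{2},u_{3}$.

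It remains to determine the new target set $T'$. By the contraction rule, contracting a single edge $xy$ places the merged vertex in the new target set if and only if $\lvert T\cap\{x,y\}\rvert$ is odd, and a short induction on the number of contractions then shows: if a connected edge set is contracted so that a vertex set $A$ is merged into a single vertex $a^{*}$, then $a^{*}$ lies in the final target set if and only if $\lvert T\cap A\rvert$ is odd. In our situation the vertices merged into $u^{*}$ form $A=\{u\}\cup\{v_{i,j}:1\le i\le 3,\ 1\le j\le k_{i}-1\}$, while $u_{1},u_{2},u_{3}$ are untouched and hence remain in $T'$. Thus $T'=\{u_{1},u_{2},u_{3}\}$ or $T'=\{u^{*},u_{1},u_{2},u_{3}\}$ according to the parity of $\lvert T\cap A\rvert$, giving a minor isomorphic to $\Delta_{3}$ or $\Delta_{2}$ respectively. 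The only mildly delicate aspect of the argument is the bookkeeping of $T'$ under the contractions, but because the parity rule handles both parities uniformly, no case distinction on how $T$ meets the interiors of the paths is necessary.
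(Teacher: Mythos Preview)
Your argument is correct. Both you and the paper reduce to the spider $P_{1}\cup P_{2}\cup P_{3}$ and then collapse each arm to a single edge, but the bookkeeping is handled differently. You keep the full paths, contract all but the \emph{last} edge of each $P_{i}$ so that $u$ and all internal vertices merge into $u^{*}$, and then appeal to a parity rule to decide whether $u^{*}\in T'$. The paper instead first \emph{truncates} each $P_{i}$ at the $T$-vertex closest to $u$, so that the interior of each shortened path is disjoint from $T$; it then contracts all but the \emph{first} edge of each path, keeping $u$ fixed. With this truncation in place no parity argument is needed: the merged leaf is automatically in $T'$ (exactly one $T$-vertex, the endpoint, is involved), and the center $u$ is untouched, so the minor is $\Delta_{2}$ or $\Delta_{3}$ according as $u\in T$ or not. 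Your parity lemma is a clean substitute for the truncation step and makes the proof insensitive to how $T$ meets the interiors; the paper's truncation avoids the induction but requires spotting the right subpaths. Either way the work is the same order of difficulty.
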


\begin{proof}
For each $i\in \{1,2,3\}$, choose a vertex $v_{i}$ in $(V(P_{i})\cap T)-\{u\}$ such that $d_{P_{i}}(u,v_{i})$ is minimum and let $P_{i}'$ be a subpath of $P_{i}$ from $u$ to $v_{i}$.
Let $E_{i}:=E(P_{i}')$ and $e_{i}$ be an edge of $E(P_{i}')$ which is incident with $u$ for $i\in \{1,2,3\}$. 
By deleting all edges in $E(G)-(E_{1}\cup E_{2}\cup E_{3})$ and contracting all edges in $(E_{1}\cup E_{2}\cup E_{3})- \{e_{1},e_{2},e_{3}\}$, we get a minor isomorphic to $\Delta_{2}$ if $u\in T$ and $\Delta_{3}$ otherwise.
\end{proof}

\begin{lemma}
\label{lem:3connectedgraft}
Let $G$ be a graph and $T$ be a subset of $V(G)$.
If $G$ is $3$-connected and $|T|\geq 3$ then $(G,T)$ has a minor isomorphic to  
$\Delta_{2}$ or $\Delta_{3}$.
\end{lemma}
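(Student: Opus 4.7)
The plan is to reduce the problem directly to Lemma~\ref{inter}. That lemma says that if we can find some vertex $u$ of $G$ together with three internally disjoint paths from $u$ to three distinct vertices of $T-\{u\}$, then $(G,T)$ has a minor isomorphic to $\Delta_{2}$ or $\Delta_{3}$. So our only task is to produce such a vertex $u$ and such a triple of paths.

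First I would choose $u$ so that $|T-\{u\}|\geq 3$. If $T\neq V(G)$, then any $u\in V(G)-T$ satisfies $T-\{u\}=T$, which already has at least $3$ elements. If instead $T=V(G)$, then $G$ is $3$-connected and hence has at least $4$ vertices, so any choice of $u\in V(G)$ gives $|T-\{u\}|\geq 3$. Either way such a $u$ exists, and we pick three distinct vertices $u_{1},u_{2},u_{3}\in T-\{u\}$.

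Next I would apply the classical fan lemma, a standard consequence of Menger's theorem: in a $k$-connected graph, for any vertex $v$ and any set $S\subseteq V(G)-\{v\}$ with $|S|\geq k$, there exist $k$ paths from $v$ to $S$ that are pairwise internally disjoint and whose endpoints in $S$ are distinct. One way to see this is to add a new vertex $w$ adjacent to every vertex of $S$; the enlarged graph has local connectivity at least $k$ between $v$ and $w$, so Menger's theorem yields $k$ internally disjoint $vw$-paths, and removing $w$ from each gives the required fan. Applying this with $k=3$, $v=u$, and $S=\{u_{1},u_{2},u_{3}\}$ (which is allowed because $G$ is $3$-connected) produces three internally disjoint paths $P_{1},P_{2},P_{3}$ from $u$ to $u_{1},u_{2},u_{3}$ respectively.

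Finally I invoke Lemma~\ref{inter} on $u$ and $P_{1},P_{2},P_{3}$ to conclude that $(G,T)$ has a minor isomorphic to $\Delta_{2}$ or $\Delta_{3}$. There is no real obstacle here; the argument is essentially bookkeeping once one recognizes that the hypothesis of Lemma~\ref{inter} is precisely what the fan lemma delivers in a $3$-connected graph, and the only subtlety is making sure the vertex $u$ can be chosen so that three target vertices remain available in $T-\{u\}$, which is handled by the case split on whether $T=V(G)$.
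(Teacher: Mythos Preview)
Your proof is correct and follows essentially the same approach as the paper: choose a vertex $u$ with $|T-\{u\}|\geq 3$, apply Menger's theorem (the fan version) to obtain three internally disjoint paths from $u$ to three vertices of $T-\{u\}$, and invoke Lemma~\ref{inter}. The paper's proof is more terse, but your additional justification for the existence of $u$ and your explicit mention of the fan lemma are just elaborations of the same argument.
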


\begin{proof}
Let $w$ be a vertex of $G$ such that $|T- \left\{w\right\}|\geq 3$. For a set $A=\left\{v_{1},v_{2},v_{3}\right\}\subseteq T-\{w\}$, there exist 3 internally disjoint paths $P_{1},P_{2},P_{3}$ from $w$ to $v_{1},v_{2},v_{3}$ by the theorem of Menger. 
The conclusion follows by Lemma~\ref{inter}.
\end{proof}

To describe the structure of grafts resulting delta-matroids with bipartite fundamental graphs, we introduce cyclic decompositions of grafts.
Let $G=(V,E)$ be a graph and $T$ be a subset of $V$. A \emph{cyclic decomposition} of $(G,T)$ is a pair $(H, \mathcal{B})$, where $H$ is a bipartite graph whose maximum degree is at most 2 and $\mathcal{B}=\left\{B_{x} : x\in V(H)\right\}$ is a collection of subsets of $V$ satisfying the following:
\begin{enumerate}[label=\rm (C\arabic*)]
\item\label{item:c1} $\displaystyle\bigcup_{x\in V(H)} B_{x}=V$.
\item\label{item:c2} If $u,v\in V$ are adjacent in $G$, then there is a vertex $x$ of $H$ such that $\left\{u,v\right\}\subseteq B_{x}$.
\item\label{item:c3} For distinct $x,y\in V(H)$, $B_{x}\cap B_{y}\subseteq T$ and $|B_{x}\cap B_{y}|$ is equal to the number of edges joining $x,y$ in $H$. 
\item\label{item:c4} For all $x\in V(H)$, $|T\cap B_{x}|\leq 2$.
\end{enumerate}

For a cyclic decomposition $(H,\mathcal{B})$ of $(G,T)$ and $B\in\mathcal{B}$, $G[B]$ is a \emph{bag} of $(H,\mathcal{B})$.

\begin{lemma}
\label{lem:minorcyclic}
If a graft $(G,T)$ has a cyclic decomposition, then every minor of $(G,T)$ also has a cyclic decomposition.
\end{lemma}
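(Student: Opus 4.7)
The plan is to prove the lemma by showing that each of the three elementary graft minor operations---deletion of an edge, deletion of an isolated vertex, and contraction of an edge---preserves the existence of a cyclic decomposition; the general case then follows by induction on the length of the sequence of operations. Fix a cyclic decomposition $(H,\mathcal{B})$ of $(G,T)$.

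Edge deletion needs no modification at all: conditions~\ref{item:c1}, \ref{item:c3}, \ref{item:c4} are insensitive to removing an edge of $G$, and \ref{item:c2} only weakens. For deletion of an isolated vertex $v$, we simply remove $v$ from every bag containing it. If $v\notin T$, then \ref{item:c3} forces $v$ to lie in a unique bag, and no further modification is needed. If $v\in T$, then $v$ lies in at most two bags, since three pairwise intersecting bags would give a triangle in $H$, contradicting bipartiteness; if $v$ lies in two bags $B_x,B_y$, we also delete from $H$ the edge between $x$ and $y$ that corresponds to $v$ via \ref{item:c3}, which preserves bipartiteness and the degree bound.

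The main work is edge contraction. Let $e=uv$ and write $(G,T)/e=(G/e,T')$. By \ref{item:c2}, some bag $B_x$ contains both $u$ and $v$; let $B_y$ denote the other bag containing $u$ (if $u\in T$ lies in two bags) and similarly $B_z$ for $v$. We split into two cases according to whether $e^*\in T'$. If $|T\cap\{u,v\}|\le 1$, then $e^*$ is allowed to lie in up to two bags of the new decomposition, so we simply replace every occurrence of $u$ or $v$ in a bag by $e^*$ and leave $H$ unchanged. If $|T\cap\{u,v\}|=2$, then $e^*\notin T'$, so $e^*$ must appear in a unique bag; we accordingly merge all bags among $\{B_x,B_y,B_z\}$ that exist into a single bag (with $u,v$ replaced by $e^*$), and contract in $H$ the edges joining $x$ to $y$ or $z$.

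The main obstacle is verifying that $H$ remains bipartite with maximum degree at most~$2$ after such a merge. The key observation is that \ref{item:c4} forces $T\cap B_x=\{u,v\}$, so the edges of $H$ incident with $x$ are in bijection with the elements of $\{u,v\}$ that lie in a second bag. Hence $\deg_H(x)\le 1$ whenever at most one of $u,v$ is shared, so $x$ is a leaf and merging $x$ with its unique neighbour just removes a leaf edge; and $\deg_H(x)=2$ precisely when both $u$ and $v$ lie in second bags, in which case $y$ and $z$ are the two neighbours of $x$ on opposite sides of a path or even cycle of $H$, so merging the three consecutive vertices $y,x,z$ shortens the path or cycle by two, preserving parity (and in the extreme case of an underlying $4$-cycle producing two parallel edges, still an even bipartite multigraph). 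Hence $H'$ remains a disjoint union of paths and even cycles, and the routine verification of \ref{item:c1}--\ref{item:c4} for the new pair $(H',\mathcal{B}')$ in each case completes the proof.
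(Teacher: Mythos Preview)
Your proof is correct and follows essentially the same approach as the paper: reduce to the elementary operations, note that edge deletion is trivial, and for contraction of $e=uv$ with both ends in $T$ merge the bag containing $\{u,v\}$ with its (at most two) neighbouring bags, using \ref{item:c4} to see that this bag has no other edges in $H$. Your exposition is slightly more conceptual where the paper does an explicit case split on $\deg_H(x)\in\{0,1,2\}$ and on whether the two neighbours coincide (the parallel-edge $2$-cycle case, which you should perhaps mention explicitly alongside the $4$-cycle case); you also treat deletion of isolated vertices, which the paper's proof silently omits.
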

\begin{proof}
Suppose that $(G,T)$ has a cyclic decomposition $(H,\mathcal{B})$. It is enough to show that both $(G,T)\setminus e$ and $(G,T)/e$ have cyclic decompositions for every $e\in E(G)$. Given $e=uv\in E(G)$, it is obvious that $(G,T)\setminus e$ has a cyclic decomposition $(H,\mathcal{B})$. 

So it remains to show that $(G,T)/e=(G/e,T^{*})$ has a cyclic decomposition. 
We may assume that $e$ is not a loop.
Let $e^{*}$ be the vertex obtained by contracting $e$. Let $z$ be a vertex of $H$ such that $\{u,v\}\subseteq B_{z}$. For all $x\in V(H)$, let
\[
B_{x}'=
\begin{cases}
(B_{x}-\{u,v\})\cup\{e^{*}\} & \text{if $\{u,v\}\cap B_{x}\neq\emptyset$,} \\
B_{x} & \text{otherwise.}
\end{cases}
\]
and $\mathcal{B}'$ be the set $\{B_{x}' : x\in V(H)\}$. 

If $\{u,v\}\nsubseteq T$, then it is easy to check that $(H,\mathcal{B}')$ is a cyclic decomposition of $(G,T)/e=(G/e,T^{*})$. So we can assume that $u,v\in T$ and $e^{*}\notin T^{*}$. Now we do some case analysis.

If $z$ is an isolated vertex in $H$, then $(H,\mathcal{B}')$ is a cyclic decomposition of $(G,T)/e$ obviously. 

Suppose that $z$ has degree $1$ in $H$. Let $y$ be the neighbor of $z$ in $H$. 
We may assume that $u\in B_{y}\cap B_{z}$ by symmetry. 
Since $H$ is bipartite, by \ref{item:c3}, $u\notin B_x$ for all $x\in V(H)-\{y,z\}$.
Let $f=yz\in E(H)$ and $f^{*}$ be the vertex obtained by contracting $f$ in $H$. Let $B_{f^{*}}=(B_{y}\cup B_{z}\cup\{e^{*}\})-\{u,v\}$. 

We claim that $(H/f, \mathcal{B}\cup\left\{B_{f^{*}}\right\}-\left\{B_{y},B_{z}\right\})$ is a cyclic decomposition of $(G,T)/e$. It is trivial to check \ref{item:c1}. Since $N_{G/e}(e^{*})\subseteq B_{f^{*}}$, \ref{item:c2} holds. \ref{item:c3} holds since $B_{x}\cap B_{y}=B_{x}\cap B_{f^{*}}$ for all $x\in V(H)-\{y,z\}$. Also \ref{item:c4} holds trivially.

It remains to consider the case when $z$ is a vertex of degree $2$ in $H$. If $z$ is incident with parallel edges $e_{1},e_{2}$, then let $y$ be the unique  neighbor of $z$ in $H$. Then $H[\{y,z\}]$ is a component of $H$ which is a cycle of length $2$. Let $H'$ be a graph $((V(H)-\{y,z\})\cup\{z'\},E(H)-\{e_{1},e_{2}\})$ and $B_{z'}=(B_{y}\cup B_{z}\cup\{e^{*}\})-\{u,v\}$. Then $(H',(\mathcal{B}-\{B_{y},B_{z}\})\cup\{B_{z'}\})$ is a cyclic decomposition of $(G,T)/e$.

Now if $z$ has two distinct neighbors $x,y$, then $B_{x}\cap B_{y}\cap B_{z}$ is empty because $H$ has no cycle of length $3$. 
We may assume that $u\in B_{x}\cap B_{z}$ and $v\in B_{y}\cap B_{z}$ by symmetry. Let $f_{1}=xz\in E(H)$ and $f_{2}=yz\in E(H)$. Also let $f^{*}$ be a vertex obtained by contracting edges $f_{1},f_{2}$ in $H$ and $B_{f^{*}}=(B_{x}\cup B_{y}\cup B_{z}\cup\{e^{*}\})-\left\{u,v\right\}$. Then $(H/\{f_{1},f_{2}\},(\mathcal{B}\cup\left\{B_{f^{*}}\right\})-\left\{B_{x},B_{y},B_{z}\right\})$ is a cyclic decomposition of $(G,T)/e$. So we prove this lemma.
\end{proof}

A cyclic decomposition $(H,\mathcal{B})$ of a graft (G,T) is \emph{nice} if the following hold.
\begin{enumerate}[label=\rm (N\arabic*)]
\item\label{item:n1} $G[B_{u}]$ is connected for all $u\in V(H)$.
\item\label{item:n2} If $|B_{v}\cap T|\leq 1$ for a vertex $v\in V(H)$, then $v$ is an isolated vertex of $H$.
\end{enumerate}

\begin{lemma}
\label{nice}
Let $G$ be a graph and $T$ be a subset of $V(G)$.
If a graft $(G,T)$ has a cyclic decomposition, then $(G,T)$ has a nice cyclic decomposition.
\end{lemma}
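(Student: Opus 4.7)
The plan is to start with an arbitrary cyclic decomposition $(H,\mathcal{B})$ of $(G,T)$ and transform it into a nice one via two local operations: \emph{splitting} a bag whose induced subgraph is disconnected (to enforce \ref{item:n1}), and \emph{merging} a low-$T$ bag into its unique neighbor (to enforce \ref{item:n2}). In both operations I would verify that properties \ref{item:c1}--\ref{item:c4} are preserved and that the bipartiteness and max-degree-$2$ constraints on $H$ survive.

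First I would establish \ref{item:n1}. Suppose $u\in V(H)$ has $G[B_u]$ disconnected, with components having vertex sets $C_1,\dots,C_k$. Since $u$ has degree at most $2$ in $H$, property \ref{item:c3} gives at most two shared elements $t_i\in B_u\cap B_{v_i}\subseteq T$, each contained in a unique component $C_{j(i)}$. I would then replace $u$ by new vertices $u_1,\dots,u_k$ with $B_{u_j}:=C_j$, and reroute each edge $uv_i$ of $H$ to $u_{j(i)}v_i$. Bipartiteness is preserved by placing each $u_j$ in the class of $u$, and each $u_j$ inherits at most the degree of $u$. Properties \ref{item:c1}--\ref{item:c4} follow routinely from the construction; in particular \ref{item:c2} uses that any two adjacent vertices of $G$ lying in $B_u$ must have been in the same component $C_j$. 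This split strictly reduces the number of bags whose induced subgraph is disconnected, so finitely many splits produce a cyclic decomposition satisfying \ref{item:n1}.

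Next I would establish \ref{item:n2} while preserving \ref{item:n1}. Suppose $v\in V(H)$ has $|B_v\cap T|\le 1$ but is not isolated. Using that $H$ is bipartite I would argue $v$ has degree exactly $1$: if $v$ had two distinct neighbors $u_1,u_2$ in $H$, then by bipartiteness $u_1u_2\notin E(H)$, hence $B_{u_1}\cap B_{u_2}=\emptyset$ by \ref{item:c3}, forcing the two shared elements $t_i\in B_v\cap B_{u_i}$ to be distinct elements of $B_v\cap T$; if $v$ had parallel edges to a single neighbor $u$, then $|B_v\cap B_u|=2\le |B_v\cap T|$. Both contradict $|B_v\cap T|\le 1$. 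So let $u$ be the unique neighbor of $v$ and $\{t\}=B_v\cap B_u$. I would then merge $v$ into $u$: delete $v$ from $H$ along with the edge $uv$, and replace $B_u$ by $B_u\cup B_v$. Verifications: \ref{item:c1}--\ref{item:c2} are immediate; for \ref{item:c3}, since $v$'s only neighbor was $u$, $B_v\cap B_w=\emptyset$ for every other $w$, so no other intersections change; for \ref{item:c4}, $T\cap B_v=\{t\}\subseteq T\cap B_u$, so $|T\cap(B_u\cup B_v)|=|T\cap B_u|\le 2$. Moreover \ref{item:n1} is preserved because $G[B_u]$ and $G[B_v]$ are each connected and share the vertex $t$. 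Each merge strictly reduces $|V(H)|$, so the process terminates, yielding a nice cyclic decomposition.

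The only genuinely delicate step is the degree-$1$ claim in the merging stage, where bipartiteness of $H$ is used essentially to rule out a common shared vertex between two neighbors of $v$; the rest of the argument is routine bookkeeping to check that the four axioms of a cyclic decomposition and the two niceness conditions are preserved by each local move.
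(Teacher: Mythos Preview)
Your proof is correct and follows essentially the same strategy as the paper's: split disconnected bags to achieve \ref{item:n1}, then use the degree-$2$ analysis (parallel edges force $|B_v\cap T|\ge 2$; two distinct neighbors are non-adjacent by bipartiteness, hence their bags are disjoint, again forcing $|B_v\cap T|\ge 2$) to reduce the \ref{item:n2} case to degree $1$, where a merge works. The only difference is organizational: the paper chooses an extremal decomposition (first minimizing the number of disconnected bags, then $|V(H)|$) and derives both niceness conditions from that single choice, whereas you run two explicit terminating procedures in sequence; your uniform ``reroute each edge to the component containing its shared vertex'' handles in one line what the paper splits into the cases $k\le 1$ and $k=2$.
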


\begin{proof}

Among all cyclic decompositions of $(G,T)$ with the minimum number of disconnected bags, we choose a cyclic decomposition $(H,\mathcal{B})$ with minimum $|\mathcal{B}|=|V(H)|$. We will show that $(H,\mathcal{B})$ is a nice cyclic decomposition of $(G,T)$. Let $t$ be the number of disconnected bags of $(H,\mathcal{B})$.

Suppose that $t\geq 1$. Let $u$ be a vertex of $H$ such that $G[B_{u}]$ is disconnected. Then, for a positive integer $m$, let $C_{0},\ldots,C_{m}$ be components of $G[B_{u}]$. Let $k$ be the number of components of $G[B_{u}]$ which intersects with $\bigcup_{v\neq u}B_{v}$. By relabelling components, we may assume that $V(C_{i})\cap \bigcup_{v\neq u}B_{v}\neq\emptyset$ for all $0\leq i<k$ and $V(C_{i})\cap \bigcup_{v\neq u}B_{v}=\emptyset$ if $i\geq k$. Let $H'$ be a graph obtained from $H$ by adding  isolated vertices $u_{1},\ldots,u_{m}$ and $\mathcal{B}'$ be a set $\{B_{x}' : x\in V(H)\}$ such that, for all $x\in V(H')$,
\[
B_{x}'=
\begin{cases}
V(C_{0}) & \text{if $x=u$,} \\
V(C_{i}) & \text{if $x=u_{i}$ for $1\leq i\leq m$,} \\
B_{x} & \text{otherwise.}
\end{cases}
\] 
If $k\leq 1$, then $(H',\mathcal{B}')$ is a cyclic decomposition of $(G,T)$ with the number of disconnected bags less than $t$, contradicting our assumption.
Therefore, we may assume that $k\geq 2$ and $\deg_{H}(u)=2$ by \ref{item:c4}.
This implies that $k=2$ and $|V(C_{0})\cap\bigcup_{v\neq u}B_{v}|=|V(C_{1})\cap\bigcup_{v\neq u}B_{v}|=1$. Let $x\in V(C_{0})\cap\bigcup_{v\neq u}B_{v}$ and $y\in V(C_{1})\cap\bigcup_{v\neq u}B_{v}$. Since $H$ is bipartite, there is a unique vertex $w$ of $H$ such that $y\in V(C_{1})\cap B_{w}$. By \ref{item:c3}, there is an edge $e=uw$ in $H$. Let $H''=(H'\backslash e)+g$ where $g$ is an edge joining $u_{1}$ and $w$. Then $(H'',\mathcal{B}')$ is a cyclic decomposition of $(G,T)$ with less than $t$ disconnected bags, contradicting our assumption.
So $t=0$ and $(H,\mathcal{B})$ satisfies \ref{item:n1}. 

Now to show \ref{item:n2}, we claim that if $v$ is a non-isolated vertex of $H$, then $\abs{B_v\cap T}\ge 2$.
If $v$ is incident with two parallel edges, then for the unique neighbor $u$ of $v$, by \ref{item:c3}, $|T\cap B_{v}|\geq |B_{u}\cap B_{v}|=2$.
If $v$ has two distinct neighbors $u$, $u'$ in $H$, then 
since $H$ is bipartite, $B_{u}\cap B_{u'}=\emptyset$ by \ref{item:c3}
and therefore $\abs{B_{v}\cap T}\ge \abs{B_{v}\cap B_u}+\abs{B_v\cap B_{u'}}\ge 2$ by \ref{item:c3}.
If $\deg_{H}(v)=1$, then let $u\in N_{H}(v)$. Let $H'=H\setminus v$ and, for all $z\in V(H')$, let 
\[
B''_{z}=
\begin{cases}
B_{u}\cup B_{v} & \text{if $z=u$,} \\
B_{z} & \text{otherwise.}
\end{cases}
\] and let $\mathcal{B}'':=\{B_{x}'' : x\in V(H')\}$. 
If $\abs{B_v\cap T}\le 1$, then $(H',\mathcal{B}'')$ is a cyclic decomposition of $(G,T)$ with no disconnected bags, contradicting the assumption that $\abs{\mathcal B}$ is chosen to be minimum.
Therefore, $\abs{B_v\cap T}\ge 2$.
\end{proof}

\begin{lemma}
\label{evencycle}
Let $G$ be a connected graph and $T$ be a subset of $V(G)$ such that $|T|\geq 3$. If G has a cycle containing at least $3$ vertices of $T$ and $(G,T)$ has no minor isomorphic to $\Delta_{1}, \Delta_{2}$,or $\Delta_{3}$, then $(G,T)$ has a nice cyclic decomposition $(H,\mathcal{B})$ such that $H$ is an even cycle of length at least 4.
\end{lemma}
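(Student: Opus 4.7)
Our plan is to construct the cyclic decomposition directly from a cycle $C$ of $G$ chosen with $|V(C) \cap T|$ maximum. The $T$-vertices on $C$, in cyclic order $\{t_1, \dots, t_k\}$ with $k \geq 3$, will serve as the singletons shared between consecutive bags, and the arcs of $C$ will become the bags after absorbing suitable components of $G - T$. Writing $A_i$ for the arc of $C$ between $t_i$ and $t_{i+1}$ passing through no other $t_j$, we set
\[
B_i = V(A_i) \cup \bigcup\bigl\{V(K) : K \text{ is a component of } G - T \text{ with } N_G(V(K)) \cap T \subseteq \{t_i, t_{i+1}\}\bigr\}
\]
(assigning any component with a single $T$-neighbour to either of the two bags containing that neighbour), and take $H$ to be the cycle $v_1 v_2 \cdots v_k v_1$ with edge $v_i v_{i+1}$ labelled $t_{i+1}$. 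The heart of the proof is to verify three structural claims: (i) $T \subseteq V(C)$; (ii) $k$ is even; (iii) every component $K$ of $G - T$ satisfies $N_G(V(K)) \cap T \subseteq \{t_i, t_{i+1}\}$ for some $i$.

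For (i), suppose $w \in T - V(C)$, and let $B$ be the block of $G$ containing $C$. If $w \notin V(B)$, let $v$ be the cut vertex of $G$ in $V(B)$ through which all $w$-to-$V(C)$ paths in $G$ pass. From a suitable vertex (namely $v$ itself when $v \in V(C) - T$, a $T$-vertex of $C$ when $v \in T \cap V(C)$, or a first-hit cycle vertex obtained from the $2$-connectivity of $B$ when $v \notin V(C)$) one extracts three internally disjoint paths to three distinct vertices of $T$: two along arcs of $C$ in opposite directions and one crossing through $v$ to reach $w$. Lemma~\ref{inter} then yields the forbidden $\Delta_2$ or $\Delta_3$ minor. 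If $w \in V(B)$, $2$-connectivity of $B$ supplies two internally disjoint first-hit paths $P_1, P_2$ from $w$ to distinct landing points $z_1, z_2 \in V(C)$; extending along cycle arcs to the nearest $T$-vertices produces either a new cycle $C' = P_1 \cup P_2 \cup \alpha$ through $w$ with $|V(C') \cap T| > k$ (contradicting the maximality of $C$), or in the residual configurations a set of three internally disjoint paths from one of the landing points or from a neighbouring $T$-vertex to three distinct members of $T$, again contradicting Lemma~\ref{inter}.

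For (ii), delete all vertices and edges of $G$ outside $C$ (after isolating them) and contract each arc $A_i$ down to the single edge $t_i t_{i+1}$ by collapsing interior non-$T$ vertices and then one non-$T$-to-$T$ edge; the resulting graft minor is $(C_k, V(C_k))$. An induction on odd $k \geq 3$ shows that $(C_k, V(C_k))$ has $\Delta_1 = (K_3, V(K_3))$ as a graft minor: the base $k = 3$ is immediate, and for $k \geq 5$, contracting $v_1 v_2$ (losing both from $T$) followed by the resulting non-$T$-to-$T$ edge (placing the merged vertex back into $T$) reduces $(C_k, V(C_k))$ to $(C_{k-2}, V(C_{k-2}))$. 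This contradicts the hypothesis, so $k$ is even, and hence $k \geq 4$. For (iii), if $|N_G(V(K)) \cap T| \geq 3$, a Steiner-tree inside the connected graph $K$ produces three internally disjoint paths from some $u \in V(K) \subseteq V(G) - T$ to three distinct $T$-vertices, giving a $\Delta_3$ minor by Lemma~\ref{inter}. If $N_G(V(K)) \cap T = \{t_a, t_b\}$ with $a, b$ non-adjacent in $\mathbb{Z}_k$, then $V(K) \cap V(C) = \emptyset$ (for otherwise $K$ would contain the interior of some arc $A_j$, forcing $\{t_j, t_{j+1}\} \subseteq N_G(V(K))$ and hence adjacency of $a, b$); routing three internally disjoint paths from $t_a$ to $t_{a-1}$ and $t_{a+1}$ along $C$ and to $t_b$ through $K$ then furnishes a $\Delta_2$ minor.

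With (i)--(iii) in hand, verifying (C1)--(C4) and (N1)--(N2) is routine: each $B_i$ meets $T$ in exactly the pair $\{t_i, t_{i+1}\}$, consecutive bags overlap in the single vertex $t_{i+1}$ while non-consecutive bags are disjoint, each $G[B_i]$ is connected through the arc $A_i$, and $H$ is a bipartite cycle of length $k \geq 4$. The main obstacle is the second half of (i): when $w$ and $C$ share a block, the $2$-connected structure need not immediately supply a cycle beating $C$, so the argument must carefully track how the landing points $z_1, z_2$ interact with the $T$-vertices on either arc of $C$ and convert every remaining configuration into the three-path framework of Lemma~\ref{inter}.
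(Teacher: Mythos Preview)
Your overall plan is sound and close to the paper's, but there are two issues worth flagging.

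First, your treatment of claim (i) is far more elaborate than needed, and the sketchy part is not obviously correct. The maximality of $C$ does \emph{not} directly yield a larger cycle in the block case: if $z_1,z_2$ land on a short arc of $C$, the cycle $P_1\cup P_2\cup\alpha$ through $w$ may pick up fewer $T$-vertices than $C$ itself, and your ``residual configurations'' are left as a promissory note. The paper's argument avoids all of this. It does not choose $C$ maximal at all: for any cycle $C$ with $|V(C)\cap T|\ge 3$ and any $w\in T\setminus V(C)$, take a shortest path $P_1$ from $w$ to $V(C)$, landing at some $y\in V(C)$; then two arcs of $C$ from $y$ to two distinct vertices of $(V(C)\cap T)\setminus\{y\}$ give, together with $P_1$, three internally disjoint paths from $y$ to three vertices of $T\setminus\{y\}$, and Lemma~\ref{inter} finishes. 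This is a four-line argument; your block/cut-vertex analysis and the maximality hypothesis are unnecessary.

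Second, your claim (iii) is phrased in terms of components of $G-T$, and this misses one case: an edge $t_at_b$ of $G$ with $a,b$ non-consecutive in $\mathbb Z_k$ has both endpoints in $T$, so it lies in no component of $G-T$ and is invisible to (iii); yet it violates (C2) for your bags, since no $B_i$ contains both $t_a$ and $t_b$. The fix is the same three-path argument you already use (paths from $t_a$ along $C$ to $t_{a-1},t_{a+1}$, and the edge to $t_b$), but as written you have not covered it. The paper sidesteps this by proving the stronger statement that every \emph{$V(C)$-path} has both ends in a single arc $Y_i$ (its Claim~2); a chord $t_at_b$ is itself a $V(C)$-path of length $1$, so it is handled automatically. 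The paper then builds the bags from components of $G\setminus V(C)$ rather than of $G-T$; the two bookkeeping schemes produce the same bags in the end, but the $V(C)$-path formulation is cleaner precisely because it unifies chords and attached components.
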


\begin{proof}

Let $C$ be a cycle of $G$ containing at least $3$ vertices of $T$. 

\begin{claim}\label{claim:1}
  $C$ contains all vertices in $T$. 
\end{claim}
\begin{subproof}
  Suppose that $x\in T- V(C)$. Let $y$ be a vertex of $C$ such that $d_{G}(x,y)$ is minimum and let $P_{1}$ be a shortest path from $x$ to $y$. Since $|V(C)\cap T|\geq 3$, there exist two vertices $a,b\in (V(C)\cap T)-\{y\}$. Let $P_{2},P_{3}$ be internally disjoint subpaths of $C$ from $y$ to $a,b$, respectively. Since $P_{1},P_{2},P_{3}$ are internally disjoint paths from $y$ to $x,a,b\in T$, by Lemma \ref{inter}, $(G,T)$ has a minor isomorphic to $\Delta_{2}$ or $\Delta_{3}$, contradicting our assumption.
\end{subproof}

We also know that $|V(C)\cap T|$ is even, because otherwise $(G,T)$ has a minor isomorphic to $\Delta_{1}$. For $k\geq 2$, let $v_{1},v_{2},\ldots,v_{2k}$ be all vertices of $V(C)\cap T$ in the cyclic order of $C$. By Claim~\ref{claim:1}, $T=\{v_{1},\ldots, v_{2k}\}$. Let $v_{2k+1}:=v_{1}$ and, for $i\in \{1,2,\ldots,2k\}$, let $Y_{i}$ be a path from $v_{i}$ to $v_{i+1}$ in $C$ such that no internal vertex is in $T$. 

\begin{claim}\label{claim:2}
   For every $V(C)$-path $P$, $V(C)\cap V(P)\subseteq V(Y_{i})$ for some $1\leq i\leq 2k$. 
\end{claim}
\begin{subproof}
Suppose that there is a $V(C)$-path $P$ such that $V(C)\cap V(P)\nsubseteq V(Y_{i})$ for all $1\leq i\leq 2k$.  Let $x$, $y$ be ends of $P$ and $u_1$, $u_2$ be the two vertices of $V(C)\cap (T-\{x\})$ such that $C$ has a path $Q_i$ from $u_i$  to $x$ without internal vertices in $T$ for $i=1,2$.
Since $\{x,y\}\nsubseteq V(Y_{i})$ for all $1\leq i\leq 2k$, $y\notin V(Q_{1})$ and $y\notin V(Q_{2})$. There exists $w\in T-(V(Q_{1})\cup V(Q_{2}))$ because $|T|\geq 4$. Let $Q_{3}$ be a subpath of $C$ from $y$ to $w$ which does not intersect with $Q_{1}$ or $Q_{2}$. Let $Q_{3}'$ be a path $wQ_{3}yPx$. Then, by applying Lemma~\ref{inter} to $Q_{1},Q_{2},Q_{3}'$, we get a minor isomorphic to $\Delta_{2}$ or $\Delta_{3}$, contradicting our assumption.
\end{subproof}

For a positive integer $m$, let $X_{1},X_{2},\ldots,X_{m}$ be components of $G\setminus V(C)$. Since $G$ is connected, $N_{G}(V(X_{i}))\subseteq V(C)$ is nonempty for $1\leq i\leq m$. By Claim~\ref{claim:2}, for every $1\leq i\leq m$, there is $1\leq j\leq 2k$ such that $N_{G}(V(X_{i}))\subseteq V(Y_{j})$.

For $1\leq i\leq 2k$, let 
\[
A_{i}=
\begin{cases}
\{j\in \{1,2,\ldots,m\} : N_{G}(V(X_{j}))\subseteq V(Y_{1})\} & \text{if $i=1$,} \\
\{j\in \{1,2,\ldots,m\} : N_{G}(V(X_{j}))\subseteq V(Y_{i})\}-\bigcup_{p=1}^{i-1}A_{p} & \text{otherwise.}
\end{cases}
\]
Let $H=x_{1}\cdots x_{2k}x_{1}$ be a cycle of length $2k$. Let $B_{x_{i}}:=V(Y_{i})\cup \bigcup_{j\in A_{i}} V(X_{j})$ for $1\leq i\leq 2k$ 
and $\mathcal B=\{B_{x_i}:1\le i\le 2k\}$.

\smallskip
We claim that $(H,\mathcal{B})$ is a nice cyclic decomposition of $(G,T)$.
By Claim~\ref{claim:2}, \ref{item:c1} holds. Claim~\ref{claim:2} implies that $\{A_{i}:1\leq i\leq 2k\}$ is a partition of $\{1,\ldots,m\}$. So each component of $G\setminus V(C)$ is contained in a unique $B_{x_{i}}$ for $1\leq i\leq 2k$. So if $i\neq j$, then $B_{x_{i}}\cap B_{x_{j}}\subseteq V(Y_{i})\cap V(Y_{j})\subseteq T$ and \ref{item:c3} holds.
Since $|B_{x_i}\cap T|=\{v_{i},v_{i+1}\}$ for all $1\leq i\leq 2k$, we can check easily that \ref{item:c4} holds. To show \ref{item:c2}, let $u$, $v$ be adjacent vertices of $G$. If $u,v\in V(C)$, it is obvious that $u,v\in V(Y_{i})\subseteq B_{x_{i}}$ for some $1\leq i\leq 2k$.
So we may assume that $u\notin V(C)$. Then $u\in V(X_{i})$ for some $1\leq i\leq m$. Then there is $1\leq j\leq 2k$ such that $i\in A_{j}$. So $u$ and all neighbors of $u$ are in $B_{x_{j}}$. Hence, $\{u,v\}\subseteq B_{x_{j}}$. So \ref{item:c2} holds. By the construction of $(H,\mathcal{B})$, \ref{item:n1} and \ref{item:n2} hold obviously. So $(H,\mathcal{B})$ is a nice cyclic decomposition.
\end{proof}

\begin{lemma}
\label{path}
Let $G$ be a connected graph and $T$ be a subset of $V(G)$ such that $|T|\geq 3$. If $(G,T)$ has no minor isomorphic to $\Delta_{1}$, $\Delta_{2}$,or $\Delta_{3}$ and every cycle of $G$ contains at most $2$ vertices of $T$, then $(G,T)$ has a nice cyclic decomposition $(H,\mathcal{B})$ such that $H$ is a path of length at least $1$.
\end{lemma}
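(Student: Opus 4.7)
The plan is to construct the nice cyclic decomposition directly by finding a ``backbone'' path $P$ in $G$ that contains every vertex of $T$ and then defining the bags as the segments of $P$ between consecutive $T$-vertices, together with the components of $G-V(P)$ attached to each segment. The proof is naturally organized in three steps.

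First, let $P$ be a path in $G$ from some $a\in T$ to some $b\in T$ with $|V(P)\cap T|$ maximum. I claim $V(P)\supseteq T$. Suppose not; pick $x\in T-V(P)$ and let $R$ be a shortest path from $x$ to $V(P)$ in $G$, meeting $V(P)$ only at its endpoint $y$. If $y$ is an interior vertex of $P$, then from $y$ one obtains three internally disjoint paths to three distinct $T$-vertices (backward along $P$, forward along $P$, and along $R$ to $x$; when $y=v_m\in T$ the two sides of $P$ yield the $T$-vertices $v_{m-1}$ and $v_{m+1}$, and when $y\notin T$ they yield the two consecutive $T$-vertices flanking $y$ on $P$). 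This contradicts Lemma~\ref{inter}. If instead $y$ is an endpoint of $P$, then concatenating $R$ with $P$ yields a $T$-to-$T$ path with strictly more $T$-vertices, contradicting the choice of $P$.

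Second, let $a=v_1,v_2,\ldots,v_k=b$ be the $T$-vertices in the order they appear on $P$, and let $Y_i$ be the $v_i$-to-$v_{i+1}$ subpath. I claim that every $V(P)$-path $Q$ in $G$ has both endpoints contained in some single $V(Y_i)$. Suppose not: let $u,v$ be the endpoints of $Q$ with $d_P(u)<d_P(v)$, and let $i$ be the largest index with $u\in V(Y_i)$ and $j$ the smallest index with $v\in V(Y_j)$, so $i<j$. The cycle $C'=Q\cup [u,v]_P$ contains exactly $(j-i)+[u=v_i]+[v=v_{j+1}]$ vertices of $T$; if this number is $\geq 3$ then the cycle hypothesis is violated. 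The remaining configurations are: $j-i=1$ with at most one of $u=v_i$, $v=v_{i+2}$ holding, or $j-i=2$ with both $u,v$ internal to their respective segments. In each such configuration one combines a backward step along $P$, a forward step along $P$, and a detour through $Q$ (extended within $Y_j$) to build three internally disjoint paths from either $u$ or $v$ to three distinct consecutive $T$-vertices drawn from $\{v_i,v_{i+1},v_{i+2},v_{i+3}\}$, again contradicting Lemma~\ref{inter}; all required indices lie in $\{1,\ldots,k\}$ because $1\leq i$ and $j\leq k-1$.

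Third, the segment property forces every component $X$ of $G-V(P)$ to have $N_G(V(X))\subseteq V(Y_i)$ for some $i$, since otherwise a path inside $X$ joining two neighbors in different segments would yield a $V(P)$-path violating the previous step. Define $H$ to be the path $x_1x_2\cdots x_{k-1}$, which has length $k-2\geq 1$ since $|T|\geq 3$, and set
\[
B_{x_i}=V(Y_i)\cup\bigcup\bigl\{V(X):X\text{ is a component of }G-V(P)\text{ with }N_G(V(X))\subseteq V(Y_i)\bigr\},
\]
assigning each component to exactly one bag (choosing one side when its single neighbor is an interior $v_j$ lying in two segments). Because $T\subseteq V(P)$, the $T$-vertices of $B_{x_i}$ are precisely $\{v_i,v_{i+1}\}$. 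Consecutive bags share only $v_{i+1}\in T$, non-consecutive bags are disjoint, every edge of $G$ lies in some bag (chords of $P$ are handled by the segment property of Step~2), and each induced subgraph $G[B_{x_i}]$ is connected; this verifies axioms~\ref{item:c1}--\ref{item:c4} and the niceness conditions~\ref{item:n1}--\ref{item:n2}.

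The main obstacle is the subcase analysis in the second step: the three internally disjoint paths must end at three \emph{distinct} $T$-vertices, each different from the chosen starting vertex, which requires delicate routing of the $Q$-detour and a careful choice of which targets among $v_{i-1},v_i,v_{i+1},v_{i+2},v_{i+3}$ to use, especially when $u$ or $v$ is itself a $T$-vertex and when $i$ or $j$ is near the boundary of the index range.
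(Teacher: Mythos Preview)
Your three-step outline mirrors the paper's proof exactly: first show $T\subseteq V(P)$ for a maximal path $P$, then show every $V(P)$-path has both ends in a single segment $Y_i$, then build the bags from the segments and the attached components. Steps~1 and~3 are essentially identical to the paper's Claims~3 and~4 and the ensuing construction.

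The difference is entirely in Step~2, and here the paper's argument is much cleaner than your case analysis. Instead of aiming for three \emph{consecutive} $T$-vertices (which forces you to track which of $v_{i-1},\ldots,v_{i+3}$ actually exist and whether $u$ or $v$ happens to equal one of them), the paper uses the two \emph{endpoints} $x_1,x_n$ of $P$ as targets. If the $V(P)$-path $R$ has ends $y_1,y_2$ with $y_1$ closer to $x_1$ and some $w\in T$ lies strictly between them on $P$, then either $y_1=x_1$ and $y_2=x_n$ (so the cycle $R\cup P$ contains all of $T$, contradicting the hypothesis), or after possibly reversing $P$ one has $y_1\neq x_1$, and the three internally disjoint paths from $y_1$ are simply
\[
y_1Px_1,\qquad y_1Pw,\qquad y_1Ry_2Px_n,
\]
with distinct targets $x_1,w,x_n\in T-\{y_1\}$. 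One construction, no subcases. Your route can be pushed through, but the justification you give---``all required indices lie in $\{1,\ldots,k\}$ because $1\le i$ and $j\le k-1$''---does not in fact guarantee that $v_{i-1}$ or $v_{i+3}$ exists when you invoke them (e.g.\ $u=v_1$ in the $j-i=1$ subcase), so the obstacle you flag at the end is a real gap in the write-up as it stands. The paper's endpoint trick eliminates the whole issue.
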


\begin{proof} 
First, we prove the following claim.

\begin{claim}\label{claim:3}
There is a path $P$ such that $T\subseteq V(P)$.
\end{claim}
\begin{subproof}
Let $P$ be a path with both ends $w_{1}$, $w_{2}$ in $T$ and maximum $|V(P)\cap T|$. If there is a vertex~$x$ in $T- V(P)$, then choose a vertex~$y$ of $P$ whose $d_{G}(x,y)$ is minimum. Let $Q$ be a shortest path from $x$ to~$y$. By the choice of $P$, $y\neq w_{1}$ and $y\neq w_{2}$. Let $Q_{1}$, $Q_{2}$ be paths of $P$ from $y$ to $w_{1}$, $w_{2}$, respectively. Then, by applying Lemma \ref{inter} to $Q$, $Q_{1}$, $Q_{2}$, we get a minor isomorphic to $\Delta_{2}$ or $\Delta_{3}$, contradicting our assumption. 
\end{subproof}

Let $P$ be a path containing $V(T)=\{x_{1},\ldots, x_{n}\}$, such that $x_1$, 
$\ldots$, $x_n$ occur in order on $P$ for $n\geq 3$ and $x_1$, $x_{n}$ are the ends of $P$. For $1\leq i\leq n-1$, let $Y_{i}$ be the subpath of $P$ from $x_{i}$ to $x_{i+1}$.

\medskip

\begin{claim}\label{claim:4}
For every $V(P)$-path $R$, $V(P)\cap V(R)\subseteq V(Y_{i})$ for some $1\leq i\leq n-1$.
\end{claim}
\begin{subproof}
Let $y_{1}$, $y_{2}$ be ends of $V(R)$ and $Q$ be the subpath of $P$ from $y_{1}$ to $y_{2}$. Suppose that there is a vertex $v$ in $(V(Q)-\{y_{1},y_{2}\})\cap T$. We may assume that $d_{P}(x_{1},y_{1})<d_{P}(x_{1},y_{2})$. If $y_{1}=x_{1}$ and $y_{2}=x_{n}$, then $Q=P$ and a cycle $C=x_{1}Px_{n}Rx_{1}$ contains all vertices of $T$, contradicting the assumption that no cycle contains more than $2$ vertices in $T$. So we can assume $y_{1}\neq x_{1}$.
Then by Lemma~\ref{inter} applied to paths $Q_{1}=x_{1}Py_{1}$, $Q_{2}=y_{1}Pv$, $Q_{3}=y_{1}Ry_{2}Px_{n}$, we obtain a minor isomorphic to $\Delta_{2}$ or $\Delta_{3}$, contradicting our assumption. Hence, $Q$ has no internal vertex in $T$. Therefore, $y_{1},y_{2}\in V(Y_{i})$ for some $1\leq i\leq n-1$.
\end{subproof}

For an integer $m\geq 1$, let $X_{1},\cdots, X_{m}$ be components of $G\setminus V(P)$. Since $G$ is connected, we know that $\emptyset\neq N_{G}(V(X_{i}))\subseteq V(P)$ for every $i\in\{1,\ldots,m\}$. By Claim~\ref{claim:4}, for every $1\leq i\leq m$, there is $1\leq j\leq n-1$ such that $N_{G}(V(X_{i}))\subseteq V(Y_{j})$.

For $1\leq i\leq n-1$, let 
\[
A_{i}=
\begin{cases}
\{j\in \{1,2,\ldots,m\} : N_{G}(V(X_{j}))\subseteq V(Y_{1})\} & \text{if $i=1$,} \\
\{j\in \{1,2,\ldots,m\} : N_{G}(V(X_{j}))\subseteq V(Y_{i})\}-\bigcup_{p=1}^{i-1}A_{p} & \text{otherwise.}
\end{cases}
\]

Let $H=z_{1}\cdots z_{n-1}$ be a path of length $n-2$.
Let $B_{z_{i}}:=V(Y_{i})\cup\bigcup_{j\in A_{i}}V(X_{j})$ for $1\leq i\leq n-1$ and let 
$\mathcal{B}=\{B_{z_{i}}: 1\leq i\leq n-1\}$.
\smallskip

We claim that $(H,\mathcal{B})$ is a nice cyclic decomposition of $(G,T)$.
By Claim~\ref{claim:4}, \ref{item:c1} holds. 
Claim~\ref{claim:4} implies that $\{A_{i}:1\leq i\leq n-1\}$ is a partition of $\{1,\ldots,m\}$. So each component of $G\setminus V(P)$ is contained in a unique $B_{z_{i}}$ for $1\leq i\leq n-1$. 
So if $i\neq j$, then $B_{z_{i}}\cap B_{z_{j}}\subseteq V(Y_{i})\cap V(Y_{j})\subseteq T$ and \ref{item:c3} holds. 
Since $|B_{z_i}\cap T|=\{x_{i},x_{i+1}\}$ for all $1\leq i\leq n-1$, we can check easily that \ref{item:c4} holds. 
To see \ref{item:c2}, let $u$, $v$ be adjacent vertices of $G$. If $u,v\in V(P)$, it is obvious that $u,v\in V(Y_{i})\subseteq B_{z_{i}}$ for some $1\leq i\leq n-1$. 
So we may assume that $u\notin V(P)$. Then $u\in V(X_{i})$ for some $1\leq i\leq m$. 
So $u$ and all neighbors of $u$ are contained in $B_{z_{j}}$ for some $j\in \{1,2,\ldots,n-1\}$. 
So $\{u,v\}\subseteq B_{z_{j}}$ and \ref{item:c2} holds. By the construction of $(H,\mathcal{B})$, \ref{item:n1} and \ref{item:n2} hold obviously. So $(H,\mathcal{B})$ is a nice cyclic decomposition.
\end{proof}

\begin{lemma}
\label{lem:D123}
Let $(G,T)$ be a graft admitting a cyclic decomposition $(H,\mathcal{B})$.
Then $(G,T)$ does not contain a minor isomorphic to $\Delta_{1}$, $\Delta_{2}$, or $\Delta_{3}$.
\end{lemma}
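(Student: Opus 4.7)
The plan is to reduce to verifying that none of $\Delta_1$, $\Delta_2$, $\Delta_3$ itself admits a cyclic decomposition. Lemma~\ref{lem:minorcyclic} shows that the class of grafts with a cyclic decomposition is minor-closed, so if $(G,T)$ had any of these three grafts as a minor, the minor would inherit a cyclic decomposition. A contradiction in each of the three small cases then finishes the proof.

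For $\Delta_1 = (K_3, V(K_3))$, write $V(K_3) = \{u,v,w\}$ and suppose $(H,\mathcal{B})$ is a cyclic decomposition. Axiom \ref{item:c4} forbids a single bag from containing all three vertices, while \ref{item:c2} forces each of the three edges $uv$, $vw$, $uw$ to lie in some bag. Hence three distinct bags $B_1, B_2, B_3$ must cover the pairs $\{u,v\}$, $\{v,w\}$, $\{u,w\}$ respectively. Each pairwise intersection contains a single terminal, so by \ref{item:c3} each pair among the corresponding vertices of $H$ is joined by at least one edge, producing a triangle in $H$ and contradicting its bipartiteness.

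The case $\Delta_2 = (K_{1,3}, V(K_{1,3}))$ is parallel: letting $c$ be the center and $\ell_1,\ell_2,\ell_3$ the leaves, each edge $c\ell_i$ forces a bag $B_i$ containing $\{c,\ell_i\}$; \ref{item:c4} then makes the $B_i$ pairwise distinct, the pairwise intersections each contain $c \in T$, and \ref{item:c3} again yields a triangle in $H$. For $\Delta_3 = (K_{1,3}, S)$ the center $c$ is \emph{not} in $T$, so the argument changes flavour: \ref{item:c3} gives $B_x \cap B_y \subseteq T$, so $c$ lies in at most one bag $B$, and axiom \ref{item:c2} applied to all three edges $c\ell_i$ forces $\{\ell_1,\ell_2,\ell_3\} \subseteq T \cap B$, violating \ref{item:c4}.

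No step looks like a genuine obstacle: each case is a short deduction from the four cyclic-decomposition axioms, using bipartiteness of $H$ for $\Delta_1$ and $\Delta_2$ and the constraint $B_x \cap B_y \subseteq T$ for $\Delta_3$. The only mildly subtle point is recognising that for $\Delta_3$ the contradiction arises from the non-terminal center rather than from any parity or bipartiteness obstruction.
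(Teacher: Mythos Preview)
Your proof is correct and follows essentially the same approach as the paper: reduce via Lemma~\ref{lem:minorcyclic} to showing that $\Delta_1$, $\Delta_2$, $\Delta_3$ themselves admit no cyclic decomposition, then handle $\Delta_1$ and $\Delta_2$ by producing a triangle in $H$ from \ref{item:c2}--\ref{item:c4}, and $\Delta_3$ by using \ref{item:c3} to confine the non-terminal center to a single bag, forcing all three terminals into it and violating \ref{item:c4}. The only cosmetic difference is that the paper phrases the $\Delta_3$ case as deducing $x=y=z$ for the three bag-labels, while you phrase it as ``$c$ lies in at most one bag''; these are the same observation.
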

\begin{proof}
By Lemma~\ref{lem:minorcyclic}, it is enough to show that $\Delta_{1}$, $\Delta_{2}$, and $\Delta_{3}$ do not have cyclic decompositions. Suppose that $\Delta_{1}=(K_{3},V(K_{3}))$ has a cyclic decomposition $(H,\mathcal{B})$. Let $V(K_{3})=\{u,v,w\}$. By \ref{item:c2}, there exist vertices $x$, $y$, $z$ of $H$ such that $\{u,v\}\subseteq B_{x}$, $\{v,w\}\subseteq B_{y}$, $\{u,w\}\subseteq B_{z}$. By \ref{item:c4}, $\{u,v\}=B_{x}$, $\{v,w\}=B_{y}$, $\{u,w\}=B_{z}$. By \ref{item:c3}, $xyzx$ is a cycle of length $3$ in $H$, contradicting the fact that $H$ is bipartite. So $\Delta_{1}$ has no cyclic decomposition.

Let $S=\{u,v,w\}$ be the set of leaves of $K_{1,3}$ and $s$ be the internal vertex of $K_{1,3}$.

Suppose that $\Delta_{2}=(K_{1,3},V(K_{1,3}))$ has a cyclic decomposition $(H,\mathcal{B})$. By \ref{item:c2}, there exist vertices $x$, $y$, $z$ of $H$ such that $\{s,u\}\subseteq B_{x}$, $\{s,v\}\subseteq B_{y}$, $\{s,w\}\subseteq B_{z}$. By \ref{item:c4}, $\{s,u\}=B_{x}$, $\{s,v\}=B_{y}$, $\{s,w\}=B_{z}$. By \ref{item:c3}, $xyzx$ is a cycle of length $3$ in $H$, contradicting the fact that $H$ is bipartite. So $\Delta_{2}$ has no cyclic decomposition. 

Suppose that $\Delta_{3}=(K_{1,3},S)$ has a cyclic decomposition $(H,\mathcal{B})$. By \ref{item:c2}, there exist vertices $x$, $y$, $z$ of $H$ such that $\{s,u\}\subseteq B_{x}$, $\{s,v\}\subseteq B_{y}$, $\{s,w\}\subseteq B_{z}$. Since $B_{x}\cap B_{y}=B_{x}\cap B_{z}=B_{y}\cap B_{z}\nsubseteq T$, by \ref{item:c3}, $x=y=z$. So $V(K_{1,3})\subseteq B_{x}$, contradicting \ref{item:c4}. So $\Delta_{3}$ has no cyclic decomposition. 
\end{proof}

\begin{proposition}
\label{prop:graftcondition}
Let $M$ be a connected graphic delta-matroid such that $M=\mathcal{G}(G,T)\triangle X$ for a graft $(G,T)$, 
where $X\subseteq E(G)$ and $G$ has no isolated vertices.
Then, $M$ is a twisted matroid if and only if at least one of the following holds:
\begin{enumerate}[label=\rm (G\arabic*)]
\item\label{item:g1} $|T|\leq 2$.
\item\label{item:g2} $(G,T)$ admits a nice cyclic decomposition $(H,\mathcal{B})$ with an even cycle $H$ of length at least 4.
\item\label{item:g3} $(G,T)$ admits a nice cyclic decomposition $(H,\mathcal{B})$ with a path $H$ of length at least 1.
\end{enumerate}
\end{proposition}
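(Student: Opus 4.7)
The plan is to show that $M$ is a twisted matroid if and only if $(G,T)$ has no minor isomorphic to $\Delta_1$, $\Delta_2$, or $\Delta_3$, and then to match this condition against (G1)--(G3) via the structural lemmas of this section.

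For the reduction step, note that $\mathcal{G}(G,T)$ is an even binary delta-matroid, so $M$ is even. By Lemma~\ref{MK3} together with the fact that $D_1$ is not even, $M$ is a twisted matroid if and only if $M$ has no minor equivalent to $MK_3$. Iterating Lemmas~\ref{lem:graftdeletion} and~\ref{lem:graftcontraction} shows that every minor of $\mathcal{G}(G,T)$ is of the form $\mathcal{G}(G',T')\triangle Y$ for some graft minor $(G',T')$ of $(G,T)$, and conversely every graft minor yields a delta-matroid minor. Since twisting by $X$ does not affect which delta-matroids appear as minors up to equivalence, combining this with Lemma~\ref{MK3graft} gives that $M$ has a minor equivalent to $MK_3$ if and only if $(G,T)$ has a minor isomorphic to one of $\Delta_1$, $\Delta_2$, $\Delta_3$ (after discarding any isolated vertices that appear).

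For the backward direction, (G1) means $|T|\leq 2$, and then Lemma~\ref{Tlessthan2} gives that $\mathcal{G}(G,T)$ is a cycle matroid, so $M$ is a twisted matroid directly. For (G2) and (G3), $(G,T)$ admits a cyclic decomposition, so Lemma~\ref{lem:minorcyclic} ensures every minor of $(G,T)$ has a cyclic decomposition, and Lemma~\ref{lem:D123} rules out $\Delta_i$-minors; the reduction above then yields that $M$ is a twisted matroid.

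For the forward direction, assume $M$ is a twisted matroid. If $|T|\leq 2$ then (G1) holds, so assume $|T|\geq 3$. The reduction gives that $(G,T)$ has no $\Delta_i$-minor. Since twisting preserves connectedness of a delta-matroid, $\mathcal{G}(G,T)=M\triangle X$ is connected, and then Lemma~\ref{graftconnected} forces $G$ to be connected. Now split on whether $G$ contains a cycle visiting at least three vertices of $T$: Lemma~\ref{evencycle} produces a nice cyclic decomposition on an even cycle of length at least $4$, giving (G2), while Lemma~\ref{path} produces one on a path of length at least $1$, giving (G3). The only subtle point is the minor-correspondence used in the reduction, but this is already fully encoded by the graft-deletion and graft-contraction formulas of Lemmas~\ref{lem:graftdeletion} and~\ref{lem:graftcontraction}.
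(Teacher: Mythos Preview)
Your proof is correct and follows essentially the same route as the paper's: both directions hinge on the equivalence between ``$\mathcal{G}(G,T)$ has no $MK_3$-minor'' and ``$(G,T)$ has no $\Delta_i$-minor,'' then invoke Lemmas~\ref{Tlessthan2}, \ref{lem:D123}, \ref{evencycle}, \ref{path}, and \ref{graftconnected} exactly as the paper does. Your version is slightly more explicit in isolating the graft-minor/delta-matroid-minor correspondence up front (via Lemmas~\ref{lem:graftdeletion} and~\ref{lem:graftcontraction}), whereas the paper leaves this implicit when it passes from Lemma~\ref{lem:D123} to Lemma~\ref{MK3graft}; but this is a matter of presentation, not substance.
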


\begin{proof}
First, we prove the backward direction. If $|T|\leq 2$, then by Lemma~\ref{Tlessthan2}, $\mathcal{G}(G,T)=M(G')$ for some graph $G'$ and therefore $M$ is a twisted matroid. If \ref{item:g2} or \ref{item:g3} holds, then $(G,T)$ has a cyclic decomposition. So by Lemma~\ref{lem:D123}, $(G,T)$ has no minor isomorphic to $\Delta_{1}$, $\Delta_{2}$, or $\Delta_{3}$. By Lemma~ \ref{MK3graft}, $\mathcal{G}(G,T)$ has no (delta-matroid) minor equivalent to $MK_{3}$. Since $\mathcal{G}(G,T)$ is even~\cite{Oum2009}, it has no minor isomorphic to $D_{1}$. Therefore, by Lemma~\ref{MK3}, $\mathcal{G}(G,T)$ is a twisted matroid and so is $M$. 

Now let us prove the forward direction. Since $M$ is a twisted matroid, $\mathcal{G}(G,T)$ is a twisted matroid. As $M$ is connected, $\mathcal{G}(G,T)$ is connected and by Lemma~\ref{graftconnected}, $G$ is connected. Since $\mathcal{G}(G,T)$ is a twisted matroid, by Lemma~\ref{MK3}, $\mathcal G(G,T)$ has no (delta-matroid) minor equivalent to $MK_{3}$. Hence, by Lemma~\ref{MK3graft}, $(G,T)$ has no minor isomorphic to $\Delta_{1}$, $\Delta_{2}$, $\Delta_{3}$. 

Suppose that $\abs{T}\geq 3$. By Lemmas~\ref{evencycle} and \ref{path}, \ref{item:g2} holds if $G$ has a cycle containing at least 3 vertices of $T$ and \ref{item:g3} holds otherwise.
\end{proof}

Let $\mathcal{C}$ be the class of all matroids $M$ such that 
$\mathcal{B}(M)=\mathcal{F}(\mathcal{G}(G,T))\triangle X$ for a graft $(G,T)$ and $X\subseteq E(G)$ where $(G,T)$ admits a nice cyclic decomposition $(H,\mathcal{B})$ with an even cycle $H$ of length at least 4.
Similarly, let $\mathcal{P}$ be the class of all matroids $M$ such that $\mathcal{B}(M)=\mathcal{F}(\mathcal{G}(G,T))\triangle X$ for a graft $(G,T)$ and $X\subseteq E(G)$ where $(G,T)$ admits a nice cyclic decomposition $(H,\mathcal{B})$ with a path $H$ of length at least 1.

\begin{proposition}
\label{prop:tgraphicgraft}
A matroid $M$ is delta-graphic if and only if every component $C$ of $M$ satisfies at least one of the following conditions:
\begin{enumerate}[label=\rm (\arabic*)]
\item $M|C$ is graphic or cographic.
\item\label{item:tg2} $M|C\in\mathcal{C}$.
\item\label{item:tg3} $M|C\in\mathcal{P}$.
\end{enumerate}
\end{proposition}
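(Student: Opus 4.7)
My plan is to deduce the proposition from Proposition~\ref{prop:graftcondition} by treating each component of $M$ separately and then reassembling.

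For the forward direction, let $C$ be a component of $M$ when $M$ is delta-graphic. Since $M|C$ is a minor of $M$, Lemma~\ref{tgraphicminorclosed} yields that $M|C$ is itself delta-graphic, so I can pick a graft $(G,T)$ and a subset $X\subseteq E(G)$ with $\mathcal{B}(M|C)=\mathcal{F}(\mathcal{G}(G,T))\triangle X$; after discarding isolated vertices of $G$, which does not affect $\mathcal{G}(G,T)$, I may assume $G$ has no isolated vertex. Now $M|C$ is connected and, being a matroid, is tautologically a twisted matroid, so Proposition~\ref{prop:graftcondition} forces one of \ref{item:g1}, \ref{item:g2}, \ref{item:g3}. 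Cases \ref{item:g2} and \ref{item:g3} are exactly the statements $M|C\in\mathcal{C}$ and $M|C\in\mathcal{P}$. In case \ref{item:g1}, Lemma~\ref{Tlessthan2} says $\mathcal{G}(G,T)=M(G')$ for a suitable graph $G'$, and then Lemma~\ref{tmatroid} applied to the connected matroid $M|C$ forces the twist $X$ to be either $\emptyset$ or all of $E(G')$, so $M|C$ equals $M(G')$ or $M(G')^{*}$ and is thus graphic or cographic.

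For the backward direction, I plan to combine per-component graft representations. Let $C_{1},\ldots,C_{k}$ be the components of $M$; then $M$ is the direct sum of the $M|C_{i}$, so $\mathcal{B}(M)=\{B_{1}\cup\cdots\cup B_{k}:B_{i}\in\mathcal{B}(M|C_{i})\}$. By hypothesis each $M|C_{i}$ is delta-graphic, using Lemma~\ref{lem:dualclosed} in the cographic case and the defining graft in the $\mathcal{C}$ and $\mathcal{P}$ cases, so I fix graft representations $\mathcal{B}(M|C_{i})=\mathcal{F}(\mathcal{G}(G_{i},T_{i}))\triangle X_{i}$. Taking the disjoint-union graft $(G,T)=\bigsqcup_{i}(G_{i},T_{i})$ and twist $X=\bigsqcup_{i}X_{i}$, the observation that a $T$-spanning forest of $G$ is exactly a disjoint union of $T_{i}$-spanning forests of the $G_{i}$ gives $\mathcal{F}(\mathcal{G}(G,T))\triangle X=\mathcal{B}(M)$, so $M$ is delta-graphic.

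The main obstacle is case \ref{item:g1} in the forward direction: I need to rule out any ``intermediate'' twist $X$, and Lemma~\ref{tmatroid} (whose hypothesis requires connectedness of $M|C$) is exactly the tool that forces $X$ to be trivial or complementary. Everything else is essentially bookkeeping on top of Proposition~\ref{prop:graftcondition} and the observation that deletion of isolated vertices and decomposition into components behave cleanly with respect to the graft construction.
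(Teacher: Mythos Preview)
Your proof is correct and follows essentially the same approach as the paper's: reduce to the connected case via Lemma~\ref{tgraphicminorclosed}, apply Proposition~\ref{prop:graftcondition}, and in case~\ref{item:g1} use Lemma~\ref{Tlessthan2} together with Lemma~\ref{tmatroid} to force $M|C$ graphic or cographic. The paper handles the backward direction with a single word (``obvious'') and the reduction to components with ``we may assume that $M$ is connected''; you have simply spelled out both steps, including the disjoint-union-of-grafts argument, but the logic is the same.
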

\begin{proof}
Since the backward direction is obvious, we prove the forward direction.
Let $M$ be a delta-graphic matroid. We may assume that $M$ is connected. There is a graft $(G,T)$ such that $\mathcal{B}(M)=\mathcal{G}(G,T)\triangle X$ for some $X\subseteq E(G)$. We may assume that $G$ has no isolated vertices. Since $M$ is a twisted matroid and a graphic delta-matroid, \ref{item:g1}, \ref{item:g2}, or \ref{item:g3} holds by Proposition~\ref{prop:graftcondition}. Since \ref{item:g2} or \ref{item:g3} implies \ref{item:tg2} or \ref{item:tg3}, we may assume that $|T|\leq 2$. By Lemma~\ref{Tlessthan2}, $\mathcal{G}(G,T)=M(G')$ for a graph~$G'$. Since $\mathcal{B}(M)=\mathcal{G}(G,T)\triangle X=\mathcal{B}(M(G'))\triangle X$ and $M$ is connected, $M=M(G')$ or $M=M^{*}(G')$ by Lemma~\ref{tmatroid}. Hence, $M$ is graphic or cographic. 
\end{proof}

\section{Structure of delta-graphic matroids}
\label{sec:str}
Now we aim to describe the structure of delta-graphic matroids.
\subsection{Structure of matroids in $\mathcal{C}$}

We will describe the structure of all matroids in $\mathcal{C}$ from the cycle matroid of wheel graphs by gluing graphic or cographic matroids with $2$-sum operation. The \emph{wheel graph} of order $k+1$ is a graph $W_{k}$ on the vertex set $\{s,t_{1},\ldots,t_{k}\}$ with an edge set $\{e_{i}:1\leq i\leq 2k\}$ where, for $1\leq i\leq k$, $e_{2i-1}:=t_{i}t_{i+1}, e_{2i}:=st_{i+1}$ and $t_{k+1}:=t_{1}$. The vertex $s$ is a \emph{center} of $W_{k}$.
An edge of $W_{k}$ is a \emph{spoke} if it is incident with a center and is a \emph{rim edge} otherwise. See Figure~\ref{fig:W6}.

\begin{figure}[t]
\centering
\tikzstyle{v}=[circle, draw, solid, fill=black, inner sep=0pt, minimum width=3pt]
  \begin{tikzpicture}
    \draw(0,0) node[v,label=left:$s$](c){};
	\foreach \x in {1,2,6} {
    \draw (\x*60+30:1) node [v,label=$t_\x$](v\x){};
    \draw (c)--(v\x); 
    }
    \foreach \x in {3,4,5} {
      \draw (\x*60+30:1) node [v,label=below:$t_\x$](v\x){};
      \draw (c)--(v\x); 
      }
    \draw (v1)--(v2)--(v3)--(v4)--(v5)--(v6)--(v1);
  \end{tikzpicture}
\caption{The graph $W_{6}$.}
\label{fig:W6}
\end{figure}
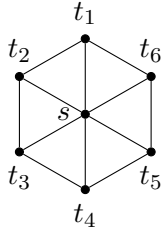

\begin{proposition}
\label{prop:main}
Let $k\geq 2$ be an integer and $(G,T)$ be a graft with a nice cyclic decomposition $(H,\mathcal{B})$ such that $H$ is an even cycle $1,2,3,\ldots ,2k,1$ and $\mathcal{B}=\{B_{i}:1\leq i\leq 2k\}$. 
Let $G_1,G_2,\ldots,G_{2k}$ be subgraphs of $G$ such that $V(G_i)=B_i$ for all $i\in\{1,2,\ldots,2k\}$
and every edge of $G$ is in exactly one of them.
Let $u_{i}$ be the vertex in $B_{i}\cap B_{i+1}$ where $B_{2k+1}:=B_{1}$.
Let $\tilde{G}_{i}$ be a graph obtained from $G_{i}$ by adding an edge $e_{i}=u_{i-1}u_{i}$ where $u_{0}:=u_{2k}$. 

Let $M^{H,\mathcal{B}}:=M(W_{k})\oplus_{2}M(\tilde{G}_{1})\oplus_{2}M^{*}(\tilde{G}_{2})\oplus_{2}M(\tilde{G}_{3})\oplus_{2}M^{*}(\tilde{G}_{4})\oplus_{2}\cdots\oplus_{2}M(\tilde{G}_{2k-1})\oplus_{2}M^{*}(\tilde{G}_{2k})$ where the edge $e_{i}\in E(W_{k})$ is identified with the edge $e_{i}$ in $E(\tilde{G}_{i})$ for $1\leq i\leq 2k$.
Then,
\[
\mathcal{F}(\mathcal{G}(G,T))=\mathcal{B}(M^{H,\mathcal{B}})\triangle\displaystyle\bigcup_{i=1}^{k}E(G_{2i}).
\]
\end{proposition}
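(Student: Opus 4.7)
The plan is to unravel the iterated 2-sum defining $M^{H,\mathcal{B}}$ bag by bag, interpret the twist by $E_{\mathrm{even}}:=\bigcup_{i=1}^{k}E(G_{2i})$, and match the result against the $T$-spanning forests of $G$.

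First, I would iterate Lemma~\ref{lem:circuittobase} along the star-shaped tree decomposition of $M^{H,\mathcal{B}}$ (center $M(W_k)$, leaves $N_i$ with $N_i=M(\tilde G_i)$ for odd $i$ and $N_i=M^*(\tilde G_i)$ for even $i$) to conclude that $B\subseteq E(G)$ is a base of $M^{H,\mathcal{B}}$ iff $B=\bigcup_{i=1}^{2k}(B_i\setminus\{e_i\})$ for some spanning tree $T_W$ of $W_k$ and bases $B_i$ of $N_i$ with $e_i$ in exactly one of $T_W,B_i$. Using $\tilde G_i=G_i+e_i$, a short case analysis on $N_i$ shows that the bag-$i$ piece of $B\triangle E_{\mathrm{even}}$ is a spanning subforest $F_i$ of $G_i$ of one of two forms: Case~1, a spanning tree of $G_i$ (so $F_i$ connects $u_{i-1}$ to $u_i$); or Case~2, a $2$-component spanning forest separating $u_{i-1}$ from $u_i$. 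The compatibility at $e_i$ reads: $e_i\in T_W$ iff $F_i$ is in Case~1 for odd $i$, and iff $F_i$ is in Case~2 for even $i$. The parity flip for even $i$ is exactly where the $E(G_i)$-twist converts the cographic cotree-based piece of $M^*(\tilde G_i)$ into the right spanning-forest form.

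Next, let $\Pi\subseteq\{1,\ldots,2k\}$ collect the Case~2 indices. The central combinatorial claim is that the following are equivalent: (a) $T_W$ is a spanning tree of $W_k$; (b) $F:=\bigcup_i F_i$ is the edge set of a $T$-spanning forest of $G$; and (c) $\Pi$ is nonempty and all cyclic gaps of $\Pi$ in $H$ are odd (equivalently, the elements of $\Pi$ alternate in parity around $H$). For (a)$\Leftrightarrow$(c) I would use the standard description of spanning trees of $W_k$: $T_W$ is a spanning tree iff some rim edges $e_{2p_l-1}$ are missing and each rim-arc contains exactly one spoke $e_{2m_l-2}\in T_W$ with $p_l<m_l\le p_{l+1}$, yielding alternating cyclic gaps $2(m_l-p_l)-1$ and $2(p_{l+1}-m_l)+1$, all odd; the reverse reconstruction is immediate from any such $\Pi$.

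For (b)$\Leftrightarrow$(c), I would invoke niceness (\ref{item:n1}, \ref{item:n2}, \ref{item:c4}) to identify $T=\{u_1,\ldots,u_{2k}\}$ with $B_i\cap T=\{u_{i-1},u_i\}$ and each $G_i$ connected; since $G$ is connected and $|T|\ge 3$, no component of a $T$-spanning forest is $T$-free, so each $F_i$ must be in Case~1 or~2. Introducing the auxiliary graph $\Lambda$ on $\{u_1,\ldots,u_{2k}\}$ with edges $\{u_{i-1},u_i\}$ for each Case~1 index (a subgraph of the cycle $\Lambda_0=u_1 u_2\cdots u_{2k} u_1$), any path in $F$ between two $u_j$'s induces a walk in $\Lambda$ since transitions between bags must go through the shared $u_\ell$'s, and conversely; hence two $u_j$'s lie in the same $F$-component iff they lie in the same $\Lambda$-component, and the odd-$T$-count condition on $F$-components becomes the odd-gap condition on $\Pi$. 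Acyclicity of $F$ is automatic because each $F_i$ is acyclic and the only potential extra cycle is the macro-cycle around the bags, which appears iff $\Pi=\emptyset$ (and is excluded by the odd-gap condition, since a single $\Lambda$-component has $2k$ vertices, which is even). Combining (a)--(c) yields the claimed identity. The main technical obstacles are the cographic-side bookkeeping in the first step (verifying that the $E(G_i)$-twist of a cotree-based piece produces precisely the two intended forms for $F_i$ with parity-reversed compatibility) and the careful two-direction verification of the odd-gap combinatorics shared by (a) and (b).
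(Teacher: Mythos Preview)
Your proof is correct and follows essentially the same approach as the paper, which proves the proposition by combining Lemma~\ref{B} (bases of $M^{H,\mathcal{B}}$ via the attached/detached dichotomy of Lemma~\ref{o}) with Lemma~\ref{F} (feasible sets of $\mathcal{G}(G,T)$ characterized by the same data). Your explicit intermediate condition~(c)---that $\Pi$ is nonempty with all cyclic gaps odd---is a clean repackaging of what the paper argues directly in the proof of Lemma~\ref{F}, where components of the $T$-spanning forest are matched against connectivity and acyclicity of the edge set $D$ in $W_k$ without isolating this parity invariant.
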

Note that $G_i$ contains all non-loop edges of $G[B_i]$ and all loops not incident with vertices in $T$. It may be possible that a loop belongs to both $G[B_i]$ and $G[B_{i+1}]$ and that is why we define $G_i$ as above, instead of defining it as an induced subgraph of $G$.
Unfortunately this means that if $G$ has a loop $e$ incident with $u_i$ for some $i$, then depending on whether $e$ belongs to $G_i$ or $G_{i+1}$, the matroid $M^{H,\mathcal B}$ has $e$ as a loop or a coloop, meaning that $M^{H,\mathcal B}$ is not uniquely determined. However, later when we apply this construction, we will focus on connected matroids and so $G$ has no loops, implying that $M^{H,\mathcal B}$ is uniquely defined in such cases.

Since $(H,\mathcal{B})$ is nice, $G_{i}$ is connected for each $i$.
For each $i\in\{1,\ldots,2k\}$, a set $X\subseteq E({G}_{i})$ is \emph{attached} if $X$ is the set of edges of a spanning tree of $G_{i}$ and $X$ is \emph{detached} if $X$  is the set of edges of a spanning forest of $G_i$ with $2$ components, each having exactly one vertex in $\{u_{i-1},u_{i}\}$. 

\begin{lemma}
\label{o}
Let $k$, $(G,T)$, $G_{i}$, $\tilde{G}_{i}$ be given as in Proposition~\ref{prop:main}.
Let $1\leq i\leq 2k$ be an integer. For a subset $X$ of $E(G_{i})$, the following hold:
\begin{enumerate}
\item[$(1)$] $X$ is attached if and only if $X$ is a base of $M(\tilde{G}_{i})$.
\item[$(2)$] $X$ is detached if and only if $X\cup\{e_{i}\}$ is a base of $M(\tilde{G}_{i})$.
\end{enumerate}
\end{lemma}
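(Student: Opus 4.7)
The plan is to unpack the definitions of attached and detached sets in terms of spanning trees of $\tilde{G}_i$ and check each direction directly, using that $G_i$ is connected (by \ref{item:n1}) and that $V(\tilde{G}_i)=V(G_i)$. First, I would record two ambient facts. Since $(H,\mathcal{B})$ is a nice cyclic decomposition with $H$ an even cycle of length $\ge 4$, the vertices $i-1$ and $i+1$ are not adjacent in $H$; by \ref{item:c3} this forces $B_{i-1}\cap B_{i+1}=\emptyset$, so $u_{i-1}\neq u_i$ and $e_i$ is not a loop of $\tilde{G}_i$. Also, $G_i$ is connected, so $\tilde{G}_i$ is connected, and therefore the bases of $M(\tilde{G}_i)$ are exactly the edge sets of spanning trees of $\tilde{G}_i$.

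For (1), observe that for any $X\subseteq E(G_i)$, the pair $(V(G_i),X)$ and $(V(\tilde{G}_i),X)$ are literally the same graph; hence $X$ is acyclic and spanning in $G_i$ if and only if $X$ is acyclic and spanning in $\tilde{G}_i$. So $X$ being the edge set of a spanning tree of $G_i$ is equivalent to $X$ being a base of $M(\tilde{G}_i)$ not containing $e_i$; but every base of $M(\tilde{G}_i)$ contained in $E(G_i)$ automatically avoids $e_i$, giving both directions.

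For (2), suppose first that $X$ is detached. Then $X$ has $|V(G_i)|-2$ edges, is acyclic, and its two components each meet $\{u_{i-1},u_i\}$ in exactly one vertex. Adding the edge $e_i=u_{i-1}u_i$ joins these two components without creating a cycle and produces a connected spanning subgraph of $\tilde{G}_i$ with $|V(\tilde{G}_i)|-1$ edges, i.e., a spanning tree. Conversely, if $X\cup\{e_i\}$ is a base of $M(\tilde{G}_i)$ with $X\subseteq E(G_i)$, then $X\cup\{e_i\}$ is a spanning tree of $\tilde{G}_i$, and deletion of the bridge $e_i$ leaves a spanning forest of $G_i$ with exactly two components, one containing $u_{i-1}$ and the other containing $u_i$; this is precisely the definition of detached.

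The only thing to be careful about is the small structural check that $u_{i-1}\neq u_i$, so that the word ``detached'' makes sense; this is handled in the first paragraph using \ref{item:c3} and the fact that $H$ has length $\ge 4$. After that, both parts reduce to the standard identification of bases of a graphic matroid with spanning trees, and a forest-tree edge count, so I do not expect any genuine obstacle beyond the bookkeeping described above.
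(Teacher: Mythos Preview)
Your proof is correct and takes essentially the same approach as the paper, which simply states that the conclusion follows easily from the definitions of ``attached'' and ``detached''. You have spelled out those definitions carefully (including the pleasant sanity check that $u_{i-1}\neq u_i$), but there is no substantive difference in method.
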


\begin{proof}
The conclusion follows easily from the definitions of ``attached'' and ``detached''.
\end{proof}

\begin{lemma}
\label{B}
Let $k$, $(G,T)$, $G_{i}$, $\tilde{G}_{i}$, $M^{H,\mathcal{B}}$ be given as in Proposition~\ref{prop:main}.
A set $B$ is a base of $M^{H,\mathcal{B}}$ if and only if there exist sets $F_{1},F_{2},\ldots,F_{2k}$ such that
\begin{enumerate}[label=\rm(\arabic*)]
\item\label{item:4.3(1)} $F_{i}$ is a base of $M(\tilde{G}_{i})$ for each $1\leq i\leq 2k$,
\item\label{item:4.3(2)} $B=\bigcup_{i=1}^{2k} (F_{i}-\{e_{i}\})\triangle\bigcup_{i=1}^{k}E(G_{2i})$,
\item\label{item:4.3(3)} $\{e_{i} : i\in\{1,2,\ldots,2k\},~e_{i}\notin F_{i}\}\triangle\{e_{2},e_{4},\ldots,e_{2k}\}$ is a base of $M(W_{k})$.
\end{enumerate}
\end{lemma}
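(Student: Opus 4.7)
The plan is to unfold the iterated $2$-sum in the definition of $M^{H,\mathcal B}$ using Lemma~\ref{lem:circuittobase}. Since $M(W_k)$ is a ``center'' matroid that shares the basepoint $e_i$ only with the $i$-th summand and the $2k$ summands $\tilde G_i$ are otherwise pairwise disjoint (their edge sets intersect $E(W_k)$ only in the $e_i$'s), the star-shaped $2$-sum admits a clean closed form. As a first step, I would prove by induction on $2k$, applying Lemma~\ref{lem:circuittobase} at each step along the basepoint $e_i$, that a set $B$ is a base of $M^{H,\mathcal B}$ if and only if there exist $B_0\in\mathcal B(M(W_k))$ and sets $\tilde F_i$ with $\tilde F_i\in\mathcal B(M(\tilde G_i))$ for odd $i$ and $\tilde F_i\in\mathcal B(M^*(\tilde G_i))$ for even $i$ such that
\[
B = B_0 \triangle \tilde F_1 \triangle \tilde F_2 \triangle \cdots \triangle \tilde F_{2k} \triangle \{e_1,e_2,\dots,e_{2k}\},
\]
and, for every $i$, $e_i\in B_0\triangle\tilde F_i$. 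This reduction is essentially automatic from Lemma~\ref{lem:circuittobase} because at each stage the remaining basepoints $e_j$ ($j\neq i$) lie entirely in the ``center'' side of the $2$-sum, so splitting them off is harmless.

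Next I would convert the even-indexed bases back into bases of the non-dual matroid. For each even $i$, write $\tilde F_i = E(\tilde G_i)\setminus F_i = E(\tilde G_i)\triangle F_i$ with $F_i\in\mathcal B(M(\tilde G_i))$, and for odd $i$ set $F_i:=\tilde F_i$; this is the bijection giving condition~\ref{item:4.3(1)}. Because $E(\tilde G_i)=E(G_i)\cup\{e_i\}$ and the sets $E(\tilde G_i)$ are pairwise disjoint, symmetric differences collapse to disjoint unions and substitution yields
\[
B \;=\; B_0 \;\triangle\; \bigcup_{i=1}^{2k} F_i \;\triangle\; \bigcup_{i=1}^{k} E(G_{2i}) \;\triangle\; \{e_1,e_3,\dots,e_{2k-1}\}.
\]

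Finally I would split the right-hand side into its $E(G)$-part and its $\{e_1,\dots,e_{2k}\}$-part. Since $B\subseteq E(G)$ and $E(G)\cap\{e_1,\dots,e_{2k}\}=\emptyset$, the $\{e_1,\dots,e_{2k}\}$-part must vanish; comparing contributions parity-by-parity, this is equivalent to
\[
B_0 \;=\; \{e_j:j\text{ odd},\,e_j\notin F_j\}\cup\{e_j:j\text{ even},\,e_j\in F_j\}\;=\;\{e_i:e_i\notin F_i\}\triangle\{e_2,e_4,\dots,e_{2k}\},
\]
and the reader can check that under this identity the constraint $e_i\in B_0\triangle\tilde F_i$ becomes automatic for every $i$. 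The requirement that this $B_0$ be a base of $M(W_k)$ is exactly condition~\ref{item:4.3(3)}. What remains on $E(G)$ simplifies to $B=\bigcup_{i=1}^{2k}(F_i-\{e_i\})\triangle\bigcup_{i=1}^{k}E(G_{2i})$, which is condition~\ref{item:4.3(2)}. Since the whole chain of manipulations is reversible, both directions of the stated equivalence follow at once. The only real difficulty is the parity-sensitive bookkeeping of signs in the symmetric differences; once the dual summands are normalized as above, the computation is purely formal.
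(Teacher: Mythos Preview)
Your proposal is correct and follows essentially the same approach as the paper: both unfold the iterated $2$-sum via Lemma~\ref{lem:circuittobase} to express a base as a symmetric difference $B_0\triangle X_1\triangle\cdots\triangle X_{2k}\triangle E(W_k)$ with the compatibility condition $e_i\in B_0\triangle X_i$, then complement the even-indexed $X_i$ to obtain bases $F_i$ of $M(\tilde G_i)$ and simplify. The paper's write-up is terser (it asserts the unfolded form without spelling out the induction), while you make the parity bookkeeping and the automatic verification of the compatibility condition more explicit, but the underlying argument is identical.
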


\begin{proof}
For $1\leq i\leq 2k$, let $\mathcal{B}_{i}:=\mathcal{B}(M(\tilde{G}_{i}))$ if $i$ is odd and $\mathcal{B}_{i}:=\mathcal{B}(M^{*}(\tilde{G}_{i}))$ otherwise. If $B$ is a base of $M^{H,\mathcal{B}}$, then there exist $X_{i}\in \mathcal{B}_{i}$ for each $i\in\{1,\ldots,2k\}$ and $X\in\mathcal{B}(M(W_{k}))$ such that $B=X\triangle X_{1}\triangle\cdots\triangle X_{2k}\triangle E(W_{k})$ and $X=\{e_{i}:e_{i}\notin X_{i}\}$. So $B=\bigcup_{i=1}^{2k}(X_{i}-\{e_{i}\})$. 
For each $1\leq i\leq 2k$, let
\[
F_{i}=
\begin{cases}
X_{i} & \text{if $i$ is odd,} \\
E(\tilde{G}_{i})- X_{i} & \text{if $i$ is even.}
\end{cases}
\] Then \ref{item:4.3(1)} holds obviously. Since $X_{2i}- \{e_{2i}\}=(F_{2i}- \{e_{2i}\})\triangle E(G_{2i})$ for all $1\leq i\leq k$, \ref{item:4.3(2)} holds. We know that \ref{item:4.3(3)} holds because $X=\{e_{i} : e_{i}\notin X_{i}\}=\{e_{i} : e_{i}\notin F_{i}\}\triangle\{e_{2},e_{4},\ldots,e_{2k}\}$. 

Conversely, suppose that there are $F_{1},F_{2},\ldots,F_{2k}$ satisfying \ref{item:4.3(1)}, \ref{item:4.3(2)}, and \ref{item:4.3(3)}. For each $1\leq i\leq 2k$, let 
\[
X_{i}=
\begin{cases}
F_{i} & \text{if $i$ is odd,} \\
E(\tilde{G}_{i})- F_{i} & \text{if $i$ is even.}
\end{cases}
\] 
Then by \ref{item:4.3(1)}, $X_{i}\in \mathcal{B}_{i}$ for each $i$. By \ref{item:4.3(3)}, we have $X:=\{e_{i} : e_{i}\notin F_{i}\}\triangle\{e_{2},e_{4},\ldots,e_{2k}\}=\{e_{i} : e_{i}\notin X_{i}\}\in \mathcal{B}(M(W_{k}))$. So $B=(X_{1}- \{e_{1}\})\cup\cdots\cup(X_{2k}- \{e_{2k}\})=X\triangle X_{1}\triangle\cdots\triangle X_{2k}\triangle E(W_{k})$ is a base of $M^{H,\mathcal{B}}$.
\end{proof}

\begin{lemma}
\label{evencycle1}
Let $(G,T)$ be a graft with a cyclic decomposition $(H,\mathcal{B})$ such that $H$ is connected. If $G'$ is a $2$-connected subgraph of $G$, 
\begin{enumerate}[label=\rm (\arabic*)]
\item\label{item:ec1} $V(G')\subseteq B_{v}$ for some $v\in V(H)$, or
\item\label{item:ec2} $H$ is an even cycle and $G'$ contains a path of $G[B_{v}]$ joining two vertices of $B_{v}\cap T$ for every $v\in V(H)$.
\end{enumerate}
\end{lemma}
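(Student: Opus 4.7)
The plan is to exploit the structure of $H$, which is a connected bipartite graph of maximum degree at most $2$, so $H$ is a single vertex, a path, or an even cycle (where a length-$2$ cycle with a double edge is allowed). If $V(G')\subseteq B_v$ for some $v$ then (1) holds, so I will assume not and work toward (2).

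First I would prove a \emph{path lemma}: if $H$ is a path with at least one edge, then under the hypotheses of the lemma, $V(G')\subseteq B_u$ for some $u\in V(H)$. The key observation is that a path has no multi-edges, so by \ref{item:c3} every edge $vw$ of $H$ gives $|B_v\cap B_w|=1$; removing this edge partitions $V(H)$ into $A\ni v$ and $B\ni w$, and \ref{item:c3} together with bipartiteness forces $\bigl(\bigcup_{u\in A}B_u\bigr)\cap\bigl(\bigcup_{u\in B}B_u\bigr)=B_v\cap B_w$ to be a single vertex $p$. Hence any split of $V(G')$ across this cut yields a separator of $G'$ of size at most~$1$, contradicting 2-connectivity unless one side meets $V(G')$ only in $\{p\}$. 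Iterating this (for instance, by taking the smallest $i$ with $V(G')\subseteq B_{v_1}\cup\cdots\cup B_{v_i}$) concentrates $V(G')$ into a single bag.

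Combining this with the assumption $V(G')\not\subseteq B_u$ for any $u$, $H$ is neither a single vertex nor a path, so $H$ is an even cycle. For each $v\in V(H)$ with neighbours $v^-,v^+$ (possibly equal in a length-$2$ cycle), \ref{item:c3} and \ref{item:c4} imply $|B_v\cap T|=2$; write $B_v\cap T=\{a,b\}$. Set $X=V(G')\cap B_v$ and $Y=V(G')-(B_v-\{a,b\})$. By \ref{item:c3} every bag $B_u$ with $u\neq v$ meets $B_v$ only inside $\{a,b\}$, so every edge of $G'$ lies inside $G'[X]$ or inside $G'[Y]$, and $X\cap Y\subseteq\{a,b\}$ is a separator of $G'$.

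The desired $(a,b)$-path in $G[B_v]\cap G'$ is then extracted by case analysis. If $X$ contains a vertex $x\notin\{a,b\}$, then both $X\setminus Y$ and $Y\setminus X$ are nonempty, so 2-connectivity forces $X\cap Y=\{a,b\}$ (in particular $a,b\in V(G')$); moreover, by \ref{item:c3}, $x\notin B_u$ for any $u\neq v$, so every $G'$-edge at $x$ lies in bag $v$. Consequently, any path from $x$ to $b$ in the connected graph $G'-a$ is forced to remain inside $X$, yielding an $(a,b)$-path in $G'\cap G[B_v]$. If instead $X\subseteq\{a,b\}$, I would check that $(H-v,\{B_u:u\neq v\})$ satisfies the hypotheses of the path lemma for $G'$ \emph{provided} that $G'$ has no edge between $a$ and $b$ inside bag $v$; in that case the path lemma forces $V(G')\subseteq B_u$ for some $u\neq v$, contradicting our standing assumption. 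Hence such an edge $ab\in E(G')\cap E(G[B_v])$ exists, giving the required length-$1$ path. The main obstacle is the bookkeeping in this last subcase: one must verify that restricting to $H-v$ preserves \ref{item:c1}--\ref{item:c4} for $G'$, noting that loops at $a$ or $b$ are also carried by the adjacent bags so they are not exclusive to bag $v$, and one must separately handle the degenerate case when $H$ is a cycle of length $2$ and $H-v$ is a single vertex. These checks are routine once the framework of the path lemma is set up.
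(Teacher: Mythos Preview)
Your argument is correct, and the path-case lemma matches the paper's: both observe that each edge of the path $H$ induces a single-vertex cut between the two sides of the bag union, so $2$-connectivity forces $V(G')$ to one side, and iterating traps $V(G')$ in a single bag.

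For the cycle case you take a somewhat different route. The paper first assumes $E(G)=E(G')$ (harmless, since deleting unused edges preserves the cyclic decomposition); then if (2) fails at some $v$, there is no $a$--$b$ path in $G[B_v]$ at all, so $B_v$ splits into two parts $X,Y$ with no edges between them and one $T$-vertex each. Replacing $v$ by two new leaves with bags $X,Y$ turns $H$ into a path, and a single appeal to the path case yields (1). Your version instead argues bag by bag: you set up the separator $X\cap Y\subseteq\{a,b\}$ inside $G'$, handle the case $X\not\subseteq\{a,b\}$ by a direct connectivity argument, and in the remaining case delete $v$ and invoke the path lemma on $H-v$ for a contradiction. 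Both are short; the paper's single bag-split is a bit slicker because it avoids the case distinction, while yours has the virtue of never needing the reduction $E(G)=E(G')$.

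One small expository gap: in the case $x\in X\setminus\{a,b\}$ you only exhibit the $x$--$b$ path in $G'-a$ staying inside $X$. To conclude an $(a,b)$-path in $G'\cap G[B_v]$ you also need the symmetric $x$--$a$ path in $G'-b$; then $a,x,b$ lie in the same component of $G'[X]\subseteq G'\cap G[B_v]$.
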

\begin{proof}
Since deleting edges does not change a cyclic decomposition, we may assume that $E(G)=E(G')$.
We first claim that \ref{item:ec1} holds if $H$ is a path. Let $H=v_{1}v_{2}\cdots v_{\ell}$ be a path. 
Since $G'$ is $2$-connected, for each $1\leq k\leq \ell-1$, either $V(G')\subseteq\bigcup_{1\leq i\leq k}B_{v_{i}}$ or $V(G')\subseteq\bigcup_{k<i\leq \ell} B_{v_{i}}$. For $1\leq k\leq \ell-1$, we orient an edge $v_{k}v_{k+1}$ from $v_{k}$ to $v_{k+1}$ if $V(G')\subseteq\bigcup_{k<i\leq \ell} B_{v_{i}}$ and from $v_{k+1}$ to $v_{k}$ otherwise. Then there is a vertex $v_{i}$ of $H$ with no outgoing edges. This implies that $V(G')\subseteq B_{v_{i}}$ and \ref{item:ec1} holds.

Now suppose that $H$ is an even cycle and \ref{item:ec2} does not hold. Then there is a vertex $v$ of $H$ such that $G'$ does not contain a path of $G[B_{v}]$ joining two vertices of $B_{v}\cap T$. Then there is a partition $(X,Y)$ of $B_{v}$, each part having a vertex of $T$ such that there is no edge between $X$ and $Y$. Let $N_{H}(v)=\{x,y\}$ and $H'$ be a path obtained from $H\setminus v$ by adding vertices $x',y'$ such that $N_{H'}(x')=\{x\}$, $N_{H'}(y')=\{y\}$. Let $\mathcal{B}'=(\mathcal{B}-\{B_{v}\})\cup\{B_{x'},B_{y'}\}$ where $B_{x'}=X$ and $B_{y'}=Y$. Then $(H',\mathcal{B}')$ is a cyclic decomposition of $(G,T)$. Since $H'$ is a path and $B_{x'}, B_{y'}\subseteq B_{v}$, by the previous claim, \ref{item:ec1} holds.
\end{proof}

\begin{lemma}
\label{componentlemma}
Let $(G,T)$ be a graft with a cyclic decomposition $(H,\mathcal{B})$. Let $x\in V(H)$ and $u$,$v$ be vertices in $\bigcup_{y\neq x}B_{y}$. If $P$ is a path from $u$ to $v$ such that $E(P)\cap E(G[B_{x}])\neq\emptyset$, then $|B_{x}\cap T|=2$ and $P$ contains a path of $G[B_{x}]$ joining two vertices of $B_{x}\cap T$.
\end{lemma}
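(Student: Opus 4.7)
The plan is to parse $P$ as a vertex sequence $w_0 w_1 \cdots w_k$ with $w_0 = u$ and $w_k = v$, and track the maximal subpaths of $P$ whose vertex sets lie inside $B_x$. Explicitly, I would consider the set $S = \{i \in \{0,1,\ldots,k\} : w_i \in B_x\}$ and decompose it into maximal intervals of consecutive indices $[a_1,b_1], \ldots, [a_m,b_m]$. An edge $w_iw_{i+1}$ of $P$ lies in $E(G[B_x])$ precisely when both $i$ and $i+1$ belong to the same interval, so the hypothesis $E(P)\cap E(G[B_x])\neq\emptyset$ guarantees at least one interval $[a_j,b_j]$ with $a_j<b_j$.

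For any such interval $[a_j,b_j]$ with $a_j<b_j$, the key step is to show that both $w_{a_j}$ and $w_{b_j}$ lie in $B_x\cap T$. I would split into cases. If $a_j>0$, then by maximality $w_{a_j-1}\notin B_x$; the edge $w_{a_j-1}w_{a_j}$ lies in some bag $B_y$ by~\ref{item:c2}, necessarily with $y\neq x$, so $w_{a_j}\in B_x\cap B_y\subseteq T$ by~\ref{item:c3}. If $a_j=0$, then $w_{a_j}=u$, which lies in $B_x$ and also, by hypothesis, in some $B_y$ with $y\neq x$, so again $w_{a_j}\in B_x\cap B_y\subseteq T$ by~\ref{item:c3}. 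The symmetric argument on $w_{b_j}$ (using $v\in\bigcup_{y\neq x}B_y$ in the boundary case $b_j=k$) shows $w_{b_j}\in B_x\cap T$.

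Since $P$ is a path, $w_{a_j}\neq w_{b_j}$, so $|B_x\cap T|\geq 2$, and by~\ref{item:c4} this forces $|B_x\cap T|=2$. The subpath $w_{a_j}w_{a_j+1}\cdots w_{b_j}$ of $P$ has all vertices in $B_x$ and all edges in $E(G[B_x])$, so it is a path of $G[B_x]$ joining the two vertices of $B_x\cap T$, as required.

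I do not expect a real obstacle here; the argument is essentially a careful bookkeeping of how $P$ enters and leaves $B_x$. The only point requiring attention is the boundary behavior at $a_j=0$ and $b_j=k$, where one must invoke the hypothesis $u,v\in\bigcup_{y\neq x}B_y$ rather than an edge-crossing argument. Conditions~\ref{item:c2}, \ref{item:c3}, \ref{item:c4} of a cyclic decomposition do all the work.
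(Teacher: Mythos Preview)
Your proof is correct and follows essentially the same approach as the paper's: both track where $P$ enters and exits $B_x$ and use \ref{item:c2}, \ref{item:c3}, \ref{item:c4} to force the boundary vertices into $T$. The only difference is cosmetic---the paper takes the global first and last indices in $B_x$ and then argues (tersely, via \ref{item:c3}) that the subpath between them lies in $G[B_x]$, whereas you take a single maximal interval of consecutive indices, which makes containment in $G[B_x]$ automatic.
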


\begin{proof}
Let $P:=v_{1}\cdots v_{\ell}$ for $\ell\geq 2$ where $v_{1}:=u$, $v_{\ell}:=v$. Then $V(P)\cap B_{x}\neq\emptyset$ and we may let $i=\min\{k\in\{1,\ldots,\ell\} : v_{k}\in B_{x}\}$ and $j=\max\{k\in\{1,\ldots,\ell\} : v_{k}\in B_{x}\}$. Since $E(P)\cap E(G[B_{x}])\neq\emptyset$, we have $i\neq j$. We first claim that $v_{i}\in T$. It is trivial from \ref{item:c3} when $i=1$. So we may assume that $i>1$. Since $v_{i-1}\notin B_{x}$, by \ref{item:c2}, $v_{i-1},v_{i}\in B_{y}$ for some $y\in V(H)-\{x\}$. So $v_{i}\in B_{x}\cap B_{y}\subseteq T$ by \ref{item:c3}. Similarly, $v_{j}\in T$. By~\ref{item:c4}, $B_{x}\cap T=\{v_{i}, v_{j}\}$ and by~\ref{item:c3}, $Q=v_{i}Pv_{j}$ is a subpath of $P$ joining two vertices of $B_{x}\cap T$ in $G[B_{x}]$.
\end{proof}
\begin{lemma}
\label{connectedcomponent}
Let $(G,T)$ be a graft with a cyclic decomposition $(H,\mathcal{B})$ where $H$ is connected. For a subgraph $G'$ of $G$, let
\[
S_{G'}=\{v\in V(H): |B_{v}\cap T|=2 \text{~and $G'$ contains a path of $G[B_{v}]$ joining two vertices of $B_{v}\cap T$}\}
\]
and $T_{G'}=\{v\in V(H): V(G')\cap B_{v}\neq\emptyset\}$.
If $G'$ is connected, then
\begin{enumerate}[label=\rm (\arabic*)]
\item $H[S_{G'}]$ is connected, and
\item if $S_{G'}=\emptyset$, then $T_{G'}$ is a clique of size at most $2$ in $H$.
\end{enumerate}
\end{lemma}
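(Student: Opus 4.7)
The plan is to analyse paths inside $G'$ by tracking which bags they visit, repeatedly invoking Lemma~\ref{componentlemma}. The key observation is that any bag $B_z$ other than the endpoint bags that is used by a path in $G'$ must have exactly two $T$-vertices and be crossed by a $G[B_z]$-path joining them, forcing $z\in S_{G'}$.

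For part~(1), given $x,y\in S_{G'}$, I would choose $u\in V(G')\cap B_x$ and $v\in V(G')\cap B_y$ (which exist by the definition of $S_{G'}$) and fix a path $P=w_0w_1\cdots w_\ell$ in $G'$ with $w_0=u$, $w_\ell=v$. For each edge $w_iw_{i+1}$ of $P$, pick via \ref{item:c2} an index $\zeta_i\in V(H)$ with $\{w_i,w_{i+1}\}\subseteq B_{\zeta_i}$. Consecutive $\zeta_i,\zeta_{i+1}$ share $w_{i+1}$, so by \ref{item:c3} they are equal or adjacent in $H$; the same reasoning applied to $u\in B_x\cap B_{\zeta_0}$ and $v\in B_y\cap B_{\zeta_{\ell-1}}$ shows that $x$ and $y$ are equal or adjacent in $H$ to $\zeta_0$ and $\zeta_{\ell-1}$ respectively. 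Thus $x,\zeta_0,\ldots,\zeta_{\ell-1},y$ is a walk in $H$. For any $\zeta_i\notin\{x,y\}$ the hypotheses of Lemma~\ref{componentlemma} apply (with bag $B_{\zeta_i}$ and path $P$, since $u\in B_x$ and $v\in B_y$ both lie in $\bigcup_{w\ne \zeta_i}B_w$, and $E(P)\cap E(G[B_{\zeta_i}])\ne\emptyset$), so $\zeta_i\in S_{G'}$. Combined with $x,y\in S_{G'}$, the walk stays inside $S_{G'}$, producing a path from $x$ to $y$ in $H[S_{G'}]$.

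For part~(2), assume $S_{G'}=\emptyset$ and pick any distinct $x_1,x_2\in T_{G'}$. Repeating the construction above with $u\in V(G')\cap B_{x_1}$ and $v\in V(G')\cap B_{x_2}$ gives a walk $x_1,\zeta_0,\ldots,\zeta_{\ell-1},x_2$ in $H$; since $S_{G'}=\emptyset$, no $\zeta_i$ can lie outside $\{x_1,x_2\}$, and so the walk forces $x_1x_2\in E(H)$. Applying this to every pair shows that the vertices of $T_{G'}$ are pairwise adjacent in $H$; since $H$ is bipartite with maximum degree $2$ it contains no triangle, hence $|T_{G'}|\leq 2$ and $T_{G'}$ is a clique.

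The main technical point is linking the walk in $H$ to the prescribed endpoints $x$ and $y$ when $P$ might use no edge of $G[B_x]$ or $G[B_y]$: this is handled by noting that a single shared vertex in the intersection of two distinct bags already forces adjacency in $H$ via \ref{item:c3}. Everything else is straightforward bookkeeping once the walk has been extracted.
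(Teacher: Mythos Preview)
Your proposal is correct and follows essentially the same approach as the paper: both arguments track a path in $G'$ through the bags via \ref{item:c2}, use \ref{item:c3} to get adjacency in $H$ between consecutive bag indices, and invoke Lemma~\ref{componentlemma} to force every intermediate bag index into $S_{G'}$. The only cosmetic difference is that the paper chooses a minimum-length path to guarantee $\zeta_0\neq x$ and $\zeta_{\ell-1}\neq y$ outright, whereas you allow equality and simply say ``equal or adjacent''; both versions work.
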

\begin{proof}
Suppose that $H[S_{G'}]$ is disconnected. Let $x$, $y$ be vertices of $H$ which are contained in different components of $H[S_{G'}]$. Choose vertices $u$, $v$ of $G'$ and a path $P=w_{1}w_{2}\cdots w_{n}$ from $u=w_{1}$ to $v=w_{n}$ such that $u\in B_{x}$, $v\in B_{y}$ and $|V(P)|$ is minimum. By \ref{item:c3}, $B_{x}\cap B_{y}=\emptyset$ and $u\neq v$. For $1\leq i\leq n-1$, by \ref{item:c2}, there is a bag $B_{z_{i}}$ containing $w_{i}$, $w_{i+1}$ for some $z_{i}\in V(H)$. 
Since $|V(P)|$ is minimum, we have $w_{2}\notin B_{x}$ and $w_{n-1}\notin B_{y}$. So $z_{1}\neq x$, $z_{n-1}\neq y$, and $u\in B_{x}\cap B_{z_{1}}$, $v\in B_{y}\cap B_{z_{n-1}}$. By \ref{item:c3}, $x$ is adjacent to $z_{1}$ and $y$ is adjacent to $z_{n-1}$. For $1\leq i\leq n-1$, either $z_{i}=z_{i+1}$ or $z_{i}$, $z_{i+1}$ are adjacent in $H$ by \ref{item:c3}. By Lemma~\ref{componentlemma}, $z_{i}\in S_{G'}$ for all $1\leq i\leq n-1$. So there is a path from $x$ to $y$ in $H[S_{G'}]$, contradicting our assumption.

To show (2), suppose that $T_{G'}$ is not a clique of $H$. Then there are $x,y\in T_{G'}$ such that $x$, $y$ are not adjacent in $H$. Let $u\in V(G')\cap B_{x}$, $v\in V(G')\cap B_{y}$. Since $x$, $y$ are not adjacent in $H$, by \ref{item:c3}, $B_{x}\cap B_{y}=\emptyset$. Let $P=v_{1}v_{2}\cdots v_{n}$ be a path $G'$ from $u=v_{1}$ to $v=v_{n}$. There is an edge $v_{j}v_{j+1}$ which is not contained in $E(G[B_{x}])\cup E(G[B_{y}])$ because $B_{x}\cap B_{y}=\emptyset$. By \ref{item:c2}, there is a vertex $w\in V(H)-\{x,y\}$ such that $\{v_{j}, v_{j+1}\}\subseteq B_{w}$. By Lemma~\ref{componentlemma}, $w\in S_{G'}$, contradicting our assumption that $S_{G'}=\emptyset$. Since $H$ is bipartite, $T_{G'}$ is a clique of size at most $2$.
\end{proof}

\begin{lemma}
\label{F}
Let $k$, $(G,T)$, $G_{i}$, $\tilde{G}_{i}$ be given as in Proposition~\ref{prop:main}.
A set $F$ is a feasible set of $\mathcal{G}(G,T)$ if and only if there exist sets $F_{1},\ldots,F_{2k}$ such that
\begin{enumerate}[label=\rm(\arabic*)]
\item\label{item:4.7(1)} $F_{i}$ is a base of $M(\tilde{G}_{i})$ for each $1\leq i\leq 2k$,
\item\label{item:4.7(2)} $F=\bigcup_{i=1}^{k}(F_{i}- \{e_{i}\})$,
\item\label{item:4.7(3)} $\{e_{i} : i\in\{1,2,\ldots,2k\},~ e_{i}\notin F_{i}\}\triangle\{e_{2},e_{4},\ldots,e_{2k}\}$ is a base of $M(W_{k})$.
\end{enumerate}
\end{lemma}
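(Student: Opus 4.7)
The plan is to prove Lemma~\ref{F} directly, independently of Proposition~\ref{prop:main}, by analyzing how a $T$-spanning forest $\tilde F$ of $G$ interacts with the nice cyclic decomposition $(H,\mathcal B)$.

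For the forward direction, I set $F_i^0 := F \cap E(G_i)$ and first show that each $F_i^0$ is either attached or detached. Niceness of $(H,\mathcal B)$ together with \ref{item:c4} and \ref{item:n2} (combined with the fact that each $i$ is not isolated in the cycle $H$) yields $B_i \cap T = \{u_{i-1}, u_i\}$; any other vertex of $B_i$ belongs to no other bag and is not in $T$, so all of its incident edges in $G$ lie in $G_i$. If some component of $(B_i, F_i^0)$ missed both $u_{i-1}$ and $u_i$, then it would be a component of $\tilde F$ with no $T$-vertex, contradicting feasibility because $G$ is connected (Lemma~\ref{graftconnected}) and $T \neq \emptyset$. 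Hence $F_i^0$ spans $B_i$ with at most two components, each containing one of $u_{i-1}, u_i$. Setting $F_i := F_i^0$ when $F_i^0$ is attached and $F_i := F_i^0 \cup \{e_i\}$ when detached, Lemma~\ref{o} gives condition~(1), and condition~(2) is immediate.

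The main step is verifying condition~(3). Let $A := \{i : F_i^0 \text{ is attached}\} = \{i : e_i \notin F_i\}$, $A^c := \{1,\ldots,2k\} \setminus A$, and $R := \{2, 4, \ldots, 2k\}$. Two $T$-vertices $u_j$ and $u_{j'}$ share a component of $\tilde F$ exactly when the cyclic path between them along $u_0 u_1 \cdots u_{2k}$ avoids $A^c$, so the components of $\tilde F$ restricted to $T$ are the cyclic arcs cut by $A^c$, and each arc length equals the number of $T$-vertices in that component. Feasibility forces every arc length to be odd, so $|A^c|$ is even and cyclically consecutive elements of $A^c$ have opposite parities. I would then prove the central combinatorial equivalence: \emph{the cyclic parity alternation of $A^c$ is equivalent to $\{e_i : i \in A \triangle R\}$ being a spanning tree of $W_k$}. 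The bijection is transparent --- odd elements of $A^c$ are rim edges of $W_k$ omitted from $X := \{e_i : i \in A \triangle R\}$, so they cut the rim cycle into rim arcs, while even elements of $A^c$ are spokes included in $X$. Cyclic alternation is then equivalent to each rim arc containing exactly one spoke from $X$, which is exactly the structural description of spanning trees of $W_k$.

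For the converse direction, given $F_1, \ldots, F_{2k}$ satisfying (1)--(3), I set $F_i^0 := F_i - \{e_i\}$ (attached or detached by Lemma~\ref{o}) and $F := \bigcup_i F_i^0$. Running the combinatorial correspondence in reverse, condition~(3) forces the cyclic alternation of $A^c$ and in particular $A \neq \{1,\ldots,2k\}$ (otherwise $\{e_i : i \in A \triangle R\}$ would be the full rim cycle of $W_k$, which is a circuit, not a base). Alternation yields odd arc lengths, so every component of $F$ has an odd number of $T$-vertices; and $A^c \neq \emptyset$ precludes any cycle in $F$, since a cycle would require a full cyclic chain of attached bags (the bags meet only at the $u_j$'s and each $F_i^0$ is itself acyclic). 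Combined with the observation that every non-$T$ vertex of $B_i$ is joined in $F_i^0$ to $u_{i-1}$ or $u_i$, this shows that $F$ is the edge set of a $T$-spanning forest, hence feasible. I expect the main obstacle to be the ``one spoke per rim arc'' combinatorial equivalence in the central step, since it requires carefully matching the cyclic parity structure on $\{1,\ldots,2k\}$ to the rim-and-spoke structure of $W_k$.
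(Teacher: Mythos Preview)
Your approach is correct and follows the same overall architecture as the paper's proof: both arguments show that each $F\cap E(G_i)$ is attached or detached, and then translate the attached/detached pattern into the spanning-tree condition on $W_k$. The difference is in how that translation is carried out. The paper verifies that $D$ is a spanning tree of $W_k$ by direct case analysis (cycles through the center, the full rim cycle, a rim-path component), and in the backward direction invokes the auxiliary Lemmas~\ref{evencycle1} and~\ref{connectedcomponent} to control where a cycle in $F$ can live and how components of $F$ meet the bags. You instead isolate a single combinatorial equivalence --- cyclic parity alternation of the detached index set $A^c$ is equivalent to $\{e_i : i\in A\triangle R\}$ being a spanning tree of $W_k$ via the ``one spoke per rim arc'' description --- and run it in both directions. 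This bypasses the two auxiliary lemmas entirely and makes the correspondence between component parities and the wheel structure more transparent; the paper's route is more self-contained but requires more bookkeeping. Both arguments ultimately unpack the same bijection.
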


\begin{proof}
Suppose that there are $F_{1},\ldots, F_{2k}$ satisfying \ref{item:4.7(1)}, \ref{item:4.7(2)}, and \ref{item:4.7(3)}. We show that $F=\bigcup_{i=1}^{2k}(F_{i}-\{e_{i}\})$ is a feasible set of $\mathcal{G}(G,T)$. For $i\in\{1,\ldots,2k\}$, let $F_{i}':=F_{i}-\{e_{i}\}$ and $H_{i}$ be a graph $(V(G_{i}),F_{i}')$. Let $D:=\{e_{i} : e_{i}\notin F_{i}\}\triangle\{e_{2},e_{4},\ldots,e_{2k}\}$ and $\Gamma=(V(G),F)$ be a spanning subgraph of $G$. We claim that $\Gamma$ is a $T$-spanning forest. 
By \ref{item:4.7(1)} and Lemma~\ref{o}, for each $i\in\{1,\ldots,2k\}$, $F_{i}'$ is attached or detached. For $i\in\{1,\ldots,2k\}$, if $F_{i}'$ is detached, then let $X_{i}$, $Y_{i}$ be edge sets of components of $H_{i}$ containing $u_{i-1}$, $u_{i}$ respectively. 

Suppose that $\Gamma$ contains a cycle $C$. If $F_{i}'$ is attached for all $i\in\{1,\ldots,2k\}$, then, by Lemma~\ref{o}, $e_{i}\notin F_{i}'=F_{i}$ for all $i\in\{1,\ldots,2k\}$ and $D=\{e_{1},e_{3},\ldots,e_{2k-1}\}$ is an edge set of a cycle of $W_{k}$, contradicting \ref{item:4.7(3)}. Therefore, by Lemma~\ref{evencycle1}, $V(C)\subseteq B_{i}$ for some $i\in\{1,\ldots,2k\}$. By \ref{item:4.7(1)}, $E(C)\nsubseteq F_{i}$. So there is an edge $e=xy\in E(C)$ such that $e\notin F_{i}$. So there is $j\neq i$ such that $e\in F_{j}$ and $x,y\in B_{i}\cap B_{j}$. By \ref{item:c3}, $H$ is an even cycle of length $2$, contradicting our assumption that $k\geq 2$. Therefore, $\Gamma$ is a forest.

It remains to show that $|V(G')\cap T|$ is odd for each component $G'$ of $\Gamma$. 
By Lemma~\ref{connectedcomponent}, $H[S_{G'}]$ is connected and if $S_{G'}=\emptyset$, then $T_{G'}$ is a clique of size at most $2$ in $H$. Suppose that $S_{G'}=\emptyset$. Since $F_{i}'$ is attached or detached for all $i\in\{1,\ldots,2k\}$, every component of $\Gamma$ contains a vertex of $T$. So $|T_{G'}|=2$ and $G'=X_{i+1}\cup Y_{i}$ for some $i\in\{1,\ldots,2k\}$. Therefore, we deduce that $|V(G')\cap T|=1$ if $S_{G'}=\emptyset$.
If $S_{G'}\neq\emptyset$, then $P:=H[S_{G'}]$ is a path because $\Gamma$ is a forest. By rotational symmetry, we can assume that $P$ is a path from $1$ or $2$. If $P$ is a path $1,2,\ldots,n$ for some $1\le n<2k$, then $F_{i}'$ is attached and $E(G')\cap E(H_{i})=F_{i}'$ for all $i\in\{1,\ldots,n\}$ and $F_{n+1}', F_{2k}'$ are detached and $E(G')\cap E(H_{2k})=Y_{2k}$,  $E(G')\cap E(H_{n+1})=X_{n+1}$. We show that $|V(G')\cap T|=n+1$ is odd. Suppose that $n$ is odd. Then, by Lemma~\ref{o}, $e_{i}\notin F_{i}$ for all $i\in\{1,\ldots,n\}$ and $e_{n+1}\in F_{n+1}$, $e_{2k}\in F_{2k}$. Therefore, $D$ contains $\{e_{1},e_{3},\ldots, e_{n},e_{n+1},e_{2k}\}$ which is an edge set of a cycle, contradicting \ref{item:4.7(3)}.

If $P$ is a path $2,3,\ldots,n$ for $2\leq n\leq 2k$, then $F_{i}'$ is attached and $E(G')\cap E(H_{i})=F_{i}'$ for all $i\in\{2,\ldots,n\}$ and both $F_{1}'$ and $F_{n+1}'$ are detached and $E(G')\cap E(H_{1})=Y_{1}$,  $E(G')\cap E(H_{n+1})=X_{n+1}$. We show that $|V(G')\cap T|=n$ is odd. Suppose that $n$ is even. Then, by Lemma~\ref{o}, $e_{i}\notin F_{i}$ for all $i\in\{2,\ldots,n\}$, $e_{1}\in F_{1}$, and $e_{n+1}\in F_{n+1}$. So $D\cap\{e_{1},e_{2},e_{4},\ldots, e_{n},e_{n+1}\}=\emptyset$. So $D$ is an edge set of a disconnected subgraph of $W_{k}$, contradicting \ref{item:4.7(3)}.

Conversely, suppose that $F$ is an edge set of a $T$-spanning forest $H$ of $G$. For $1\leq i\leq 2k$, let $F_{i}'=F\cap E(G_{i})$ and $H_{i}=(V(G_{i}),F_{i}')$. 
Since $G$ is connected and $H$ is a $T$-spanning forest, every component of $H$ contains a vertex in $T$. By \ref{item:c2}, there is no component of $H_{i}$ avoiding $T$. Since $|B_{i}\cap T|=2$, $F_{i}'$ should be either attached or detached. Let $F_{i}:=F_{i}'$ if $F_{i}'$ is attached and $F_{i}:=F_{i}'\cup\{e_{i}\}$ if $F_{i}'$ is detached. If $F_{i}'$ is detached, then let $X_{i}$,  $Y_{i}$ be edge sets of components of $H_{i}$ containing $u_{i-1}$, $u_{i}$ respectively. 

Trivially, \ref{item:4.7(2)} holds. By Lemma~\ref{o}, \ref{item:4.7(1)} holds. So it remains to prove \ref{item:4.7(3)}. Let $D:=\{e_{i} : e_{i}\notin F_{i}\}\triangle\{e_{2},e_{4},\ldots,e_{2k}\}$ and $Z=(V(W_{k}),D)$ be a subgraph of $W_{k}$. We wish to show that $Z$ is a spanning tree of $W_{k}$. Suppose that $Z$ contains a cycle.

If $Z$ has a cycle containing the center, then let $C$ be a shortest cycle among all cycles containing the center. Then $C$ is an induced cycle of $Z$. By rotational symmetry, we may assume that $E(C)=\{e_{1},e_{3},\ldots,e_{2n-1}\}\cup\{e_{2n},e_{2k}\}$ for some $1\leq n< k$. Since $E(C)\subseteq D$ and $D\cap\{e_{2},\ldots, e_{2(n-1)}\}=\emptyset$, by Lemma~\ref{o}, both $F_{2n}'$ and $F_{2k}'$ are detached and $F_{i}'$ is attached for $1\leq i\leq 2n-1$. So $Y_{2k}\cup F_{1}'\cup F_{2}'\cup\cdots\cup F_{2n-1}'\cup X_{2n}$ is an edge set of a component $X$ of $H$. However, $|V(X)\cap T|=2n\equiv 0 \pmod{2}$, contradicting our assumption. 

So $E(C)=\{e_{1},e_{3},\ldots,e_{2k-1}\}$ if $Z$ contains a cycle $C$. Then $F_{2j-1}'$ is attached for $1\leq j\leq k$. Since $Z$ has no cycles containing the center, $|D\cap \{e_{2},e_{4},\ldots,e_{2k}\}|\leq 1$. So all but at most one of $F_{1}',\ldots, F_{2k}'$ is attached and $H$ has a component containing all vertices of $T$, contradicting our assumption. Therefore, $Z$ is a forest. 

It remains to prove that $Z$ is connected. If $Z$ is disconnected, then there exists a component $P$ not containing the center $s$ of $W_{k}$. Then $P$ is a connected subgraph of a cycle $W_{k}\backslash s$. Since $Z$ is a forest and $P$ is connected, $P$ is a path. By rotational symmetry, let $E(P)=\{e_{1},e_{3},\ldots, e_{2n-1}\}$ for some $n<k$. By Lemma~\ref{o}, both $F_{2n+1}'$ and $F_{2k-1}'$ are detached and $F_{i}'$ is attached for all $i\in\{1,\ldots,2n\}\cup\{2k\}$. Therefore, $Y_{2k-1}\cup F_{2k}'\cup F_{1}'\cup\cdots\cup F_{2n}'\cup X_{2n+1}$ is a component $Y$ of $H$ and $|V(Y)\cap T|=2n+2\equiv 0 \pmod{2}$, contradicting our assumption. Therefore, $Z$ is connected.
\end{proof}

\begin{proof}[Proof of Proposition~\ref{prop:main}]
Obvious from Lemma~\ref{B} and Lemma~\ref{F}.
\end{proof}

\subsection{Structure of matroids in $\mathcal{P}$}
For a positive integer $k\geq 2$, let

\[
\Pi_{k}=
\begin{cases}
W_{\lfloor\frac{k}{2}\rfloor+1}/e_{k+1} & \text{if $k$ is odd,} \\
W_{\lfloor\frac{k}{2}\rfloor+1}\setminus e_{k+1}/e_{k+2} & \text{if $k$ is even.} \\
\end{cases}
\]

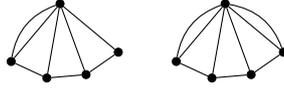
\begin{figure}[t]
\centering
\tikzstyle{v}=[circle, draw, solid, fill=black, inner sep=0pt, minimum width=3pt]
  \begin{tikzpicture}
    \draw(0,0) node[v](c){};
	\foreach \x in {0,1,2,3} {
    \draw (230+\x*30:1) node [v](v\x){};
    \draw (c)--(v\x); 
    }
    \draw (v0)--(v1)--(v2)--(v3);
    \draw (c) [bend right] to (v0);
  \end{tikzpicture}
  $\quad$
  \begin{tikzpicture}
    \draw(0,0) node[v](c){};
	\foreach \x in {0,1,2,3} {
    \draw (230+\x*30:1) node [v](v\x){};
    \draw (c)--(v\x); 
    }
    \draw (v0)--(v1)--(v2)--(v3);
    \draw (c) [bend right] to (v0);
    \draw (c)[bend left] to (v3);
  \end{tikzpicture}
\caption{Two graphs $\Pi_{8}$ and $\Pi_{9}$.}
\label{fig:Pi89}
\end{figure}

\begin{proposition}
\label{prop:main2}
Let $\ell\geq 2$ be an integer and $(G,T)$ be a graft with a nice cyclic decomposition $(H,\mathcal{B})$ such that $H$ is a path $1,2,\ldots,\ell$ and $\mathcal{B}=\{B_{i}:1\leq i\leq \ell\}$. 
Let $G_1,G_2,\ldots,G_\ell$ be subgraphs of $G$ such that $V(G_i)=B_i$ for all $i\in\{1,2,\ldots,\ell\}$ and every edge of $G$ is in exactly one of them.
For $i\in\{1,\ldots,\ell-1\}$, let $u_{i}$ be the vertex in $B_{i}\cap B_{i+1}$ and $u_{0}$ be the vertex in $(T\cap B_{1})- B_{2}$ and $u_{\ell}$ be the vertex in $(T\cap B_{\ell})- B_{\ell-1}$. Let $\tilde{G}_{i}$ be a graph obtained from $G_{i}$ be adding an edge $e_{i}=u_{i-1}u_{i}$.
Let  
\[
M^{H,\mathcal{B}}=
\begin{cases}
M(\Pi_{\ell})\oplus_{2}M(\tilde{G}_{1})\oplus_{2}M^{*}(\tilde{G}_{2})\oplus_{2}M(\tilde{G}_{3})\oplus_{2}M^{*}(\tilde{G}_{4})\oplus_{2}\cdots\oplus_{2}M(\tilde{G}_{\ell}) & \text{if $\ell$ is odd,} \\
M(\Pi_{\ell})\oplus_{2}M(\tilde{G}_{1})\oplus_{2}M^{*}(\tilde{G}_{2})\oplus_{2}M(\tilde{G}_{3})\oplus_{2}M^{*}(\tilde{G}_{4})\oplus_{2}\cdots\oplus_{2}M^{*}(\tilde{G}_{\ell}) & \text{if $\ell$ is even}
\end{cases}
\]
where $e_{i}\in E(\Pi_{\ell})$ is identified with $e_{i}$ in $E(\tilde{G}_{i})$ for $1\leq i\leq \ell$.
Then, 
\[
\mathcal{F}(\mathcal{G}(G,T))=\mathcal{B}(M^{H,\mathcal{B}})\triangle\displaystyle\bigcup_{i=1}^{\lfloor\frac{\ell}{2}\rfloor} E(G_{2i})
\]
\end{proposition}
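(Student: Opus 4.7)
The plan is to mirror the proof of Proposition~\ref{prop:main} step by step, by establishing path-analogues of Lemmas~\ref{o}, \ref{B}, and \ref{F} and combining them in the same way. Conceptually, the matroid $M(\Pi_\ell)$ plays the role of $M(W_k)$ as the ``hub'' of the $2$-sum decomposition, and the deletion/contraction defining $\Pi_\ell$ from the wheel $W_{\lfloor\ell/2\rfloor+1}$ is precisely what is needed to account for the two boundary $T$-vertices $u_0$ and $u_\ell$, which are not separator vertices of the $2$-sum but do affect the parity of components of a $T$-spanning forest.

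First, I would define attached and detached subsets of $E(G_i)$ exactly as before, using the designated pair $(u_{i-1}, u_i)$ for every $i \in \{1,\ldots,\ell\}$, and state the path-analogue of Lemma~\ref{o}; its proof is identical. Next, I would prove the path-analogue of Lemma~\ref{B}: a set $B$ is a base of $M^{H,\mathcal{B}}$ if and only if there exist sets $F_1,\ldots,F_\ell$ such that each $F_i \in \mathcal{B}(M(\tilde{G}_i))$,
\[
B = \bigcup_{i=1}^{\ell}(F_i - \{e_i\}) \triangle \bigcup_{i=1}^{\lfloor\ell/2\rfloor} E(G_{2i}),
\]
and $\{e_i : i \in \{1,\ldots,\ell\},\ e_i \notin F_i\} \triangle \{e_2, e_4, \ldots, e_{2\lfloor\ell/2\rfloor}\}$ is a base of $M(\Pi_\ell)$. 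The proof is an iterated application of Lemma~\ref{lem:circuittobase}, essentially identical to that of Lemma~\ref{B}; the only subtlety is that for even $\ell$ the last tile enters as $M^{*}(\tilde{G}_\ell)$, which shifts the set of indices appearing in the XOR correction.

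The technical heart is the path-analogue of Lemma~\ref{F}: $F$ is feasible in $\mathcal{G}(G,T)$ iff there exist $F_1,\ldots,F_\ell$ satisfying the same three conditions. The acyclicity argument goes through essentially unchanged, because by Lemma~\ref{evencycle1} every cycle of the putative spanning forest $\Gamma = (V(G), F)$ must be confined to a single bag, and then the contradiction is extracted as in Lemma~\ref{F}. For the parity analysis, Lemma~\ref{connectedcomponent} shows that every component $C$ of $\Gamma$ touches a contiguous interval of bags, so the $T$-vertices of $C$ form a contiguous sub-interval of $\{u_0, u_1, \ldots, u_\ell\}$ determined by the attached/detached pattern. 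The $T$-spanning condition then amounts to requiring each such interval to have odd length, and I would verify that this is equivalent to $\{e_i : e_i \notin F_i\} \triangle \{e_2, \ldots, e_{2\lfloor\ell/2\rfloor}\}$ being a spanning tree of $\Pi_\ell$. This equivalence is a direct but bookkeeping-heavy case analysis that uses the explicit form of $\Pi_\ell$: contracting the spoke $e_{\ell+1}$ when $\ell$ is odd, and additionally deleting the adjacent rim edge $e_{\ell+1}$ before contracting the spoke $e_{\ell+2}$ when $\ell$ is even, introduces parallel edges at the open ends of the resulting fan and precisely captures the boundary contribution of $u_0$ and $u_\ell$. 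I expect this verification to be the main obstacle of the proof.

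Finally, Proposition~\ref{prop:main2} follows from the two path-analogues of Lemmas~\ref{B} and \ref{F} by the same one-line combination used at the end of the proof of Proposition~\ref{prop:main}.
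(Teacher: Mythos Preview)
Your approach is correct in principle but takes a genuinely different route from the paper. You propose to redo the entire analysis of Lemmas~\ref{o}, \ref{B}, and \ref{F} in a path setting, carrying out the bookkeeping for $M(\Pi_\ell)$ directly. The paper instead \emph{reduces} the path case to the already-proven cycle case (Proposition~\ref{prop:main}): when $\ell$ is odd, it closes the path into an even cycle of length $\ell+1$ by adding a single edge $f_{\ell+1}$ between $u_0$ and $u_\ell$ (with a new two-vertex bag $B_{\ell+1}$), applies Proposition~\ref{prop:main} to this augmented graft, and then removes $f_{\ell+1}$ using Lemma~\ref{lem:graftdeletion} and the $2$-sum identity $(M(W_n)\oplus_2 M^{*}(\tilde G_{\ell+1}))/f_{\ell+1}=M(W_n/e_{\ell+1})=M(\Pi_\ell)$; when $\ell$ is even, it first appends a pendant vertex $u_{\ell+1}$ to extend the path to odd length and then invokes the odd case. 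The paper's reduction replaces your ``bookkeeping-heavy case analysis'' with two short computations and avoids any new parity argument; your approach, by contrast, is self-contained and would make the structure of $M(\Pi_\ell)$ more transparent, at the cost of essentially duplicating the work of Lemma~\ref{F}.
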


\begin{proof}
Suppose that $\ell$ is odd. Let $\ell=2n-1$ for some $n\geq 2$. Let $\tilde{G}=G+f_{\ell+1}$ where $f_{\ell+1}$ is a new edge joining $u_{0}$ and $u_{\ell}$. 
Let $G_{\ell+1}$ be a graph with $V(G_{\ell+1})=\{u_{0},u_{\ell}\}$ and $E(G_{\ell+1})=\{f_{\ell+1}\}$. Then $(\tilde{G},T)$ has a nice cyclic decomposition $(\tilde{H},\tilde{\mathcal{B}})$ such that $\tilde{H}$ is an even cycle obtaining from $H$ by adding a vertex $\ell+1$ adjacent to $1$ and $\ell$
and $\tilde{\mathcal{B}}=\mathcal{B}\cup\{B_{\ell+1}\}$ where $B_{\ell+1}:=\{u_{0},u_{\ell}\}$. Let $\tilde{G}_{\ell+1}$ be a graph on $\{u_{0},u_{\ell}\}$ with $2$ parallel edges $e_{\ell+1}$, $f_{\ell+1}$. 

Since $f_{\ell+1}$ is not a $T$-bridge of $(\tilde{G},T)$, $\mathcal{G}(G,T)=\mathcal{G}(\tilde{G}\setminus f_{\ell+1},T)=\mathcal{G}(\tilde{G},T)\setminus f_{\ell+1}$. We observe that $(M(W_{n})\oplus_{2}M^{*}(\tilde{G}_{\ell+1}))/f_{\ell+1}=M(W_{n}/e_{\ell+1})$. Hence, by Proposition~\ref{prop:main}, we have 
\[
\mathcal{F}(\mathcal{G}(G,T))=\left(\mathcal{B}(M^{\tilde{H},\tilde{B}})\triangle\bigcup_{i=1}^{n} E(G_{2i})\right)
\setminus f_{\ell+1}=\mathcal{B}(M^{\tilde{H},\tilde{B}}/f_{\ell+1})\triangle\bigcup_{i=1}^{n-1} E(G_{2i}). 
\]
Note that 
\begin{align*}
M^{\tilde{H},\tilde{\mathcal{B}}}/f_{\ell+1}&=(M(W_{n})\oplus_{2}M(\tilde{G}_{1})\oplus_{2}\cdots\oplus_{2}M(\tilde{G}_{\ell})\oplus_{2}M^{*}(\tilde{G}_{\ell+1}))/f_{\ell+1} \\
&=M(W_{n}/e_{\ell+1})\oplus_{2}M(\tilde{G}_{1})\oplus_{2}\cdots\oplus_{2}M(\tilde{G}_{\ell}) \\
&=M(\Pi_{\ell})\oplus_{2}M(\tilde{G}_{1})\oplus_{2}\cdots\oplus_{2}M(\tilde{G}_{\ell})=M^{H,\mathcal{B}}.
\end{align*}
Suppose that $\ell$ is even. Then $\ell=2n$ for $n\geq 1$. Let $\tilde{G}$ be a graph obtained from $G$ by adding a vertex $u_{\ell+1}$ adjacent to $u_{\ell}$. 
Let $f_{\ell+1}=u_{\ell}u_{\ell+1}$, $\tilde{T}=T\cup\{u_{\ell+1}\}$, and $G_{\ell+1}$ be a graph with $V(G_{\ell+1})=\{u_{\ell},u_{\ell+1}\}$ and $E(G_{\ell++1})=\{f_{\ell+1}\}$. Then $(\tilde{G},\tilde{T})$ has a nice cyclic decomposition $(\tilde{H},\tilde{\mathcal{B}})$ such that $\tilde{H}$ is a path obtained from $H$ by adding a vertex $v_{\ell+1}$ adjacent to $v_{\ell}$ and $\tilde{\mathcal{B}}=\mathcal{B}\cup\{B_{\ell+1}\}$ where $B_{\ell+1}:=\{u_{\ell},u_{\ell+1}\}$. Let $\tilde{G}_{\ell+1}$ is a graph on $\{u_{\ell},u_{\ell+1}\}$ with $2$ parallel edges $e_{\ell+1}$, $f_{\ell+1}$. 

Since $f_{\ell+1}$ is not a $\tilde{T}$-bridge of $(\tilde{G},\tilde{T})$, $\mathcal{F}(\mathcal{G}(G,T))=\mathcal{F}(\mathcal{G}(\tilde{G}\setminus f_{\ell+1},\tilde{T}))=\mathcal{F}(\mathcal{G}(\tilde{G},\tilde{T}))\setminus f_{\ell+1}$. We observe that $(M(W_{n+1}/e_{\ell+2})\oplus_{2}M(\tilde{G}_{\ell+1}))\setminus f_{\ell+1}=M(W_{n+1}\setminus e_{\ell+1}/e_{\ell+2})$. Hence, by the first case, we have
\[
\mathcal{F}(\mathcal{G}(G,T))=\left(\mathcal{B}(M^{\tilde{H},\tilde{B}})\triangle\bigcup_{i=1}^{n} E(G_{2i})\right)\setminus f_{\ell+1}=\mathcal{B}(M^{\tilde{H},\tilde{B}}\setminus f_{\ell+1})\triangle\bigcup_{i=1}^{n} E(G_{2i})
\] where
\begin{align*}
M^{\tilde{H},\tilde{\mathcal{B}}}\setminus f_{\ell+1}&=(M(W_{n+1}/e_{\ell+2})\oplus_{2}M(\tilde{G}_{1})\oplus_{2}\cdots\oplus_{2}M^{*}(\tilde{G}_{\ell})\oplus_{2}M(\tilde{G}_{\ell+1}))\setminus f_{\ell+1} \\
&=M(W_{n+1}\setminus e_{\ell+1}/e_{\ell+2})\oplus_{2}M(\tilde{G}_{1})\oplus_{2}\cdots\oplus_{2}M^{*}(\tilde{G}_{\ell}) \\
&=M(\Pi_{\ell})\oplus_{2}M(\tilde{G}_{1})\oplus_{2}\cdots\oplus_{2}M^{*}(\tilde{G}_{\ell})=M^{H,\mathcal{B}}.
\end{align*}
So we finish the proof.
\end{proof}

For a graft $(G,T)$, a \emph{$T$-separation} is a pair $(G_{1},G_{2})$ of subgraphs of $G$ such that the following hold:
\begin{itemize}
\item $G_{1}\cup G_{2}=G$, $E(G_{1})\cap E(G_{2})=\emptyset$.
\item $\min\{|E(G_{1})|, |E(G_{2})|\}\geq 1$.
\item $T=V(G_{1})\cap V(G_{2})$ and $|T|=2$.
\end{itemize} A vertex $v$ of $G$ is a \emph{$T$-cutvertex} if $\kappa(G,T)<\kappa(G\setminus v, T-\{v\})$. 

The following proposition characterizes grafts giving connected graphic delta-matroids.
\begin{proposition}
\label{prop:connectedgraft}
Let $G$ be a graph with $|E(G)|\geq 2$ and no isolated vertices. Let $T$ be a subset of $V(G)$. Then $\mathcal{G}(G,T)$ is connected if and only if $G$ is connected and has no loops, no $T$-separations, and no $T$-cutvertices.
\end{proposition}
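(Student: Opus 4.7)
The proof splits into two directions. For the forward direction (assuming $\mathcal{G}(G,T)$ is connected), the connectedness of $G$ follows from Lemma~\ref{graftconnected}. The absence of loops follows from Lemma~\ref{lem:Ttunnel}: a loop $e$ lies in no feasible set, so the partition $(\{e\}, E(G) - \{e\})$ is a separation of $\mathcal{G}(G,T)$.

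Given a $T$-separation $(G_1, G_2)$ with $T = V(G_1) \cap V(G_2) = \{u, v\}$, I plan to prove $\mathcal{F}(\mathcal{G}(G,T)) = \{F_1 \cup F_2 : F_i \in \mathcal{F}(\mathcal{G}(G_i, T))\}$, which directly yields the separation $(E(G_1), E(G_2))$. The forward inclusion exploits the fact that in any $T$-spanning forest $F$ of $G$, each component contains exactly one vertex of $T$ (since $|V(C) \cap T|$ must be odd and at most $2$, and connectedness of $G$ precludes empty-intersection components), and restricting $F$ to $E(G_i)$ preserves components on $V(G_i)$ because edges incident to $V(G_i) - T$ lie only in $E(G_i)$. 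The reverse inclusion glues two such forests at $\{u, v\}$ without creating cycles, as $u$ and $v$ sit in distinct components of each piece. Given a $T$-cutvertex $v$ with a bad component $C$ of $G \setminus v$ satisfying $V(C) \cap T = \emptyset$, I set $G_C = G[V(C) \cup \{v\}]$ and $G' = G - V(C)$ and argue analogously: every $T$-spanning forest of $G$ must collect $V(C) \cup \{v\}$ into a single component (since $V(C)$ has no $T$-vertex and reaches the rest only through $v$), so decomposes as a spanning tree of $G_C$ together with a $T$-spanning forest of $G'$, producing the separation $(E(G_C), E(G'))$.

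For the backward direction, I argue by contraposition: assume $(E_1, E_2)$ is a separation of $\mathcal{G}(G,T)$ with both parts nonempty, and let $V_i$ denote the endpoint-set of $E_i$ and $A = V_1 \cap V_2$. Since $G$ is connected with no isolated vertices, $A \neq \emptyset$ and no $G$-edge joins $V_1 - A$ to $V_2 - A$. The plan is to show $|A| \leq 2$ and $A \subseteq T$, so that the separation descends to a $T$-separation (when $|A| = 2$, giving $A = T$ and $|T| = 2$) or to a $T$-cutvertex at $v \in A$ (when $|A| = 1$, with $V_1 - A$ or $V_2 - A$ providing a component of $G \setminus v$ disjoint from $T$), contradicting the hypotheses.

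The main obstacle is this last translation. Using Lemma~\ref{bouchetconnected} applied to the even delta-matroid $\mathcal{G}(G, T)$, the separation is equivalent to saying that no edge of any fundamental graph $G_{M, F}$ crosses from $E_1$ to $E_2$. If some $v \in A \setminus T$ existed, I would construct a feasible $F$ and edges $e_1 \in E_1$, $e_2 \in E_2$ both incident to $v$ such that $F \triangle \{e_1, e_2\}$ is feasible, contradicting the edgelessness. Verifying this local exchange requires careful analysis of how the edges of $F$ incident to $v$ are distributed between $E_1$ and $E_2$, and how deleting one and adding another preserves the parity and component structure of the $T$-spanning forest. The bound $|A| \leq 2$ should follow by similar rigidity arguments, since $A$ acts as a graph-theoretic separator whose size is constrained by the parity condition on $T$-spanning forests.
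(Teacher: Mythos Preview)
Your forward direction is correct; you handle the $T$-separation case by a direct feasible-set decomposition where the paper instead invokes Lemma~\ref{Tlessthan2} to pass to a cycle matroid, but both routes work, and your $T$-cutvertex argument matches the paper's.

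The backward direction has a genuine gap at the translation step. Even granting $A \subseteq T$ and $|A| \le 2$, you cannot reach the claimed contradiction. When $|A| = 2$: the definition of a $T$-separation requires $V(G_1) \cap V(G_2) = T$ exactly, so you would need $A = T$; nothing in your plan forces this when $|T| \ge 3$. When $|A| = 1$, say $A = \{v\}$: then $v$ is a cut-vertex of $G$, but for $v$ to be a $T$-cutvertex some component of $G \setminus v$ must avoid $T$ entirely, and you have not ruled out $T$ meeting both $V_1 \setminus \{v\}$ and $V_2 \setminus \{v\}$. On top of this, the bound $|A| \le 2$ is asserted with only the phrase ``similar rigidity arguments'' behind it.

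The paper avoids analyzing $A$ altogether. For $|T| \le 2$ it reduces via Lemma~\ref{Tlessthan2} to connectedness of a cycle matroid. For $|T| \ge 3$ it proves directly that any two edges $e, f$ of $G$ sharing an endpoint $v$ lie in the same component of some fundamental graph $G_{M,F}$; since connectedness of $G$ provides such $e \in E_1$, $f \in E_2$ whenever $(E_1,E_2)$ is a separation, Lemma~\ref{bouchetconnected} then gives the contradiction immediately. Constructing the witnessing $F$ does require case analysis (a cycle through $e,f$ versus $v$ a cut-vertex, the parity of $|T|$, and whether $v \in T$), and the no-$T$-cutvertex hypothesis enters exactly to ensure every component of $G \setminus v$ meets $T$, so that $F$ can be assembled from per-component $T_i$-spanning forests. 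Your local-exchange idea for $v \notin T$ is a piece of this, but the paper treats $v \in T$ on equal footing rather than first excluding $v \notin T$ and then trying to bound the size of $A$.
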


\begin{proof}
We first prove the forward direction. Suppose that $M:=\mathcal{G}(G,T)=(E(G),\mathcal{F})$ is connected. By Lemma~\ref{graftconnected}, $G$ is connected. If $G$ has a loop $e$, then there is no feasible set containing $e$ and therefore $(\{e\},E(G)-\{e\})$ is a separation of $M$, contradicting our assumption. So $G$ has no loops. 

Suppose that $G$ has a $T$-separation for $T=\{u,v\}$. Then, by Lemma~\ref{Tlessthan2}, $\mathcal{G}(G,T)=M(G')$ where $G'=(G+e)/e$ and $e=uv$. Since $G'$ has a cut-vertex or a loop, $\mathcal{G}(G,T)=M(G')$ is disconnected, contradicting our assumption. So $G$ has no $T$-separations.

If $G$ has a $T$-cutvertex $v$, then there is a partition $(X,Y)$ of $V(G)-\{v\}$ such that $X$, $Y\neq\emptyset$, $Y\cap T=\emptyset$, and there is no edge between $X$, $Y$. Let $G_{X}:=G[X\cup\{v\}]$, $G_{Y}:=G[Y\cup\{v\}]$, $T_{X}=T\cap(X\cup\{v\})$, $T_{Y}=T\cap(Y\cup\{v\})$. 
Let $\mathcal F_X$ be the set of all feasible sets of $\mathcal G(G_X,T_X)$
and $\mathcal F_Y$ be the set of all feasible sets of $\mathcal G(G_Y,T_Y)$.
We show that $(E(G_{X}),E(G_{Y}))$ is a separation of $\mathcal{G}(G,T)$.

For each $F\in\mathcal{F}$, let $H=(V(G),F)$. Then $H[Y\cup\{v\}]$ is a spanning tree of $G_{Y}$ since $G$ is connected and $|T_{Y}|\leq 1$. Obviously, $H[X\cup\{v\}]$ is a $T_{X}$-spanning forest of $G_{X}$. So $\mathcal{F}\subseteq\{F_{X}\cup F_{Y} : F_{X}\in \mathcal F_X, F_{Y}\in\mathcal{F}_Y\}$. It is easy to see that $\{F_{X}\cup F_{Y} : F_{X}\in \mathcal F_X, F_{Y}\in\mathcal{F}_Y\}\subseteq\mathcal{F}$. Therefore, $(E(G_{X}),E(G_{Y}))$ is a separation of $\mathcal{G}(G,T)$, contradicting our assumption. So $G$ has no $T$-cutvertices.

For the backward direction, suppose that $G$ is connected and has no loops, no $T$-separations, and no $T$-cutvertices. If $|T|\leq 1$, then $\mathcal{G}(G,T)=M(G)$ is connected because $G$ has no cut-vertices. If $T=\{u,v\}$, then by Lemma~\ref{Tlessthan2}, $\mathcal{G}(G,T)=M(G')$ where $G'=(G+e)/e$ and $e=uv$. Since $G$ has no $T$-separation, there is no edge in $G$ parallel to $e$. Let $e^{*}$ be a vertex of $G'$ obtained from $G$ by contracting $e$. 
Since $G$ has neither a loop nor an edge parallel to $e$, $G'$ has no loop.
Suppose that $G'$ has a cut-vertex $w$. Then $w=e^{*}$, because otherwise $w$ is a $T$-cutvertex of $G$. Let $(X,Y)$ be a partition of $V(G')-\{w\}$ such that $X,Y\neq\emptyset$ and there is no edge between $X$ and $Y$. Let $G_{X}:=G[X\cup T]$ and $G_{Y}=G[Y\cup T]$. Then $(G_{X},G_{Y})$ is a $T$-separation of $G$, contradicting our assumption. So $\mathcal{G}(G,T)=M(G')$ is connected. So we may assume that $|T|\geq 3$.

Let $M:=\mathcal{G}(G,T)=(E(G),\mathcal{F})$. By Lemma~\ref{bouchetconnected}, it is enough to prove that, whenever $e,f\in E(G)$ have a common end $v$, they are contained in same component of $G_{M,F}$ for some $F\in\mathcal{F}$. 

If there is a cycle $C$ containing both $e$ and $f$ in $G$, then let $P:=C\setminus f$ and $H$ be a spanning tree of $G$ containing $P$. Let $F_{1}=E(H)$. If $|T|$ is odd, then $F_{1}, F_{1}\triangle\{e,f\}\in\mathcal{F}$ and so $e$ and $f$ are adjacent in $G_{M,F_{1}}$. Therefore, we may assume that $|T|$ is even. 
If $T\cap(V(H)- V(P))\neq\emptyset$, then we can choose an edge $g\in E(H)- E(P)$ such that $H'=H\setminus g$ induces two components $T_{1}$, $T_{2}$ so that $|V(T_{1})\cap T|=1$ and $V(T_{1})\cap V(C)=\emptyset$. Let $F_{2}=E(H\setminus g)$. Then $F_{2}, F_{2}\triangle\{e,f\}\in\mathcal{F}$ and $e$ and $f$ are adjacent in $G_{M,F_{2}}$. 

So we may assume that $T\subseteq V(P)$. Let $P=x_{1}x_{2}\cdots x_{n}$ where $e=x_{n-1}x_{n}$. Let $i=\min\{k: x_{k}\in T\}$ and $g'=x_{i}x_{i+1}$. Since $|T|\geq 3$, $g'\neq e$. Let $F_{3}:=E(H\setminus g')$. Then we know that $F_{3}\in\mathcal{F}$. If $x_{n}\notin T$, then we can observe that $F_{3}\triangle\{e,f\}\in\mathcal{F}$. So $e$ and $f$ are in same component of $G_{M,F_{3}}$. Therefore, we assume that $x_{n}\in T$. Let $j=\max\{k:k<n, x_{k}\in T\}$ and $g''=x_{j-1}x_{j}$. Since $|T|\geq 3$ and $|T|$ is even, $|T|\geq 4$ and so $g''\neq g'$. Then we know that $F_{3}\triangle\{e,g''\}, F_{3}\triangle\{f,g''\}\in\mathcal{F}$. So $e$ and $f$ are in the same component of $G_{M,F_{3}}$.

Hence, we can assume that there is no cycle containing both $e$ and $f$. Then $v$ is a cut-vertex of~$G$. Let $C_{1},\ldots, C_{m}$ be all components of $G\setminus v$. Then, for each $i\in\{1,\ldots,m\}$, $V(C_{i})\cap T\neq\emptyset$ because otherwise $v$ is a $T$-cutvertex. Let $e=vx$ and $f=vy$. 
Without loss of generality, we may assume that $x\in V(C_{1})$ and $y\in V(C_{2})$. For each $i\in\{1,\ldots,m\}$, let $T_{i}:=V(C_{i})\cap T$ and $H_{i}$ be a $T_{i}$-spanning forest of $G_{i}$. Let $F_{i}:=E(H_{i})$ for $i\in\{1,\ldots,m\}$ and $F:=F_{1}\cup F_{2}\cup\cdots\cup F_{m}$. If $v\in T$, then $F\in\mathcal{F}$ and $F\triangle\{e,f\}=F\cup\{e,f\}\in\mathcal{F}$, implying that $e$ and $f$ are adjacent in $G_{M,F}$. If $v\notin T$, then $F'=F\cup\{e\}\in\mathcal{F}$ and $F'\triangle\{e,f\}=F\cup\{f\}\in\mathcal{F}$ and therefore $e$ and $f$ are adjacent in $G_{M,F'}$. 
\end{proof}

Now we are ready to characterize the structure of connected delta-graphic matroids.

\begin{proposition}
\label{prop:structure_conn}
A connected matroid $M$ is delta-graphic if and only if 
\begin{enumerate}[label=\rm (\arabic*)]
\item\label{item:str1} $M$ or $M^{*}$ is graphic, or
\item\label{item:str2} $M$ or $M^{*}$ is isomorphic to $M^{H,\mathcal{B}}$ for a graft $(G,T)$ such that $(G,T)$ admits a nice cyclic decomposition $(H,\mathcal{B})$ where $H$ is an even cycle of length at least $4$ or a path of length at least~$1$.
\end{enumerate}
\end{proposition}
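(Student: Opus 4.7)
The plan is to combine Proposition~\ref{prop:tgraphicgraft}, which splits connected delta-graphic matroids into three cases (graphic or cographic, class $\mathcal{C}$, or class $\mathcal{P}$), with the explicit rewriting of $\mathcal{F}(\mathcal{G}(G,T))$ given in Propositions~\ref{prop:main} and \ref{prop:main2}, and finally invoke Lemma~\ref{tmatroid} to eliminate the twist and conclude that $M$ itself (rather than a mere twist of it) coincides with $M^{H,\mathcal{B}}$ or its dual.

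For the backward direction, assume $M$ satisfies \ref{item:str1} or \ref{item:str2}. If $M$ or $M^*$ is graphic, take the graft $(G,\emptyset)$ and $X=\emptyset$ (for graphic), and use Lemma~\ref{lem:dualclosed} to deduce that cographic matroids are also delta-graphic. If instead $M=M^{H,\mathcal{B}}$ for a nice cyclic decomposition $(H,\mathcal{B})$ of a graft $(G,T)$ with $H$ as described, Proposition~\ref{prop:main} (cycle case) or Proposition~\ref{prop:main2} (path case) gives
\[
\mathcal{B}(M^{H,\mathcal{B}}) \;=\; \mathcal{F}(\mathcal{G}(G,T))\triangle \bigcup_{i} E(G_{2i}),
\]
which exhibits $M$ as delta-graphic; the dual case follows from Lemma~\ref{lem:dualclosed}.

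For the forward direction, let $M$ be a connected delta-graphic matroid. Proposition~\ref{prop:tgraphicgraft} hands us three options. If $M$ is graphic or cographic we are in case~\ref{item:str1}. Otherwise $M\in\mathcal{C}$ or $M\in\mathcal{P}$, i.e.\ there are a graft $(G,T)$ with a nice cyclic decomposition $(H,\mathcal{B})$ of the required shape and a set $X\subseteq E(G)$ with $\mathcal{B}(M)=\mathcal{F}(\mathcal{G}(G,T))\triangle X$. Applying Proposition~\ref{prop:main} (or Proposition~\ref{prop:main2}) to the right-hand side, we rewrite
\[
\mathcal{B}(M) \;=\; \mathcal{B}(M^{H,\mathcal{B}}) \triangle Y,
\qquad Y=X\triangle\bigcup_{i} E(G_{2i}).
\]
Since $M$ is a connected matroid and $M^{H,\mathcal{B}}$ is a matroid on the same ground set $E(G)$, Lemma~\ref{tmatroid} forces $Y=\emptyset$ or $Y=E(G)$, giving $M=M^{H,\mathcal{B}}$ or $M=(M^{H,\mathcal{B}})^*$, which is case~\ref{item:str2}.

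The main thing to watch out for is the hypothesis ``$M_1$ connected'' in Lemma~\ref{tmatroid}, which is why it is crucial to apply it with $M_1:=M$, not with $M^{H,\mathcal{B}}$ (which need not be connected as an object constructed through a chain of $2$-sums). A secondary check, which should cause no trouble, is that the $2$-sums used to build $M^{H,\mathcal{B}}$ are well-defined: niceness of $(H,\mathcal{B})$ guarantees that each $\tilde{G}_i$ is $2$-connected enough that the identified basepoint $e_i$ is neither a loop nor a coloop in the relevant summands, and one appeals implicitly to the fact that, since $M$ is connected, one may assume $G$ has no loops so the matroid $M^{H,\mathcal{B}}$ is uniquely specified as noted in the remark following Proposition~\ref{prop:main}.
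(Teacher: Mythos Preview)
Your proof is correct and follows essentially the same approach as the paper: both directions invoke Propositions~\ref{prop:tgraphicgraft}, \ref{prop:main}, and \ref{prop:main2}, and the forward direction finishes by applying Lemma~\ref{tmatroid} to the connected matroid $M$ to collapse the twist $Y$ to $\emptyset$ or $E(G)$. Your additional remarks about which matroid to take as $M_1$ in Lemma~\ref{tmatroid} and about the well-definedness of $M^{H,\mathcal{B}}$ are accurate and make explicit points the paper leaves implicit.
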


\begin{proof}
The backward direction follows from Propositions~\ref{prop:tgraphicgraft}, \ref{prop:main}, and \ref{prop:main2}.
For the forward direction, suppose that \ref{item:str1} does not hold. Then by Proposition~\ref{prop:tgraphicgraft}, we may assume that $M\in\mathcal{C}$ or $M\in\mathcal{P}$. It is enough to show that \ref{item:str2} holds.

If $M\in\mathcal{C}$, then $\mathcal{B}(M)=\mathcal{F}(\mathcal{G}(G,T))\triangle X$ for a graft $(G,T)$ and $X\subseteq E(G)$ where $(G,T)$ has a nice cyclic decomposition $(H,\mathcal{B})$ with an even cycle $H=1,2,\ldots,2k,1$ and $k\geq 2$.
By Proposition~\ref{prop:main}, $\mathcal{F}(\mathcal{G}(G,T))=\mathcal{B}(M^{H,\mathcal{B}})\triangle\bigcup_{i=1}^{k}E(G[B_{2i}])$.
So $\mathcal{B}(M)=\mathcal{B}(M^{H,\mathcal{B}})\triangle X'$ for a set $X'=X\triangle\bigcup_{i=1}^{k}E(G[B_{2i}])$. Since $M$ is connected, $M$ is isomorphic to $M^{H,\mathcal{B}}$ or $(M^{H,\mathcal{B}})^{*}$ by Lemma~\ref{tmatroid}.

Now suppose that $M\in\mathcal{P}$. Then $\mathcal{B}(M)=\mathcal{F}(\mathcal{G}(G,T))\triangle X$ for a graft $(G,T)$ and $X\subseteq E(G)$ where $(G,T)$ has a nice cyclic decomposition $(H,\mathcal{B})$ with a path $H=1,2,\ldots,\ell$ and $\ell\geq 2$. By Proposition~\ref{prop:main2}, $\mathcal{F}(\mathcal{G}(G,T))=\mathcal{B}(M^{H,\mathcal{B}})\triangle\bigcup_{i=1}^{\lfloor\frac{\ell}{2}\rfloor}E(G[B_{2i}])$.
So $\mathcal{B}(M)=\mathcal{B}(M^{H,\mathcal{B}})\triangle X'$ for a set $X'=X\triangle\bigcup_{i=1}^{\lfloor\frac{\ell}{2}\rfloor}E(G[B_{2i}])$. Since $M$ is connected, $M$ is isomorphic to $M^{H,\mathcal{B}}$ or $(M^{H,\mathcal{B}})^{*}$ by Lemma~\ref{tmatroid}. 
\end{proof}

\section{Tree decomposition of delta-graphic matroids}
\label{sec:td}
In this section, we describe the structure of delta-graphic matroids which are not $3$-connected. For that, we will use the tree decomposition of matroids in Section~\ref{sec:prelim}. 

\begin{lemma}
\label{pathcontraction}
Let $M$ be a connected matroid with a canonical tree decomposition $(T,\rho)$ and $P$ be a path of $T$ from $u$ to $v$ with length at least $2$ such that every internal vertex of $P$ has degree $2$ in $T$. Let $e_{u}$, $e_{v}$ be edges of $P$ incident with $u$, $v$, respectively and $N$ be a matroid obtained from $\rho(v)$ by relabelling an element $e_{v}$ to $e_{u}$. If $(T',\rho')$ is a matroid-labelled tree  such that $T'$ is obtained from $T\setminus(V(P)-\{u,v\})$ by adding an edge $e_{u}$ joining $u$ and $v$ and
\[
\rho'(x)=
\begin{cases}
\rho(x) & \text{if $x\in V(T')-\{v\}$,} \\
N & \text{if $x=v$,}
\end{cases}
\] 
then $\rho'(T')$ is a minor of $M$. In addition, the canonical tree decomposition of $\rho'(T')$ is $(T',\rho')/e_{u}$ if the pair $\{u, v\}$ is bad  and $(T',\rho')$ otherwise.
\end{lemma}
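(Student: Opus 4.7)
My plan is to iteratively shortcut the path $P$ in the tree decomposition by replacing each internal vertex's matroid with a parallel pair $U_{1,2}$ identifying its two tree edges. Write $P = w_0 w_1 \cdots w_m$ with $w_0 = u$, $w_m = v$, $m \geq 2$, and set $f_i = w_{i-1} w_i$, so that $f_1 = e_u$ and $f_m = e_v$. First, for each internal vertex $w_i$ with $1 \leq i \leq m-1$, I would show that $\rho(w_i)$ has a minor $L_i$ on $\{f_i, f_{i+1}\}$ isomorphic to $U_{1,2}$ with $\{f_i, f_{i+1}\}$ as its unique circuit. Since $(T, \rho)$ is canonical, $\rho(w_i)$ is $3$-connected or is isomorphic to $U_{1,n}$ or $U_{n-1,n}$ for some $n \geq 3$. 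In the $U_{1,n}$ case the minor is obtained by deleting all other elements; in the $U_{n-1,n}$ case by contracting all other elements. In the $3$-connected case (where $|E(\rho(w_i))| \geq 4$), connectedness gives a circuit $C$ containing both $f_i$ and $f_{i+1}$, and since $C - \{f_i, f_{i+1}\}$ is independent, contracting it makes $\{f_i, f_{i+1}\}$ a circuit while leaving each of $f_i, f_{i+1}$ a non-loop; deleting the remaining elements yields $U_{1,2}$.

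Next, I would view $M$ as the $2$-sum chain $\rho(T_u) \oplus_{2} \rho(w_1) \oplus_{2} \cdots \oplus_{2} \rho(w_{m-1}) \oplus_{2} \rho(T_v)$, where $T_u$ and $T_v$ are the components of $T$ remaining after removing the internal vertices of $P$ (containing $u$ and $v$ respectively), with the $2$-sums taken along $f_1, \ldots, f_m$. Iterated application of Lemma~\ref{2summinor} to replace each $\rho(w_i)$ by $L_i$ shows that $M' := \rho(T_u) \oplus_{2} L_1 \oplus_{2} \cdots \oplus_{2} L_{m-1} \oplus_{2} \rho(T_v)$ is a minor of $M$. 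A direct circuit calculation (using Lemma~\ref{lem:circuittobase} or a case check on $U_{1,2}$) shows that the $2$-sum of any matroid $A$ with $U_{1,2}$ on $\{a, b\}$ along $a$ is just $A$ with $a$ relabelled as $b$; iterating this down the chain telescopes $L_1 \oplus_{2} \cdots \oplus_{2} L_{m-1}$ to a single $U_{1,2}$ on $\{f_1, f_m\}$, and the final $2$-sum with $\rho(T_v)$ along $f_m$ relabels $e_v$ to $e_u$ inside $\rho(v)$. Hence $M' = \rho'(T')$, which proves that $\rho'(T')$ is a minor of $M$.

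For the canonicity claim, each $\rho'(x)$ is $3$-connected or uniform of the required types because $N$ is isomorphic to $\rho(v)$ via a relabelling. Any adjacent pair of $T'$ other than $\{u, v\}$ also appears in $T$ with identical matroids, and $(T, \rho)$ has no bad pairs, so the only candidate bad pair in $(T', \rho')$ is $\{u, v\}$. If it is not bad, $(T', \rho')$ is already canonical. If it is bad, both $\rho(u)$ and $N$ are uniform rank-$1$ (say $U_{1,a}$ and $U_{1,b}$ with $a, b \geq 3$) or both corank-$1$; a direct computation shows $U_{1,a} \oplus_{2} U_{1,b} \cong U_{1, a+b-2}$ (dually $U_{a-1,a} \oplus_{2} U_{b-1,b} \cong U_{a+b-3, a+b-2}$), which is uniform with at least $4$ elements. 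A neighbor $x$ of the merged vertex in $(T', \rho')/e_u$ was a neighbor of $u$ or $v$ in $T$ and did not form a bad pair there; since $\rho(u) \oplus_{2} N$ is uniform of the same rank-$1$ or corank-$1$ type as $\rho(u)$, no new bad pair arises. The uniqueness in Theorem~\ref{thm:canonical} then identifies $(T', \rho')/e_u$ (respectively $(T', \rho')$) as the canonical tree decomposition. I expect the main obstacle to be cleanly executing step two: the chain of $L_i$'s must be shown to induce, on the nose, the relabelling $e_v \mapsto e_u$ promised in the statement, which needs a careful induction using associativity of $2$-sum and Lemma~\ref{lem:twosomeminor} rather than merely an isomorphism-level identification.
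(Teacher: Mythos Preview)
Your proposal is correct and follows essentially the same approach as the paper: reduce the middle portion of the path to a $U_{1,2}$ that acts as the relabelling $e_v \mapsto e_u$, invoke Lemma~\ref{2summinor}, and then handle canonicity via the identities $U_{1,a}\oplus_2 U_{1,b}\cong U_{1,a+b-2}$ and their duals. The only cosmetic difference is granularity: the paper treats $M_2 = \rho(P\setminus\{u,v\})$ as a single connected matroid and reduces it to $U_{1,2}$ in one step by picking a circuit through $e_u$ and $e_v$, whereas you reduce each $\rho(w_i)$ to $U_{1,2}$ separately and then telescope, which is a bit more work but equally valid.
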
  

\begin{proof}
Let $T_{u}$, $T_{v}$ be components of $T\setminus(V(P)-\{u,v\})$ containing $u$, $v$, respectively. Let $M_{1}=\rho(T_{u})$, $M_{2}=\rho(P\setminus\{u,v\})$, and $M_{3}=\rho(T_{v})$. Then $M=M_{1}\oplus_{2}M_{2}\oplus_{2}M_{3}$.
Since $M_{2}$ is connected, there is a circuit $C$ in $M_{2}$ containing $e_{u}$ and $e_{v}$. Then $M_{2}'=M_{2}\setminus (E(M_{2})- C)/(C-\{e_{u},e_{v}\})$ is a minor of $M_{2}$. By Lemma~\ref{2summinor}, $M_{2}'\oplus_{2}M_{3}$ is a minor of $M_{2}\oplus_{2}M_{3}$ and so $M'=M_{1}\oplus_{2}M_{2}'\oplus_{2}M_{3}$ is a minor of $M_{1}\oplus_{2}M_{2}\oplus_{2}M_{3}$.

Let $M_{3}'$ be the matroid obtained from $M_{3}$ by relabelling an element $e_{v}$ to $e_{u}$. Then $M_{2}'\oplus_{2}M_{3}=M_{3}'$. So $M'=\rho'(T')$ is a minor of $M$. 

It is easy to check that $(T',\rho')$ is a canonical tree decomposition of $\rho'(T')$ if the pair $\{u, v\}$ is not bad. If the pair $\{u, v\}$ is bad, then $(T',\rho')/e_{u}$ is a canonical tree decomposition of $\rho'(T')$ because  
$U_{1,m}\oplus_{2}U_{1,n}=U_{1,m+n-2}$ and $U_{m-1,m}\oplus_{2}U_{n-1,n}=U_{m+n-3,m+n-2}$  for $m,n\geq 2$.
\end{proof}

Let $\mathcal{G}$ be the class of graphic matroids and $\mathcal{G}^{*}$ be the class of cographic matroids.
If $T$ is a tree, $e\in E(T)$, and $v\in V(T)$, then we define $T_{e}(v)$ to be the component of $T\setminus e$ not containing $v$. 

\begin{lemma}
\label{twosum}
Let $M$ be a connected matroid. Then the following are equivalent:
\begin{enumerate}[label=\rm(\arabic*)]
\item There exists a graft $(G,T)$ with a nice cyclic decomposition $(H,\mathcal{B})$ such that $M$ or $M^{*}$ is isomorphic to $M^{H,\mathcal{B}}$ and $H$ is isomorphic to $P_{2}$, $P_{3}$, or $C_{4}$.
\item $M$ is a $2$-sum of a graphic matroid and a cographic matroid.
\end{enumerate}
\end{lemma}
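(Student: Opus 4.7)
The plan is to prove both directions by explicit computation: the forward direction by a case-analysis on the shape of $H$, and the reverse direction by a single explicit construction using $H=P_{2}$.

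For $(1)\Rightarrow(2)$, I case-split on $H\in\{P_{2},P_{3},C_{4}\}$ and apply Proposition~\ref{prop:main} or Proposition~\ref{prop:main2} to unfold $M^{H,\mathcal{B}}$ as an iterated 2-sum, then re-associate so that all graphic factors collect on one side of a single 2-sum and all cographic factors on the other. When $H=P_{2}$ (so $\ell=2$), a direct computation shows that $\Pi_{2}=W_{2}\setminus e_{3}/e_{4}$ consists of two parallel edges, hence $M(\Pi_{2})\cong U_{1,2}$ on $\{e_{1},e_{2}\}$. The 2-sum with $M(\tilde G_{1})$ at $e_{1}$ then merely relabels $e_{1}$ to $e_{2}$ inside $M(\tilde G_{1})$, which is still graphic, and the final $\oplus_{2}M^{*}(\tilde G_{2})$ at $e_{2}$ already exhibits $M^{H,\mathcal{B}}$ as a 2-sum of a graphic and a cographic matroid. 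When $H=P_{3}$ (so $\ell=3$), $\Pi_{3}=W_{2}/e_{4}$ is three parallel edges, so $M(\Pi_{3})\cong U_{1,3}$ on $\{e_{1},e_{2},e_{3}\}$; fusing the three graphic factors $U_{1,3}$, $M(\tilde G_{1})$, $M(\tilde G_{3})$ first (the class of graphic matroids is closed under 2-sum) and then 2-summing with $M^{*}(\tilde G_{2})$ gives the desired decomposition.

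When $H=C_{4}$ (so $k=2$), I first observe that $(\{e_{1},e_{3}\},\{e_{2},e_{4}\})$ is a 2-separation of $M(W_{2})$; a direct comparison of circuits yields $M(W_{2})=U_{1,3}\oplus_{2}U_{2,3}$ at a fresh basepoint $f$, with $U_{1,3}$ on $\{e_{1},e_{3},f\}$ and $U_{2,3}$ on $\{e_{2},e_{4},f\}$. Substituting and re-associating along the tree of basepoints yields
\[
M^{H,\mathcal{B}}=\bigl(U_{1,3}\oplus_{2}M(\tilde G_{1})\oplus_{2}M(\tilde G_{3})\bigr)\oplus_{2}\bigl(U_{2,3}\oplus_{2}M^{*}(\tilde G_{2})\oplus_{2}M^{*}(\tilde G_{4})\bigr)
\]
glued along $f$. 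The left factor is graphic; since $U_{2,3}\cong(U_{1,3})^{*}$, Lemma~\ref{lem:twosumdual} expresses the right factor as the dual of a 2-sum of graphic matroids, so it is cographic. If $M^{*}\cong M^{H,\mathcal{B}}$ instead of $M$, dualizing once more via Lemma~\ref{lem:twosumdual} transfers the conclusion to $M$.

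For $(2)\Rightarrow(1)$, I realize any 2-sum of a graphic and a cographic matroid by the $H=P_{2}$ construction. Write $M=M(G_{1})\oplus_{2}M^{*}(G_{2})$ at basepoint $e$; since $M$ is connected, both factors are connected by Lemma~\ref{2sumconnected}, so each $G_{i}$ is 2-edge-connected and $G_{i}\setminus e$ is connected. Let $e=u_{0}u_{1}$ in $G_{1}$ and $e=u_{1}'u_{2}$ in $G_{2}$; form $G$ from $G_{1}\setminus e$ and $G_{2}\setminus e$ by identifying $u_{1}'$ with $u_{1}$; set $T=\{u_{0},u_{1},u_{2}\}$, $B_{1}=V(G_{1}\setminus e)$, $B_{2}=V(G_{2}\setminus e)$; and let $H=P_{2}$ have vertex set $\{1,2\}$. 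Verifying (C1)--(C4) and the niceness conditions (N1)--(N2) is routine, so $(H,\{B_{1},B_{2}\})$ is a nice cyclic decomposition of $(G,T)$. Applying the forward computation for $H=P_{2}$ to this data then gives $M^{P_{2},\mathcal{B}}\cong M(G_{1})\oplus_{2}M^{*}(G_{2})=M$ (after relabeling the basepoint), as required.

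The main obstacle will be the combinatorics of the $H=C_{4}$ case, where one must verify the internal 2-separation $M(W_{2})=U_{1,3}\oplus_{2}U_{1,3}^{*}$ and then justify that the tree of iterated 2-sums can be re-associated to collect graphic and cographic factors separately; this rearrangement uses commutativity and associativity of 2-sum at basepoints that appear in exactly two factors, which is implicit in the $\OPLUS$ notation and follows from Lemma~\ref{lem:twosomeminor} applied to $N$ of size $3$. All remaining checks are routine.
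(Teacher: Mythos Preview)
Your proposal is correct and follows essentially the same approach as the paper: the same case split on $H\in\{P_2,P_3,C_4\}$ for the forward direction, the same identification $M(\Pi_2)\cong U_{1,2}$, $M(\Pi_3)\cong U_{1,3}$, and $M(W_2)\cong U_{1,3}\oplus_2 U_{2,3}$, and the same explicit $H=P_2$ construction for the backward direction. One minor wording point: you do not need to ``apply'' Propositions~\ref{prop:main} or~\ref{prop:main2} in the forward direction, since $M^{H,\mathcal{B}}$ is by definition the iterated 2-sum; and saying each $G_i$ is ``2-edge-connected'' is slightly looser than the paper's ``2-connected or two vertices with parallel edges,'' though it suffices for the only use you make of it, namely that $G_i\setminus e$ is connected.
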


\begin{proof}
We will first prove that (1) implies (2). By taking dual, we may assume that $M$ is isomorphic to $M^{H,\mathcal{B}}$. Let us start with the case that $H$ is a path. Let $H=1,\ldots,\ell+1$ be a path of length $\ell$. For $i\in\{1,\ldots,\ell+1\}$, let $G_{i}$ be a graph obtained from $G[B_{i}]$ by adding an edge $e_{i}$ joining two vertices of $B_{i}\cap T$ and 
\[
M_{i}=
\begin{cases}
M(G_{i}) & \text{if $i$ is odd,} \\
M^{*}(G_{i}) & \text{if $i$ is even.}
\end{cases}
\]

If $\ell=1$, then we have $M^{H,\mathcal{B}}=M(\Pi_{2})\oplus_{2}M_{1}\oplus_{2}M_{2}=(M(\Pi_{2})\oplus_{2}M_{1})\oplus_{2}M_{2}$. Since $M(\Pi_{2})=U_{1,2}$, $M(\Pi_{2})\oplus_{2}M_{1}$ is graphic and $M^{H,\mathcal{B}}$ is a $2$-sum of a graphic matroid and a cographic matroid.

If $\ell=2$, then $M^{H,\mathcal{B}}=M(\Pi_{3})\oplus_{2}M_{1}\oplus_{2}M_{2}\oplus_{2}M_{3}=(M(\Pi_{3})\oplus_{2}M_{1}\oplus_{2}M_{3})\oplus_{2}M_{2}$. Since $M(\Pi_{3})=U_{1,3}$, $M(\Pi_{3})\oplus_{2}M_{1}\oplus_{2}M_{3}$ is graphic and $M^{H,\mathcal{B}}$ is a $2$-sum of a graphic matroid and a cographic matroid.

Now it remains to check the case that $H=1,2,3,4,1$ is a cycle of length $4$. For $i\in\{1,\ldots,4\}$, let $G_{i}$ be a graph obtained from $G[B_{i}]$ by adding an edge $e_{i}$ joining two vertices of $B_{i}\cap T$ and 
\[
M_{i}=
\begin{cases}
M(G_{i}) & \text{if $i$ is odd,} \\
M^{*}(G_{i}) & \text{if $i$ is even.}
\end{cases}
\]

Then $M^{H,\mathcal{B}}=M(W_{2})\oplus_{2}M_{1}\oplus_{2}M_{2}\oplus_{2}M_{3}\oplus_{2}M_{4}$. 
It is easy to observe that $M(W_{2})=N_{1}\oplus_{2}N_{2}$ where $N_{1}$ is isomorphic to $U_{1,3}$ with $\{e_{1}, e_{3}\}\subseteq E(N_{1})$ and $N_{2}$ is isomorphic to $U_{2,3}$ with $\{e_{2}, e_{4}\}\subseteq E(N_{2})$. So $M^{H,\mathcal{B}}=(N_{1}\oplus_{2}M_{1}\oplus_{2}M_{3})\oplus_{2}(N_{2}\oplus_{2}M_{2}\oplus_{2}M_{4})$ and $N_{1}\oplus_{2}M_{1}\oplus_{2}M_{3}$ is graphic and $N_{2}\oplus_{2}M_{2}\oplus_{2}M_{4}$ is cographic. So $M^{H,\mathcal{B}}$ is a $2$-sum of a graphic matroid and a cographic matroid.

Now we show (2) implies (1). Suppose that $M=M(G_{1})\oplus_{2}M^{*}(G_{2})$ for graphs $G_{1}$ and $G_{2}$. We may assume that $G_{1}$ and $G_{2}$ have no isolated vertices. We may assume that $E(G_{1})\cap E(G_{2})=\{e\}$. Since $M$ is connected, $M(G_{1})$ and $M^{*}(G_{2})$ are connected by Lemma~\ref{2sumconnected}. Then, for $i\in\{1,2\}$, $G_{i}$ is $2$-connected or is isomorphic to a graph with $2$ vertices and parallel edges. Hence, $G_{1}\setminus e$ and $G_{2}\setminus e$ are connected. Let $x_{1}$, $y_{1}$ be ends of $e$ in $G_{1}$ and $x_{2}$, $y_{2}$ be ends of $e$ in $G_{2}$. Let $G$ be a graph obtained from the union of $G_{1}\setminus e$ and $G_{2}\setminus e$ by identifying $x_{1}$ with $x_{2}$ and $T:=\{x_{1},y_{1},y_{2}\}$. Let $H$ be a path of length $1$ on the vertex set $\{1,2\}$. Let $B_{1}:=V(G_{1})$, $B_{2}:=V(G_{2})$, and $\mathcal{B}=\{B_{1},B_{2}\}$. Because $G_{1}\setminus e$ and $G_{2}\setminus e$ are connected, $(H,\mathcal{B})$ is a nice cyclic decomposition of $(G,T)$ and $M=M^{H,\mathcal{B}}$.
\end{proof}

\begin{lemma}
\label{Lemma6.9}
Let $M$ be a connected matroid. Then $M=M_{1}\oplus_{2}M_{2}$ such that $M_{1}\in\mathcal{G}$, $M_{2}\in\mathcal{G}^{*}$ if and only if $M\in\mathcal{G}\cup\mathcal{G}^{*}$ or $M$ has a canonical tree decomposition $(T,\rho)$ which satisfies one of the following:
\begin{enumerate}[label=\rm(\arabic*)]
\item There is an edge $e$ of $T$ such that for two components $T_{1}$, $T_{2}$ of $T\setminus e$, $\rho(T_{1})\in\mathcal{G}$ and $\rho(T_{2})\in\mathcal{G}^{*}$.
\item There is a vertex $v$ of $T$ such that 
\begin{enumerate}[label=\rm (\roman*)]
\item\label{item:gg*1} $\rho(v)$ is isomorphic to $U_{1,m}$ or $U_{m-1,m}$ for some $m\geq 3$, 
\item\label{item:gg*2} for each component $C$ of $T\setminus v$, $\rho(C)\in\mathcal{G}\cup\mathcal{G}^{*}$, and
\item\label{item:gg*3} there are distinct components $C_{1}$, $C_{2}$ of $T\setminus v$ such that $\rho(C_{1})\in\mathcal{G}$ and $\rho(C_{2})\in\mathcal{G}^{*}$. 
\end{enumerate} 
\end{enumerate}
\end{lemma}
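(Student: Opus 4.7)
For the backward direction, I will argue case by case. If $M\in\mathcal{G}\cup\mathcal{G}^{*}$, say $M\in\mathcal{G}$, then any non-loop non-coloop element $e$ of $M$ allows $M\cong M\oplus_{2} U_{1,2}$ with $U_{1,2}\in\mathcal{G}\cap\mathcal{G}^{*}$, so $M_{1}:=M$ and $M_{2}:=U_{1,2}$ work. If condition (1) holds, taking $M_{1}:=\rho(T_{1})\in\mathcal{G}$ and $M_{2}:=\rho(T_{2})\in\mathcal{G}^{*}$ yields $M=\rho(T)=M_{1}\oplus_{2} M_{2}$ by the definition of tree decomposition. The main substance of the backward direction is condition (2): suppose $\rho(v)=U_{1,m}$, the case $U_{m-1,m}$ following dually via Lemma~\ref{lem:twosumdual}. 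The key identity is $U_{1,m}=U_{1,a+1}\oplus_{2} U_{1,b+1}$ at a new basepoint $g$, valid for any $a+b=m$ with $a,b\geq 1$. I choose $a$ to be the number of basepoints of $T$ at $v$ that lead to graphic components of $T\setminus v$, so that $N_{1}:=U_{1,a+1}$ absorbs those basepoints and $N_{2}:=U_{1,b+1}$ absorbs the remaining basepoints together with all non-basepoint elements of $\rho(v)$. By (iii), both $a,b\geq 1$; and since $U_{1,n}$ is simultaneously graphic and cographic, $N_{1}\in\mathcal{G}$ and $N_{2}\in\mathcal{G}^{*}$. Setting $M_{1}$ to be the $2$-sum of $N_{1}$ with all graphic $\rho(C)$'s and $M_{2}$ to be the $2$-sum of $N_{2}$ with all cographic $\rho(C)$'s reassembles $M$ as $M_{1}\oplus_{2} M_{2}$ at basepoint $g$, with $M_{1}$ graphic and $M_{2}$ cographic, since $2$-sums of graphic (resp. cographic) matroids are graphic (resp. cographic).

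For the forward direction, suppose $M=M_{1}\oplus_{2} M_{2}$ with $M_{1}\in\mathcal{G}$, $M_{2}\in\mathcal{G}^{*}$, and basepoint $e$. If $|E(M_{i})|\leq 2$ for some $i$, then by connectedness $M_{i}=U_{1,2}$, the $2$-sum collapses to relabelling $e$, and $M$ is isomorphic to the other summand, hence lies in $\mathcal{G}\cup\mathcal{G}^{*}$. Otherwise I take the canonical tree decompositions $(T_{i},\rho_{i})$ of $M_{i}$ guaranteed by Theorem~\ref{thm:canonical} and let $u_{i}\in V(T_{i})$ be the vertex with $e\in E(\rho_{i}(u_{i}))$. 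Gluing $T_{1}$ and $T_{2}$ by a new edge $e$ between $u_{1}$ and $u_{2}$ produces a matroid-labelled tree $(T^{*},\rho^{*})$ with $\rho^{*}(T^{*})=M$; this is legal because $e$ cannot be a loop or coloop of $\rho_{i}(u_{i})$ (otherwise it would be one in the connected matroid $M_{i}$). If $\{u_{1},u_{2}\}$ is not a bad pair, then $(T^{*},\rho^{*})$ is the canonical tree decomposition of $M$ and the edge $e$ witnesses condition (1). If it is a bad pair, I contract $e$, merging $u_{1}$ and $u_{2}$ into one vertex $v$ with label $\rho(v)=\rho_{1}(u_{1})\oplus_{2}\rho_{2}(u_{2})$, which is again $U_{1,m+n-2}$ or $U_{m+n-3,m+n-2}$; by Lemma~\ref{subtreeminor} each component of $T\setminus v$ is a minor of either $M_{1}$ (hence graphic) or $M_{2}$ (hence cographic), yielding condition (2) provided both $T_{i}\setminus u_{i}$ are nonempty. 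In the degenerate cases where one $T_{i}\setminus u_{i}$ is empty, $M_{i}=\rho_{i}(u_{i})$ is uniform and lies in $\mathcal{G}\cap\mathcal{G}^{*}$, forcing $M\in\mathcal{G}\cup\mathcal{G}^{*}$.

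The hardest part will be verifying that the tree produced after contracting a bad pair is truly the canonical tree decomposition of $M$, that is, that no new bad pair arises at the merged vertex $v$. This reduces to the identities $U_{1,a}\oplus_{2} U_{1,b}=U_{1,a+b-2}$ and $U_{a-1,a}\oplus_{2} U_{b-1,b}=U_{a+b-3,a+b-2}$, which show that $\rho(v)$ shares the uniform rank/corank type of each of its constituents, so any neighbor of $u_{1}$ or $u_{2}$ that did not form a bad pair in the original canonical decomposition $(T_{i},\rho_{i})$ continues to not form a bad pair with $v$.
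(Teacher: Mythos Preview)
Your approach is essentially the same as the paper's, and the forward direction is handled correctly, including the check that no new bad pair arises after contracting $e$ (a point the paper leaves implicit).

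There is one small slip in the backward direction under condition~(2). Your claim ``By (iii), both $a,b\geq 1$'' is not justified with the partition you chose. Condition~(iii) supplies a component $C_{2}$ with $\rho(C_{2})\in\mathcal{G}^{*}$, but nothing prevents $\rho(C_{2})$ from also lying in $\mathcal{G}$; if so, your rule sends its basepoint to $N_{1}$. In the extreme case where every component of $T\setminus v$ is graphic and $\deg_{T}(v)=|E(\rho(v))|$, you obtain $b=0$ and the $2$-sum at $g$ is undefined. The paper avoids this by choosing the partition $(X,Y)$ of $E(\rho(v))$ more flexibly: it places the basepoint of $C_{1}$ in $X$ and the basepoint of $C_{2}$ in $Y$ explicitly (using (iii) to guarantee $|X|,|Y|\geq 1$), and distributes the remaining elements wherever (ii) permits. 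With that adjustment your argument goes through unchanged; alternatively, you could observe that $b=0$ forces $M\in\mathcal{G}$, which is already handled by your first case.
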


\begin{proof}
To prove the forward direction, suppose that $M=M_{1}\oplus_{2}M_{2}\notin\mathcal{G}\cup\mathcal{G}^{*}$ for $M_{1}\in\mathcal{G}$ and $M_{2}\in\mathcal{G}^{*}$. Then $M_{1}\notin\mathcal{G}^{*}$, $M_{2}\notin\mathcal{G}$, and $|E(M_{1})|,|E(M_{2})|\geq 3$. Let $E(M_{1})\cap E(M_{2})=\{e\}$. Since $M$ is connected, both $M_{1}$ and $M_{2}$ are connected by Lemma~\ref{2sumconnected}. Hence, by Theorem~\ref{thm:canonical}, $M_{1}$ and $M_{2}$ have canonical tree decompositions. For $i\in\{1,2\}$, let $(T_{i},\rho_{i})$ be a canonical tree decomposition of $M_{i}$ and $x_{i}$ be a vertex of $T_{i}$ such that $e\in E(\rho(x_{i}))$. Then $\rho(T_{1})=M_{1}\in\mathcal{G}$ and $\rho(T_{2})=M_{2}\in\mathcal{G}^{*}$. Let $(T',\rho')$ be a matroid-labelled tree such that $T'$ is obtained from $T_{1}\cup T_{2}$ by adding an edge $e$ joining $x_{1}$ and $x_{2}$ and
\[
\rho'(x)=
\begin{cases}
\rho_{1}(x) & \text{if $x\in V(T_{1})$,} \\
\rho_{2}(x) & \text{if $x\in V(T_{2})$.}
\end{cases}
\]
If a pair $\{x_{1}, x_{2}\}$ is not bad, then $(T,\rho):=(T',\rho')$ is a canonical tree decomposition of $M$ and $(T,\rho)$ satisfies (1).
If $\{x_{1}, x_{2}\}$ is bad, then $(T,\rho):=(T',\rho')/e$ is a canonical tree decomposition. 

Now we show that $(T,\rho)$ satisfies (2). Let $v$ be a vertex of $T$ obtained by contracting $e$. Then $\rho(v)$ is isomorphic to $U_{1,m}$ or $U_{m-1,m}$ for $m\geq 3$ and so \ref{item:gg*1} holds. For each component $C$ of $T\setminus v$, since $C$ is a subtree of $T_{1}$ or $T_{2}$, $\rho(C)$ is isomorphic to a minor of $\rho(T_{1})$ or $\rho(T_{2})$ by Lemma~\ref{subtreeminor}. So $\rho(C)\in\mathcal{G}\cup\mathcal{G}^{*}$ and \ref{item:gg*2} holds. Moreover, for each $i\in\{1,2\}$, $T_{i}$ is not a single vertex $x_{i}$ otherwise $M\in\mathcal{G}\cup\mathcal{G}^{*}$. Therefore, for each $i\in\{1,2\}$, there is a component $C_{i}$ of $T\setminus v$ which is a subtree of $T_{i}\setminus x_{i}$. So \ref{item:gg*3} holds because $\rho(C_{1})\in\mathcal{G}$ and $\rho(C_{2})\in\mathcal{G}^{*}$.

Now we prove the converse. If $M\in\mathcal{G}\cup\mathcal{G}^{*}$, then $M$ is isomorphic to $M\oplus_{2}U_{1,2}$ and $U_{1,2}\in\mathcal{G}\cap\mathcal{G}^{*}$. If (1) holds, then $M=\rho(T)=\rho(T_{1})\oplus_{2}\rho(T_{2})$ and so $M$ is a $2$-sum of a graphic matroid and a cographic matroid. 

Suppose that (2) holds. By taking dual, we may assume that $\rho(v)$ is a uniform matroid with rank~$1$. By \ref{item:gg*2}, \ref{item:gg*3} of (2), there exists a partition $(X,Y)$ of $E(\rho(v))$ such that $|X|,|Y|\geq 1$, $\rho(T_{e}(v))\in\mathcal{G}$ for each $e\in X\cap E(T)$, and $\rho(T_{f}(v))\in\mathcal{G}^{*}$ for each $f\in Y\cap E(T)$. Let $X\cap E(T)=\{e_{1},e_{2},\ldots,e_{m}\}$ and let $Y\cap E(T)=\{f_{1},f_{2},\ldots,f_{n}\}$. Let $N_{1}$ be a uniform matroid of rank $1$ on $X\cup\{g\}$ and $N_{2}$ be a uniform matroid of rank $1$ on $Y\cup\{g\}$. Then $\rho(v)=N_{1}\oplus_{2}N_{2}$ and so $M=(N_{1}\oplus_{2}\rho(T_{e_{1}}(v))\oplus_{2}\cdots\oplus_{2}\rho(T_{e_{m}}(v)))\oplus_{2}(N_{2}\oplus_{2}\rho(T_{f_{1}}(v))\oplus_{2}\cdots\oplus_{2}\rho(T_{f_{n}}(v)))$, $N_{1}\oplus_{2}\rho(T_{e_{1}}(v))\oplus_{2}\cdots\oplus_{2}\rho(T_{e_{m}}(v))$ is graphic, and $N_{2}\oplus_{2}\rho(T_{f_{1}}(v))\oplus_{2}\cdots\oplus_{2}\rho(T_{f_{n}}(v))$ is cographic.
\end{proof}

For a matroid $M$, a subset $C$ of $E(M)$ is a \emph{circuit-hyperplane} of $M$ if $C$ is a circuit and a hyperplane of $M$.

A canonical tree decomposition $(T,\rho)$ of a connected matroid $M$ is a \emph{wheel decomposition} if there is a vertex $v$ of $T$, called a \emph{hub}, such that the following hold:
\begin{enumerate}[label=\rm(W\arabic*)]
\item\label{item:w1} $\rho(v)$ is isomorphic to $M(W_{k})$ for some $k\geq 3$.
\item\label{item:w2} There exists a circuit-hyperplane $C$ of $\rho(v)$ such that for every component $X$ of $T\setminus v$ and the edge $e_{X}$ joining $v$ and a vertex of $X$, 
\begin{enumerate}
\item $\rho(X)\in\mathcal{G}$ if $e_{X}\in C$, 
\item $\rho(X)\in\mathcal{G}^{*}$ if $e_{X}\notin C$.
\end{enumerate}
\end{enumerate}

We remark that $C$ is the set of rim edges.

\begin{lemma}
\label{Wtdual}
If a connected matroid $M$ admits a wheel decomposition, then so does $M^{*}$.
\end{lemma}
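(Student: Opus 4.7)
The plan is to take the canonical tree decomposition $(T,\rho^{*})$ of $M^{*}$ produced by Lemma~\ref{dualtree} and show that it is itself a wheel decomposition of $M^{*}$, using the same hub $v$ but replacing the circuit-hyperplane $C$ of $\rho(v)$ by its complement $C':=E(\rho(v))- C$, viewed inside $\rho^{*}(v)$.

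For condition~\ref{item:w1}, I note that $\rho^{*}(v)=\rho(v)^{*}\cong M(W_{k})^{*}$, so the point is the self-duality of the wheel matroid. This follows because $W_{k}$ is planar and its planar dual is again $W_{k}$ (with rim edges and spokes interchanged), hence $M(W_{k})^{*}\cong M(W_{k})$.

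For condition~\ref{item:w2}, I first verify that $C'$ is a circuit-hyperplane of $\rho^{*}(v)$. This is a general matroid fact applied to $N=\rho(v)$: since $C$ is a hyperplane of $N$, $E(N)-C$ is a cocircuit of $N$, hence a circuit of $N^{*}$; and since $C$ is a circuit of $N$, $C$ is a cocircuit of $N^{*}$, making $E(N)-C$ a hyperplane of $N^{*}$. Then for each component $X$ of $T\setminus v$ joined to $v$ by an edge $e_X$, I apply Lemma~\ref{lem:twosumdual} repeatedly to the definition of $\rho^{*}(X)$ as an iterated $2$-sum of vertex-duals to obtain $\rho^{*}(X)=\rho(X)^{*}$. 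Since $e_X\in C'$ iff $e_X\notin C$, the hypothesis \ref{item:w2} for $(T,\rho)$ gives $\rho(X)\in\mathcal{G}^{*}$ in that case and therefore $\rho^{*}(X)\in\mathcal{G}$; the reverse case is symmetric. This confirms \ref{item:w2} for $(T,\rho^{*})$ at the hub $v$ with the new circuit-hyperplane $C'$.

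I do not foresee a real obstacle: the substantive ingredients are just the self-duality of $M(W_{k})$ and the symmetry of the circuit-hyperplane notion under matroid duality, both of which are standard.
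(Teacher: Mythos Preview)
Your proof is correct and follows essentially the same approach as the paper: the paper's one-line argument observes that the complement of a circuit-hyperplane of $M(W_k)$ is a circuit-hyperplane of $M^{*}(W_k)\cong M(W_k)$, which is precisely what you unpack in detail using Lemma~\ref{dualtree} and Lemma~\ref{lem:twosumdual}. Your version is simply a more explicit rendering of the same idea.
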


\begin{proof}
The conclusion follows from the fact that the complement of a circuit-hyperplane of $M(W_{k})$ is a circuit-hyperplane of $M^{*}(W_{k})$ which is isomorphic to $M(W_{k})$.
\end{proof}

\begin{lemma}
\label{Wtreedecomposition}
A connected matroid $M$ has a wheel decomposition if and only if there is a graft $(G,T)$ with a nice cyclic decomposition $(H,\mathcal{B})$ such that $M$ or $M^{*}$ is isomorphic to $M^{H,\mathcal{B}}$ and $H$ is isomorphic to $C_{2k}$ for some $k\geq 3$.
\end{lemma}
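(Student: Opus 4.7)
The plan is to use Proposition~\ref{prop:main} as a bridge between nice cyclic decompositions of grafts with $H\cong C_{2k}$ and iterated $2$-sum presentations centered at $M(W_k)$; both directions then reduce to identifying the pieces correctly.

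For the forward direction, suppose $M$ has a wheel decomposition $(T,\rho)$ with hub $v$, $\rho(v)\cong M(W_k)$ for some $k\ge 3$, and circuit-hyperplane $C$. I would identify $\rho(v)$ with $M(W_k)$ so that $C$ becomes the rim, and label the wheel edges $e_1,\ldots,e_{2k}$ so that odd-indexed ones are rim and even-indexed ones are spokes. For each $i$, let $X_i$ be the component of $T\setminus v$ incident with $e_i$ if such a component exists; the wheel hypothesis gives $\rho(X_i)\in\mathcal{G}$ for odd $i$ and $\rho(X_i)\in\mathcal{G}^{*}$ for even $i$, and $\rho(X_i)$ is connected by Lemma~\ref{2sumconnected}. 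Pick a $2$-connected graph (or a pair of parallel edges) $\tilde G_i$ containing $e_i$ as a non-loop, non-coloop edge so that $\rho(X_i)=M(\tilde G_i)$ for odd $i$ and $\rho(X_i)=M^*(\tilde G_i)$ for even $i$; if no such $X_i$ exists then $e_i\in E(M)$, in which case take $\tilde G_i$ to be a pair of parallel edges whose deletion of $e_i$ leaves the corresponding element of $M$. Glue the $\tilde G_i$'s cyclically around fresh vertices $u_1,\ldots,u_{2k}$ by identifying $e_i$ with $u_{i-1}u_i$ (indices mod $2k$), delete these distinguished edges, and set $T_0=\{u_1,\ldots,u_{2k}\}$. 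Checking \ref{item:c1}--\ref{item:c4} and \ref{item:n1}, \ref{item:n2} is then immediate: each bag $G[V(\tilde G_i)]=\tilde G_i\setminus e_i$ is connected because $\tilde G_i$ is $2$-connected or a pair of parallel edges, and each bag meets $T_0$ in exactly $\{u_{i-1},u_i\}$. Proposition~\ref{prop:main} then produces $M^{H,\mathcal B}=\rho(v)\oplus_{2}\rho(X_1)\oplus_{2}\cdots\oplus_{2}\rho(X_{2k})=\rho(T)=M$ (up to renaming of the $e_i$'s coming from $M$-elements).

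For the backward direction, assume $M\cong M^{H,\mathcal B}$ for a graft with nice cyclic decomposition $(C_{2k},\mathcal B)$ with $k\ge 3$; the case $M^{*}\cong M^{H,\mathcal B}$ is reduced to this one at the end by Lemma~\ref{Wtdual}. Proposition~\ref{prop:main} gives $M=M(W_k)\oplus_{2}M(\tilde G_1)\oplus_{2}M^{*}(\tilde G_2)\oplus_{2}\cdots\oplus_{2}M^{*}(\tilde G_{2k})$, which immediately yields a star-shaped matroid-labelled tree around a vertex $v$ with $\rho(v)=M(W_k)$ and leaves $x_i$ carrying $M(\tilde G_i)$ or $M^{*}(\tilde G_i)$. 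To refine this into the canonical tree decomposition, first absorb into $v$ any leaf $x_i$ with $|E(\rho(x_i))|\le 2$ (which just relabels an element inside $M(W_k)$), then replace each surviving leaf's matroid by its own canonical tree decomposition via Theorem~\ref{thm:canonical}. Since $M(W_k)$ is $3$-connected and neither $U_{1,n}$ nor $U_{n-1,n}$ for any $n\ge 3$ (this uses $k\ge 3$), no pair of the form $\{v,y\}$ is bad, and internal bad pairs in each canonical subtree are absent by canonicality. Finally, for each component $X$ of $T'\setminus v$ the matroid $\rho'(X)$ collapses, via its own $2$-sum structure, to some $M(\tilde G_i)\in\mathcal G$ if the connecting edge has odd index (lies in the rim circuit-hyperplane $C$), and to some $M^{*}(\tilde G_i)\in\mathcal G^{*}$ otherwise, which is precisely a wheel decomposition.

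The main technical obstacle is careful bookkeeping on both sides: verifying that the forward gluing really produces a nice cyclic decomposition on $C_{2k}$ (especially \ref{item:c3}, that adjacent bags share exactly one $T_0$-vertex and non-adjacent bags share none), and verifying that the backward refinement produces a truly canonical tree decomposition with no hidden bad pairs after leaves are absorbed and canonical subtrees are glued to $v$. Both issues dissolve because $M(W_k)$ for $k\ge 3$ is $3$-connected and non-uniform, excluding bad pairs at the hub, and because Lemma~\ref{2sumconnected} forces each $\rho(X_i)$ to be connected, so each $\tilde G_i$ can be taken $2$-connected (or a multi-edge on two vertices), making the bags of the cyclic decomposition connected.
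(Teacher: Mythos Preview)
Your proposal is correct and follows essentially the same approach as the paper's proof: in the forward direction you glue the connected graphs $\tilde G_i$ (or two parallel edges when $e_i\in E(M)$) cyclically to build the graft with nice cyclic decomposition on $C_{2k}$, and in the backward direction you assemble the canonical tree decomposition by attaching the canonical trees of the nontrivial $M_i$'s to a hub carrying $M(W_k)$, using that $M(W_k)$ is $3$-connected for $k\ge 3$ to rule out bad pairs at the hub. The paper's proof differs only in presentation (it treats the backward direction first and indexes the nontrivial factors via the set $A=\{i:|E(M_i)|\ge 3\}$ rather than ``absorbing'' small leaves), not in substance.
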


\begin{proof}
To prove the backward direction, suppose that $H=1,2,\ldots,2k,1$ is an even cycle for some $k\geq 3$. 
As $M$ is connected, $G$ has no loops.
For each $1\leq i\leq 2k$, let $G_{i}$ be a graph obtained from $G[B_{i}]$ by adding an edge $e_{i}$ joining two vertices of $T\cap B_{i}$ and let
\[
M_{i}=
\begin{cases}
M(G_{i}) & \text{if $i$ is odd,} \\
M^{*}(G_{i}) & \text{if $i$ is even.}
\end{cases}
\]

By Lemma \ref{Wtdual}, we can assume that $M=M^{H,\mathcal{B}}=M(W_{k})\oplus_{2}M_{1}\oplus_{2}M_{2}\oplus_{2}\cdots\oplus_{2}M_{2k}$ by taking dual. Let $E(M(W_{k}))=\{e_{1},e_{2},\ldots,e_{2k}\}$ and $C:=\{e_{1},e_{3},\ldots,e_{2k-1}\}$ be the set of rim edges of $M(W_{k})$. Since $M$ is connected, $M_{i}$ is connected for each $i\in\{1,2,\ldots,2k\}$ by Lemma~\ref{2sumconnected}. Let $A=\{1\leq i\leq 2k:|E(M_{i})|\geq 3\}$. For each $i\in A$, let $(T_{i},\rho_{i})$ be a canonical tree decomposition of $M_{i}$ and let $v_{i}$ be a vertex of $V(T_{i})$ such that $e_{i}\in \rho_{i}(v_{i})$. Let $(T,\rho)$ be a matroid-labelled tree such that $T$ is obtained from $\bigcup_{i\in A}T_{i}$ by adding a new vertex $u$ and edges $e_{i}=uv_{i}$ for all $i\in A$ and 

\[
\rho(v)=\begin{cases}
M(W_{k}) & \mbox{if $v=u$,} \\
\rho_{i}(v) & \mbox{if $v\in V(T_{i})$ for some $i\in A$.}
\end{cases}
\]
Since $\rho(u)=M(W_{k})$ is $3$-connected, $(T,\rho)$ is a canonical tree decomposition of $\rho(T)$ which is isomorphic to $M^{H,\mathcal{B}}$. We claim that $(T,\rho)$ is a wheel decomposition with a hub $u$. Obviously, \ref{item:w1} holds. Observe that $C$ is a circuit-hyperplane of $M(W_{k})$. For each component $X$ of $T\setminus u$, $X=T_{i}$ for some $i\in A$. So $\rho(X)=\rho(T_{i})$ for each $i$. If $e_{i}\in C$, then $i$ is odd and $\rho(T_{i})=M_{i}\in\mathcal{G}$. If $e_{i}\notin C$, then $i$ is even and $\rho(T_{i})=M_{i}\in\mathcal{G}^{*}$. So \ref{item:w2} holds.

To prove the forward direction, let $(T,\rho)$ be a wheel decomposition of $M$ with a hub $v$. Then $\rho(v)=M(W_{k})$ for some $k\geq 3$. Let $E(M(W_{k}))=\{e_{1},e_{2},\ldots,e_{2k}\}$ and $C:=\{e_{1},e_{3},\ldots,e_{2k-1}\}$ be the circuit-hyperplane of $M(W_{k})$.
Let $A=\{1\leq i\leq 2k : e_{i}\in E(T)\}$. For each $i\in\{1,2,\ldots,2k\}$, let $M_{i}$ be a matroid $\rho(T_{e_{i}}(v))$ if $i\in A$ and $U_{1,2}$ containing $e_{i}$ if $i\notin A$. By \ref{item:w2}, there is a connected graph $G_{i}$ such that 
\[
M(G_{i})=
\begin{cases}
M_{i} & \text{if $i$ is odd,} \\
M_{i}^{*} & \text{if $i$ is even.}
\end{cases}
\]

Then $M$ is isomorphic to $M(W_{k})\oplus_{2}M_{1}\oplus_{2}M_{2}\oplus_{2}\cdots\oplus_{2} M_{2k}$. For each $i\in\{1,2,\ldots,2k\}$, since $M_{i}$ is connected, $G_{i}$ is $2$-connected or isomorphic to a graph with $2$ vertices and parallel edges and therefore $G_{i}\setminus e_{i}$ is connected. For each $i\in\{1,2,\ldots,2k\}$, let $x_{i}$, $y_{i}$ be ends of $e_{i}$ in $G_{i}$. Let $G$ be a graph obtained from the graph $(G_{1}\setminus e_{1})\cup(G_{2}\setminus e_{2})\cup\cdots\cup(G_{2k}\setminus e_{2k})$ by identifying $y_{i}$, $x_{i+1}$ for each $1\leq i\leq 2k-1$, and identifying $x_{1}$, $y_{2k}$ and let $T=\{x_{1},x_{2},\ldots,x_{2k}\}$. 
Let $H=1,2,\ldots,2k,1$ be an even cycle of length $2k$ and let $B_{i}=V(G_{i})$ for each $i\in\{1,2,\ldots,2k\}$. Let $\mathcal{B}=\{B_{i}:1\leq i\leq 2k\}$. Then $(H,\mathcal{B})$ is a nice cyclic decomposition of $(G,T)$ and $M$ is isomorphic to $M^{H,\mathcal{B}}$.
\end{proof}

A canonical tree decomposition $(T,\rho)$ of a connected matroid $M$ is a \emph{fan decomposition} if $T$ has a path $P$ of length at least $1$, called a \emph{spine}, such that the following hold:
\begin{enumerate}[label=\rm(F\arabic*)]
\item\label{item:f1} For each $v\in V(P)$, $\rho(v)$ is a uniform matroid of rank $1$ or corank $1$.

\item\label{item:f2} For an edge $e\in E(T)-E(P)$ incident with $v\in V(P)$,
\begin{enumerate}
\item $\rho(T_{e}(v))\in\mathcal{G}$ if $v$ is an internal node of $P$ and $\rho(v)$ has corank $1$,
\item $\rho(T_{e}(v))\in\mathcal{G}^{*}$ if $v$ is an internal node of $P$ and $\rho(v)$ has rank $1$, and
\item $\rho(T_{e}(v))\in\mathcal{G}\cup\mathcal{G}^{*}$ otherwise.
\end{enumerate}

\item\label{item:f3} For an end vertex $v$ of $P$, if $\deg_{T}(v)=|E(\rho(v))|$, then there exist distinct edges $e_{1}, e_{2}$ incident with $v$ such that $e_{1}, e_{2}\notin E(P)$, $\rho(T_{e_{1}}(v))\in\mathcal{G}$, and $\rho(T_{e_{2}}(v))\in\mathcal{G}^{*}$.
\end{enumerate}

\begin{lemma}
\label{ftdual} 
If a connected matroid $M$ admits a fan decomposition, then so does $M^{*}$. 
\end{lemma}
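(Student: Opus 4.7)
The plan is to apply Lemma~\ref{dualtree} to pass from the canonical tree decomposition $(T,\rho)$ of $M$ to the canonical tree decomposition $(T,\rho^{*})$ of $M^{*}$, where $\rho^{*}(v)=\rho(v)^{*}$, and then show that keeping the same path $P$ as the spine makes $(T,\rho^{*})$ a fan decomposition of $M^{*}$. Concretely, I fix a fan decomposition $(T,\rho)$ of $M$ with spine $P$ and verify \ref{item:f1}, \ref{item:f2}, and \ref{item:f3} for $(T,\rho^{*})$ with spine $P$. Throughout I would use the following ``duality dictionary'': the dual of $U_{1,n}$ is $U_{n-1,n}$ (so the dual of a rank-$1$ uniform matroid is a corank-$1$ uniform matroid and vice versa); dualizing swaps the classes $\mathcal{G}$ and $\mathcal{G}^{*}$ and preserves $\mathcal{G}\cup\mathcal{G}^{*}$; the size $|E(\rho(v))|$ is preserved; and, by iterating Lemma~\ref{lem:twosumdual}, $\rho^{*}(S)=\rho(S)^{*}$ for every subtree $S$ of $T$.

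Condition \ref{item:f1} is then immediate from the swap $U_{1,n}\leftrightarrow U_{n-1,n}$. For \ref{item:f2} I would split by cases on $v\in V(P)$. If $v$ is internal and $\rho(v)$ has corank $1$, then $\rho^{*}(v)$ has rank $1$, and $\rho^{*}(T_{e}(v))=\rho(T_{e}(v))^{*}\in\mathcal{G}^{*}$ because $\rho(T_{e}(v))\in\mathcal{G}$; this matches requirement~(b) of \ref{item:f2} for $(T,\rho^{*})$. The case ``internal and rank $1$'' is symmetric. When $v$ is an endpoint of $P$, requirement~(c) allows $\mathcal{G}\cup\mathcal{G}^{*}$, which is closed under duality. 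For \ref{item:f3} the degree/size condition $\deg_{T}(v)=|E(\rho^{*}(v))|$ is the same as for $\rho$ since the ground sets coincide; and the required pair of edges is obtained simply by exchanging the roles of $e_{1}$ and $e_{2}$ from the original fan decomposition, because the dual of a $\mathcal{G}$-labelled subtree is $\mathcal{G}^{*}$-labelled and conversely.

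There is no real obstacle here; the argument is a routine check that the three defining properties of a fan decomposition are closed under matroid duality. The only point deserving a moment of care is the asymmetric wording of \ref{item:f2}, which pairs ``corank $1$'' with $\mathcal{G}$ and ``rank $1$'' with $\mathcal{G}^{*}$: duality flips both halves of each pairing simultaneously, so the pairing itself is preserved.
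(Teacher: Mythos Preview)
Your proposal is correct and is exactly the approach the paper takes: the paper's entire proof is ``The conclusion follows easily from Lemma~\ref{dualtree},'' and you have simply written out the routine verification that this sentence elides. Your duality dictionary and case-by-case check of \ref{item:f1}--\ref{item:f3} are precisely the details one would fill in.
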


\begin{proof}
The conclusion follows easily from Lemma~\ref{dualtree}.
\end{proof}

\begin{lemma}
\label{Ftreedecomposition}
A connected matroid $M$ has a fan decomposition if and only if $M$ or $M^{*}$ is isomorphic to $M^{H,\mathcal{B}}$ for a graft $(G,T)$ admitting a nice cyclic decomposition $(H,\mathcal{B})$ such that $H$ is a path of length at least $3$.
\end{lemma}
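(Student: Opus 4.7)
The plan is to mirror the proof of Lemma~\ref{Wtreedecomposition}, replacing the single hub $\rho(v)\cong M(W_k)$ by a path spine whose labels form the canonical tree decomposition of $M(\Pi_\ell)$. Lemma~\ref{ftdual} plays the role of Lemma~\ref{Wtdual} to reduce by duality.

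For the backward direction, by Lemma~\ref{ftdual} I may assume $M$ is isomorphic to $M^{H,\mathcal{B}}$. Proposition~\ref{prop:main2} expresses $M$ as a $2$-sum of $M(\Pi_\ell)$ with an alternating sequence of $M(\tilde{G}_i)$ and $M^{*}(\tilde{G}_i)$. The key computation is that the canonical tree decomposition of $M(\Pi_\ell)$ is a path on $\ell-2$ vertices whose labels alternate $U_{1,3},U_{2,3},U_{1,3},\ldots$ starting from $U_{1,3}$, with the $2$-sum basepoints $e_1,\ldots,e_\ell$ distributed so that $e_1,e_2$ lie at the first vertex, $e_{\ell-1},e_\ell$ at the last, and $e_3,\ldots,e_{\ell-2}$ one-per-vertex at the interior. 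This follows by induction on $\ell$ from the base cases $\Pi_3\cong U_{1,3}$ and $\Pi_4\cong U_{1,3}\oplus_2 U_{2,3}$ together with the identities $U_{1,m}\oplus_2U_{1,n}=U_{1,m+n-2}$ and $U_{m-1,m}\oplus_2U_{n-1,n}=U_{m+n-3,m+n-2}$. Attaching the canonical tree decomposition of each $M(\tilde{G}_i)$ or $M^{*}(\tilde{G}_i)$ at the spine vertex containing $e_i$, and merging the bad pairs that arise (as in Lemma~\ref{pathcontraction}), produces a canonical tree decomposition of $M$. Conditions \ref{item:f1} and \ref{item:f2} then follow from the $U_{1,3}/U_{2,3}$ alternation on the spine and from the alternating graphic/cographic pattern in Proposition~\ref{prop:main2}, while \ref{item:f3} holds at the first spine vertex because $\tilde{G}_1$ is attached via $M(\tilde{G}_1)\in\mathcal{G}$ and $\tilde{G}_2$ via $M^{*}(\tilde{G}_2)\in\mathcal{G}^{*}$, and symmetrically at the last spine vertex.

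For the forward direction, let $(T,\rho)$ be a fan decomposition of $M$ with spine $P=v_1\cdots v_s$; then $s\geq 2$ since $P$ has length at least $1$. By Lemma~\ref{ftdual} I may dualize to assume $\rho(v_1)=U_{1,m_1}$, so the spine labels alternate $U_{1,\cdot},U_{\cdot-1,\cdot},\ldots$ (the absence of bad pairs forces alternation). The idea is to reverse the construction above: set $\ell=s+2$ and take $H$ to be the path on $\ell$ vertices. For each spine vertex $v_j$, split $\rho(v_j)$ as a $2$-sum of a size-$3$ uniform matroid of the same rank type (which will sit inside $M(\Pi_\ell)$) and a size-$(m_j-1)$ uniform matroid (parallel edges or a bond that is absorbed into the adjacent bag). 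For each off-spine subtree $T_e(v_j)$, the matroid $\rho(T_e(v_j))$ is graphic or cographic by \ref{item:f2} and \ref{item:f3}, and is realized as $M(G')$ for a suitable connected graph $G'$ (possibly dualized). Assembling these graphs along $H$ by identifying them at the prescribed $2$-sum basepoints, and choosing $T$ as the set of attachment vertices between consecutive bags together with one distinguished vertex at each end, produces a graft $(G,T)$ and a cyclic decomposition $(H,\mathcal{B})$ with $|B_i\cap T|=2$ for all $i$. The niceness axioms \ref{item:n1} and \ref{item:n2} follow from $2$-connectedness of each $\tilde{G}_i$, inherited from connectedness of $M$ via Lemma~\ref{2sumconnected}. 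Proposition~\ref{prop:main2} then identifies $M^{H,\mathcal{B}}$ with $M$ (or with $M^{*}$, depending on the parity of $s$ that survived the initial dualization).

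The main obstacle is the forward direction: the careful bookkeeping needed to distribute the off-spine subtrees and the excess elements of each spine uniform matroid among the $\tilde{G}_i$'s so that the alternation of graphic and cographic pieces matches exactly the pattern demanded by Proposition~\ref{prop:main2}. Condition \ref{item:f3} requires particular attention because it permits a mixture of graphic and cographic off-spine attachments at the two end spine vertices, and this mixture must be paired correctly with $\{\tilde{G}_1,\tilde{G}_2\}$ and $\{\tilde{G}_{\ell-1},\tilde{G}_\ell\}$, as well as aligned with the parity of $\ell$ that determines which of $M$ or $M^{*}$ appears in the conclusion. Once these details are in place, uniqueness of canonical tree decompositions (Theorem~\ref{thm:canonical}) confirms that the construction returns a nice cyclic decomposition of the desired form.
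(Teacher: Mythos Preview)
Your proposal is correct and follows essentially the same approach as the paper: the backward direction builds the spine from the canonical tree decomposition of $M(\Pi_\ell)$ as a path of $\ell-2$ alternating $U_{1,3}$/$U_{2,3}$ nodes, attaches the $(T_i,\rho_i)$ at the appropriate basepoints, and contracts the bad pairs; the forward direction sets the path length to $s+2$, splits each endpoint spine label into two bags via \ref{item:f3}, and reassembles $M(\Pi_{s+2})\oplus_2 M_1\oplus_2\cdots\oplus_2 M_{s+2}$ from the pieces. Two minor remarks: the reference to Lemma~\ref{pathcontraction} for merging bad pairs is misplaced (that lemma concerns contracting a path in $T$ to produce a minor, not the standard bad-pair contraction in building a canonical tree decomposition), and the appeal to Theorem~\ref{thm:canonical} at the end is unnecessary, since niceness of $(H,\mathcal{B})$ follows directly from connectedness of each $G_i\setminus e_i$ rather than from uniqueness of canonical tree decompositions.
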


\begin{proof}
Let us prove the backward direction first. Let $H=1,2,\ldots,\ell$ be a path for $\ell\geq 4$. 
As $M$ is connected, $G$ has no loops.
For each $i\in\{1,2,\ldots,\ell\}$, let $G_{i}$ be a graph obtained from $G[B_{i}]$ by adding an edge $e_{i}$ joining two vertices of $B_{i}\cap T$ and let 
\[
M_{i}=
\begin{cases}
M(G_{i}) & \text{if $i$ is odd,} \\
M^{*}(G_{i}) & \text{if $i$ is even.}
\end{cases}
\]
Then, by Lemma~\ref{ftdual}, we can assume that $M=M^{H,\mathcal{B}}=M(\Pi_{\ell})\oplus_{2}M_{1}\oplus_{2}M_{2}\oplus_{2}\cdots\oplus_{2}M_{\ell}$ by taking dual. Let $T'$ be a path graph on the vertex set $\{u_{1},u_{2},\ldots,u_{\ell-2}\}$ and $f_{i}=u_{i}u_{i+1}$ for $1\leq i\leq\ell-3$. For $i\in\{1,\ldots,\ell-2\}$, let
\[
E_{i}=
\begin{cases}
\{e_{1},e_{2},f_{1}\} & \text{if $i=1$,} \\
\{f_{i-1},f_{i},e_{i+1}\} & \text{if $1<i<\ell-2$,} \\
\{f_{\ell-3},e_{\ell-1},e_{\ell}\} & \text{if $i=\ell-2$.}
\end{cases}
\]
and let $\rho'(u_{i})$ be the uniform matroid on $E_{i}$ of rank $1$ if $i$ is odd and of rank $2$ if $i$ is even.

Then $(T',\rho')$ is a canonical tree decomposition of $M(\Pi_{\ell})$.
By Lemma~\ref{2sumconnected}, $M_{i}$ is connected for each $i\in\{1,2,\ldots,\ell\}$ because $M$ is connected. For $i\in\{1,2,\ldots,\ell\}$, let $(T_{i},\rho_{i})$ be a canonical tree decomposition of $M_{i}$ and $v_{i}$ be a vertex of $T_{i}$ such that $e_{i}\in E(\rho_{i}(v_{i}))$. Let $(T'',\rho'')$ be a matroid-labelled tree such that $T''$ is obtained from $(\bigcup_{i=1}^{\ell}T_{i})\cup T'$ by adding edges 
\[
e_{i}=
\begin{cases}
u_{1}v_{1} & \text{if $i=1$,} \\
u_{i-1}v_{i} & \text{if $2\leq i\leq\ell-1$, and}\\
u_{\ell-2}v_{\ell} & \text{if $i=\ell$.}
\end{cases}
\]
and

\[
\rho''(v)=
\begin{cases}
\rho'(v) & \text{if $v\in V(T')$,} \\
\rho_{i}(v) & \text{if $v\in V(T_{i})$ for some $i\in\{1,2,\ldots,\ell\}$.}
\end{cases}
\]
Let $A=\{e_{i}\in E(T''): \text{a pair of ends of $e_{i}$ is bad in $(T'',\rho'')$}\}$. Then $(T,\rho):=(T'',\rho'')/A$ is a canonical tree decomposition of $M$. We claim that $(T,\rho)$ is a fan decomposition with a spine $T'=u_{1}u_{2}\cdots u_{\ell-2}$. For each $i\in\{1,\ldots,\ell-2\}$, $\rho(u_{i})$ is a uniform matroid of rank $1$ or corank $1$. So \ref{item:f1} holds. 

For an edge $e\in E(T)-E(T')$ incident with $u_{i}$,
if $e_i\notin A$, then  $T_{e}(u_{i})=T_j$ for some $j$
and 
if $e_i\in A$, then $T_e(u_i)$ is a component of $T_{j}\setminus v_{j}$ for some $j$. 
By Lemma~\ref{subtreeminor}, $\rho(T_{e}(u_{i}))$ is isomorphic to a minor of $\rho(T_{j})\in\mathcal{G}\cup\mathcal{G}^{*}$, which implies that $\rho(T_{e}(u_{i}))\in\mathcal{G}\cup\mathcal{G}^{*}$. 

Moreover, if $u_{i}$ is an internal node and $\rho(u_{i})$ is a uniform matroid of rank $1$, then $i$ is odd and $\rho(T_{e}(u_{i}))$ is a minor of $\rho(T_{i+1})\in\mathcal{G}^{*}$. Therefore, $\rho(T_{e}(u_{i}))\in\mathcal{G}^{*}$.
If $u_{i}$ is an internal node and $\rho(u_{i})$ is a uniform matroid of corank $1$, then $i$ is even and $\rho(T_{e}(u_{i}))$ is a minor of $\rho(T_{i+1})$. Since $\rho(T_{i+1})\in\mathcal{G}$, $\rho(T_{e}(u_{i}))\in\mathcal{G}$. Hence, \ref{item:f2} holds. 

So it remains to prove that \ref{item:f3} holds. By symmetry, it is enough to show for $u_{1}$. Suppose that $\deg_{T}(u_{1})=|E(\rho(u_{1}))|$. For $i\in\{1,2\}$, let $g_{i}=e_{i}$ if the pair $\{u_{1}, v_{i}\}$ is not bad in $T''$. If the pair $\{u_{1}, v_{i}\}$ is bad, then let $g_{i}$ be an edge in $T_{i}$ incident with $v_{i}$. Such an edge exists because otherwise $\deg_{T}(u_{1})<|E(\rho(u_{1}))|$. Both $g_{1}$ and $g_{2}$ are incident with $u_{1}$ in $T$.

For each $i\in\{1,2\}$, $T_{g_{i}}(u_{1})$ is a subtree of $T_{i}$, and by Lemma~\ref{subtreeminor}, $\rho(T_{g_{i}}(u_{1}))$ is a minor of $\rho(T_{i})$. Therefore, $\rho(T_{g_{1}}(u_{1}))\in\mathcal{G}$ and $\rho(T_{g_{2}}(u_{1}))\in\mathcal{G}^{*}$. So \ref{item:f3} holds.

To prove the forward direction, let $(T,\rho)$ be a fan decomposition of $M$. Then there is a path $P=u_{1}u_{2}\cdots u_{\ell}$ for $\ell\geq 2$ such that \ref{item:f1}, \ref{item:f2}, and \ref{item:f3} holds. Let $f_{i}:=u_{i}u_{i+1}$ for $i\in\{1,2,\ldots,\ell-1\}$. By taking dual, for each $i\in\{1,2,\ldots,\ell\}$, we may assume that $\rho(u_{i})$ is a rank-$1$ uniform matroid if $i$ is odd and a corank-$1$ uniform matroid if $i$ is even.

For $j\in\{1,\ell\}$, 
let $X_j'$ be the set of all elements $e\in E(\rho(u_j))-E(P)$ such that
either $e\notin E(T)$ or $\rho(T_e(u_j))\in \mathcal G$
and let $Y_j'$ be the set of all elements $e\in E(\rho(u_j))-E(P)$ such that
either $e\notin E(T)$ or $\rho(T_e(u_j))\in \mathcal G^*$.
By \ref{item:f2}, $X_j'\cup Y_j'=E(\rho(u_j))-E(P)$
and by \ref{item:f3}, both $X_j'$ and $Y_j'$ are nonempty.
Since $\abs{X_j'\cup Y_j'}\ge 2$, 
we can choose $X_j\subseteq X_j'$ and $Y_j\subseteq Y_j'$ such that 
$X_j\cap Y_j=\emptyset$, $X_j\cup Y_j=E(\rho(u_j))-E(P)$, and $\abs{X_j}, \abs{Y_j}\ge 1$.
Then  $\rho(T_{e}(u_{j}))\in\mathcal{G}$ for each $e\in X_{j}\cap E(T)$ and $\rho(T_{f}(u_{j}))\in\mathcal{G}^{*}$ for each $f\in Y_{j}\cap E(T)$. 

For $i\in\{1,\ldots,\ell+2\}$, let
\[
\Gamma_{i}=
\begin{cases}
X_{1} & \text{if $i=1$,} \\
Y_{1} & \text{if $i=2$,} \\
E(\rho(u_{i-1}))-\{f_{i-2},f_{i-1}\} & \text{if $2<i<\ell+1$,} \\
X_{\ell} & \text{if $i\geq\ell+1$ and $i$ is odd,} \\
Y_{\ell} & \text{if $i\geq\ell+1$ and $i$ is even.} 
\end{cases}
\]
and let $\Gamma_{i}'=\Gamma_{i}\cap E(T)$.
Let $e_{1}^{i},\ldots,e_{m_{i}}^{i}$ be the list of all elements of $\Gamma_i'$ with $m_i=\abs{\Gamma_i'}\ge0$.
For $i\in\{1,2,\ldots,\ell+2\}$, let $N_{i}$ be a uniform matroid on $\Gamma_{i}\cup\{e_{i}\}$ such that $N_{i}$ has corank $1$ if $\min\{i,\ell+1\}\in\{k:2\leq k\leq\ell+1 \text{~and $k$ is odd}\}$ and rank $1$ otherwise.
Let $M_{i}=N_{i}\oplus_{2}\rho(T_{e_{1}^{i}}(u_{i}))\oplus_{2}\cdots\oplus_{2}\rho(T_{e_{m_{i}}^{i}}(u_{i}))$. By \ref{item:f2} and definitions of $X_{1}$, $Y_{1}$, $X_{\ell}$, and $Y_{\ell}$, we have $M_{i}\in\mathcal{G}$ if $i$ is odd and $M_{i}\in\mathcal{G}^{*}$ if $i$ is even. For each $i\in\{1,2,\ldots,\ell+2\}$, there is a connected graph $G_{i}$ such that
\[
M(G_{i})=
\begin{cases}
M_{i} & \text{if $i$ is odd,} \\
M_{i}^{*} & \text{if $i$ is even.}
\end{cases}
\]

For $i\in\{1,\ldots,\ell\}$, let $N_{i}'$ be a uniform matroid such that 
\[
E(N_{i}')=
\begin{cases}
\{f_{1},e_{1},e_{2}\} & \text{if $i=1$,} \\
\{f_{i-1},e_{i+1},f_{i}\} & \text{if $2\leq i\leq\ell-1$, and} \\
\{f_{\ell-1},e_{\ell+1},e_{\ell+2}\} & \text{if $i=\ell$.}
\end{cases}
\]
and $N_{i}'$ has rank $1$ if $i$ is odd and corank $1$ otherwise. It is straightforward to check that $N_{1}'\oplus_{2}\cdots\oplus_{2}N_{\ell}'=M(\Pi_{\ell+2})$. Moreover, for $i\in\{1,\ldots,\ell\}$,
\[
\rho(u_{i})=
\begin{cases}
N_{1}'\oplus_{2}N_{1}\oplus_{2}N_{2} & \text{if $i=1$,} \\
N_{i}'\oplus_{2}N_{i+1} & \text{if $1<i<\ell$,} \\
N_{\ell}'\oplus_{2}N_{\ell+1}\oplus_{2}N_{\ell+2} & \text{if $i=\ell$}
\end{cases}
\]
and therefore,
\begin{align*}
M&=\rho(T)=\OPLUS_{i=1}^{\ell}\left(\rho(u_{i})\oplus_{2}\OPLUS_{j=1}^{m_{i}}\rho(T_{e_{j}^{i}}(u_{i}))\right) \\
&=(N_{1}'\oplus_{2}\cdots\oplus_{2}N_{\ell}')\oplus_{2}(N_{1}\oplus_{2}\cdots\oplus_{2}N_{\ell+2})\oplus_{2}\OPLUS_{i=1}^{\ell}\left(\OPLUS_{j=1}^{m_{i}}\rho(T_{e_{j}^{i}}(u_{i}))\right) \\
&=(N_{1}'\oplus_{2}\cdots\oplus_{2}N_{\ell}')\oplus_{2}M_{1}\oplus_{2}\cdots\oplus_{2}M_{\ell+2}=M(\Pi_{\ell+2})\oplus_{2}M_{1}\oplus_{2}\cdots\oplus_{2}M_{\ell+2}.
\end{align*}

Since $M_{i}$ is connected, $G_{i}$ is loopless $2$-connected or isomorphic to a graph with $2$ vertices and parallel edges and therefore $G_{i}\setminus e_{i}$ is connected. For each $i\in\{1,2,\ldots,\ell+2\}$, let $x_{i}$, $y_{i}$ be ends of $e_{i}$ in $G_{i}$. Let $G$ be a graph obtained from the graph $(G_{1}\setminus e_{1})\cup(G_{2}\setminus e_{2})\cup\cdots\cup(G_{\ell+2}\setminus e_{\ell+2})$ by identifying $y_{i}$ and $x_{i+1}$ for each $i\in\{2,\ldots,\ell+1\}$ and $T:=\{x_{1},x_{2},\ldots,x_{\ell+2}\}$. Let $H=1,2,\ldots,\ell+2$ be a path of length $\ell+1\geq 3$ and let $B_{i}=V(G_{i})$ for each $i\in\{1,2,\ldots,\ell+2\}$. Let $\mathcal{B}=\{B_{i}:1\leq i\leq \ell+2\}$. Then $(H,\mathcal{B})$ is a nice cyclic decomposition of $(G,T)$ and $M=M^{H,\mathcal{B}}$.
\end{proof}

\begin{proposition}
\label{prop:deltagraphictree}
 A connected matroid $M\notin\mathcal{G}\cup\mathcal{G}^{*}$ is delta-graphic if and only if its canonical tree decomposition $(T,\rho)$ satisfies at least one of the following:
\begin{enumerate}[label=\rm(T\arabic*)]
\item\label{item:t1} There is an edge $e$ of $T$ such that for two components $T_{1}$, $T_{2}$ of $T\setminus e$, $\rho(T_{1})\in\mathcal{G}$ and $\rho(T_{2})\in\mathcal{G}^{*}$.
\item\label{item:t2} There is a vertex $v$ of $T$ such that 
\begin{enumerate}[label=\rm (\roman*)]
\item $\rho(v)$ is isomorphic to $U_{1,m}$ or $U_{m-1,m}$ for some $m\geq 3$, 
\item for each component $C$ of $T\setminus v$, $\rho(C)\in\mathcal{G}\cup\mathcal{G}^{*}$, and
\item there are distinct components $C_{1}$, $C_{2}$ of $T\setminus v$ such that $\rho(C_{1})\in\mathcal{G}$ and $\rho(C_{2})\in\mathcal{G}^{*}$. 
\end{enumerate} 
\item\label{item:t3} $(T,\rho)$ is a wheel decomposition.
\item\label{item:t4} $(T,\rho)$ is a fan decomposition
\end{enumerate} 
\end{proposition}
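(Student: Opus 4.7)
The plan is to deduce this proposition directly from Proposition~\ref{prop:structure_conn} together with Lemmas~\ref{twosum}, \ref{Lemma6.9}, \ref{Wtreedecomposition}, and \ref{Ftreedecomposition}, which collectively translate between the graft-side description $M^{H,\mathcal{B}}$ and the tree-decomposition side conditions (T1)--(T4). The only genuinely new work is organizing the case split on the ``shape'' of $H$ so that small $H$ maps to (T1)/(T2), long cycles map to (T3), and long paths map to (T4).

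For the forward direction, suppose $M$ is a connected delta-graphic matroid with $M\notin\mathcal{G}\cup\mathcal{G}^{*}$. By Proposition~\ref{prop:structure_conn}, there is a graft $(G,T)$ with a nice cyclic decomposition $(H,\mathcal{B})$ such that $M$ or $M^{*}$ is isomorphic to $M^{H,\mathcal{B}}$, with $H$ either an even cycle of length $\geq 4$ or a path of length $\geq 1$. I split into three subcases. If $H\in\{P_{2},P_{3},C_{4}\}$, then by Lemma~\ref{twosum} the matroid $M$ is a $2$-sum of a graphic matroid and a cographic matroid, so Lemma~\ref{Lemma6.9} together with the hypothesis $M\notin\mathcal{G}\cup\mathcal{G}^{*}$ forces (T1) or (T2). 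If $H\cong C_{2k}$ for some $k\geq 3$, then Lemma~\ref{Wtreedecomposition} yields a wheel decomposition of $M$, giving (T3). If $H$ is a path of length $\geq 3$, then Lemma~\ref{Ftreedecomposition} yields a fan decomposition, giving (T4). In each case I invoke the uniqueness part of Theorem~\ref{thm:canonical} to identify the wheel/fan decomposition produced by the corresponding lemma with \emph{the} canonical tree decomposition appearing in the statement.

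For the backward direction, each of (T1)--(T4) implies $M$ is delta-graphic. If (T1) or (T2) holds, then Lemma~\ref{Lemma6.9} shows that $M=M_{1}\oplus_{2}M_{2}$ with $M_{1}\in\mathcal{G}$ and $M_{2}\in\mathcal{G}^{*}$; Lemma~\ref{twosum} then provides a nice cyclic decomposition $(H,\mathcal{B})$ with $H\in\{P_{2},P_{3},C_{4}\}$ realizing $M\cong M^{H,\mathcal{B}}$, and Proposition~\ref{prop:structure_conn} concludes that $M$ is delta-graphic. If (T3) holds, Lemma~\ref{Wtreedecomposition} supplies a graft with $H\cong C_{2k}$, $k\geq 3$, such that $M$ or $M^{*}$ equals $M^{H,\mathcal{B}}$, and Proposition~\ref{prop:structure_conn} combined with Lemma~\ref{lem:dualclosed} yields the conclusion. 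The case (T4) is handled identically via Lemma~\ref{Ftreedecomposition}.

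The main obstacle here is bookkeeping rather than mathematical depth: all substantive content has been invested in the four lemmas above, and the work that remains is ensuring that the trichotomy on $H$ in Proposition~\ref{prop:structure_conn} (small path, small cycle; long cycle; long path) precisely aligns with the partition of (T1)--(T4), and that uniqueness of the canonical tree decomposition legitimizes the definite article ``its canonical tree decomposition'' in the statement. There is also a minor sanity check that ``2-sum of graphic and cographic'' (Lemma~\ref{Lemma6.9}) is a strictly weaker property than ``delta-graphic,'' which is confirmed by the fact that $\mathcal{G}\cup\mathcal{G}^{*}$ itself is excluded from the hypothesis.
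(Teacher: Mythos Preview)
Your proposal is correct and follows essentially the same approach as the paper: the paper's proof is the single sentence ``The conclusion follows from Proposition~\ref{prop:structure_conn} and Lemmas~\ref{twosum}, \ref{Lemma6.9}, \ref{Wtreedecomposition}, and \ref{Ftreedecomposition},'' and you have simply unpacked that citation into the natural case split on the shape of $H$. Your remark about invoking the uniqueness in Theorem~\ref{thm:canonical} to reconcile ``a'' canonical tree decomposition (as produced by the lemmas) with ``its'' canonical tree decomposition (as in the statement) is a detail the paper leaves implicit.
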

\begin{proof}
The conclusion follows from Proposition~\ref{prop:structure_conn} and Lemmas~\ref{twosum}, \ref{Lemma6.9}, \ref{Wtreedecomposition}, and \ref{Ftreedecomposition}. 
\end{proof}

Now we are ready to prove Theorem~\ref{thm:main}.
\begin{proof}[Proof of Theorem~\ref{thm:main}]
The forward direction follows from Proposition~\ref{prop:deltagraphictree}. Let us prove the backward direction. Let $M$ be a connected generalized wheel obtained from the cycle matroid $N$ of a minor of a wheel graph. By relabelling edges, we may assume that $N=M(W_{k})\setminus X/Y$ for $k\geq 3$ and some $X,Y\subseteq E(W_{k})$. Let $M'$ be a matroid such that $M'\setminus X/Y=M$. Then $M'$ is obtained from $M(W_{k})$ by a sequence of $2$-sums where 
the other part of each $2$-sum is connected and graphic if the corresponding basepoint is a rim edge
and is connected and cographic otherwise.
So $M'$ is connected and let $(T,\rho)$ be a canonical tree decomposition of $M'$. Then $(T,\rho)$ is a wheel decomposition and therefore $M'$ is delta-graphic by Proposition~\ref{prop:deltagraphictree}. Since $M$ is a minor of $M'$, $M$ is delta-graphic by Lemma~\ref{tgraphicminorclosed}.
\end{proof}

From Theorem~\ref{thm:main} and the facts that every graphic or cographic matroid is regular~{\cite[Proposition 5.1.2]{Oxley2011}} and the class of regular matroids is closed under taking $2$-sum~{\cite[Corollary 7.1.26]{Oxley2011}}, we can observe that every delta-graphic matroid is regular.

\section{Forbidden minors for the class of delta-graphic matroids}   
\label{sec:exclude}

In this section, we will show Theorem~\ref{thm:exclude}, stating every forbidden minor for the class of delta-graphic matroids has at most $48$ elements.
\begin{lemma}
\label{lem:3c}
A $3$-connected matroid $M$ is delta-graphic if and only if $M$ is graphic or cographic. 
\end{lemma}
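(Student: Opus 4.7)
The plan is to invoke Proposition~\ref{prop:deltagraphictree} together with the observation that $3$-connectedness forces the canonical tree decomposition of $M$ to be trivial. The backward direction is immediate: graphic matroids are delta-graphic because their bases form graphic delta-matroids by Oum~\cite{Oum2009}, and the class of delta-graphic matroids is closed under duality by Lemma~\ref{lem:dualclosed}, so cographic matroids are delta-graphic as well.

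For the forward direction, suppose $M$ is $3$-connected and delta-graphic, and assume for contradiction that $M \notin \mathcal{G}\cup\mathcal{G}^*$. First I will verify that $M$ cannot admit a canonical tree decomposition with more than one vertex: any such decomposition contains an edge $e$, which expresses $M$ as a $2$-sum $M_1\oplus_2 M_2$ with $|E(M_i)|\geq 3$, and this yields a $2$-separation $(E(M_1)-\{e\}, E(M_2)-\{e\})$ of $M$ with connectivity $1$, contradicting $3$-connectedness. Hence the canonical tree decomposition $(T,\rho)$ guaranteed by Theorem~\ref{thm:canonical} consists of a single vertex $v$ with $\rho(v)=M$.

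Now I will apply Proposition~\ref{prop:deltagraphictree}, which forces $(T,\rho)$ to satisfy one of \ref{item:t1}--\ref{item:t4}. Conditions \ref{item:t1}, \ref{item:t2}, and \ref{item:t4} each require $T$ to have at least two vertices (respectively an edge of $T$, a vertex with nontrivial components in $T\setminus v$, and a spine path of length at least~$1$), so they are impossible when $|V(T)|=1$. The only remaining option is \ref{item:t3}, a wheel decomposition whose hub $v$ satisfies $\rho(v) = M(W_k)$ for some $k \geq 3$; but then $M = M(W_k)$ is graphic, contradicting the assumption. I anticipate no substantive obstacle; the entire argument is structural bookkeeping once Proposition~\ref{prop:deltagraphictree} is in hand.
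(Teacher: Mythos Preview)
Your proof is correct and follows essentially the same approach as the paper: both argue that $3$-connectedness forces the canonical tree decomposition to have a single vertex, then apply Proposition~\ref{prop:deltagraphictree} and observe that only \ref{item:t3} can survive, yielding $M\cong M(W_k)$, which is graphic. You spell out a few details (the backward direction and why $|V(T)|=1$) that the paper leaves implicit, but the argument is the same.
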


\begin{proof}
Obviously, it is enough to prove the forward direction. Suppose that $M\notin\mathcal{G}\cup\mathcal{G}^{*}$. By Proposition~\ref{prop:deltagraphictree}, the canonical tree decomposition $(T,\rho)$ of $M$ satisfies one of \ref{item:t1}, \ref{item:t2}, \ref{item:t3}, and \ref{item:t4}. Since $M$ is 3-connected, $|V(T)|=1$ and $(T,\rho)$ should satisfy \ref{item:t3}. Then $M$ should be isomorphic to $W_{m}$ for $m\geq 3$, contradicting our assumption that $M\notin\mathcal{G}\cup\mathcal{G}^{*}$.
\end{proof}

Since $R_{10}$ is $3$-connected and neither graphic nor cographic, by Lemma~\ref{lem:3c}, $R_{10}$ is not delta-graphic but is regular~\cite{Oxley2011}.

\begin{lemma}
\label{3connected}
A $3$-connected matroid $M$ is delta-graphic if and only if it does not have a minor isomorphic to one of $U_{2,4}$, $F_{7}$, $F_{7}^{*}$, $R_{10}$, $R_{12}$.
\end{lemma}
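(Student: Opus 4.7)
The plan is to prove both directions using the facts already established in the paper, together with Tutte's excluded-minor characterization of regular matroids (Theorem~\ref{thm:tutte}) and Seymour's splitter-type result for $3$-connected regular matroids (Lemma~\ref{regular}). No new combinatorial work should be needed.

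For the forward direction I would argue that none of the five matroids in the list is delta-graphic, so by the minor-closedness established in Lemma~\ref{tgraphicminorclosed}, no delta-graphic matroid (not even just the $3$-connected ones) can have such a minor. Each of $U_{2,4}$, $F_{7}$, and $F_{7}^{*}$ fails to be delta-graphic because delta-graphic matroids are regular (as was noted immediately after the proof of Theorem~\ref{thm:main}) and these three are precisely the forbidden minors for regularity by Theorem~\ref{thm:tutte}. For $R_{10}$ and $R_{12}$, I would invoke Lemma~\ref{lem:3c}: both are $3$-connected and are well-known to be regular but neither graphic nor cographic, so Lemma~\ref{lem:3c} immediately yields that they are not delta-graphic. (The paper already flags that $R_{10}$ has this property; the analogous statement for $R_{12}$ is a classical fact from Seymour's decomposition.)

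For the backward direction, let $M$ be a $3$-connected matroid with none of $U_{2,4}$, $F_{7}$, $F_{7}^{*}$, $R_{10}$, $R_{12}$ as a minor. The absence of the first three minors gives, via Theorem~\ref{thm:tutte}, that $M$ is regular. Then Lemma~\ref{regular} applied to this $3$-connected regular matroid forces $M$ to be graphic, cographic, or to contain $R_{10}$ or $R_{12}$ as a minor; the hypothesis rules out the last two options, so $M$ is graphic or cographic. Finally, Lemma~\ref{lem:3c} (or equivalently the fact that $\mathcal{G}\cup\mathcal{G}^{*}$ is contained in the class of delta-graphic matroids, as observed after Theorem~\ref{thm:main}) concludes that $M$ is delta-graphic.

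There is essentially no obstacle: the entire argument is a two-line assembly of Theorem~\ref{thm:tutte}, Lemma~\ref{regular}, and Lemma~\ref{lem:3c}, with minor-closedness from Lemma~\ref{tgraphicminorclosed} used to convert ``not delta-graphic'' into ``a forbidden minor.'' The only thing to double-check is the bookkeeping item that $R_{12}$ is indeed $3$-connected and neither graphic nor cographic, which is standard and can be cited from Oxley's book; once that is acknowledged, the proof is a one-paragraph deduction.
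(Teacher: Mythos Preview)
Your proposal is correct and follows essentially the same argument as the paper: both directions are assembled from Lemma~\ref{lem:3c}, Theorem~\ref{thm:tutte}, Lemma~\ref{regular}, and minor-closedness (Lemma~\ref{tgraphicminorclosed}). The only cosmetic difference is that the paper treats all five matroids uniformly in the forward direction via Lemma~\ref{lem:3c} (they are all $3$-connected and neither graphic nor cographic), whereas you split off $U_{2,4}$, $F_7$, $F_7^*$ via regularity; and the paper phrases the backward direction as a contrapositive.
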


\begin{proof}
Since $U_{2,4}$, $F_{7}$, $F_{7}^{*}$, $R_{10}$, $R_{12}$ are $3$-connected and neither graphic nor cographic, they are not delta-graphic by Lemma~\ref{lem:3c}
and by Lemma~\ref{tgraphicminorclosed}, a matroid containing a minor isomorphic to $U_{2,4}$, $F_{7}$, $F_{7}^{*}$, $R_{10}$, or $R_{12}$  is not delta-graphic.

Suppose that $M$ is a $3$-connected matroid which is not delta-graphic. Then $M$ is neither graphic nor cographic by Lemma~\ref{lem:3c}. If $M$ is not regular, then $M$ has a minor isomorphic to $U_{2,4}$, $F_{7}$, or $F_{7}^{*}$ by Theorem~\ref{thm:tutte}. If $M$ is regular, then $M$ has a minor isomorphic to $R_{10}$ or $R_{12}$ by Lemma \ref{regular}. 
\end{proof}

\begin{lemma}
\label{GorCG}
Let $M$ be a connected matroid with a canonical tree decomposition $(T,\rho)$. If $M$ is not delta-graphic, then $\rho(v)\in \mathcal{G}\cup \mathcal{G}^{*}$ for every vertex $v\in V(T)$ or $M$ has a minor isomorphic to one of $U_{2,4}$, $F_{7}$, $F_{7}^{*}$, $R_{10}$, $R_{12}$.
\end{lemma}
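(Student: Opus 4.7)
The plan is to prove the contrapositive of the implication inside the disjunction: assuming $M$ has no minor isomorphic to any of $U_{2,4}$, $F_{7}$, $F_{7}^{*}$, $R_{10}$, $R_{12}$, I will show $\rho(v)\in \mathcal{G}\cup \mathcal{G}^{*}$ for every $v\in V(T)$. Note that the hypothesis ``$M$ is not delta-graphic'' is not strictly needed for this direction, but it matches the setting in which the lemma will later be applied.

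Fix $v\in V(T)$. By the definition of a canonical tree decomposition, $\rho(v)$ is either $3$-connected, or isomorphic to $U_{1,n}$ or $U_{n-1,n}$ for some $n\geq 3$. In the latter case, $\rho(v)$ is graphic: $U_{1,n}$ is the cycle matroid of $n$ parallel edges between two vertices, and $U_{n-1,n}$ is the cycle matroid of the cycle $C_{n}$. So the conclusion is immediate unless $\rho(v)$ is $3$-connected.

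Assume now that $\rho(v)$ is $3$-connected. Applying Lemma~\ref{subtreeminor} to the one-vertex subtree $\{v\}$ of $T$ shows that $\rho(v)$ is isomorphic to a minor of $M$. Since $M$ has no minor in $\{U_{2,4},F_{7},F_{7}^{*},R_{10},R_{12}\}$, neither does $\rho(v)$. Then Lemma~\ref{3connected} yields that the $3$-connected matroid $\rho(v)$ is delta-graphic, and Lemma~\ref{lem:3c} then forces $\rho(v)\in \mathcal{G}\cup \mathcal{G}^{*}$, as desired.

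There is no real obstacle here: the lemma is a short combination of the characterization of $3$-connected delta-graphic matroids (Lemmas~\ref{lem:3c} and \ref{3connected}) with the fact that each label $\rho(v)$ in a canonical tree decomposition occurs as a minor of the whole matroid (Lemma~\ref{subtreeminor}), plus the trivial observation that the two allowed uniform labels are both graphic.
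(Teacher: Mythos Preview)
Your proof is correct and follows essentially the same approach as the paper's: both argue that if some $\rho(v)\notin\mathcal{G}\cup\mathcal{G}^{*}$ then $\rho(v)$ must be $3$-connected, hence (via Lemmas~\ref{lem:3c} and~\ref{3connected}) contains one of the five forbidden minors, which then passes to $M$ since $\rho(v)$ is a minor of $M$. Your version is slightly more explicit in handling the uniform labels separately and in citing Lemma~\ref{subtreeminor} for the minor relation, whereas the paper leaves these implicit; you are also right that the hypothesis ``$M$ is not delta-graphic'' is not actually used.
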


\begin{proof}
Suppose that there is a vertex $v\in V(T)$ such that $\rho(v)\notin\mathcal{G}\cup\mathcal{G}^{*}$. Then $\rho(v)$ is 3-connected. By Lemma~\ref{lem:3c}, $\rho(v)$ is not delta-graphic. By Lemma~\ref{3connected}, $\rho(v)$ has a minor isomorphic to one of $U_{2,4}$, $F_{7}$, $F_{7}^{*}$, $R_{10}$, $R_{12}$. \end{proof}

\begin{lemma}
\label{planar}
Let $M$ be a connected matroid with a canonical tree decomposition $(T,\rho)$. If $M$ is minor-minimally not delta-graphic, then $\rho(v)\notin\mathcal{G}\cap\mathcal{G}^{*}$ for every leaf $v$ of $T$.
\end{lemma}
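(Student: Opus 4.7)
My plan is to assume for contradiction that some leaf $v$ of $T$ has $\rho(v)\in\mathcal{G}\cap\mathcal{G}^{*}$ and derive that $M$ is delta-graphic, contradicting the hypothesis. The trivial case $|V(T)|=1$ is immediate since $M\cong\rho(v)\in\mathcal{G}$; otherwise I let $u$ be the neighbor of $v$ and $e=uv$, and first use Lemma~\ref{GorCG} to force every vertex label to lie in $\mathcal{G}\cup\mathcal{G}^{*}$: the alternative conclusion provides a $3$-connected non-delta-graphic proper minor of $M$ (proper because $|V(T)|\geq 2$ means $M$ is not $3$-connected), contradicting minor-minimality via Lemma~\ref{lem:3c}.

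Next I would set $M'=\rho(T\setminus v)$, which by Lemma~\ref{subtreeminor} is a proper minor of $M$ (proper since $|E(\rho(v))|\geq 3$) with canonical tree decomposition $(T\setminus v,\rho|_{V(T\setminus v)})$, and is therefore delta-graphic by minor-minimality. Applying Proposition~\ref{prop:deltagraphictree} to $M'$: if $M'\in\mathcal{G}\cup\mathcal{G}^{*}$, then the edge $e$ witnesses condition \ref{item:t1} for $(T,\rho)$, since the components of $T\setminus e$ are $\{v\}$ and $T\setminus v$ with $\rho(v)\in\mathcal{G}\cap\mathcal{G}^{*}$ playing whichever role is opposite to $M'$. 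Otherwise $(T\setminus v,\rho|_{V(T\setminus v)})$ satisfies one of \ref{item:t1}--\ref{item:t4}, and I lift that property to $(T,\rho)$ using two observations: (i) the $2$-sum of any matroid in $\mathcal{G}$ (resp.~$\mathcal{G}^{*}$) with $\rho(v)\in\mathcal{G}\cap\mathcal{G}^{*}$ remains in $\mathcal{G}$ (resp.~$\mathcal{G}^{*}$), so absorbing $v$ into an existing subtree preserves its graphic/cographic type; (ii) if $u$ coincides with the distinguished vertex of \ref{item:t2} or the hub of \ref{item:t3}, then $\{v\}$ is a fresh component of $T\setminus u$ whose label $\rho(v)\in\mathcal{G}\cap\mathcal{G}^{*}$ automatically satisfies whichever side condition is required. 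By Proposition~\ref{prop:deltagraphictree}, $(T,\rho)$ satisfying one of \ref{item:t1}--\ref{item:t4} forces $M$ to be delta-graphic, the desired contradiction.

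I expect the main obstacle to be the fan-decomposition case \ref{item:t4}, specifically when $u$ is an endpoint of the spine $P$ and the new edge $e$ raises $\deg_T(u)$ to $|E(\rho(u))|$, thereby activating condition \ref{item:f3}. I plan to handle it by using $e$ itself as one of the two required edges (possible because $\rho(v)\in\mathcal{G}\cap\mathcal{G}^{*}$) and using an already-present out-of-spine edge at $u$ (which exists because $|E(\rho(u))|\geq 3$ leaves at least one such edge besides $e$) to supply the complementary type via condition \ref{item:f2} applied to $(T\setminus v,\rho|_{V(T\setminus v)})$. All other subcases of \ref{item:t1}--\ref{item:t4} reduce to routine checks once observations (i) and (ii) above are in place.
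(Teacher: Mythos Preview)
Your proposal is correct and follows essentially the same approach as the paper: both argue by contradiction, pass to the proper minor $M'=\rho(T\setminus v)$ with canonical tree decomposition $(T\setminus v,\rho|_{V(T\setminus v)})$, and then lift whichever of \ref{item:t1}--\ref{item:t4} (or membership in $\mathcal{G}\cup\mathcal{G}^{*}$) holds for $M'$ back to $(T,\rho)$ using that $\rho(v)\in\mathcal{G}\cap\mathcal{G}^{*}$ can be absorbed into either side. Your invocation of Lemma~\ref{GorCG} is harmless but unnecessary---the paper omits it since the lifting argument never actually uses that every label lies in $\mathcal{G}\cup\mathcal{G}^{*}$---and your treatment of the delicate \ref{item:f3} subcase matches the paper's exactly.
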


\begin{proof}
Suppose that $\rho(v)\in\mathcal{G}\cap\mathcal{G}^{*}$ for some leaf $v$ of $T$. Since $M$ is not delta-graphic, $|V(T\setminus v)|\geq 1$. Then $\rho(T\setminus v)$ is a delta-graphic matroid with a canonical tree decomposition 
$(T\setminus v,\rho|_{V(T\setminus v)})$. 
Let $T':=T\setminus v$ and $u$ be a vertex of $T$ which is adjacent to $v$. If $\rho(T')\in\mathcal{G}\cup\mathcal{G}^{*}$, then $M=\rho(T')\oplus_{2}\rho(v)\in\mathcal{G}\cup\mathcal{G}^{*}$, contradicting our assumption. Hence, by Proposition~\ref{prop:deltagraphictree}, $(T',\rho|_{V(T')})$ satisfies one of \ref{item:t1}, \ref{item:t2}, \ref{item:t3}, \ref{item:t4}.

If $(T',\rho|_{V(T')})$ satisfies \ref{item:t1}, then there is an edge $e\in E(T')$ such that, for two components $T_{1}', T_{2}'$ of $T'\setminus e$, $\rho(T_{1}')\in\mathcal{G}$ and  $\rho(T_{2}')\in\mathcal{G}^{*}$. Let $T_{1}$, $T_{2}$ be components of $T\setminus e$ such that $T_{1}$ contains $T_{1}'$. Then $T_{1}'=T_{1}$ or $T_{2}'=T_{2}$. If $T_{1}'=T_{1}$, then 
$\rho(T_{2})=\rho(v)\oplus_{2}\rho(T_{2}')\in \mathcal{G}^{*}$. If $T_{2}'=T_{2}$, then $\rho(T_{1})=\rho(v)\oplus_{2}\rho(T_{1}')\in \mathcal{G}$. So $(T,\rho)$ satisfies \ref{item:t1} and $M$ is delta-graphic, contradicting our assumption.

Suppose that $(T',\rho|_{V(T')})$ satisfies \ref{item:t2}. There is a vertex $w$ of $T'$ such that 
\begin{enumerate}[label=\rm (\roman*)]
\item $\rho(w)$ is isomorphic to $U_{1,m}$ or $U_{m-1,m}$ for some $m\geq 3$, 
\item for each component $C$ of $T'\setminus w$, $\rho(C)\in\mathcal{G}\cup\mathcal{G}^{*}$, and
\item there are distinct components $C_{1}$, $C_{2}$ of $T'\setminus w$ such that $\rho(C_{1})\in\mathcal{G}$ and $\rho(C_{2})\in\mathcal{G}^{*}$. 
\end{enumerate}

If $u=w$, then it is obvious that $(T,\rho)$ satisfies \ref{item:t2}. Suppose that $u\neq w$. Then there is a component $C'$ of $T'\setminus w$ containing $u$. Let $C=T[V(C')\cup\{v\}]$. Then $C$ is a component of $T\setminus w$ and $\rho(C)=\rho(C')\oplus_{2}\rho(v)$. So $\rho(C)\in\mathcal{G}$ if $\rho(C')\in\mathcal{G}$ and $\rho(C)\in\mathcal{G}^{*}$ if $\rho(C')\in\mathcal{G}^{*}$. Hence, $(T,\rho)$ satisfies \ref{item:t2}, contradicting our assumption.

Suppose that $(T',\rho|_{V(T\setminus v)})$ satisfies \ref{item:t3}. Let $x\in V(T')$ be the hub. 
If $u=x$, then $(T,\rho)$ is also a wheel decomposition with the hub $x$. So we may assume that $u\neq x$. Let $X'$ be a component of $T'\setminus x$ containing $u$ and $e_{X'}$ be an edge of $T'$ joining $x$ and a vertex of $X'$. By \ref{item:w2}, $\rho(x)$ has a set $C$ which is a circuit-hyperplane such that $\rho(X')\in\mathcal{G}$ if $e_{X'}\in C$ and $\rho(X')\in\mathcal{G}^{*}$ otherwise. Let $X=T[V(X')\cup\{v\}]$. Then $X$ is a component of $T\setminus x$ and $\rho(X)=\rho(X')\oplus_{2}\rho(v)$. Hence, $\rho(X)\in\mathcal{G}$ if $\rho(X')\in\mathcal{G}$ and $\rho(X)\in\mathcal{G}^{*}$ if $\rho(X')\in\mathcal{G}^{*}$. So $(T,\rho)$  is a wheel decomposition with the hub $x$ and \ref{item:t3} holds, contradicting our assumption.

Now it remains to consider when $(T',\rho|_{V(T\setminus v)})$ satisfies \ref{item:t4}. Let $P$ be the spine. Suppose that $u\in V(P)$. Let $e_{1}:=uv$. If $\deg_{T}(u)\neq|E(\rho(u))|$, obviously $(T,\rho)$ is also a fan decomposition with the spine $P$. If $\deg_{T}(u)=|E(\rho(u))|$, then $T'$ has an edge $e_{2}\notin E(P)$ incident with $u$. Since $\rho(T_{e_{1}}(u))\in\mathcal{G}\cap\mathcal{G}^{*}$ and $\rho(T_{e_{2}}(u))\in\mathcal{G}\cup\mathcal{G}^{*}$, \ref{item:f3} holds and $(T,\rho)$ is a fan decomposition with the spine~$P$. 

So we may assume that $u\notin V(P)$. Let $X'$ be a component of $T'\setminus V(P)$ containing $u$ and let $e$ be an edge joining $w\in V(P)$ and a vertex of $X'$. Let $X=T[X'\cup\{v\}]$. Then $X=T_{e}(w)$ and $\rho(X)=\rho(X')\oplus_{2}\rho(v)$. So $\rho(X)\in\mathcal{G}$ if $\rho(X')\in\mathcal{G}$ and $\rho(X)\in\mathcal{G}^{*}$ if $\rho(X')\in\mathcal{G}^{*}$. So $(T,\rho)$ is a fan decomposition with the spine $P$, contradicting our assumption.
\end{proof}

For matroid-labelled trees $(T,\rho)$, $(T,\rho')$, we say that $(T,\rho)$ is \emph{equivalent} to $(T,\rho')$ if the following hold for each vertex $v$ of $T$:

\begin{enumerate}[label=\rm (\roman*)]
\item $\rho(v)\in\mathcal{G}$ if and only if $\rho'(v)\in\mathcal{G}$.
\item $\rho(v)\in\mathcal{G}^{*}$ if and only if $\rho'(v)\in\mathcal{G}^{*}$.
\item $\rho(v)$ is a uniform matroid of rank $1$ if and only if $\rho'(v)$ is a uniform matroid of rank $1$. 
\item $\rho(v)$ is a uniform matroid of corank $1$ if and only if $\rho'(v)$ is a uniform matroid of corank $1$. 
\item For $k\geq 3$, $\rho(v)$ is isomorphic to $M(W_{k})$ if and only if $\rho'(v)$ is isomorphic to $M(W_{k})$.
\item $\rho(v)$ is $3$-connected if and only if $\rho'(v)$ is $3$-connected.
\end{enumerate}

\begin{lemma}
\label{lem:Tiso}
Let $M$ be a connected matroid with a canonical tree decomposition $(T,\rho)$ and, for each vertex $v\in V(T)$, let $N_{v}$ be a connected minor of $\rho(v)$ such that $E(N_{v})\cap E(T)=E(\rho(v))\cap E(T)$ and $E(N_{v})\cap E(T)$ has neither loops nor coloops in $N_{v}$. Let $(T,\rho')$ be a matroid-labelled tree such that $\rho'(v)=N_{v}$ for each $v\in V(T)$ and $(T,\rho')$ is equivalent to $(T,\rho)$. Then $\rho'(T)$ is delta-graphic if and only if $\rho(T)$ is delta-graphic.
\end{lemma}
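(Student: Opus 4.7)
The plan is to handle the two directions separately. For the forward implication ($\rho(T)$ delta-graphic implies $\rho'(T)$ delta-graphic), I will show that $\rho'(T)$ is a minor of $\rho(T)=M$ and then invoke Lemma~\ref{tgraphicminorclosed}. Starting from $M=\rho(T)=\OPLUS_{v\in V(T)}\rho(v)$, I replace $\rho(v)$ by $N_v$ one vertex at a time; by Lemma~\ref{2summinor} each such replacement produces a minor, with the hypothesis of Lemma~\ref{2summinor} guaranteed by the assumption that $E(N_v)\cap E(T)$ contains neither loops nor coloops of $N_v$. After finitely many steps we obtain $\rho'(T)=\OPLUS_{v\in V(T)}N_v$ as a minor of $M$.

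For the reverse implication I plan to apply Proposition~\ref{prop:deltagraphictree} to both canonical tree decompositions (noting that $(T,\rho')$ is canonical by the equivalence hypothesis). The central auxiliary claim is that, for every subtree $S$ of $T$,
\[
  \rho(S)\in\mathcal{G}\iff\rho'(S)\in\mathcal{G}\quad\text{and}\quad\rho(S)\in\mathcal{G}^{*}\iff\rho'(S)\in\mathcal{G}^{*}.
\]
This reduces to showing that $\rho(S)\in\mathcal{G}$ if and only if $\rho(v)\in\mathcal{G}$ for every $v\in V(S)$: the ``only if'' uses that each $\rho(v)$ is a minor of $\rho(S)$ (apply Lemma~\ref{subtreeminor} to the one-vertex subtree $\{v\}$ of $S$, using that $(S,\rho|_{V(S)})$ is a canonical tree decomposition of $\rho(S)$) together with minor-closedness of $\mathcal{G}$; the ``if'' uses the standard fact that a $2$-sum of graphic matroids is graphic. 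The analogous statement for $\mathcal{G}^{*}$ follows via Lemma~\ref{lem:twosumdual}, and then equivalence conditions (i) and (ii) yield the displayed biconditionals.

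Now suppose $\rho'(T)$ is delta-graphic. Either $\rho'(T)\in\mathcal{G}\cup\mathcal{G}^{*}$ (and the claim at $S=T$ gives $\rho(T)\in\mathcal{G}\cup\mathcal{G}^{*}$), or Proposition~\ref{prop:deltagraphictree} furnishes one of \ref{item:t1}--\ref{item:t4} for $(T,\rho')$, which I will transfer to $(T,\rho)$. Conditions \ref{item:t1} and \ref{item:t2} transfer at once using the biconditionals together with equivalence parts (iii) and (iv). For \ref{item:t3}, equivalence (v) gives $\rho(v)\cong\rho'(v)\cong M(W_k)$ for the same $k$; since $\rho'(v)=N_v$ is a minor of $\rho(v)$ of the same cardinality $2k$, we must have $N_v=\rho(v)$ on the same ground set, so any circuit-hyperplane witnessing the wheel-decomposition condition for $(T,\rho')$ serves for $(T,\rho)$ as well. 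The main obstacle is \ref{item:t4}: \ref{item:f1} and \ref{item:f2} are routine, but \ref{item:f3} at a spine endpoint $v$ depends on the cardinality $|E(\rho(v))|$, which may differ from $|E(\rho'(v))|$ when $\rho(v)$ is uniform. The resolution uses the chain $\deg_T(v)\le|E(\rho'(v))|\le|E(\rho(v))|$: if the hypothesis $\deg_T(v)=|E(\rho(v))|$ of \ref{item:f3} for $\rho$ holds, this chain collapses and again forces $N_v=\rho(v)$, so the edges $e_1,e_2$ provided by \ref{item:f3} for $(T,\rho')$ also work for $(T,\rho)$ via the biconditionals; otherwise \ref{item:f3} holds vacuously for $(T,\rho)$.
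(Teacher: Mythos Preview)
Your proposal is correct and follows essentially the same route as the paper: the easy direction via the minor relation, and the hard direction by transferring each of \ref{item:t1}--\ref{item:t4} from $(T,\rho')$ to $(T,\rho)$ using Proposition~\ref{prop:deltagraphictree}. Your treatment is in fact more explicit than the paper's in two places: you justify the subtree biconditionals $\rho(S)\in\mathcal G\iff\rho'(S)\in\mathcal G$ (the paper calls the transfer of \ref{item:t1}--\ref{item:t3} ``obvious''), and for \ref{item:t3} you spell out why equivalence~(v) forces $N_v=\rho(v)$ so that the same circuit-hyperplane works. Conversely, the paper isolates the degenerate cases $|E(N_v)|<3$ before asserting that $(T,\rho')$ is canonical; your assertion ``$(T,\rho')$ is canonical by the equivalence hypothesis'' is correct but hides the small argument that $|E(N_v)|=2$ would force $N_v\cong U_{1,2}$, hence by (iii) and (iv) also $\rho(v)\cong U_{1,2}$, contradicting $|E(\rho(v))|\ge 3$ when $|V(T)|\ge 2$.
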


\begin{proof}
The backward direction is obvious. So we prove the forward direction.
If $|E(N_{v})|<2$ for some $v\in V(T)$, then every element of $N_v$ is a loop or a coloop, and thus $E(N_v)\cap E(T)=\emptyset$ and $\abs{V(T)}=1$. 
Since $(T,\rho)$ is equivalent to $(T,\rho')$, 
by (i) and (ii), $\rho(v)=M$ is both graphic and cographic and therefore $\rho(T)$ is delta-graphic. 

If $|E(N_{v})|=2$ for some $v\in V(T)$, then $N_{v}$ is isomorphic to $U_{1,2}$ 
because $N_{v}$ is connected. Since $(T,\rho)$ is equivalent to $(T,\rho')$, 
by (iii) and (iv), $\rho(v)$ is also isomorphic to $U_{1,2}$ and by the definition of a tree decomposition, $|V(T)|=1$ and $\rho(v)=M$ is isomorphic to $U_{1,2}$, which is delta-graphic.

So we can assume that $|E(N_{v})|\geq 3$ for each $v\in V(T)$. Then $(T,\rho')$ is a canonical tree decomposition of $\rho'(T)$ because $(T,\rho')$ is equivalent to $(T,\rho)$. Obviously, $M\in\mathcal{G}\cup\mathcal{G}^{*}$ if $\rho'(T)\in\mathcal{G}\cup\mathcal{G}^{*}$. Therefore, we may assume that $\rho'(T)\notin\mathcal{G}\cup\mathcal{G}^{*}$. Then, by Proposition~\ref{prop:deltagraphictree}, $(T,\rho')$ satisfies at least one of \ref{item:t1}, \ref{item:t2}, \ref{item:t3}, or \ref{item:t4}. 

It is obvious that $(T,\rho)$ satisfies \ref{item:t1}, \ref{item:t2}, \ref{item:t3} if $(T,\rho')$ satisfies \ref{item:t1}, \ref{item:t2}, \ref{item:t3}, respectively. Suppose that $(T,\rho')$ satisfies \ref{item:t4} and let $P$ be the spine of $(T,\rho')$. We claim that $(T,\rho)$ is a fan decomposition with the spine $P$. Obviously, \ref{item:f1} and \ref{item:f2} hold. Let $v$ be an end vertex of $P$ in $T$. If $|E(\rho(v))|=|E(\rho'(v))|$, then \ref{item:f3} obviously holds for $(T,\rho)$. If not, $\rho'(v)$ is a proper minor of $\rho(v)$ and $|E(\rho(v))|>|E(\rho'(v))|\geq\deg_{T}(v)$. So \ref{item:f3} holds.
\end{proof}

Let $K_{3,3}'$ be a simple graph such that $K_{3,3}'\setminus e=K_{3,3}$ for some edge $e$.

\begin{lemma}
\label{lem:rootleafminor}
Let $G$ be a $3$-connected simple nonplanar graph and $e$ be an edge of $G$. Then, $G$ has a minor $H$ containing $e$ such that $H$ is isomorphic to $K_{5}$, $K_{3,3}$, or $K_{3,3}'$.
\end{lemma}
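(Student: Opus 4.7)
My plan is to use a minimality argument. I will let $H$ be a minor of $G$ that is $3$-connected, simple, nonplanar, and contains $e$, chosen so that $|E(H)|$ is as small as possible; since $G$ itself satisfies these conditions, such an $H$ exists. My aim is to prove that $H$ is isomorphic to $K_5$, $K_{3,3}$, or $K_{3,3}'$.

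By Lemma~\ref{rm1}, either $H \cong K_5$ (in which case I am done) or $H$ has a minor $N$ isomorphic to $K_{3,3}$. If $H \cong K_{3,3}$ I am also done, so from now on I assume $H$ has $K_{3,3}$ as a proper minor; in particular $|E(H)| \geq 10$. Applying Lemma~\ref{lem:Truemper2} to $H$ and $N$ yields a chain of simple $3$-connected graphs in which consecutive terms differ by at most three edges and which reduces $H$ to $N$. The minimality of $H$ forces any intermediate term that still contains $e$ to equal $H$ itself (otherwise it would be a $3$-connected simple nonplanar minor of $G$ containing $e$ with fewer edges than $H$), so the very first reduction step must already eliminate $e$. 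In other words, there exist disjoint subsets $A, B \subseteq E(H)$ with $|A \cup B| \leq 3$ and $e \in A \cup B$ such that the graph $G_1 = H \setminus A / B$ is $3$-connected, simple, nonplanar, and does not contain $e$.

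I then split the analysis into two cases according to whether $e \in A$ (deletion case) or $e \in B$ (contraction case). In the deletion case $G_1$ is a minor of $H \setminus e$, so $H \setminus e$ contains a minor $N_0$ isomorphic to $K_5$ or $K_{3,3}$ by Lemma~\ref{rm1}. Performing on $H$ the same deletions and contractions that yield $N_0$ from $H \setminus e$ but without deleting $e$ produces a minor $\tilde{N}_0$ of $H$ that equals $N_0$ together with the edge $e$ now joining the images $u'$ and $v'$ of its endpoints. If $u'v'$ is already an edge of $N_0$, then $\tilde{N}_0$ has parallel edges, and deleting the non-$e$ copy yields $N_0 \in \{K_5, K_{3,3}\}$ containing $e$; otherwise $u'v' \notin E(N_0)$, which rules out $N_0 = K_5$ since $K_5$ is complete on its vertices, and forces $u', v'$ to lie in the same part of $N_0 = K_{3,3}$, so $\tilde{N}_0 \cong K_{3,3}'$. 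In the contraction case the analogous construction produces $\tilde{N}_0$ by splitting a vertex of $N_0$ into two vertices $u', v'$ joined by $e$ with the original neighbors of that vertex redistributed, and a short case analysis on the distribution shows that a single further contraction or deletion in $\tilde{N}_0$ produces a minor of $H$ isomorphic to one of $K_5$, $K_{3,3}$, or $K_{3,3}'$ and containing $e$.

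In every case the output is a minor of $H$ containing $e$ that is isomorphic to one of $K_5$, $K_{3,3}$, or $K_{3,3}'$, and since all three graphs have at most $10$ edges, minimality then forces $H$ itself to be isomorphic to one of them. The main obstacle lies in the degenerate contraction sub-case in which all neighbors of the split vertex are assigned to one side, leaving the other side as a pendant and preventing direct extraction of a $3$-connected minor. This is resolved by exploiting the flexibility in choosing the Truemper reduction: since $H$ is $3$-connected, each endpoint of $e$ has at least two further neighbors in $H$, and this guarantees that some reduction to $N$ produces a non-degenerate split of the relevant vertex, completing the argument.
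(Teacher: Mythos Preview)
Your approach via a minimal $3$-connected nonplanar minor and Truemper's chain is natural, but both case analyses have genuine gaps.

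In the deletion case you assert that performing the same operations on $H$ rather than on $H\setminus e$ yields $N_0$ together with the edge $e$ joining ``the images $u'$ and $v'$ of its endpoints''. This presupposes $u'\neq v'$. If the endpoints $u,v$ of $e$ happen to lie in the \emph{same} branch set of your model of $N_0$ in $H\setminus e$, then after the contractions $e$ becomes a loop, and no copy of $K_5$, $K_{3,3}$, or $K_{3,3}'$ containing $e$ can be extracted from $N_0$ plus a loop. You give no argument that a model with $u$ and $v$ in distinct branch sets must exist.

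In the contraction case you explicitly flag the degenerate pendant sub-case and claim it can be avoided ``by exploiting the flexibility in choosing the Truemper reduction'', since each endpoint of $e$ has at least two further neighbours in $H$. But those neighbours may themselves lie inside the branch set of the split vertex, so nothing prevents all outgoing model-edges from attaching on one side; the one-sentence resolution is an assertion, not a proof. Exactly the same objection applies if one tries to repair the deletion case by ``changing the model'' to separate $u$ and $v$.

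The paper avoids all of this by weakening from $3$-connectivity to $2$-connectivity. Once $G$ (not $K_5$) is known to have a $K_{3,3}$ minor, it proves by induction on $|E(G)|$, for any $2$-connected loopless $G$ with a $K_{3,3}$ minor, that some minor isomorphic to $K_{3,3}$ or $K_{3,3}'$ contains $e$. The inductive step uses Brylawski's lemma (Lemma~\ref{lem:Brylawski}) to strip off one edge $f\notin E(H)\cup\{e\}$ while keeping $2$-connectivity and the $K_{3,3}$ minor. Because $e$ is simply never touched, there is no need to analyse how a minor model interacts with its endpoints, and the argument is both shorter and free of the gaps above.
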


\begin{proof}
If $G$ is isomorphic to $K_{5}$, then the proof is done with $H=G$. So we may assume that $G$ is not isomorphic to $K_{5}$ and by Lemma~\ref{rm1}, $G$ has a minor $H$ isomorphic to $K_{3,3}$. 

We prove that if a $2$-connected graph $G$ has a minor $H$ isomorphic to $K_{3,3}$ and $e$ is an edge of~$G$, then $G$ has a minor which contains $e$ and is isomorphic to $K_{3,3}$ or $K_{3,3}'$. We proceed by induction on $|E(G)|$. If $|E(G)|\leq 10$, then $G$ is a $K_{3,3}$-subdivision or is isomorphic to $K_{3,3}'$. So  we may assume that $|E(G)|>10$. Then there is an edge $f\in E(G)- (E(H)\cup\{e\})$ and by Lemma~\ref{lem:Brylawski} and the induction hypothesis, $G/f$ or $G\setminus f$ contains a minor $H'$ which contains $e$ and is isomorphic to $K_{3,3}$ or $K_{3,3}'$. Hence, $G$ contains a minor $H'$.
\end{proof}

\begin{lemma}
\label{lem:nonplanar}
For a $3$-connected non-cographic and graphic matroid $M$ and a nonempty subset $X$ of $E(M)$, $M$ has a $3$-connected minor $N$ with $|E(N)|\leq 4|X|+6$ such that $N$ is non-cographic and contains $X$.
\end{lemma}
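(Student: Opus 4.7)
The plan is to start with a small $3$-connected non-cographic graphic minor containing one element of $X$, and then repeatedly enlarge it by Bixby--Coullard's extension lemma to absorb the remaining elements of $X$ while keeping $3$-connectivity. Write $M=M(G)$ for a $3$-connected graph $G$ by Whitney's theorem (Lemma~\ref{lem:whitney}); since $M$ is non-cographic, $G$ is nonplanar.

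First, fix an arbitrary element $e\in X$, and apply Lemma~\ref{lem:rootleafminor} to $G$ and $e$ to obtain a minor $H$ of $G$ containing $e$ with $H$ isomorphic to $K_{5}$, $K_{3,3}$, or $K_{3,3}'$. Let $N_{0}:=M(H)$. Then $N_{0}$ is a minor of $M$, is $3$-connected (since each of $K_5$, $K_{3,3}$, $K_{3,3}'$ is a simple $3$-connected graph), is non-cographic (since each of these graphs is nonplanar, and cographic matroids are closed under taking minors), contains $e$, and satisfies $|E(N_{0})|\leq 10$.

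Next, enumerate $X-\{e\}=\{x_{1},\ldots,x_{|X|-1}\}$ and build $3$-connected minors $N_{0},N_{1},\ldots,N_{|X|-1}$ of $M$ such that $N_{i}$ contains $N_{i-1}$ as a minor, $x_{i}\in E(N_{i})$, and $|E(N_{i})|-|E(N_{i-1})|\leq 4$. This is exactly the content of Lemma~\ref{lem: bixby} (Bixby--Coullard) applied to the pair $(M,N_{i-1})$ and the element $x_{i}$; note that the hypothesis $|E(N_{i-1})|\geq 4$ always holds because $|E(N_{0})|\geq 9$. Setting $N:=N_{|X|-1}$, we obtain a $3$-connected minor of $M$ containing $X$, with
\[
|E(N)|\leq |E(N_{0})|+4(|X|-1)\leq 10+4(|X|-1)=4|X|+6,
\]
and $N$ is non-cographic because it has $N_{0}$ as a minor while $N_{0}$ is non-cographic. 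This gives the desired $N$.

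I do not anticipate a serious obstacle: the base case is immediate from Lemma~\ref{lem:rootleafminor} once one observes that $K_{5}$, $K_{3,3}$, $K_{3,3}'$ have at most $10$ edges and are $3$-connected and nonplanar, and the inductive step is a direct application of Lemma~\ref{lem: bixby}. The only care needed is to check that at every stage $|E(N_{i-1})|\geq 4$, which is automatic from the starting size, and to note that the minor-closed property of non-cographicness (equivalently, minor-closedness of the class of cographic matroids) guarantees that adding elements cannot destroy the non-cographicness already witnessed by~$N_{0}$.
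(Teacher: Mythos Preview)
Your proof is correct and follows essentially the same approach as the paper's: start from a small $3$-connected nonplanar minor containing one designated element via Lemma~\ref{lem:rootleafminor}, then absorb the remaining elements of $X$ one at a time using Lemma~\ref{lem: bixby}, with non-cographicness preserved because the initial minor persists as a minor throughout. The paper phrases the argument as an induction on $|X|$ rather than an explicit iteration, but the content is identical. Two minor remarks: your invocation of Lemma~\ref{lem:whitney} is slightly off (that lemma gives uniqueness of the $3$-connected graph, not the fact that the underlying graph can be taken $3$-connected, which is the standard correspondence between $3$-connected graphic matroids and simple $3$-connected graphs), and you should note that if some $x_i$ already lies in $E(N_{i-1})$ one simply takes $N_i=N_{i-1}$, since Lemma~\ref{lem: bixby} is stated for $e\notin E(N)$.
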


\begin{proof}
We use induction on $k=|X|$. We may assume $k>1$ by Lemma~\ref{lem:rootleafminor}. Let $e$ be an element of $X$. Then by the induction hypothesis, there is a non-cographic $3$-connected minor $N$ such that $X-\{e\}\subseteq E(N)$ and $|E(N)|\leq 4(|X|-1)+6$.

If $e\in E(N)$, then the proof is done. If $e\in E(M)- E(N)$, then by Lemma~\ref{lem: bixby}, there is a $3$-connected minor $N'$ of $M$ containing $e$ such that $|E(N')- E(N)|\geq 4$, and $N$ is a minor of $N'$. Then $N'$ is non-cographic and contains $X$ and $|E(N')|\leq |E(N)|+4\leq 4|X|+6$.
\end{proof}

\begin{lemma}
\label{ReducingVertex}
Let $M$ be a connected matroid with a canonical tree decomposition $(T,\rho)$. If $M$ is minor-minimally not delta-graphic and is not $3$-connected, then, for each vertex $v$ of $T$, the following hold:
\begin{enumerate}[label=\rm (\roman*)]
\item If $\rho(v)\notin\mathcal{G}\cap\mathcal{G}^{*}$, then $|E(\rho(v))|\leq 4\deg_{T}(v)+6$.

\item If $\rho(v)$ is a uniform matroid of rank $1$ or corank $1$, then $|E(\rho(v))|=\deg_{T}(v)$ or $|E(\rho(v))|=3$.
\end{enumerate}
\end{lemma}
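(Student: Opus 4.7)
My plan is to prove both parts by a uniform strategy: replace the chosen vertex label $\rho(v)$ by a smaller connected minor $N_v$ with the same incident tree edges, form a new matroid-labelled tree $(T,\rho')$ equivalent to $(T,\rho)$, and invoke Lemma~\ref{lem:Tiso} together with minor-minimality of $M$ to pin down the size of $\rho(v)$. Before either part, I will first establish that every $\rho(v)$ lies in $\mathcal{G}\cup\mathcal{G}^{*}$: by Lemma~\ref{GorCG}, the only alternative is that $M$ has a minor isomorphic to one of $U_{2,4}, F_{7}, F_{7}^{*}, R_{10}, R_{12}$, but each of those is $3$-connected and itself not delta-graphic (Lemma~\ref{3connected}), so minor-minimality would force $M$ to be one of them and in particular $3$-connected, contradicting the hypothesis.

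For (i), since $\rho(v)\notin\mathcal{G}\cap\mathcal{G}^{*}$ and the uniform matroids $U_{1,n}$, $U_{n-1,n}$ are both graphic and cographic, the definition of a canonical tree decomposition forces $\rho(v)$ to be $3$-connected. Taking duals via Lemma~\ref{dualtree} if needed, I may assume $\rho(v)\in\mathcal{G}\setminus\mathcal{G}^{*}$. I will then apply Lemma~\ref{lem:nonplanar} with $X=E(\rho(v))\cap E(T)$, which is nonempty because $M$ is not $3$-connected forces $|V(T)|\geq 2$, to obtain a $3$-connected non-cographic minor $N_v$ of $\rho(v)$ containing $X$ with $|E(N_v)|\leq 4\deg_T(v)+6$; being $3$-connected with at least $4$ elements, no element of $N_v$ is a loop or coloop. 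Setting $\rho'(v)=N_v$ and $\rho'(u)=\rho(u)$ otherwise, the equivalence of $(T,\rho)$ and $(T,\rho')$ is straightforward: both $\rho(v)$ and $N_v$ are graphic but not cographic, both are $3$-connected, and neither is a uniform matroid of rank/corank $1$ or a wheel matroid (wheels are both graphic and cographic). Lemma~\ref{lem:Tiso} then makes $\rho'(T)$ a non-delta-graphic minor of $M$, and minor-minimality forces $N_v=\rho(v)$, giving the bound.

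For (ii), I will assume $\rho(v)=U_{1,n}$ by duality, noting $n\geq 3$ by the size condition in a canonical tree decomposition and $n\geq\deg_T(v)$ because the tree edges at $v$ are distinct elements of $\rho(v)$. Setting $k=\max(\deg_T(v),3)$, I will take $N_v$ to be the restriction $U_{1,k}$ of $\rho(v)$ to a $k$-element subset containing $X=E(\rho(v))\cap E(T)$. Then $N_v$ is a connected minor of $\rho(v)$, and every element is neither a loop nor a coloop since $k\geq 2$. For equivalence, $N_v$ is rank-$1$ uniform (matching $\rho(v)$), and because $k\geq 3$ it is not corank-$1$ uniform, matching the fact that $\rho(v)=U_{1,n}$ with $n\geq 3$ is likewise not corank-$1$ uniform; the remaining conditions are vacuous. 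Lemma~\ref{lem:Tiso} together with minor-minimality again forces $n=k$, yielding $|E(\rho(v))|=\deg_T(v)$ when $\deg_T(v)\geq 3$ and $|E(\rho(v))|=3$ otherwise.

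The main obstacle is choosing the replacement $N_v$ carefully enough that the equivalence conditions of Lemma~\ref{lem:Tiso} actually hold. In (ii) this is exactly what forces the threshold $k=3$: replacing $U_{1,n}$ by the even smaller $U_{1,2}$ is forbidden because $U_{1,2}$ is simultaneously rank-$1$ and corank-$1$ uniform, breaking the equivalence test, which is precisely why the statement must allow the alternative $|E(\rho(v))|=3$ rather than only $|E(\rho(v))|=\deg_T(v)$.
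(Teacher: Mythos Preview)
Your proposal is correct and follows essentially the same approach as the paper: first use Lemma~\ref{GorCG} together with minor-minimality and non-$3$-connectedness to force $\rho(v)\in\mathcal{G}\cup\mathcal{G}^{*}$, then for each part replace $\rho(v)$ by a suitable smaller minor and invoke Lemma~\ref{lem:Tiso}. The only cosmetic differences are that you argue (i) directly rather than by contradiction, and in (ii) you shrink to $\max(\deg_T(v),3)$ in one step rather than one element at a time; your remark that condition (vi) of equivalence is ``vacuous'' in part (ii) is not quite accurate when passing from $U_{1,4}$ to $U_{1,3}$, but the paper's proof shares this imprecision and the underlying argument (that $(T,\rho')$ remains a canonical tree decomposition with the same rank-$1$/corank-$1$ pattern) goes through regardless.
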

\begin{proof}
First we prove (i). By Lemma \ref{GorCG}, $M$ has a minor isomorphic to one of $U_{2,4}$, $F_{7}$, $F_{7}^{*}$, $R_{10}$, or $R_{12}$ or $\rho(v)\in\mathcal{G}\cup\mathcal{G}^{*}$ for each $v\in V(T)$. Since $M$ is not $3$-connected and minor-minimally not delta-graphic, $M$ has no minor isomorphic to one of $U_{2,4}$, $F_{7}$, $F_{7}^{*}$, $R_{10}$, or $R_{12}$ by Lemma~\ref{lem:3c}. So $\rho(v)\in\mathcal{G}\cup\mathcal{G}^{*}$ for each $v\in V(T)$. Suppose that $\rho(v)\notin\mathcal{G}\cap\mathcal{G}^{*}$ for some $v$. Then $\rho(v)\in\mathcal{G}- \mathcal{G}^{*}$ or $\rho(v)\in\mathcal{G}^{*}- \mathcal{G}$. By duality, we may assume that $\rho(v)\in\mathcal{G}- \mathcal{G}^{*}$. Suppose that $|E(\rho(v))|>4\deg_{T}(v)+6$. Then, by Lemma~\ref{lem:nonplanar}, $\rho(v)$ has a non-cographic proper minor $H$ containing $E(\rho(v))\cap E(T)$. Let

\[
\rho_{1}(u)=
\begin{cases}
\rho(u) & \text{if $u\neq v$,} \\
H & \text{if $u=v$.}
\end{cases}
\] 
Then $(T,\rho_{1})$ is equivalent to $(T,\rho)$, and by Lemma~\ref{lem:Tiso}, $\rho_{1}(T)$ is not delta-graphic, contradicting the minimality of $M$. So $|E(\rho(v))|\leq 4\deg_{T}(v)+6$.

Now it is enough to show (ii). By duality, we may assume that $\rho(v)$ is a uniform matroid of corank~$1$. If $\deg_{T}(v)<|E(\rho(v))|$ and $|E(\rho(v))|\geq 4$, then there is an element $e\in E(\rho(v))- E(T)$. Observe that $\rho(v)/e$ is a uniform matroid of corank $1$ such that $|E(\rho(v)/e)|\geq 3$. Let
\[
\rho_{2}(u)=
\begin{cases}
\rho(u) & \text{if $u\neq v$,} \\
\rho(v)/e & \text{if $u=v$.}
\end{cases}
\] 
Since $(T,\rho_{2})$ is equivalent to $(T,\rho)$, by Lemma~\ref{lem:Tiso}, $M/e$ is not delta-graphic, contradicting the minimality of $M$. We remark that $(T,\rho_{2})$ is not equivalent to $(T,\rho)$ if $|E(\rho(v)/e)|<3$.
\end{proof}

\begin{lemma}
\label{Reducingleaf}
Let $M$ be a connected matroid with a canonical tree decomposition $(T,\rho)$. If $M$ is minor-minimally not delta-graphic and is not $3$-connected, then for each leaf $v$ of $T$, $\rho(v)$ is isomorphic to $M(K_{5})$, $M(K_{3,3})$, $M(K_{3,3}')$, $M^{*}(K_{5})$, $M^{*}(K_{3,3})$, or $M^{*}(K_{3,3}')$. 
\end{lemma}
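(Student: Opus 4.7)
The plan is to pin down $\rho(v)$ at a leaf $v$ using the reduction machinery already developed in Lemmas~\ref{GorCG}--\ref{ReducingVertex}. Since $M$ is minor-minimally not delta-graphic and is not $3$-connected, I would first observe via Lemma~\ref{3connected} that $M$ has no minor isomorphic to any of $U_{2,4}$, $F_{7}$, $F_{7}^{*}$, $R_{10}$, $R_{12}$: such a minor would necessarily be proper (otherwise $M$ itself would be one of these $3$-connected matroids, contradicting the assumption that $M$ is not $3$-connected), hence delta-graphic by minimality, but each of these five matroids is $3$-connected and not delta-graphic. Applying Lemma~\ref{GorCG} gives $\rho(w)\in\mathcal{G}\cup\mathcal{G}^{*}$ for every $w\in V(T)$, and Lemma~\ref{planar} applied to the leaf $v$ yields $\rho(v)\notin\mathcal{G}\cap\mathcal{G}^{*}$. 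By Lemma~\ref{dualtree} it suffices to treat the case $\rho(v)\in\mathcal{G}\setminus\mathcal{G}^{*}$, since the cographic case yields the three dual matroids in the conclusion.

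Because $U_{1,n}$ and $U_{n-1,n}$ both lie in $\mathcal{G}\cap\mathcal{G}^{*}$, the canonical decomposition axioms force $\rho(v)$ to be $3$-connected, and hence $\rho(v)=M(G)$ for a $3$-connected simple nonplanar graph $G$. Let $e$ denote the unique edge of $T$ incident with the leaf $v$, regarded as an element of $E(G)$. Lemma~\ref{lem:rootleafminor} then supplies a minor $H$ of $G$ containing $e$ with $H$ isomorphic to $K_{5}$, $K_{3,3}$, or $K_{3,3}'$.

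If $H$ were a proper minor of $G$, I would derive a contradiction by local replacement at $v$. Set $\rho_{1}(u)=\rho(u)$ for $u\neq v$ and $\rho_{1}(v)=M(H)$. Since $K_{5}$, $K_{3,3}$, $K_{3,3}'$ are $3$-connected simple graphs on at least four vertices, $M(H)$ is $3$-connected and nonplanar, and $e$ is neither a loop nor a coloop of $M(H)$, so $(T,\rho_{1})$ is a matroid-labelled tree equivalent to $(T,\rho)$ in the sense preceding Lemma~\ref{lem:Tiso}. Lemma~\ref{lem:Tiso} would then declare $\rho_{1}(T)$ not delta-graphic, while Lemma~\ref{2summinor} identifies $\rho_{1}(T)$ as a proper minor of $M$, contradicting minimality. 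Therefore $H=G$, giving $\rho(v)\cong M(K_{5})$, $M(K_{3,3})$, or $M(K_{3,3}')$, and the dual case completes the list. The main delicate point is the six-item equivalence check for $(T,\rho)$ and $(T,\rho_{1})$ at the vertex $v$: neither label is cographic, uniform of rank or corank $1$, or a cycle matroid of a wheel, and both are $3$-connected, so the check is routine once one has the precise list of candidate minors supplied by Lemma~\ref{lem:rootleafminor}.
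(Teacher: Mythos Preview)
Your proposal is correct and follows essentially the same approach as the paper's proof: both apply Lemma~\ref{planar} to get $\rho(v)\notin\mathcal{G}\cap\mathcal{G}^{*}$, reduce by duality to the graphic case, invoke Lemma~\ref{lem:rootleafminor} to produce a minor $H\in\{K_{5},K_{3,3},K_{3,3}'\}$ through the basepoint $e$, and use the equivalence check together with Lemma~\ref{lem:Tiso} to force $\rho(v)=M(H)$ by minimality. You supply a bit more detail than the paper (explicitly deriving $\rho(v)\in\mathcal{G}\cup\mathcal{G}^{*}$ via Lemmas~\ref{3connected} and~\ref{GorCG}, and spelling out the six equivalence conditions), but the substance is identical.
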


\begin{proof}
By Lemma~\ref{planar}, $\rho(v)\notin\mathcal{G}\cap\mathcal{G}^{*}$. By duality, we may assume that $\rho(v)\in\mathcal{G}-\mathcal{G}^{*}$. Then $\rho(v)$ is isomorphic to $M(G)$ for a $3$-connected nonplanar simple graph. Let $e$ be the edge of $T$ incident with~$v$. Then by Lemma~\ref{lem:rootleafminor}, $\rho(v)$ has a minor $N$ which contains $e$ and is isomorphic to $M(K_{5})$, $M(K_{3,3})$, or $M(K_{3,3}')$. Then, for each $w\in V(T)$, let
\[
\rho'(w)=
\begin{cases}
\rho(w) & \text{if $w\neq v$,} \\
N & \text{if $w=v$.}
\end{cases}
\]
Since $(T,\rho')$ is equivalent to $(T,\rho)$, by Lemma~\ref{lem:Tiso}, $\rho'(T)$ is not delta-graphic. By the minimality of~$M$, $M=\rho'(T)$ and $\rho(v)=N$.
\end{proof}

\begin{lemma}
\label{Reducingsubtree}
Let $M$ be a connected matroid minor-minimally not delta-graphic.
Let $(T,\rho)$ be a canonical tree decomposition of $M$.
Let $e=xy$ be an edge of $T$. If $\rho(T_{e}(y))\in\mathcal{G}\cup\mathcal{G}^{*}$, then $x$ is a leaf of $T$. 
\end{lemma}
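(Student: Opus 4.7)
My plan is to argue by contradiction. Suppose $x$ is not a leaf of $T$, so $|V(T_e(y))| \geq 2$ and hence $|E(\rho(T_e(y)))| \geq 4$ (each label has at least $3$ elements and each internal edge consumes two). Using Lemma~\ref{dualtree} to pass to $M^{*}$ if necessary, I may assume $N := \rho(T_e(y)) \in \mathcal{G}$. Writing $A := \rho(T_e(x))$ so that $M = A \oplus_{2} N$, the first step is to rule out $A \in \mathcal{G} \cup \mathcal{G}^{*}$. If $A \in \mathcal{G}$, then $M$ is a $2$-sum of two graphic matroids, hence graphic and so delta-graphic, contradicting the hypothesis; if $A \in \mathcal{G}^{*}$, then $(T,\rho)$ satisfies \ref{item:t1} at the edge $e$, so $M$ is delta-graphic by Proposition~\ref{prop:deltagraphictree}, again a contradiction. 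By Lemma~\ref{subtreeminor}, $A$ is then a proper minor of $M$, so by minor-minimality $A$ is delta-graphic; since $A \notin \mathcal{G} \cup \mathcal{G}^{*}$, Proposition~\ref{prop:deltagraphictree} forces the canonical tree decomposition $(T_e(x), \rho|_{V(T_e(x))})$ of $A$ to satisfy at least one of \ref{item:t1}--\ref{item:t4}.

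The heart of the argument will be a case analysis on which of \ref{item:t1}--\ref{item:t4} holds for $(T_e(x), \rho|)$, aiming to show that $(T,\rho)$ itself satisfies one of \ref{item:t1}--\ref{item:t4}, contradicting the non-delta-graphicness of $M$. The unifying idea is that because $N \in \mathcal{G}$, attaching the subtree $T_e(y)$ via $e$ to any graphic component in the decomposition of $A$ preserves graphicness, being a $2$-sum of two graphic matroids. For example, if $(T_e(x), \rho|)$ satisfies \ref{item:t1} at an edge $e'$ with components $R_1, R_2$ such that $\rho(R_1) \in \mathcal{G}$ and $\rho(R_2) \in \mathcal{G}^{*}$, and if $y \in R_1$, then in $T \setminus e'$ the components are $R_1 \cup T_e(y)$ with $\rho = \rho(R_1) \oplus_{2} N \in \mathcal{G}$ and $R_2$ with $\rho \in \mathcal{G}^{*}$, so \ref{item:t1} holds for $(T,\rho)$. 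The cases \ref{item:t2}, \ref{item:t3}, and \ref{item:t4} work analogously: whenever $T_e(y)$ attaches through $e$ to a graphic part of $A$'s decomposition (either the graphic component of a $v_0$-splitting, the rim side of the wheel, or a graphic branch of the fan), that part stays graphic and the structure is preserved.

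The main obstacle I anticipate is the ``stuck'' sub-case where $y$ lies on the cographic side (e.g., $y \in R_2$ in the \ref{item:t1} situation) and simultaneously $N \notin \mathcal{G}^{*}$; then $\rho(R_2 \cup T_e(y)) = \rho(R_2) \oplus_{2} N$ is a $2$-sum of a matroid in $\mathcal{G}^{*} \setminus \mathcal{G}$ with one in $\mathcal{G} \setminus \mathcal{G}^{*}$, so it lies outside $\mathcal{G} \cup \mathcal{G}^{*}$ and the decomposition does not directly lift. To resolve this I plan to exploit minor-minimality once more: by repeating the Step~1 style argument, $\rho(R_2) \in \mathcal{G}^{*} \setminus \mathcal{G}$ is forced (otherwise both sides of $e'$ in $(T,\rho)$ would be graphic and $M$ would be graphic, a contradiction), so the proper minor $\rho(R_2 \cup T_e(y))$ of $M$ is delta-graphic and not in $\mathcal{G} \cup \mathcal{G}^{*}$; hence its canonical tree decomposition again satisfies one of \ref{item:t1}--\ref{item:t4} on a strictly smaller subtree of $T$. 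Iterating the case analysis on this shrinking subtree, the ``cographic portion'' strictly decreases each step; since $T$ is finite, the process must terminate, at which point the decomposition extends successfully to give $(T,\rho)$ satisfying one of \ref{item:t1}--\ref{item:t4}, completing the contradiction.
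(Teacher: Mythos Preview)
Your setup and the non-stuck cases are handled correctly, and the overall strategy (show that $(T,\rho)$ inherits one of \ref{item:t1}--\ref{item:t4} from a proper minor, yielding a contradiction) matches the paper. The problem is the ``stuck'' case, where your iteration does not deliver what you claim.

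Concretely, take the \ref{item:t1} situation: $T_e(x)$ splits at $e'$ into $R_1$ (graphic) and $R_2$ (cographic) with $y\in R_2$. You then pass to $M'=\rho(R_2\cup T_e(y))$, which is delta-graphic and not in $\mathcal G\cup\mathcal G^*$, and repeat. But at the next step, if $M'$ satisfies \ref{item:t1} at some $e''$ with components $S_1\in\mathcal G$ and $S_2\in\mathcal G^*$, the endpoint $z'$ of $e'$ in $R_2$ may lie in $S_2$. Then in $T\setminus e''$ the two pieces are $S_1$ and $S_2\cup R_1$, and $\rho(S_2\cup R_1)=\rho(S_2)\oplus_2\rho(R_1)$ with $\rho(S_2)\in\mathcal G^*\setminus\mathcal G$ and $\rho(R_1)\in\mathcal G\setminus\mathcal G^*$; this is neither graphic nor cographic, so \ref{item:t1} does not lift to $(T,\rho)$. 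Worse, the ``cographic portion'' does not shrink: you started with $T_e(x)$, moved to $R_2$, but now the obstruction lives in $S_2\cup R_1$, which is not contained in $R_2$. So there is no monotone quantity forcing termination, and the $2$-sum of a graphic matroid with a delta-graphic matroid need not be delta-graphic (tripods are exactly such examples), so you cannot conclude directly either. The same obstruction arises in the \ref{item:t2}--\ref{item:t4} cases when $y$ sits in a cographic branch or at a rank-$1$ spine vertex.

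The paper avoids the stuck case entirely with one extra move you are missing. Instead of analysing $A=\rho(T_e(x))$, it first uses Lemma~\ref{planar} to pick a leaf $u\neq x$ of $T$ inside $T_e(y)$ with $\rho(u)\in\mathcal G\setminus\mathcal G^*$, and then (via Lemmas~\ref{subtreeminor} and~\ref{pathcontraction}) replaces all of $T_e(y)$ by that single vertex $u$, obtaining a proper minor $\rho'(T')$ with $T'=T_e(x)\cup\{u\}$. Now in the \ref{item:t1}--\ref{item:t4} analysis of $(T',\rho')$, the leaf $u$ must land on a graphic component (since $\rho'(u)\notin\mathcal G^*$), so the stuck case never occurs, and extending $u$ back to the graphic subtree $T_e(y)$ preserves the structure. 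This single-vertex replacement is the key idea your argument lacks.
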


\begin{proof}
By Lemma~\ref{planar}, $\rho(T_{e}(y))\notin\mathcal{G}\cap\mathcal{G}^{*}$. So by duality, we may assume that $\rho(T_{e}(y))\in\mathcal{G}-\mathcal{G}^{*}$. If $\rho(T_{e}(x))\in\mathcal{G}\cup\mathcal{G}^{*}$, then $M\in\mathcal{G}\cup\mathcal{G}^{*}$ or $(T,\rho)$ satisfies \ref{item:t1}. So by Proposition~\ref{prop:deltagraphictree}, $M$ is delta-graphic, contradicting our assumption. So $\rho(T_{e}(x))\notin\mathcal{G}\cup\mathcal{G}^{*}$. Suppose that $x$ is not a leaf of $T$. 
Then there is a leaf $u\neq x$ in $T_{e}(y)$. Let $f$ be an edge incident with $u$ and $N$ be a matroid obtained from $\rho(u)$ by relabelling $f$ to $e$. Since $u\in V(T_{e}(y))$ is a leaf, by Lemma~\ref{planar}, $\rho(u)\notin\mathcal{G}\cap\mathcal{G}^{*}$. Since $\rho(u)$ is isomorphic to a minor of $\rho(T_{e}(y))$ by Lemma~\ref{subtreeminor}, we have $\rho(u)\in\mathcal{G}-\mathcal{G}^{*}$, and therefore $N\in\mathcal{G}-\mathcal{G}^{*}$.
Let $T'$ be a tree obtained from $T[\{u\}\cup V(T_{e}(x))]$ by adding an edge $e$ joining $u$ and $y$ and, for each $w\in V(T')$, let
\[
\rho'(w)=
\begin{cases}
\rho(w) & \text{if $w\in V(T')-\{u\}$,} \\
N & \text{if $w=u$.}
\end{cases}
\]
Then, by Lemmas~\ref{subtreeminor} and \ref{pathcontraction}, $\rho'(T')$ is isomorphic to a proper minor of $M$. So $\rho'(T')$ is delta-graphic. Since $N\notin\mathcal{G}\cap\mathcal{G}^{*}$, $\{u,y\}$ is not bad and $(T',\rho')$ is a canonical tree decomposition of $\rho'(T')$ by Lemma~\ref{pathcontraction}. Obviously, $\rho'(T'_{e}(y))=\rho'(u)\in\mathcal{G}-\mathcal{G}^{*}$.

Since $T'_{e}(u)=T_{e}(x)$ and $\rho(T_{e}(x))\notin\mathcal{G}\cup\mathcal{G}^{*}$, we have $\rho'(T')\notin\mathcal{G}\cup\mathcal{G}^{*}$. So by Proposition~\ref{prop:deltagraphictree}, $(T',\rho')$ satisfies at least one of \ref{item:t1}, \ref{item:t2}, \ref{item:t3}, and \ref{item:t4}. 

Suppose that $(T',\rho')$ satisfies \ref{item:t1}. Then there is an edge $f\in E(T')$ such that $\rho'(T_{1}')\in\mathcal{G}, \rho'(T_{2}')\in\mathcal{G}^{*}$ for two components $T_{1}'$, $T_{2}'$ of $T'\setminus f$. 

Since $\rho'(u)\in\mathcal{G}-\mathcal{G}^{*}$ and $\rho'(T'_{2})\in\mathcal{G}^{*}$, we have $u\in V(T'_{1})$. Let $T_{1}=T[V(T_{1}')\cup V(T_{e}(y))]$. Then $T_{1}$ and $T_{2}'$ are components of $T\setminus f$ such that $\rho(T_{1})\in\mathcal{G}$ and $\rho(T_{2}')\in\mathcal{G}^{*}$. So $(T,\rho)$ satisfies \ref{item:t1} and $M$ is delta-graphic by Proposition~\ref{prop:deltagraphictree}, contradicting our assumption.

Suppose that $(T',\rho')$ satisfies \ref{item:t2}. Then there is a vertex $v$ of $T'$ such that 
\begin{enumerate}[label=\rm (\roman*)]
\item $\rho'(v)$ is isomorphic to $U_{1,m}$ or $U_{m-1,m}$ for some $m\geq 3$, 
\item for each component $C$ of $T'\setminus v$, $\rho'(C)\in\mathcal{G}\cup\mathcal{G}^{*}$, and
\item there are distinct components $C_{1}$, $C_{2}$ of $T'\setminus v$ such that $\rho'(C_{1})\in\mathcal{G}$ and $\rho'(C_{2})\in\mathcal{G}^{*}$. 
\end{enumerate} 
Let $X'$ be a component of $T'\setminus v$ containing $u$. Since $\rho'(u)\notin\mathcal{G}^{*}$, we have $\rho'(X')\in\mathcal{G}$. Let $X:=T[V(X')\cup V(T_{e}(y))]$.
Then $X$ is a component of $T\setminus v$ and $\rho(X)\in\mathcal{G}$. So $(T,\rho)$ satisfies \ref{item:t2} and $M$ is delta-graphic by Proposition~\ref{prop:deltagraphictree}, contradicting our assumption.

If $(T',\rho')$ satisfies \ref{item:t3}, let $z$ be the hub. Let $Y'$ be a component of $T'\setminus z$ containing $u$. Since $\rho'(u)\notin\mathcal{G}^{*}$, we have  $\rho'(Y')\in\mathcal{G}$. Let $Y=T[V(Y')\cup V(T_{e}(y))]$. Then $Y$ is a component of $T\setminus z$ and $\rho(Y)\in\mathcal{G}$. So $(T,\rho)$ satisfies \ref{item:t3} and $M$ is delta-graphic by Proposition~\ref{prop:deltagraphictree}, contradicting our assumption.

If $(T',\rho')$ satisfies \ref{item:t4}, then let $P$ be the spine. Since $\rho(u)\notin\mathcal{G}\cap\mathcal{G}^{*}$, we have $u\notin V(P)$. Let $Z'$ be a component of $T'\setminus V(P)$ containing $u$. Since $\rho'(u)\notin\mathcal{G}^{*}$, we have $\rho'(Z')\in\mathcal{G}$. Let $Z=T[V(Z')\cup V(T_{e}(y))]$. Then $Z$ is a component of $T\setminus z$ and $\rho(Z)\in\mathcal{G}$. Therefore, $(T,\rho)$ satisfies \ref{item:t4} and $M$ is delta-graphic, contradicting our assumption. 
\end{proof}

\subsection{Excluding tripods}

A matroid $M$ is a \emph{tripod} if $M=M_{1}\oplus_{2}M_{2}\oplus_{2}M_{3}$ for $3$-connected matroids $M_{1}$, $M_{2}$, $M_{3}$ such that $|E(M_{1})\cap E(M_{2})|=|E(M_{2})\cap E(M_{3})|=1$, $E(M_{1})\cap E(M_{3})=\emptyset$, and 

\begin{enumerate}[label=\rm (\roman*)]
\item $M_{1}, M_{3}\in\mathcal{G}-\mathcal{G}^{*}$ and $M_{2}\in\mathcal{G}^{*}-\mathcal{G}$, or
\item $M_{1}, M_{3}\in\mathcal{G}^{*}-\mathcal{G}$ and $M_{2}\in\mathcal{G}-\mathcal{G}^{*}$.
\end{enumerate}
Observe that a canonical tree decomposition of a tripod $M=M_{1}\oplus_{2}M_{2}\oplus_{2}M_{3}$ is a pair $(T,\rho)$ such that $T$ is a path $v_{1}v_{2}v_{3}$ of length $2$ and $\rho(v_{i})=M_{i}$ for each $i\in\{1,2,3\}$.

\begin{lemma}
\label{tripod}
No tripod is delta-graphic.
\end{lemma}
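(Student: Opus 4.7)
The plan is to assume for contradiction that some tripod $M=M_1\oplus_2 M_2\oplus_2 M_3$ is delta-graphic, and derive a contradiction from Proposition~\ref{prop:deltagraphictree} by ruling out each of the four possibilities (T1)--(T4) for the canonical tree decomposition of $M$. By duality (Lemmas~\ref{lem:dualclosed} and \ref{lem:twosumdual}), it suffices to treat case (i) where $M_1,M_3\in\mathcal{G}-\mathcal{G}^*$ and $M_2\in\mathcal{G}^*-\mathcal{G}$.

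First, I will verify that the canonical tree decomposition $(T,\rho)$ of $M$ is exactly the path $T=v_1v_2v_3$ with $\rho(v_i)=M_i$. Each $M_i$ is $3$-connected, and since no $M_i$ lies in $\mathcal G\cap \mathcal G^*$, no $M_i$ is a uniform matroid of rank or corank~$1$ (which would be both graphic and cographic). This also shows that no pair of adjacent vertices is bad, so $(T,\rho)$ satisfies the definition of a canonical tree decomposition, and by uniqueness (Theorem~\ref{thm:canonical}) it is the canonical one. Moreover, $M\notin \mathcal{G}\cup\mathcal{G}^*$: by Lemma~\ref{twosumminor}, each $M_i$ is a minor of $M$, so if $M\in\mathcal G$ then $M_2\in\mathcal G$, a contradiction, and similarly $M\notin\mathcal G^*$ since $M_1\notin\mathcal G^*$.

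It remains to exclude (T1)--(T4). For (T1), the edge $e$ must be one of the two edges of the path, and one of the two components is a single $v_i$ while the other contracts to $M_j\oplus_2 M_k$ for the remaining indices. In each subcase, one uses Lemma~\ref{twosumminor} again: whichever side is supposed to lie in $\mathcal G^*$ has a minor ($M_1$ or $M_3$) not in $\mathcal G^*$, giving a contradiction. For (T2), the distinguished vertex $v$ must be some $v_i$, but then $\rho(v_i)=M_i$ would have to be uniform of rank or corank~$1$, which it is not. For (T3), the hub $v$ would require $\rho(v)\cong M(W_k)$, which is graphic; so the hub cannot be $v_2$ (since $M_2\notin\mathcal G$), and if the hub is $v_1$ (or $v_3$), the single adjacent component has $\rho$-image $M_2\oplus_2 M_3$ (or $M_1\oplus_2 M_2$), and the required membership in $\mathcal G$ or $\mathcal G^*$ is again blocked by Lemma~\ref{twosumminor} applied to $M_2$ and $M_3$ (respectively $M_1$ and $M_2$). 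For (T4), the spine $P$ must have length $\geq 1$, and every vertex on $P$ must have $\rho$-image uniform of rank or corank~$1$; but no $M_i$ is such a uniform matroid, so no spine exists.

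The main obstacle will be the wheel decomposition case (T3), since there one must check both possible placements of the hub and carefully argue that the forced membership of the single pendant component in $\mathcal G$ or $\mathcal G^*$ (according to whether the connecting edge is a rim edge or a spoke of the circuit-hyperplane~$C$) is incompatible with having $M_2$ or $M_3$ as a minor. The other three cases reduce quickly to the observation that $M_i$-minors of the relevant $2$-sum already obstruct the required graphic/cographic classification, via Lemma~\ref{twosumminor}.
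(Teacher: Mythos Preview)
Your proof is correct and follows essentially the same approach as the paper, which simply states that the conclusion follows from Proposition~\ref{prop:deltagraphictree}; you have carefully unpacked why none of \ref{item:t1}--\ref{item:t4} can hold for the canonical tree decomposition of a tripod. One small simplification in your treatment of \ref{item:t3}: since $W_k$ is planar, $M(W_k)\in\mathcal G\cap\mathcal G^*$, so no $M_i$ can be isomorphic to $M(W_k)$ and the hub cannot be any of $v_1,v_2,v_3$, which disposes of the wheel case without analyzing the pendant component.
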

\begin{proof}
The conclusion follows from Proposition~\ref{prop:deltagraphictree}.
\end{proof}

For a tree $T$ and vertices $x$, $y$ of $T$, let $T_{x,y}$ be the path of $T$ from $x$ to $y$.

\smallskip

For a connected matroid $M$ with a canonical tree decomposition $(T,\rho)$, a triple $(u,v,w)$ of distinct vertices of $T$ is \emph{flexible} if $v\in V(T_{u,w})$ and one of the following hold:
\begin{enumerate}[label=\rm (\roman*)]
\item $\rho(u), \rho(w)\in \mathcal{G}-\mathcal{G}^{*}$ and $\rho(v)\in \mathcal{G}^{*}-\mathcal{G}$.
\item $\rho(u), \rho(w)\in \mathcal{G}^{*}-\mathcal{G}$ and $\rho(v)\in \mathcal{G}-\mathcal{G}^{*}$. 
\end{enumerate}

\begin{lemma}
\label{lem:flexible}
Let $M$ be a connected matroid  minor-minimally not delta-graphic.
Let $(T,\rho)$ be a canonical tree decomposition of $M$.
If $M$ has a flexible triple, then $|E(M)|\leq 30$.
\end{lemma}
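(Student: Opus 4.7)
The plan is to extract a tripod as a minor of $M$ from the flexible triple $(u,v,w)$; then minor-minimality forces $M$ itself to be this tripod, after which the size bound follows directly from Lemma~\ref{ReducingVertex}(i).

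First I would let $S = T_{u,w}$ be the path in $T$ from $u$ to $w$ (which passes through $v$). By Lemma~\ref{subtreeminor}, $\rho(S)$ is isomorphic to a minor of $M$ whose canonical tree decomposition is $(S, \rho|_{V(S)})$. Since any matroid in $\mathcal{G}\setminus \mathcal{G}^*$ or $\mathcal{G}^*\setminus \mathcal{G}$ must be $3$-connected (the uniform matroids $U_{1,n}$ and $U_{n-1,n}$ being both graphic and cographic), the matroids $\rho(u), \rho(v), \rho(w)$ are $3$-connected and none is uniform of rank or corank $1$; in particular, the pairs $\{u,v\}$ and $\{v,w\}$ are not bad. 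If the subpath of $S$ from $u$ to $v$ has length at least $2$, then its internal vertices have degree $2$ in the path $S$, and Lemma~\ref{pathcontraction} collapses this subpath to the single edge $uv$, producing a further minor whose canonical tree decomposition results from the obvious relabelling (no edge contraction, as the pair is not bad). Repeating this for the subpath from $v$ to $w$ yields a minor $M'$ of $M$ whose canonical tree decomposition is the length-$2$ path $u\text{-}v\text{-}w$, with $\rho(u), \rho(v), \rho(w)$ preserved up to a relabelling of the basepoints. In particular, the types defining flexibility are retained.

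By construction $M'$ is a tripod, hence not delta-graphic by Lemma~\ref{tripod}. Since $M$ is minor-minimally not delta-graphic, $M = M'$, so $M$ itself has canonical tree decomposition given by the length-$2$ path $u\text{-}v\text{-}w$ with $3$-connected vertex matroids of the tripod types. In particular $M$ is not $3$-connected, so Lemma~\ref{ReducingVertex}(i) applies and gives $|E(\rho(x))|\le 4\deg_T(x)+6$ for each $x\in\{u,v,w\}$ (all three lie outside $\mathcal{G}\cap \mathcal{G}^*$). Using $\deg(u)=\deg(w)=1$ and $\deg(v)=2$, and noting that each of the two edges of $T$ is counted twice across the sum $\sum_x|E(\rho(x))|$, we obtain
\[
|E(M)| \;=\; \sum_{x\in\{u,v,w\}} |E(\rho(x))| \;-\; 2\cdot 2 \;\le\; 10 + 14 + 10 - 4 \;=\; 30.
\]
The main point requiring care is the verification that the pairs $\{u,v\}, \{v,w\}$ are not bad, so that the two applications of Lemma~\ref{pathcontraction} yield bona fide canonical tree decompositions in which the types of $\rho(u), \rho(v), \rho(w)$ survive; everything else reduces to bookkeeping.
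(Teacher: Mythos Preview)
Your argument is correct and follows essentially the same route as the paper's proof: restrict to the path $T_{u,w}$ via Lemma~\ref{subtreeminor}, collapse the two subpaths via Lemma~\ref{pathcontraction} to obtain a tripod minor, invoke Lemma~\ref{tripod} and minor-minimality to conclude $M$ is that tripod, then bound the pieces. Your explicit verification that the pairs $\{u,v\}$ and $\{v,w\}$ are not bad (because $\rho(u),\rho(v),\rho(w)\notin\mathcal{G}\cap\mathcal{G}^*$ forces them to be genuinely $3$-connected rather than uniform) is a point the paper leaves implicit, and your observation that Lemma~\ref{ReducingVertex}(i) alone already yields the bounds $10,14,10$ (so Lemma~\ref{Reducingleaf} is not strictly needed here) is correct.
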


\begin{proof}
Let $(u,v,w)$ be a flexible triple of $M$.
By taking dual, we can assume that $\rho(u), \rho(w)\in \mathcal{G}-\mathcal{G}^{*}$ and $\rho(v)\in \mathcal{G}^{*}-\mathcal{G}$. Let  $P:=T_{u,w}$. By Lemma \ref{subtreeminor}, $\rho(P)$ is isomorphic to a minor of $M$ with a canonical tree decomposition $(P,\rho|_{V(P)})$. Let $e_{u}$, $e_{w}$ be edges of $P$ incident with $u$, $w$, respectively. Let $f_{1}$, $f_{2}$ be edges of $P$ which are incident with $v$ such that $f_{1}\in E(T_{u,v})$ and $f_{2}\in E(T_{v,w})$. Let $N$ be a matroid obtained from $\rho(v)$ by relabelling $f_{1}$ by $e_{u}$ and $f_{2}$ by $e_{w}$. Let $P'=uvw$ be a path graph and 
\[
\rho'(x)=
\begin{cases}
\rho(x) & \text{if $x\in\{u,w\}$,} \\
N & \text{if $x=v$.}
\end{cases}
\]
By Lemma~\ref{pathcontraction}, $\rho(P)$ has a minor $\rho'(P')$ with a canonical tree decomposition $(P',\rho')$. By Lemma~\ref{tripod}, $\rho'(P')$ is a tripod and is not delta-graphic. Since $M$ is minor-minimally not delta-graphic, $M=\rho'(P')=\rho(u)\oplus_{2}N\oplus_{2}\rho(w)$. Observe that $\rho(u),\rho(w),N\notin\mathcal{G}\cap\mathcal{G}^{*}$, $\deg_{P'}(u)=\deg_{P'}(w)=1$, and $\deg_{P'}(v)=2$. So by Lemmas~\ref{ReducingVertex} and \ref{Reducingleaf}, $|E(\rho(u))|, |E(\rho(w))|\leq 10$ and $|E(\rho(v))|\leq 14$. 
Therefore, $|E(M)|=|E(\rho(u))|+|E(\rho(w))|+|E(\rho(v))|-4\leq 30$.
\end{proof} 

\begin{lemma}
\label{notriaxialstrong}
Let $M$ be a connected matroid  minor-minimally not delta-graphic.
Let $(T,\rho)$ be a canonical tree decomposition of $M$.
If there is no flexible triple, then $|E(M)|\leq 12$ or $\rho(v)\in\mathcal{G}\cap\mathcal{G}^{*}$ for every internal vertex $v$ of $T$.
\end{lemma}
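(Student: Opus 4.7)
The plan is to dichotomize on the $3$-connectivity of $M$. If $M$ is $3$-connected, then $|V(T)|=1$, so $T$ has no internal vertex and the internal-vertex disjunct of the conclusion is vacuously true; moreover, Lemma~\ref{3connected} combined with the minor-minimality of $M$ forces $M$ to be isomorphic to one of $U_{2,4},F_{7},F_{7}^{*},R_{10},R_{12}$ (each being a $3$-connected matroid that is itself not delta-graphic), so $|E(M)|\leq 12$ holds as well.

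For the non-$3$-connected case, observe first that $M$ cannot have any of $U_{2,4},F_{7},F_{7}^{*},R_{10},R_{12}$ as a minor: each such minor would be a $3$-connected non-delta-graphic proper minor of $M$, contradicting minor-minimality. By Lemma~\ref{GorCG}, this forces $\rho(x)\in\mathcal{G}\cup\mathcal{G}^{*}$ for every $x\in V(T)$. Suppose for contradiction that some internal vertex $v\in V(T)$ satisfies $\rho(v)\notin\mathcal{G}\cap\mathcal{G}^{*}$; by Lemma~\ref{dualtree} we may assume $\rho(v)\in\mathcal{G}-\mathcal{G}^{*}$. Let $y_{1},\ldots,y_{n}$ with $n\geq 2$ be the neighbours of $v$ in $T$, and for each $j$ let $S_{j}$ denote the component of $T\setminus v$ containing $y_{j}$.

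The key step is to apply Lemma~\ref{Reducingsubtree} to each edge $e_{j}=vy_{j}$ with the roles $x=v$ and $y=y_{j}$: since $v$ is not a leaf, $\rho(T_{e_{j}}(y_{j}))\notin\mathcal{G}\cup\mathcal{G}^{*}$, where $T_{e_{j}}(y_{j})=\{v\}\cup\bigcup_{i\neq j}V(S_{i})$ is the side of the edge containing $v$. Because the class $\mathcal{G}$ of graphic matroids is closed under $2$-sum, were every vertex in this side to carry a label in $\mathcal{G}$ the whole $2$-sum would again lie in $\mathcal{G}$; combined with $\rho(v)\in\mathcal{G}-\mathcal{G}^{*}$, some vertex $w\in\bigcup_{i\neq j}V(S_{i})$ must satisfy $\rho(w)\in\mathcal{G}^{*}-\mathcal{G}$. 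Setting $J=\{i:S_{i}\text{ contains a vertex with label in }\mathcal{G}^{*}-\mathcal{G}\}$, this shows $J\setminus\{j\}\neq\emptyset$ for every $j\in\{1,\ldots,n\}$, which forces $|J|\geq 2$. Picking distinct $i_{1},i_{2}\in J$ with witnesses $w_{1}\in V(S_{i_{1}})$ and $w_{2}\in V(S_{i_{2}})$ produces a flexible triple $(w_{1},v,w_{2})$, contradicting the hypothesis. The only real subtlety is applying Lemma~\ref{Reducingsubtree} in the correct direction so that it constrains the \emph{large} side of each edge at $v$ rather than the small hanging subtree; once that is set up, closure of $\mathcal{G}$ under $2$-sums plus an elementary pigeonhole on $J$ finishes the argument.
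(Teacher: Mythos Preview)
Your proof is correct and follows essentially the same approach as the paper's: both reduce to the case where every $\rho(x)\in\mathcal{G}\cup\mathcal{G}^{*}$ via Lemma~\ref{GorCG}, fix an internal vertex $v$ with $\rho(v)\in\mathcal{G}-\mathcal{G}^{*}$, and combine Lemma~\ref{Reducingsubtree} with the existence of vertices labelled in $\mathcal{G}^{*}-\mathcal{G}$ in two different components of $T\setminus v$ to produce a flexible triple.

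The only organisational difference is where Lemma~\ref{Reducingsubtree} enters. The paper does a case split on the number of components $X$ of $T\setminus v$ with $\rho(X)\notin\mathcal{G}$: if there are at least two, a flexible triple is immediate; if there is at most one (hence exactly one, since $M\notin\mathcal{G}$), then the complementary side is graphic and a single application of Lemma~\ref{Reducingsubtree} forces $v$ to be a leaf. You instead apply Lemma~\ref{Reducingsubtree} contrapositively to \emph{every} edge at $v$, concluding that each ``big'' side is non-graphic, and then use the pigeonhole observation $J\setminus\{j\}\neq\emptyset$ for all $j$ to get $|J|\geq 2$. These are the same argument rearranged; your version avoids the explicit case split at the cost of invoking Lemma~\ref{Reducingsubtree} once per neighbour rather than once. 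Your initial dichotomy on $3$-connectivity is also harmless but slightly redundant: the paper simply observes that any minor in $\{U_{2,4},F_{7},F_{7}^{*},R_{10},R_{12}\}$ forces $|E(M)|\leq 12$ by minimality, without needing to separate out the $3$-connected case.
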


\begin{proof}
If $M$ has a minor isomorphic to one of $U_{2,4}$, $F_{7}$, $F_{7}^{*}$, $R_{10}$, and $R_{12}$, then $|E(M)|\leq 12$ by minimality of $M$. So we may assume that $M$ has no minor isomorphic to one of $U_{2,4}$, $F_{7}$, $F_{7}^{*}$, $R_{10}$, and $R_{12}$.

Suppose that there is an internal vertex $v$ of $T$ such that $\rho(v)\notin\mathcal{G}\cap\mathcal{G}^{*}$. By Lemma~\ref{GorCG}, $\rho(v)\in\mathcal{G}\cup\mathcal{G}^{*}$ and, by duality, we may assume that $\rho(v)\in\mathcal{G}-\mathcal{G}^{*}$. If there are distinct components $X_{1}, X_{2}$ of $T\setminus v$ such that $\rho(X_{1}),\rho(X_{2})\notin\mathcal{G}$, then there are $u\in V(X_{1})$, $w\in V(X_{2})$ such that $\rho(u), \rho(w)\notin\mathcal{G}$. Then by Lemma~\ref{GorCG}, $\rho(u), \rho(w)\in\mathcal{G}^{*}-\mathcal{G}$. Hence, $(u,v,w)$ is a flexible triple, contradicting our assumption. Since $M\notin\mathcal{G}$, there is a unique component $X$ of $T\setminus v$ such that $\rho(X)\notin\mathcal{G}$. Let $e$ be an edge of $T$ joining $v$ and a vertex $v'$ of $X$. Since $\rho(T_{e}(v'))\in\mathcal{G}$, by Lemma~\ref{Reducingsubtree}, $v$ is a leaf of $T$, contradicting our assumption.
\end{proof}

\subsection{Excluding $H$-matroids and $H'$-matroids}

\begin{figure}
\centering
\tikzstyle{v}=[circle, draw,fill=black,inner sep=0pt, minimum width=3pt]
  \begin{tikzpicture}
    \draw (0,1) node [label=left:$x_1$,v] (x1){}
    -- (0,0) node [label=left:$v_1$,v] (v1){}
    -- (0,-1)node[label=left:$y_1$,v] (y1){};
    \draw (v1)-- (2,0) node [label=right:$v_2$,v](v2){}
    --(2,1) node[label=right:$x_2$,v](x2){};
    \draw (v2)--(2,-1)node[label=right:$y_2$,v]{};
    \draw (1,-1.5) node[rectangle]{(i)};
  \end{tikzpicture}
  $\qquad$
  \begin{tikzpicture}
    \draw (0,1) node [label=left:$x_1$,v] (x1){}
    -- (0,0) node [label=left:$v_1$,v] (v1){}
    -- (0,-1)node[label=left:$y_1$,v] (y1){};
    \draw (v1)-- (1,0) node[v,label=$v$]{}--
    (2,0) node [label=right:$v_2$,v](v2){}
    --(2,1) node[label=right:$x_2$,v](x2){};
    \draw (v2)--(2,-1)node[label=right:$y_2$,v]{};
    \draw (1,-1.5) node[rectangle]{(ii)};
  \end{tikzpicture}
\caption{(i)~ $H$-graph. (ii)~ $H'$-graph.}
\label{fig:Hgraph}
\end{figure}
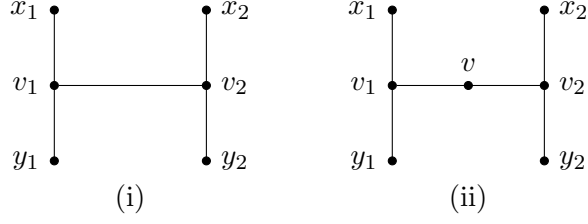
A connected matroid with a canonical tree decomposition $(T,\rho)$ is an \emph{$H$-matroid} if $T$ is an $H$-graph in Figure~\ref{fig:Hgraph} and $(T,\rho)$ satisfies the following:
\begin{enumerate}[label=\rm (h\arabic*)]
\item\label{item:h1} $\rho(v_{1})$ or $\rho(v_{2})$ is neither a uniform matroid of rank $1$ nor a uniform matroid of corank $1$.
\item\label{item:h2} $\rho(x_{1}), \rho(x_{2})\in \mathcal{G}-\mathcal{G}^{*}$ and $\rho(y_{1}), \rho(y_{2})\in \mathcal{G}^{*}-\mathcal{G}$.
\end{enumerate}

A connected matroid with a canonical tree decomposition $(T,\rho)$ is an $H'$-matroid if $T$ is an $H'$-graph in Figure~\ref{fig:Hgraph} and $(T,\rho)$ satisfies the following:
\begin{enumerate}[label=\rm (H\arabic*)]
\item\label{item:H1} $\rho(v)$ is neither a uniform matroid of rank $1$ nor a uniform matroid of corank $1$.
\item\label{item:H2} $\rho(x_{1}), \rho(x_{2})\in \mathcal{G}-\mathcal{G}^{*}$ and $\rho(y_{1}), \rho(y_{2})\in \mathcal{G}^{*}-\mathcal{G}$.
\end{enumerate}

\begin{lemma}
\label{lem:Hgraph}
No $H$-matroid is delta-graphic and no $H'$-matroid is delta-graphic.
\end{lemma}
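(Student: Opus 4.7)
My plan is to argue by contradiction using Proposition~\ref{prop:deltagraphictree}. Let $M$ be an $H$-matroid or $H'$-matroid with canonical tree decomposition $(T,\rho)$, and suppose $M$ is delta-graphic. By Lemma~\ref{subtreeminor}, $\rho(x_1)\in\mathcal{G}\setminus\mathcal{G}^{*}$ and $\rho(y_1)\in\mathcal{G}^{*}\setminus\mathcal{G}$ are both minors of $M$, so $M\notin\mathcal{G}\cup\mathcal{G}^{*}$. Hence by Proposition~\ref{prop:deltagraphictree}, $(T,\rho)$ must satisfy at least one of \ref{item:t1}--\ref{item:t4}, and I would rule each out.

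The backbone of the argument is the following elementary observation about $T$: for every edge $e$ of $T$, at least one component of $T\setminus e$ contains some $x_i$ together with some $y_j$; and for every non-leaf vertex $u$ of $T$, at least one component of $T\setminus u$ also contains some $x_i$ together with some $y_j$. This is a direct one-line check for both the $H$-graph and the $H'$-graph. Whenever a subtree $S$ contains both $x_i$ and $y_j$, Lemma~\ref{subtreeminor} exhibits $\rho(x_i)$ and $\rho(y_j)$ as minors of $\rho(S)$, so $\rho(S)\notin\mathcal{G}\cup\mathcal{G}^{*}$. This immediately contradicts~\ref{item:t1}, the second clause of~\ref{item:t2} (the only plausible $v^{*}$ are non-leaves, since leaves yield just one component in $T\setminus v^{*}$, failing clause (iii)), and condition~\ref{item:w2} of~\ref{item:t3} (the hub is non-leaf because $M(W_k)$ has at least six elements and more than one component is needed).

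For the fan decomposition~\ref{item:t4}, I would first observe that every leaf of $T$ has $\rho$ equal to a $3$-connected (and in particular non-uniform) matroid: each label in a canonical tree decomposition is either $3$-connected or isomorphic to $U_{1,n}$ or $U_{n-1,n}$ for some $n\geq 3$, and since $U_{1,n},U_{n-1,n}\in\mathcal{G}\cap\mathcal{G}^{*}$, the assumption $\rho(x_i)\in\mathcal{G}\setminus\mathcal{G}^{*}$ (or $\rho(y_j)\in\mathcal{G}^{*}\setminus\mathcal{G}$) forces $3$-connectedness. So no leaf can lie on the spine $P$. In the $H$-case, condition~\ref{item:h1} forces at least one of $v_1, v_2$ off $P$, leaving $V(P)\subseteq\{v_j\}$ for a single $j$, contradicting the requirement that $P$ has length at least $1$. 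In the $H'$-case, condition~\ref{item:H1} excludes $v$ from $V(P)$, leaving $V(P)\subseteq\{v_1,v_2\}$, but $v_1$ and $v_2$ are nonadjacent in $T$, so no spine of length at least $1$ exists.

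I expect the main delicacy will be the bookkeeping for~\ref{item:t3}: the hub $v^{*}$ can a priori be any non-leaf vertex of $T$, since $M(W_k)$ is $3$-connected and not uniform of rank $1$ or corank $1$, so neither~\ref{item:h1} nor~\ref{item:H1} automatically excludes any candidate. The topological observation above resolves this cleanly: for each non-leaf choice of $v^{*}$, some component of $T\setminus v^{*}$ contains both an $x_i$ and a $y_j$, forcing $\rho$ of that component out of $\mathcal{G}\cup\mathcal{G}^{*}$ and contradicting~\ref{item:w2}. This reduces to a short enumeration rather than a hard step.
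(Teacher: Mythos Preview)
Your proof is correct and follows essentially the same approach as the paper's: both argue by contradiction via Proposition~\ref{prop:deltagraphictree}, use the edge/vertex separation observation (some component always contains both an $x_i$ and a $y_j$) to rule out \ref{item:t1}--\ref{item:t3}, and rule out \ref{item:t4} by showing too few vertices carry uniform labels to form a spine. One small correction: your stated reason that the hub in \ref{item:t3} is a non-leaf (``$M(W_k)$ has at least six elements and more than one component is needed'') is not quite right---neither \ref{item:w1} nor \ref{item:w2} forces multiple components---but the conclusion holds because $M(W_k)\in\mathcal{G}\cap\mathcal{G}^{*}$ while every leaf label lies outside $\mathcal{G}\cap\mathcal{G}^{*}$ by \ref{item:h2}/\ref{item:H2}.
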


\begin{proof}
Let $M$ be an $H$-matroid or an $H'$-matroid with a canonical tree decomposition $(T,\rho)$. Suppose that $M$ is delta-graphic. Since $M\notin\mathcal{G}\cup\mathcal{G}^{*}$, by Proposition~\ref{prop:deltagraphictree}, $(T,\rho)$ satisfies at least one of \ref{item:t1}, \ref{item:t2}, \ref{item:t3}, and \ref{item:t4}. By \ref{item:h1} and \ref{item:h2} or \ref{item:H1} and \ref{item:H2}, $T$ has no adjacent vertices $u$, $v$ such that $\rho(u)$ and $\rho(v)$ are uniform matroids of rank $1$ or corank $1$. So $(T,\rho)$ does not satisfy \ref{item:t4}. For each $e\in E(T)$, there is a component $C'$ of $T\setminus e$ such that $\{x_{1},y_{1}\}\subseteq V(C')$ or $\{x_{2},y_{2}\}\subseteq V(C')$. Hence, $\rho(C')\notin\mathcal{G}\cup\mathcal{G}^{*}$ and $(T,\rho)$ does not satisfy \ref{item:t1}, \ref{item:t2}, and \ref{item:t3}, contradicting our assumption.
\end{proof}

Let $M$ be a connected matroid with a canonical tree decomposition $(T,\rho)$. Then a vertex $v$ of $T$ is a \emph{core} if $\rho(v)\in\mathcal{G}\cap\mathcal{G}^{*}$, $\rho(v)$ is neither a uniform matroid of rank $1$ nor a uniform matroid of corank~$1$, and it satisfies one of the following:
\begin{enumerate}[label=\rm (R\arabic*)]
\item\label{item:R1} There are distinct components $X_{1}$, $X_{2}$ of $T\setminus v$ such that $\rho(X_{1}), \rho(X_{2})\notin\mathcal{G}\cup\mathcal{G}^{*}$.
\item\label{item:R2} There is exactly one component $X$ of $T\setminus v$ such that $\rho(X)\notin\mathcal{G}\cup\mathcal{G}^{*}$ and there are components $Y_{1}, Y_{2}$ of $T\setminus v$ such that $\rho(Y_{1})\in\mathcal{G}-\mathcal{G}^{*}$ and $\rho(Y_{2})\in\mathcal{G}^{*}-\mathcal{G}$.
\end{enumerate}

\begin{lemma}
\label{U13minor}
Let $M$ be a connected matroid with $|E(M)|\geq 3$. Then, for each set $X\subseteq E(M)$ with $|X|=3$, $M$ has a minor on $X$ isomorphic to $U_{1,3}$ or $U_{2,3}$.
\end{lemma}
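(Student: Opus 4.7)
The plan is to invoke Lemma~\ref{3circuit} on the $3$-element subset $X$: since $M$ is connected and $|E(M)|\ge 3$, either $M$ has a circuit $C$ with $X\subseteq C$, or $M$ has a cocircuit $D$ with $X\subseteq D$. These two cases will give $U_{2,3}$ and $U_{1,3}$ respectively, and they are related by matroid duality, so the work is essentially in one of them.

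In the circuit case, consider the minor $N:=(M|C)/(C- X)$ on the ground set $X$. Because $C$ is a circuit of $M$, the restriction $M|C$ has rank $|C|-1$ and every proper subset of $C$ is independent, so $M|C\cong U_{|C|-1,|C|}$. Every element of a uniform matroid $U_{|C|-1,|C|}$ is a non-loop, and contracting a non-loop from $U_{r,n}$ yields $U_{r-1,n-1}$; contracting the $|C|-3$ elements of $C- X$ therefore produces $U_{2,3}$ on $X$, as desired.

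In the cocircuit case, I apply the circuit case to the dual matroid $M^{*}$, whose circuits are the cocircuits of $M$. This produces a minor of $M^{*}$ on $X$ isomorphic to $U_{2,3}$. Dualizing and using $(N\setminus e)^{*}=N^{*}/e$ and $(N/e)^{*}=N^{*}\setminus e$, the corresponding minor of $M$ on $X$ is $M/(E(M)- D)\setminus(D- X)$, which is isomorphic to $U_{2,3}^{*}=U_{1,3}$.

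There is no real obstacle here: the only substantive ingredient is Lemma~\ref{3circuit}, which provides the dichotomy between a circuit and a cocircuit through $X$, and the remainder is a direct computation with minors of a uniform matroid together with the standard commutation of duality with deletion and contraction.
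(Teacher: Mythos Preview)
Your proof is correct and follows essentially the same approach as the paper: invoke Lemma~\ref{3circuit} to obtain a circuit or cocircuit through $X$, then restrict and contract (respectively, the dual operations) to obtain $U_{2,3}$ or $U_{1,3}$ on $X$. You give a bit more detail in justifying why the resulting minor is uniform, but the argument is the same.
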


\begin{proof}
By Lemma~\ref{3circuit}, there is a subset $C$ of $E(M)$ which is a circuit or a cocircuit of $M$ containing~$X$. If $C$ is a circuit, then $M\setminus(E(M)- C)/(C- X)$ is a minor on $X$ which is isomorphic to $U_{2,3}$. If $C$ is a cocircuit, then $M/(E(M)- C)\setminus(C- X)$ is a minor on $X$ which is isomorphic to $U_{1,3}$. 
\end{proof}

\begin{lemma}
\label{lem:rk3}
Let $M$ be a $3$-connected binary matroid with $|E(M)|\geq 3$. Then $M$ is isomorphic to $U_{1,3}$ or $U_{2,3}$, or $r(M),r^{*}(M)\geq 3$.
\end{lemma}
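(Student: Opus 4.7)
The plan is to reduce to the case $r(M) \le 2$ by duality (noting that $U_{1,3}$ and $U_{2,3}$ are duals of each other and $r^{*}(M)=|E(M)|-r(M)$), and then to run a short case analysis on $r(M)$, using only the definition of $3$-connectedness and the fact that the simple binary matroids of rank at most $2$ are $U_{1,1}$, $U_{2,1}$, $U_{2,2}$, and $U_{2,3}$.

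First I would observe that $M$ is loopless: any loop $e$ has $r_{M}(\{e\})=0$, so $\lambda_{M}(\{e\})=0$ and $(\{e\},E(M)-\{e\})$ is a $1$-separation because $|E(M)|\geq 3$ forces $|E(M)-\{e\}|\geq 2\geq 1$. In particular $r(M)\geq 1$.

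Case $r(M)=1$: since $M$ is a loopless binary rank-$1$ matroid, all elements are pairwise parallel, so $M\cong U_{1,n}$ where $n=|E(M)|\geq 3$. If $n\geq 4$, pick a partition $(X,Y)$ with $|X|=2$ and $|Y|=n-2\geq 2$; then $r_{M}(X)=r_{M}(Y)=1=r(M)$, so $\lambda_{M}(X)=1<2$, contradicting $3$-connectedness. Hence $n=3$ and $M\cong U_{1,3}$.

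Case $r(M)=2$: first I would handle $|E(M)|\geq 4$. Since the simplification $\si(M)$ is a simple binary matroid of rank $2$, it is one of $U_{2,1}$, $U_{2,2}$, $U_{2,3}$. If $\si(M)\cong U_{2,k}$ with $k\leq 2$, partitioning $E(M)$ into its (at most two) parallel classes $P_{1},P_{2}$ gives $r(P_{i})=1$ and $r(P_{1})+r(P_{2})=2=r(M)$, so $\lambda_{M}(P_{1})=0$, a $1$-separation since both parts are nonempty. If $\si(M)\cong U_{2,3}$, some parallel class $P$ has $|P|\geq 2$ (as $|E(M)|\geq 4$); the complement $Y=E(M)-P$ contains elements from the other two parallel classes, hence $r_{M}(Y)=2$, giving $\lambda_{M}(P)=1+2-2=1$ with $|P|,|Y|\geq 2$ (noting $|Y|\geq 2$ because the two other parallel classes are both nonempty), a $2$-separation. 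Both subcases contradict $3$-connectedness, so $|E(M)|=3$. Finally, if $M$ has rank $2$ on three elements and contains a parallel pair $\{a,b\}$, then for the third element $c$ we get $r(\{c\})+r(\{a,b\})=1+1=2=r(M)$ and $\lambda_{M}(\{c\})=0$, a $1$-separation. Hence $M$ has no parallel pair, so $M\cong U_{2,3}$.

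Combining the cases, if $r(M)\leq 2$ then $M$ is isomorphic to $U_{1,3}$ or $U_{2,3}$, and dually if $r^{*}(M)\leq 2$. The remaining routine step is just the dualization, and the main (very minor) obstacle is the careful bookkeeping of parallel classes in the rank-$2$ argument to confirm the separation is of the required size.
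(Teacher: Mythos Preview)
Your argument is correct and follows essentially the same approach as the paper: a short case analysis on $r(M)$ (and duality) using $3$-connectedness to force $|E(M)|=3$ when the rank is small. The paper's version is slightly more streamlined---after disposing of $|E(M)|=3$, it observes that $3$-connectedness forces $M$ to be simple, so $r(M)=2$ would make $M\cong U_{2,n}$ with $n\ge 4$, contradicting binarity---whereas you work through the parallel-class structure to exhibit explicit separations; note also that your ``$U_{2,1}$'' is a harmless slip (no such matroid exists, and that case is vacuous in your argument).
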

\begin{proof}
Since $M$ is connected, $M$ has no loops and no coloops. 
So if $|E(M)|=3$, then $M$ is isomorphic to $U_{1,3}$ or $U_{2,3}$. So we may assume that $|E(M)|\geq 4$. 

Since $M$ is $3$-connected, $M$ has no circuit of size $2$. Hence, every subset of $E(M)$ with size at most~$2$ is independent. If $r(M)=2$, then $M$ is isomorphic to $U_{2,n}$ for $n\geq 4$, contradicting our assumption that $M$ is binary. So $r(M)\geq 3$. By duality, we also have $r^{*}(M)\geq 3$.
\end{proof}

\begin{lemma}
\label{lem:core}
Let $M$ be a connected matroid minor-minimally not delta-graphic. Let $(T,\rho)$ be a canonical tree decomposition of $M$. If $(T,\rho)$ has a core $v$, then $|E(M)|\leq 40$.
\end{lemma}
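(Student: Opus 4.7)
I plan to prove Lemma~\ref{lem:core} by a case analysis on whether the core $v$ satisfies \ref{item:R1} or \ref{item:R2}; in each case I will build a minor of $M$ that is an $H'$-matroid or an $H$-matroid, use Lemma~\ref{lem:Hgraph} and minor-minimality to deduce $M$ equals this minor, and then shrink each vertex matroid to its minimum size. At the outset, Lemma~\ref{GorCG} lets me assume every $\rho(w)$ lies in $\mathcal{G}\cup\mathcal{G}^*$; otherwise $M$ has a minor isomorphic to one of $U_{2,4}, F_7, F_7^*, R_{10}, R_{12}$, and minor-minimality forces $|E(M)|\leq 12$.

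In case \ref{item:R1}, let $X_1, X_2$ be the two bad components of $T\setminus v$. Since $\mathcal{G}$ and $\mathcal{G}^*$ are each closed under $2$-sum, each $X_i$ has a vertex with $\rho$ in $\mathcal{G}-\mathcal{G}^*$ and a vertex with $\rho$ in $\mathcal{G}^*-\mathcal{G}$; applying Lemma~\ref{Reducingsubtree} iteratively (its contrapositive says every branch at an internal vertex of $T$ is again not in $\mathcal{G}\cup\mathcal{G}^*$) together with Lemma~\ref{planar} to identify classes at leaves, I produce leaves $x_i, y_i$ of $T$ inside $X_i$ with $\rho(x_i)\in\mathcal{G}-\mathcal{G}^*$ and $\rho(y_i)\in\mathcal{G}^*-\mathcal{G}$. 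Let $v_i\in V(X_i)$ be the median of $\{v, x_i, y_i\}$ in $T$. Passing to the subtree $S$ of $T$ spanned by $\{v, x_1, y_1, x_2, y_2\}$ via Lemma~\ref{subtreeminor} and then contracting via Lemma~\ref{pathcontraction} each of the six degree-$2$ internal paths between $v, v_1, v_2$ and the four leaves yields a minor $M'$ of $M$ whose canonical tree decomposition sits on the $H'$-graph with $v$ in the middle; condition \ref{item:H1} holds at $v$ by the core hypothesis and \ref{item:H2} holds by the choice of leaves, so $M'$ is an $H'$-matroid and, by Lemma~\ref{lem:Hgraph}, not delta-graphic. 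Minor-minimality gives $M=M'$.

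I next bound each of the seven vertex matroids of this $H'$-decomposition of $M$. Each leaf has at most $10$ elements by Lemma~\ref{Reducingleaf}. The core $v$ is $3$-connected, binary, non-uniform and in $\mathcal{G}\cap\mathcal{G}^*$, so $r,r^*\geq 3$ by Lemma~\ref{lem:rk3}; Lemma~\ref{MK4minor} then gives an $M(K_4)$-minor of $\rho(v)$ containing its two tree edges plus any extra element, and replacing $\rho(v)$ by this $M(K_4)$ keeps the tree decomposition canonical and preserves \ref{item:H1}, forcing $|E(\rho(v))|=6$ by minor-minimality. For each of $v_1, v_2$: $\rho(v_i)$ is connected (being $3$-connected or uniform of corank/rank~$1$), so Lemma~\ref{3circuit} places its three tree edges in a common circuit $C$ or cocircuit $D$, giving a minor of $\rho(v_i)$ on just these three elements that is isomorphic to $U_{2,3}$ (from $\rho(v_i){\restriction}C\,/\,(C-\{\text{tree edges}\})$) or $U_{1,3}$; replacing $\rho(v_i)$ by this uniform matroid creates no bad pair (its neighbours $v, x_i, y_i$ are all non-uniform) and keeps the $H'$-matroid conditions, so $|E(\rho(v_i))|=3$ by minor-minimality. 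Summing,
\[
|E(M)| = 4\cdot 10 + 6 + 2\cdot 3 - 2\cdot 6 = 40.
\]

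Case \ref{item:R2} is analogous. I find leaves $p, q$ of $T$ in the unique bad component $X$ with $\rho(p)\in\mathcal{G}-\mathcal{G}^*$ and $\rho(q)\in\mathcal{G}^*-\mathcal{G}$, plus leaves $y'_1$ of $T$ in $Y_1$ and $y'_2$ of $T$ in $Y_2$, and let $v_X$ be the median of $\{v, p, q\}$; path contraction yields an $H$-matroid on six vertices with central vertices $v$ and $v_X$, which equals $M$ by Lemma~\ref{lem:Hgraph} and minor-minimality. The same shrinking procedure gives $|E(M)|\leq 4\cdot 10 + 6 + 3 - 2\cdot 5 = 39\leq 40$. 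The main obstacle I foresee is the reduction at the non-core internal vertices $v_1, v_2$ (or $v_X$): because their $\rho$ can belong to any subclass of $\mathcal{G}\cup\mathcal{G}^*$, Lemma~\ref{lem:Tiso} does not directly permit a size reduction (equivalence would be broken when jumping between the $3$-connected and uniform classes), so I must construct the uniform minor explicitly via Lemma~\ref{3circuit} and verify case-by-case (uniform of rank $1$, uniform of corank $1$, $3$-connected in each of the three subclasses of $\mathcal{G}\cup\mathcal{G}^*$) that the resulting tree decomposition is canonical and retains the $H'$- or $H$-matroid structure.
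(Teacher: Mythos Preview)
Your overall strategy---locate leaves of both colour classes in each ``bad'' component, contract down to an $H'$- or $H$-graph, invoke Lemma~\ref{lem:Hgraph} and minor-minimality, then shrink $\rho(v)$ to $M(K_4)$ via Lemma~\ref{MK4minor} and each $\rho(v_i)$ to $U_{1,3}$ or $U_{2,3}$ via Lemma~\ref{U13minor}---matches the paper's proof exactly, and your arithmetic is correct in both cases.

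There is, however, a genuine gap in how you find the leaves $x_i,y_i$. You assert that ``applying Lemma~\ref{Reducingsubtree} iteratively'' produces leaves of both types inside each $X_i$, but the contrapositive of that lemma says only that a branch whose \emph{root} is internal has $\rho\notin\mathcal G\cup\mathcal G^*$; it does not force the non-planar vertex you located in $X_i$ to be a leaf. Concretely, take $X_i=\{x,z\}$ with $x$ adjacent to $v$, $z$ a leaf of $T$, $\rho(x)\in\mathcal G-\mathcal G^*$, and $\rho(z)\in\mathcal G^*-\mathcal G$. Then $\rho(X_i)\notin\mathcal G\cup\mathcal G^*$ as required, yet $X_i$ contains \emph{no} leaf of $T$ with $\rho\in\mathcal G-\mathcal G^*$, and no amount of iterating Lemma~\ref{Reducingsubtree} will manufacture one. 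The paper closes this gap by first invoking Lemma~\ref{lem:flexible} (if there is a flexible triple, $|E(M)|\le 30$) and then Lemma~\ref{notriaxialstrong}, which forces $\rho(w)\in\mathcal G\cap\mathcal G^*$ for every internal vertex $w$; once that is in place, any vertex with $\rho\in\mathcal G-\mathcal G^*$ is automatically a leaf. You should add exactly this reduction at the start of your proof---after that, the rest of your argument goes through without change.
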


\begin{proof}
If $(T,\rho)$ has a flexible triple, then $|E(M)|\leq 30$ by Lemma~\ref{lem:flexible}. So we may assume that $(T,\rho)$ has no flexible triples.
By Lemma~\ref{notriaxialstrong}, we may assume that $\rho(x)\in\mathcal{G}\cap\mathcal{G}^{*}$ for each internal vertex $x$. Suppose that a core $v$ satisfies \ref{item:R1} and let $X$, $Y$ be components of $T\setminus v$ such that $\rho(X),\rho(Y)\notin\mathcal{G}\cup\mathcal{G}^{*}$. Then $T$ has leaves $x_{1},y_{1}\in V(X)$ and $x_{2},y_{2}\in V(Y)$ such that $\rho(x_{1}),\rho(x_{2})\in\mathcal{G}-\mathcal{G}^{*}$ and $\rho(y_{1}),\rho(y_{2})\in\mathcal{G}^{*}-\mathcal{G}$ by Lemmas~\ref{GorCG} and \ref{planar}. Let $P_{1}:=T_{x_{1},y_{1}}$ and $P_{2}:=T_{x_{2},y_{2}}$. For each $i\in\{1,2\}$, let $Q_{i}$ be a shortest path from $v$ to $P_{i}$ and let $v_{i}\in V(P_{i})$ be an end of $E(Q_{i})$. Then $v_{i}\notin\{x_{i},y_{i}\}$ because $x_{i},y_{i}$ are leaves of $T$.

For each $i\in\{1,2\}$, let $e_{i}$, $f_{i}$ be edges of $P_{i}$ incident with $x_{i}$, $y_{i}$ respectively and let $g_{i}$, $g_{i}'$, $g_{i}''$ be edges incident with $v_{i}$ such that $g_{i}\in E(T_{v_{i},x_{i}})$, $g_{i}'\in E(T_{v_{i},y_{i}})$, and $g_{i}''\in E(T_{v,v_{i}})$. Let $h_{1}\in E(Q_{1})$, $h_{2}\in E(Q_{2})$ be edges incident with $v$.

For each $i\in\{1,2\}$, let $N_{i}$ be a matroid obtained from $\rho(v_{i})$ by relabelling $g_{i}$, $g_{i}'$, $g_{i}''$ to $e_{i}$, $f_{i}$, $h_{i}$ respectively. Let $T'$ be the $H'$-graph in Figure~\ref{fig:Hgraph} and, for each $w\in V(T')$, let 
\[
\rho'(w)=
\begin{cases}
\rho(w) & \text{if $w\in V(T')-\{v_{1},v_{2}\}$,} \\
N_{i} & \text{if $w=v_{i}$ for some $i\in\{1,2\}$.}
\end{cases}
\]
By Lemmas~\ref{subtreeminor} and \ref{pathcontraction}, $\rho'(T')$ is isomorphic to a minor of $\rho(T)$. For each $i\in\{1,2\}$, since $N_{i}$ is connected, by Lemma~\ref{U13minor}, $N_{i}$ has a minor $N_{i}'$ on $\{e_{i},f_{i},h_{i}\}$ which is isomorphic to $U_{1,3}$ or $U_{2,3}$. Moreover, since $\rho(v)$ is neither a uniform matroid of rank $1$ nor a uniform matroid of corank $1$, $r(\rho(v)),r^{*}(\rho(v))\geq 3$ by Lemma~\ref{lem:rk3}. Hence, by Lemma~\ref{MK4minor}, $\rho(v)$ has a minor $N$ isomorphic to $M(K_{4})$ using $h_{1}$ and $h_{2}$. So, for each vertex $w\in V(T')$, let 
\[
\rho''(w)=
\begin{cases}
\rho'(w) & \text{if $w\in V(T')-\{v,v_{1},v_{2}\}$,} \\
N_{i}' & \text{if $w\in\{v_{1},v_{2}\}$,} \\
N & \text{if $w=v$.}
\end{cases}
\] Then $\rho''(T')$ is isomorphic to a minor of $M$ which is the $H'$-matroid. By Lemma~\ref{lem:Hgraph} and the minimality of $M$, $M$ is isomorphic to $\rho''(T')$. Therefore, by Lemmas~\ref{ReducingVertex} and \ref{Reducingleaf}, 
\[
|E(M)|=\sum_{w\in\{x_{1},x_{2},y_{1},y_{2}\}}|E(\rho(w))|+|E(N_{1}')|+|E(N_{2}')|+|E(N)|-12\leq 40.
\]

Suppose that a core $v$ satisfies \ref{item:R2} and let $X$ be a component of  $T\setminus v$ such that $\rho(X)\notin\mathcal{G}\cup\mathcal{G}^{*}$ and $Y_{1}$, $Y_{2}$ be components of $T\setminus v$ such that $\rho(Y_{1})\in\mathcal{G}-\mathcal{G}^{*}$ and $\rho(Y_{2})\in\mathcal{G}^{*}-\mathcal{G}$. Then, by Lemmas~\ref{GorCG} and \ref{planar}, $T$ has leaves $x_{1},y_{1}\in V(X)$, $x_{2}\in V(Y_{1})$, and $y_{2}\in V(Y_{2})$ such that $\rho(x_{1}), \rho(x_{2})\in\mathcal{G}-\mathcal{G}^{*}$ and $\rho(y_{1}), \rho(y_{2})\in\mathcal{G}^{*}-\mathcal{G}$. Let $P_{1}:=T_{x_{1},y_{1}}$, $P_{2}:=T_{v,x_{2}}$ and $P_{3}:=T_{v,y_{2}}$. 
Let $Q$ be a shortest path from $v$ to $P_{1}$ and $v_{1}\in V(P_{1})$ is an end of $Q$. Then $v_{1}\notin\{x_{1},y_{1}\}$ because $x_{1}$, $y_{1}$ are leaves of $T$. For each $i\in\{1,2\}$, let $e_{i}$, $f_{i}$ be edges of $T$ incident with $x_{i}$, $y_{i}$ respectively. Let $v_{2}=v$ and for each $i\in\{1,2\}$, let $g_{i}$, $g_{i}'$, $g_{i}''$ be edges incident with $v_{i}$ such that $g_{i}\in E(Q)$, $g_{i}'\in E(T_{v_{i},x_{i}})$, and $g_{i}''\in E(T_{v_{i},y_{i}})$ and $N_{i}$ be a matroid obtained from $\rho(v_{i})$ by relabelling $g_{i}$, $g_{i}'$, $g_{i}''$ to $e:=g_{1},e_{i},f_{i}$ respectively. Let $T'$ be the $H$-graph in Figure~\ref{fig:Hgraph} and for $w\in V(T')$, let
\[
\rho'(w)=
\begin{cases}
\rho(w) & \text{if $w\in V(T')-\{v_{1},v_{2}\}$,} \\
N_{i} & \text{if $w=v_{i}$ for some $i\in\{1,2\}$.} 
\end{cases}
\]
By Lemmas~\ref{subtreeminor} and \ref{pathcontraction}, $\rho'(T')$ is isomorphic to a minor of $\rho(T)$. Since $N_{1}$ is connected, by Lemma~\ref{U13minor}, there is a minor $N_{1}'$ on $\{e,e_{1},f_{1}\}$ which is isomorphic to $U_{1,3}$ or $U_{2,3}$.
Moreover, since $N_{2}$ is neither a uniform matroid of rank $1$ nor a uniform matroid of corank $1$, $r(N_{2}),r^{*}(N_{2})\geq 3$ by Lemma~\ref{lem:rk3}. Hence, by Lemma~\ref{MK4minor}, $N_{2}$ has a minor $N_{2}'$ isomorphic to $M(K_{4})$ using $e$, $e_{2}$, and $f_{2}$. So, for each vertex $w\in V(T')$, let 
\[
\rho''(w)=
\begin{cases}
\rho'(w) & \text{if $w\in V(T')-\{v_{1},v_{2}\}$,} \\
N_{i}' & \text{if $w=v_{i}$ for some $i\in\{1,2\}$,} \\
\end{cases}
\] 
Then $\rho''(T')$ is isomorphic to a minor of $M$ which is the $H$-matroid. By Lemma~\ref{lem:Hgraph} and the minimality of $M$, $M$ is isomorphic to $\rho''(T')$. Therefore, by Lemmas~\ref{ReducingVertex} and \ref{Reducingleaf},
\[
|E(M)|=\sum_{w\in\{x_{1},x_{2},y_{1},y_{2}\}}|E(\rho(w))|+|E(N_{1}')|+|E(N_{2}')|-10\leq 40-1=39. \qedhere
\]
\end{proof}

\subsection{Excluding $(m,k)$-benches}

\begin{figure}[t]
\centering
\tikzstyle{v}=[circle, draw,fill=black,inner sep=0pt, minimum width=3pt]
  \begin{tikzpicture}
    \draw (0,1) node [label=left:$x_1$,v] (x1){}
    -- (0,0) node [label=left:$v_1$,v] (v1){}
    -- (0,-1)node[label=left:$y_1$,v] (y1){};
    \draw (4,0) node [label=right:$v_m$,v](v2){}
    --(4,1) node[label=right:$x_m$,v](x2){};
    \draw (v2)--(4,-1)node[label=right:$y_m$,v]{};
    \draw (v1) [dotted] to (2,0) node [v,label=$v_k$](vk) {} [dotted] to (v2);
    \draw (vk)--+(0,-1)node[v,label=right:$w$]{};
  \end{tikzpicture}
\caption{$(m,k)$-bench}
\label{fig:Bench}
\end{figure}
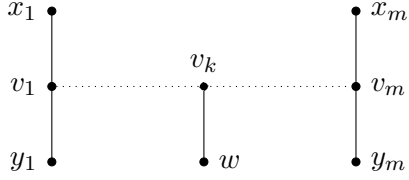

For $m\geq 3$ and $1<k<m$, an $(m,k)$-\emph{bench} is a tree obtained from a path $v_{1}v_{2}\cdots v_{m}$ of length $m-1$ by adding vertices $x_{1}$, $y_{1}$ adjacent to $v_{1}$, adding vertices $x_{m}$, $y_{m}$ adjacent to $v_{m}$, and adding a vertex~$w$ adjacent to $v_{k}$. See Figure~\ref{fig:Bench}.

A connected matroid $M$ with a canonical tree decomposition $(T,\rho)$ is an \emph{$(m,k)$-bench} for $m\geq 3$ and $1<k<m$ if $T$ is an $(m,k)$-bench and $(T,\rho)$ satisfies the following:

\begin{enumerate}[label=\rm(L\arabic*)]
\item\label{item:L1} For each $i\in\{1,\ldots,m\}$, $\rho(v_{i})$ is a uniform matroid of rank $1$ or corank $1$.
\item\label{item:L2} $\rho(x_{1}),\rho(x_{m})\in\mathcal{G}-\mathcal{G}^{*}$ and $\rho(y_{1}),\rho(y_{m})\in\mathcal{G}^{*}-\mathcal{G}$.
\item\label{item:L3} $\rho(w)\in\mathcal{G}-\mathcal{G}^{*}$ if $r(\rho(v_{k}))=1$ and $\rho(w)\in\mathcal{G}^{*}-\mathcal{G}$ if $r^{*}(\rho(v_{k}))=1$.
\end{enumerate}

\begin{lemma}
\label{ladder}
No $(m,k)$-bench is delta-graphic.
\end{lemma}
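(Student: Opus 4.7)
The plan is to invoke Proposition~\ref{prop:deltagraphictree}: since an $(m,k)$-bench $M$ is connected, it suffices to show that $M\notin\mathcal{G}\cup\mathcal{G}^*$ and that the canonical tree decomposition $(T,\rho)$ satisfies none of \ref{item:t1}, \ref{item:t2}, \ref{item:t3}, \ref{item:t4}. The workhorse will be Lemma~\ref{subtreeminor}: for any subtree $S$ of $T$, $\rho(S)$ is a minor of $\rho(T)$, and each label $\rho(u)$ with $u\in V(S)$ is a minor of $\rho(S)$. Combined with \ref{item:L2}, this gives the repeatedly used observation that any subtree of $T$ containing $x_1$ or $x_m$ has $\rho(S)\notin\mathcal{G}^*$ and any subtree containing $y_1$ or $y_m$ has $\rho(S)\notin\mathcal{G}$, so any subtree containing one vertex from $\{x_1,x_m\}$ and one from $\{y_1,y_m\}$ lies outside $\mathcal{G}\cup\mathcal{G}^*$. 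Taking $S=T$ already yields $M\notin\mathcal{G}\cup\mathcal{G}^*$.

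Ruling out \ref{item:t1} will proceed by enumerating the edges of $T$: a spine edge $v_iv_{i+1}$ splits $\{x_1,y_1\}$ and $\{x_m,y_m\}$ one pair to each component, so both components have $\rho$ outside $\mathcal{G}\cup\mathcal{G}^*$; each of the pendant edges $v_1x_1,v_1y_1,v_mx_m,v_my_m,v_kw$ removes only a single pendant, leaving the other component still containing at least one element of $\{x_1,x_m\}$ and one of $\{y_1,y_m\}$. For \ref{item:t2}, the distinguished vertex $v$ must satisfy $\rho(v)\cong U_{1,n}$ or $U_{n-1,n}$ for some $n\ge 3$; these uniform matroids are the cycle matroids of $n$ parallel edges and of the $n$-cycle respectively, so both lie in $\mathcal{G}\cap\mathcal{G}^*$, whereas by \ref{item:L2} and \ref{item:L3} every pendant label lies in $(\mathcal{G}-\mathcal{G}^*)\cup(\mathcal{G}^*-\mathcal{G})$. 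Hence only the spine vertices $v_i$ qualify, and a direct case analysis on $i$ (splitting into $i\in\{1,m\}$, $i\in\{2,\ldots,m-1\}\setminus\{k\}$, and $i=k$) shows that some component of $T\setminus v_i$ always contains both a vertex from $\{x_1,x_m\}$ and one from $\{y_1,y_m\}$, violating the required membership. For \ref{item:t3}, the hub must satisfy $\rho(v)\cong M(W_j)$ for some $j\ge 3$, which is not uniform of rank or corank $1$ (both rank and corank equal $j\ge 3$) and lies in $\mathcal{G}\cap\mathcal{G}^*$ since $W_j$ is planar; this excludes every vertex of $T$.

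The subtle case is \ref{item:t4}, which I expect to be the main obstacle, and where the opposition engineered by \ref{item:L3} at the pendant $w$ is the decisive mechanism. By \ref{item:f1} and \ref{item:L1}, the spine $P$ must lie inside the subpath $v_1v_2\cdots v_m$, so $P=v_av_{a+1}\cdots v_b$ with $1\le a\le b\le m$ and $b-a\ge 1$. If $a>1$, the off-spine edge at the endpoint $v_a$ leading toward $v_{a-1}$ cuts off a subtree containing both $x_1$ and $y_1$, whose $\rho$ therefore fails to lie in $\mathcal{G}\cup\mathcal{G}^*$, contradicting \ref{item:f2}. Hence $a=1$ and, symmetrically, $b=m$, so $P=v_1\cdots v_m$ and $v_k$ is an internal vertex of $P$. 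Applying \ref{item:f2} at $v_k$ to the off-spine edge $v_kw$ forces $\rho(w)\in\mathcal{G}$ when $\rho(v_k)$ has corank $1$ and $\rho(w)\in\mathcal{G}^*$ when $\rho(v_k)$ has rank $1$; but \ref{item:L3} prescribes exactly the opposite in each case. The resulting contradiction completes the argument.
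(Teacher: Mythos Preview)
Your proof is correct and follows essentially the same approach as the paper: show $M\notin\mathcal{G}\cup\mathcal{G}^*$, rule out \ref{item:t1}--\ref{item:t3} by exhibiting a component containing both a vertex of $\{x_1,x_m\}$ and one of $\{y_1,y_m\}$, and rule out \ref{item:t4} by forcing the spine to be $v_1\cdots v_m$ and then invoking the conflict between \ref{item:f2} and \ref{item:L3} at $v_k$. The paper is slightly more economical in that it dispatches \ref{item:t1}, \ref{item:t2}, and \ref{item:t3} simultaneously with the single observation that for every edge $e$ of $T$ some component of $T\setminus e$ contains $\{x_1,y_1\}$ or $\{x_m,y_m\}$, rather than treating the three cases separately, but the substance is identical.
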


\begin{proof}
Let $M$ be an $(m,k)$-bench with a canonical tree decomposition $(T,\rho)$. Suppose that $M$ is delta-graphic. By \ref{item:L2}, $M\notin\mathcal{G}\cup\mathcal{G}^{*}$ and so by Proposition~\ref{prop:deltagraphictree}, $(T,\rho)$ satisfies at least one of \ref{item:t1}, \ref{item:t2}, \ref{item:t3}, and \ref{item:t4}. For each $e\in E(T)$, there is a component $C$ of $T\setminus e$ such that $\{x_{1},y_{1}\}\subseteq V(C)$ or $\{x_{m},y_{m}\}\subseteq V(C)$. So $\rho(C)\notin\mathcal{G}\cup\mathcal{G}^{*}$ and $(T,\rho)$ does not satisfy \ref{item:t1}, \ref{item:t2}, or \ref{item:t3}. Suppose that $(T,\rho)$ satisfies \ref{item:t4} with a spine $P$. Then by \ref{item:f1} and \ref{item:f2}, $P=v_{1}v_{2}\cdots v_{m}$ and $\rho(w)\in\mathcal{G}$ if $r^{*}(\rho(v_{k}))=1$ and $\rho(w)\in\mathcal{G}^{*}$ if $r(\rho(v_{k}))=1$, contradicting \ref{item:L3}. So $(T,\rho)$ does not satisfy \ref{item:t4}, contradicting our assumption. 
\end{proof}

\begin{lemma}
\label{lem:bench}
Let $M$ be a connected matroid  minor-minimally not delta-graphic.
Let $(T,\rho)$ be a canonical tree decomposition of $M$.
If $\rho(v)$ is a uniform matroid of rank $1$ or corank $1$ for each internal vertex $v$, then $|E(M)|\leq 47$.
\end{lemma}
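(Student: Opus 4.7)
The plan is to show that $M$ is itself isomorphic to an $(m,k)$-bench with $m \leq 5$, and then bound $|E(M)|$ by summing contributions over $T$.

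First, by Lemmas~\ref{planar}, \ref{Reducingleaf}, and \ref{ReducingVertex}(ii), every leaf $v$ of $T$ satisfies $\rho(v) \in (\mathcal{G} - \mathcal{G}^*) \cup (\mathcal{G}^* - \mathcal{G})$ with $|E(\rho(v))| \leq 10$, and every internal $v$ is uniform of rank or corank $1$ (so $\rho(v) \in \mathcal{G} \cap \mathcal{G}^*$) with $|E(\rho(v))| \in \{\deg_T(v), 3\}$. Both graphic types of leaves occur since $M \notin \mathcal{G} \cup \mathcal{G}^*$.

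Second, since $M$ is not delta-graphic, Proposition~\ref{prop:deltagraphictree} rules out each of \ref{item:t1}--\ref{item:t4} for $(T, \rho)$. As no vertex matroid is a wheel, \ref{item:t3} is automatic. The failure of \ref{item:t4} (fan decomposition) for every candidate spine, together with the failures of \ref{item:t1} and \ref{item:t2}, forces a bench pattern: along a maximal internal-vertex path $v_1 \cdots v_m$, the $F_3$-obstructions at the two ends produce two leaves of opposite graphic types attached at each of $v_1$ and $v_m$, and an $F_2$-obstruction at some interior $v_k$ produces a leaf $w$ of the forbidden graphic orientation relative to $\rho(v_k)$. Using Lemmas~\ref{subtreeminor} and \ref{pathcontraction} to prune irrelevant subtrees, Lemma~\ref{lem:rootleafminor} to shrink each leaf-matroid to $M(K_5)$, $M(K_{3,3})$, $M(K_{3,3}')$, or a dual, and Lemma~\ref{lem:Tiso} to preserve non-delta-graphicness under those reductions, we extract an $(m,k)$-bench as a minor of $M$. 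By Lemma~\ref{ladder} this minor is not delta-graphic, so minor-minimality forces $M$ itself to be an $(m,k)$-bench.

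Third, I would bound $m \leq 5$. Suppose $m \geq 6$; then the $m - 3 \geq 3$ degree-$2$ internal vertices provide enough room to pick $v_i$ with $|i - k| \geq 2$ (the constraint eliminates only $i \in \{k-1, k+1\}$). Delete the unique extra element $g_i \in E(\rho(v_i))$. In the canonical tree decomposition of $M \setminus g_i$, the vertex $v_i$ (now labelled by $U_{1,2}$) is absorbed, and since the canonical tree decomposition forces rank-$1$/corank-$1$ types to alternate along the spine, $v_{i-1}$ and $v_{i+1}$ form a bad pair and merge into a single uniform vertex of their common type. Because $|i - k| \geq 2$ ensures $v_k \notin \{v_{i-1}, v_i, v_{i+1}\}$, the vertex $v_k$ and its leaf $w$ are untouched, and the resulting matroid is an $(m-2, k')$-bench with \ref{item:L3} preserved. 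By Lemma~\ref{ladder} this smaller bench is not delta-graphic, contradicting minor-minimality. The main obstacle of the proof is this third step, which requires a careful parity argument to confirm the preservation of \ref{item:L3} under the merge.

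Finally, using the identity
\[
|E(M)| = \sum_{v \in V(T)}\bigl(|E(\rho(v))| - \deg_T(v)\bigr)
\]
and the bounds above (leaf contribution at most $10 - 1 = 9$, degree-$3$ internal contribution $0$, degree-$2$ internal contribution $3 - 2 = 1$), we obtain
\[
|E(M)| \leq 5 \cdot 9 + (m - 3) \cdot 1 = 42 + m \leq 47.
\]
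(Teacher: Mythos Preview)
Your overall strategy is sound and the final count in step 4 is correct, but step 2 has a real gap and step 3, while a valid alternative to the paper, needs one correction.

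\textbf{The gap in step 2.} You write that ``the $F_3$-obstructions at the two ends produce two leaves of opposite graphic types attached at each of $v_1$ and $v_m$.'' This is backward: failure of \ref{item:t4} for the spine $P=v_1\cdots v_m$ yields \emph{one} obstruction (an \ref{item:f2}-violation at some interior $v_k$, or an \ref{item:f3}-violation at a single end), not three. The existence of both a $\mathcal{G}-\mathcal{G}^{*}$ leaf and a $\mathcal{G}^{*}-\mathcal{G}$ leaf adjacent to $v_1$ (and likewise at $v_m$) does not follow from \ref{item:t4} failing; it comes from Lemma~\ref{Reducingsubtree}. Since $P$ is a longest path in $T[X]$, every neighbor of $v_1$ other than $v_2$ is a leaf of $T$; if all such leaves had $\rho$ in $\mathcal{G}$ (say), then $\rho(T_{e_1}(v_2))\in\mathcal{G}$, and Lemma~\ref{Reducingsubtree} would force $v_1$ to be a leaf, a contradiction. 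Only after securing the end-leaves this way does the failure of \ref{item:t4} supply the interior leaf $w$. You also skip the degenerate cases $|V(T)|\le 2$ and $m\le 2$; the paper disposes of these via Lemma~\ref{3connected} and by observing that $m\le 2$ forces \ref{item:t2} or \ref{item:t4}.

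\textbf{Step 3 versus the paper.} Your iterative reduction $m\mapsto m-2$ is a genuine alternative to the paper's approach. The paper instead contracts the long stretches of $P$ in one shot, using parity indices $k^{-},k^{+}$ (chosen so that the types of $\rho(v_1),\rho(v_{k^-}),\rho(v_{k^+}),\rho(v_m)$ line up) to land directly on an $(m',k')$-bench with $m'=3+(k^{+}-k^{-})\le 5$. Your argument avoids that bookkeeping at the price of an extra induction, and is arguably cleaner. One correction: ``delete $g_i$'' only works when $\rho(v_i)\cong U_{1,3}$; when $\rho(v_i)\cong U_{2,3}$ you must \emph{contract} $g_i$ to obtain $U_{1,2}$ (deletion would leave two coloops). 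With that fix, the merging of $v_{i-1}$ and $v_{i+1}$ and the preservation of \ref{item:L3} go through exactly as you describe.
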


\begin{proof}
If $M$ is $3$-connected, then $M$ has a minor isomorphic to one of $U_{2,4}$, $F_{7}$, $F_{7}^{*}$, $R_{10}$, $R_{12}$ by Lemma~\ref{3connected} and $|E(M)|\leq 12$ by minimality of $M$. So we may assume that $M$ is not $3$-connected.

If $M$ has a flexible triple, then $|E(M)|\leq 30$ by Lemma~\ref{lem:flexible}. So we may assume that $M$ has no flexible triples.
By Lemma~\ref{GorCG}, $\rho(v)\in\mathcal{G}\cup\mathcal{G}^{*}$ for each $v\in V(T)$. By Lemma~\ref{planar}, $\rho(v)\notin\mathcal{G}\cap\mathcal{G}^{*}$ for each leaf $v$ of $T$.
Let $X$ be the set of internal vertices of $T$ and let $P=v_{1}v_{2}\cdots v_{m}$ be a longest path of $T[X]$. Let $e_{i}=v_{i}v_{i+1}$ for $i\in\{1,\ldots,m-1\}$. Then $m\geq 3$ because otherwise $M\in\mathcal{G}\cup\mathcal{G}^{*}$ or $(T,\rho)$ satisfies \ref{item:t2} or \ref{item:t4}, implying that $M$ is delta-graphic, contradicting our assumption. 

By taking dual, we may assume that $r(\rho(v_{1}))=1$. Since $P$ is a longest path, each vertex not in $P$ but adjacent to an end of $P$ is a leaf of $T$. There are leaves $x_{1},y_{1}\in N_{T}(v_{1})$ such that $\rho(x_{1})\in\mathcal{G}-\mathcal{G}^{*}$ and $\rho(y_{1})\in\mathcal{G}^{*}-\mathcal{G}$ because otherwise by Lemma~\ref{GorCG}, $T_{e_{1}}(v_{2})\in\mathcal{G}\cup\mathcal{G}^{*}$ and $v_{1}$ is a leaf of $T$ by Lemma~\ref{Reducingsubtree}, contradicting our assumption. Similarly, there are leaves $x_{m},y_{m}\in N_{T}(v_{m})$ such that $\rho(x_{m})\in\mathcal{G}-\mathcal{G}^{*}$ and $\rho(y_{m})\in\mathcal{G}^{*}-\mathcal{G}$. 

Since $(T,\rho)$ does not satisfy \ref{item:t4}, there exist $k\in\{2,\ldots,m-1\}$ and an edge $e\notin E(P)$ incident with $v_{k}$ such that $T_{e}(v_{k})\notin\mathcal{G}^{*}$, $r(\rho(v_{k}))=1$, or $T_{e}(v_{k})\notin\mathcal{G}$, $r^{*}(\rho(v_{k}))=1$. By Lemmas~\ref{GorCG} and \ref{planar}, there is a leaf $w\in V(T_{e}(v_{k}))$ of $T$ such that $\rho(w)\in\mathcal{G}-\mathcal{G}^{*}$ if $T_{e}(v_{k})\notin\mathcal{G}^{*}$ and $\rho(w)\in\mathcal{G}^{*}-\mathcal{G}$ if $T_{e}(v_{k})\notin\mathcal{G}$. Let $P_{1}:=T_{v_{k},w}$ and $e$, $f$ be edges of $P_{1}$ incident with $w$, $v_{k}$ respectively. Let 
\[
k^{-}=
\begin{cases}
k-1 & \text{if $k\equiv 1 \pmod{2}$,} \\
k & \text{otherwise} 
\end{cases}
\] and
\[
k^{+}=
\begin{cases}
k+1 & \text{if $k\equiv m \pmod{2}$,} \\
k & \text{otherwise.} 
\end{cases}
\]
Observe that $k^{-}\neq 1$ and $k^{+}\neq m$.
Let $e'=e_{1}$, $f'=e_{k^{-}-1}$, $f''=e_{k^{+}}$, $e''=e_{m-1}$.
Let $T_{1}$ be a minimal subtree of $T$ containing $\{w,x_{1},x_{m},y_{1},y_{m}\}$ and let $T'$ be a tree obtained from $T_{1}$ by contracting edges in $\{e_{i}:\text{$2\leq i< k^{-}-1$ or $k^{+}\leq i<m-1$}\}\cup (E(P_{1})-\{e\})$.

If $k^{-}=k-1$, then let $N_{k-1}$ be a matroid obtained from $\rho(v_{k-1})$ by relabelling $f'$ to $e'$. If $k^{+}=k+1$, then let $N_{k+1}$ be a matroid obtained from $\rho(v_{k+1})$ by relabelling $f''$ to $e''$. Let $N_{k}$ be a matroid obtained from $\rho(v_{k})$ by relabelling $f'$ to $e'$ if $f'\in E(\rho(v_{k}))$, relabelling $f''$ to $e''$ if $f''\in E(\rho(v_{k}))$, and relabelling $f$ to $e$. For each $z\in V(T')$, let
\[
\rho'(z)=
\begin{cases}
\rho(z) & \text{if $z\in V(T')-\{v_{k^{-}},v_{k},v_{k^{+}}\}$,} \\
N_{i} & \text{if $z=v_{i}$ and $k^{-}\leq i\leq k^{+}$.}
\end{cases}
\]

By Lemmas~\ref{subtreeminor} and \ref{pathcontraction}, $\rho'(T')$ is isomorphic to a minor of $\rho(T)$ and $\rho'(T')$ is isomorphic to a $(3+(k^{+}-k^{-}),2+(k-k^{-}))$-bench which is not delta-graphic by Lemma~\ref{ladder}. By the minimality of $M$, $\rho'(T')$ is isomorphic to $M$. By Lemma~\ref{ReducingVertex}, for each $z\in Z:=\{v_{1},v_{m}\}\cup\{v_{i}:k^{-}\leq i\leq k^{+}\}$, $|E(\rho(z))|=3$ because $\deg_{T'}(z)\leq 3$. Therefore, by Lemma~\ref{ReducingVertex} and \ref{Reducingleaf},
\begin{align*}
|E(M)|&=\sum_{z\in\{x_{1},y_{1},x_{m},y_{m},w\}}|E(\rho(z))|+\sum_{z\in Z}|E(\rho(z))|-(14+2(k^{+}-k^{-})) \\
&\leq 50+3(3+(k^{+}-k^{-}))-(14+2(k^{+}-k^{-}))=50-5+(k^{+}-k^{-})\leq 47. \qedhere
\end{align*}
\end{proof}

A connected matroid $M$ is \emph{starlike} if $M$ has a canonical tree decomposition $(T,\rho)$ such that 
\begin{itemize}
\item $T$ is isomorphic to $K_{1,4}$ with an internal vertex $v$ and leaves $x_{1}$, $x_{2}$, $x_{3}$, $x_{4}$,
\item $\rho(v)\in\mathcal{G}\cap\mathcal{G}^{*}$ and $\rho(v)$ is neither a uniform matroid of rank $1$ nor a uniform matroid of corank $1$, and
\item $\rho(x_{1}),\rho(x_{2})\in\mathcal{G}-\mathcal{G}^{*}$ and $\rho(x_{3}),\rho(x_{4})\in\mathcal{G}^{*}-\mathcal{G}$.
\end{itemize}

\begin{lemma}
\label{starlike}
Let $M$ be a connected matroid which is minor-minimally not delta-graphic. Then, $M$ is starlike or $|E(M)|\leq 47$.
\end{lemma}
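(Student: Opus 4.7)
The plan is to show, under the hypothesis $|E(M)|>47$, that the canonical tree decomposition $(T,\rho)$ of $M$ already has the starlike shape. I will produce a minor $M''$ of $M$ whose canonical decomposition is $K_{1,4}$ with the required label types, verify that $M''$ is not delta-graphic, and then invoke the minor-minimality of $M$ to conclude $M''=M$. The earlier lemmas give a candidate center: by Lemma~\ref{3connected} combined with minimality $M$ is not $3$-connected, Lemma~\ref{lem:flexible} rules out flexible triples, Lemma~\ref{notriaxialstrong} then forces every internal vertex of $T$ to have $\rho$-label in $\mathcal{G}\cap\mathcal{G}^{*}$, Lemma~\ref{lem:core} forbids cores, and Lemma~\ref{lem:bench} produces an internal vertex $v$ such that $\rho(v)$ is neither a uniform matroid of rank $1$ nor of corank $1$; hence $\rho(v)$ is $3$-connected, lies in $\mathcal{G}\cap\mathcal{G}^{*}$, and $v$ is not a core.

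Next I would reduce to the case where every component $C$ of $T\setminus v$ satisfies $\rho(C)\in\mathcal{G}\cup\mathcal{G}^{*}$. If some component $X$ fails this, then the failure of \ref{item:R1} makes $X$ unique and the failure of \ref{item:R2} forces the remaining components to all lie in just one of $\mathcal{G},\mathcal{G}^{*}$. Since $\rho(X)\notin\mathcal{G}\cup\mathcal{G}^{*}$, Lemma~\ref{planar} places leaves of opposite class-types inside $X$, and I would find a non-uniform internal vertex $v'$ of $T$ on the path between them to replace $v$ and run the Case~A argument below; the degenerate subcase where every such candidate $v'$ is uniform of rank or corank $1$ yields a bench-like configuration to which Lemma~\ref{lem:bench}'s reasoning applies, contradicting $|E(M)|>47$. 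So assume every component of $T\setminus v$ lies in $\mathcal{G}\cup\mathcal{G}^{*}$. Since $M\notin\mathcal{G}\cup\mathcal{G}^{*}$, the failure of~\ref{item:t1} of Proposition~\ref{prop:deltagraphictree} at each edge incident to $v$ forces at least two components in $\mathcal{G}-\mathcal{G}^{*}$ and at least two in $\mathcal{G}^{*}-\mathcal{G}$. I pick leaves $x_{1},x_{2}$ of $T$ in two different components of the former type and $y_{1},y_{2}$ in two different components of the latter (possible by Lemmas~\ref{GorCG} and \ref{planar}). Applying Lemma~\ref{subtreeminor} to the minimal subtree of $T$ spanning $\{v,x_{1},x_{2},y_{1},y_{2}\}$ and then four instances of Lemma~\ref{pathcontraction}, one on each $v$--leaf path (none of whose end-pairs are bad, because $\rho(v)$ is not uniform), gives a canonical tree decomposition $(T'',\rho'')$ of a minor $M''$ of $M$, where $T''\cong K_{1,4}$ with center $v$ and leaves $x_{1},x_{2},y_{1},y_{2}$ carrying the original class-types.

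It remains to verify that $M''$ is not delta-graphic, and this is the main obstacle. By Proposition~\ref{prop:deltagraphictree} I must rule out \ref{item:t1}--\ref{item:t4}. Conditions~\ref{item:t2} and~\ref{item:t4} fail because no node of $(T'',\rho'')$ carries a uniform matroid of rank or corank $1$, and~\ref{item:t1} fails at every edge because the three-leaf side of each $K_{1,4}$-edge removal still contains both a $\mathcal{G}-\mathcal{G}^{*}$ and a $\mathcal{G}^{*}-\mathcal{G}$ leaf. The genuinely delicate case is~\ref{item:t3}, which holds for $M''$ precisely when $\rho(v)\cong M(W_{k})$ and the four identification elements of $\rho(v)$ split $2$--$2$ between a circuit-hyperplane and its complement according to graphic-versus-cographic leaf-type. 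Since the original $(T,\rho)$ itself does not satisfy~\ref{item:t3}, at least one component of $T\setminus v$ is attached to $v$ through an element whose rim/spoke status in $M(W_{k})$ is incompatible with the class-type of that component; including such a mismatched component among the four chosen components propagates the mismatch into $M''$ and excludes~\ref{item:t3} (for $k=3$, one handles the four circuit-hyperplanes of $M(K_{4})$ by a slightly finer selection). Hence $M''$ is not delta-graphic, and the minor-minimality of $M$ gives $M''=M$, so $M$ is starlike.
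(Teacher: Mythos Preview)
Your overall strategy---build a starlike minor $M''$, show it is not delta-graphic, then invoke minimality---has a real gap at the \ref{item:t3} step, and the paper takes a different route that avoids the issue.

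First a minor point: your reduction to ``every component of $T\setminus v$ lies in $\mathcal{G}\cup\mathcal{G}^{*}$'' takes an unjustified detour. Having observed that no-\ref{item:R1} makes a hypothetical bad component $X$ unique and that no-\ref{item:R2} then forces all remaining components into a single one of $\mathcal{G},\mathcal{G}^{*}$, you should combine this with your own count (at least two components of each class-type) to obtain an immediate contradiction. Searching inside $X$ for a replacement centre $v'$ is not well-founded: nothing prevents $v'$ from facing the same obstruction, and Lemma~\ref{lem:bench} cannot cover the ``degenerate subcase'' since its hypothesis requires \emph{every} internal vertex to be uniform, whereas $v$ is not.

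The genuine gap is your exclusion of \ref{item:t3} for $M''$. You claim that because $(T,\rho)$ fails \ref{item:t3}, some component is ``mismatched'' and that including it among the four chosen leaves propagates the failure to $M''$. For $\rho(v)\cong M(W_k)$ with $k\ge 4$ the circuit-hyperplane is unique and this can be made precise, but for $k=3$ the matroid $M(K_4)$ has four circuit-hyperplanes, and a component mismatched for one of them may be compatible with another; ``a slightly finer selection'' is not an argument. The paper sidesteps this entirely. After getting every component into $\mathcal{G}\cup\mathcal{G}^{*}$, it applies Lemma~\ref{Reducingsubtree} at each edge incident to $v$ to conclude that every component is a single leaf, so $T\cong K_{1,m}$ already. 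To force $m=4$ it argues by contradiction: if three leaves are of graphic type, deleting one of them, say $x_1$, yields a proper (hence delta-graphic) minor whose star decomposition can only satisfy \ref{item:t3}, so $\rho(v)\cong M(W_k)$ with a circuit-hyperplane $C$, and necessarily $e_{x_1}\notin C$ (else $(T,\rho)$ itself would satisfy \ref{item:t3}). Deleting a second graphic leaf $x_2$ produces a second circuit-hyperplane $D\ni e_{x_1}$, so $D\neq C$; this is impossible for $k\ge 4$, and for $k=3$ the relation $|C\triangle D|>2$ yields an $e_i\in C\triangle D$ with $i\neq 1,2$ whose component would lie in both class-types, contradicting the earlier disjointness. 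This double-deletion argument is what replaces your incomplete \ref{item:t3} analysis.
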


\begin{proof}
Let $(T,\rho)$ be a canonical tree decomposition of $M$.
If $(T,\rho)$ has a flexible triple or a core, then $|E(M)|\leq 40$ by Lemmas~\ref{lem:flexible} and \ref{lem:core}. So we may assume that $M$ has no flexible triples and no cores. So by Lemma~\ref{notriaxialstrong}, $\rho(v)\in\mathcal{G}\cap\mathcal{G}^{*}$ for every internal vertex $v$. If $\rho(v)$ is a uniform matroid of rank $1$ or corank $1$ for each internal vertex $v$ of $T$, then by Lemma~\ref{lem:bench}, $|E(M)|\leq 47$. Therefore, we can assume that $T$ has an internal vertex $v$ such that $\rho(v)$ is neither a uniform matroid of rank $1$ nor a uniform matroid of corank $1$. 

Let $X_{1},\ldots, X_{m}$ be the components of $T\setminus v$ and let $C_{1}=\{X_{i}:\rho(X_{i})\notin\mathcal{G}^{*}\}$ and $C_{2}=\{X_{i}:\rho(X_{i})\notin\mathcal{G}\}$. Since $v$ is not a leaf, by Lemma~\ref{Reducingsubtree}, $|C_{1}|,|C_{2}|\geq 2$. We have $|C_{1}\cap C_{2}|\leq 1$ because otherwise \ref{item:R1} holds and $v$ is a core. Since \ref{item:R2} does not hold, we have $C_{1}\cap C_{2}=\emptyset$ and therefore $\rho(X)\in\mathcal{G}\cup\mathcal{G}^{*}$ for each component $X$ of $T\setminus v$. 

By Lemma~\ref{Reducingsubtree}, every component $X_{i}$ of $T\setminus v$ has only one vertex $x_{i}$. Therefore, $T$ is isomorphic to $K_{1,m}$. For each $i\in\{1,\ldots,m\}$, let $e_{i}$ be an edge $vx_{i}$ in $T$.

Suppose that $|C_{1}|\geq 3$. By symmetry, we may assume that $X_{1},X_{2}\in C_{1}$. Let $T_{1}=T\setminus x_{1}$ and $\rho_{1}:=\rho|_{V(T_{1})}$. By Lemma~\ref{subtreeminor}, $\rho(T_{1})$ is isomorphic to a minor of $M$ with a canonical tree decomposition $(T_{1},\rho_{1})$. So $\rho(T_{1})$ is delta-graphic. Since $|C_{1}-\{X_{1}\}|,|C_{2}|\geq 2$, $\rho(T_{1})\notin\mathcal{G}\cup\mathcal{G}^{*}$ and $(T_{1},\rho_{1})$ does not satisfy \ref{item:t1}. Since there is no vertex $u$ such that $\rho(u)$ is a uniform matroid of rank $1$ or corank $1$, $(T_{1},\rho_{1})$ satisfies neither \ref{item:t2} nor \ref{item:t4}. Therefore, $(T_{1},\rho_{1})$ is a wheel decomposition with the hub~$v$. So $\rho(v)$ is isomorphic to $M(W_{k})$ for some $k\geq 3$ and there is a circuit-hyperplane $C$ of $\rho(v)$ such that for $i\in\{2,\ldots,m\}$, $X_{i}\in C_{1}$ if $e_{i}\in C$ and $X_{i}\in C_{2}$ if $e_{i}\notin C$. If $e_{1}\in C$, then $(T,\rho)$ is a wheel decomposition and, by Proposition~\ref{prop:deltagraphictree}, $M$ is delta-graphic, contradicting our assumption. So $e_{1}\notin C$. 

Now let $T_{2}=T\setminus x_{2}$ and $\rho_{2}=\rho|_{V(T_{2})}$. Then, by Lemma~\ref{subtreeminor}, $\rho(T_{2})$ is a delta-graphic matroid with a canonical tree decomposition $(T_{2},\rho_{2})$. 
By the same reason, $(T_{2},\rho_{2})$ is a wheel decomposition with hub~$v$
and therefore $\rho(v)$ has a circuit-hyperplane $D$ such that for $i\in\{1,2, \ldots,m\}-\{2\}$,  
$X_{i}\in C_{1}$ if $e_{i}\in D$
and 
$X_{i}\in C_{2}$ if $e_{i}\notin D$.
In particular, $e_1\in D$ and so $D\neq C$. 
Since $\rho(v)$ is isomorphic to $M(W_{k})$ and both $C$ and $D$ are circuit-hyperplanes, we deduce that $k=3$.
Observe from $M(W_3)$ that $\abs{C\triangle D}>2$ and therefore there is $e_i\in C\triangle D$ with $i\neq 1,2$. This implies that $X_i\in C_1\cap C_2$, contradicting the fact that $C_1\cap C_2=\emptyset$.
Thus we conclude that $|C_{1}|=2$. Similarly, $|C_{2}|=2$. Therefore $M$ is starlike.
\end{proof}

\begin{lemma}
\label{lem:properwheel}
Let $M$ be a connected matroid which is starlike and is minor-minimally not delta-graphic. 
Let $(T,\rho)$ be a canonical tree decomposition of $M$.
If $v$ is the internal vertex of $T$ and $N$ is a $3$-connected proper minor of $\rho(v)$ containing $E(T)$, then $N$ is isomorphic to $M(W_{k})$ for some $k\geq 3$.
\end{lemma}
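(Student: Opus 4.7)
The plan is to replace the block $\rho(v)$ by $N$ in the tree decomposition, use the minimality of $M$ to conclude that the resulting matroid $M'$ is delta-graphic, and then read off from Proposition~\ref{prop:deltagraphictree} that the only feasible structural type for $M'$ forces $N\cong M(W_k)$.

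First, define
\[M' = N\oplus_2\rho(x_1)\oplus_2\rho(x_2)\oplus_2\rho(x_3)\oplus_2\rho(x_4),\]
where each $e_i$ identifies $N$ with $\rho(x_i)$. Since $N$ is $3$-connected and $|E(N)|\geq |E(T)|=4$, $N$ has no loops or coloops, so all basepoints $e_i$ remain legitimate at every intermediate deletion/contraction used to produce $N$ from $\rho(v)$. Applying Lemma~\ref{2summinor} iteratively, $M'$ is a proper minor of $M$; by Lemma~\ref{2sumconnected}, $M'$ is connected. By the minimality of $M$, $M'$ is delta-graphic. Moreover, since $\rho(x_1)\in\mathcal{G}-\mathcal{G}^*$ and $\rho(x_3)\in\mathcal{G}^*-\mathcal{G}$ are minors of $M'$, we have $M'\notin\mathcal{G}\cup\mathcal{G}^*$.

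Next, I would verify that the matroid-labelled tree $(T,\rho')$ with $\rho'(v)=N$ and $\rho'(x_i)=\rho(x_i)$ is a canonical tree decomposition of $M'$. All five labels are $3$-connected: $N$ by hypothesis, and each $\rho(x_i)$ because it was a label in the canonical tree decomposition of $M$ and $\rho(x_i)\in(\mathcal{G}\cup\mathcal{G}^*)\setminus(\mathcal{G}\cap\mathcal{G}^*)$ is non-uniform. There are no bad pairs, because no label is a uniform matroid of rank $1$ or corank $1$: for $i\in\{1,\ldots,4\}$, $\rho(x_i)$ lies in $\mathcal{G}-\mathcal{G}^*$ or $\mathcal{G}^*-\mathcal{G}$, whereas $U_{1,n}$ and $U_{n-1,n}$ belong to $\mathcal{G}\cap\mathcal{G}^*$; and $N$ is $3$-connected with at least $4$ elements, so $N$ is not $U_{1,n}$ or $U_{n-1,n}$ for any $n\geq 3$ (those matroids with $n\geq 4$ admit a $2$-separation).

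Finally, since $M'\notin\mathcal{G}\cup\mathcal{G}^*$ is a connected delta-graphic matroid, Proposition~\ref{prop:deltagraphictree} forces $(T,\rho')$ to satisfy one of \ref{item:t1}--\ref{item:t4}. I eliminate all but \ref{item:t3}:
\begin{itemize}
\item \ref{item:t1} fails because for any edge $e_i$ the component of $T\setminus e_i$ not containing $x_i$ still contains both a leaf $x_j\in\mathcal{G}-\mathcal{G}^*$ and a leaf $x_k\in\mathcal{G}^*-\mathcal{G}$, so its label lies in neither $\mathcal{G}$ nor $\mathcal{G}^*$.
\item \ref{item:t2} and \ref{item:t4} fail because they require a vertex whose label is a uniform matroid of rank $1$ or corank $1$, and no label has this form.
\end{itemize}
So $(T,\rho')$ is a wheel decomposition with hub $v$ (the unique non-leaf), and \ref{item:w1} gives $N=\rho'(v)\cong M(W_k)$ for some $k\geq 3$.

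The only mild obstacle is the bookkeeping in the first step: one must ensure that throughout the sequence of deletions/contractions taking $\rho(v)$ to $N$, each basepoint $e_i$ remains a non-loop non-coloop, so that Lemma~\ref{lem:twosomeminor} applies and the equation $M'=M\setminus X/Y$ for suitable $X,Y\subseteq E(\rho(v))-E(T)$ holds; this is automatic from $3$-connectivity of $N$ and $|E(N)|\geq 4$. Everything else is a direct verification against the definition of a canonical tree decomposition and Proposition~\ref{prop:deltagraphictree}.
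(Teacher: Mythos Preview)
Your proposal is correct and follows essentially the same approach as the paper's proof: replace $\rho(v)$ by $N$, observe the resulting matroid is a proper delta-graphic minor with canonical tree decomposition $(T,\rho')$, and then use Proposition~\ref{prop:deltagraphictree} to force a wheel decomposition at $v$. The paper's proof is terser---it simply asserts that $\rho'(T)$ is starlike and hence immediately rules out \ref{item:t1}, \ref{item:t2}, \ref{item:t4}---whereas you unpack each of these verifications explicitly, but the logical content is the same.
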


\begin{proof}
Let $N$ be a $3$-connected proper minor of $\rho(v)$ containing $E(T)$ and, for each vertex $w\in V(T)$, let 
\[
\rho'(w)=
\begin{cases}
\rho(w) & \text{if $w\in V(T)-\{v\}$,} \\
N & \text{if $w=v$.}
\end{cases}
\]
Then $\rho'(T)$ is a proper minor of $M$ with a canonical tree decomposition $(T,\rho')$ and therefore it is delta-graphic. Since $\rho'(T)$ is starlike, $\rho'(T)\notin\mathcal{G}\cup\mathcal{G}^{*}$ and $(T,\rho')$ does not satisfy \ref{item:t1}, \ref{item:t2}, or \ref{item:t4}. So by Proposition~\ref{prop:deltagraphictree}, $(T,\rho')$ is a wheel decomposition and therefore $N$ isomorphic to $M(W_{k})$ for some $k\geq 3$.
\end{proof}

For graphs $G$ and $G'$, $G$ is obtained from $G'$ by \emph{$1$-bridging} if, for a vertex $v$ and an edge $e=xy$ of~$G'$ which is not incident with $v$, $G$ is obtained from $G'\setminus e$ by adding a vertex $u$ joining $v$, $x$, and $y$.

Moreover, $G$ is obtained from $G'$ by \emph{$2$-bridging}, if, for two distinct edges $e_{1}=x_{1}y_{1}$, $e_{2}=x_{2}y_{2}$ of~$G'$, $G$ is obtained from $G'\setminus\{e_{1},e_{2}\}$ by adding vertices $u_{1}$, $u_{2}$ such that $N_{G}(u_{1})=\{u_{2},x_{1},y_{1}\}$ and $N_{G}(u_{2})=\{u_{1},x_{2},y_{2}\}$.

\begin{lemma}[Kelmans~\cite{Kelmans1978}] 
\label{lem:kelmans}
Let $G$ and $H$ be simple $3$-connected graphs. Then $G$ contains an $H$-subdivision as a subgraph if and only if $G$ can be obtained from $H$ by a finite sequence of adding an edge, $1$-bridging, and $2$-bridging.
\end{lemma}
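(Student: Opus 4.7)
The plan is to prove both directions separately, with the backward direction being a direct verification and the forward direction requiring an inductive argument.

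For the backward direction, I would check that each of the three operations preserves the property of containing an $H$-subdivision. Adding an edge is trivial since any $H$-subdivision in the smaller graph persists. For $1$-bridging, suppose $G'$ contains an $H$-subdivision $S$ and $G$ is obtained from $G'$ by removing $e=xy$, adding $u$ adjacent to $v,x,y$. If $e\in E(S)$, replace $e$ with the path $x,u,y$ in $S$ to obtain an $H$-subdivision in $G$; if $e\notin E(S)$, then $S$ itself is already an $H$-subdivision in $G$. For $2$-bridging, the same case analysis applies for each of the two removed edges $e_1,e_2$, handling the four sub-cases of whether $e_1,e_2$ lie in $S$. I would also briefly verify that each operation preserves $3$-connectivity, which is needed to remain in the class of graphs under consideration.

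For the forward direction, I would induct on $|E(G)|+|V(G)|$. The base case is $G=H$. For the inductive step, one searches for a $3$-connected proper subgraph or minor $G^\circ$ of $G$ obtained by reversing one of the three operations, such that $G^\circ$ still contains an $H$-subdivision; applying the inductive hypothesis to $G^\circ$ yields the result. There are three reduction candidates: (i) if $G$ has an edge $e$ not belonging to a fixed $H$-subdivision $S$ such that $G\setminus e$ is still $3$-connected, reverse the ``adding an edge'' operation; (ii) if $G$ has a vertex $u$ of degree $3$ whose three neighbours include two adjacent vertices $x,y$ and a third vertex $v$, and such that replacing $u$ by the edge $xy$ yields a $3$-connected graph still containing an $H$-subdivision, reverse a $1$-bridging; (iii) if $G$ has two adjacent degree-$3$ vertices $u_1,u_2$ with respective additional neighbours $\{x_1,y_1\}$ and $\{x_2,y_2\}$, and the replacement of the gadget by the edges $x_1y_1,x_2y_2$ yields a $3$-connected graph with an $H$-subdivision, reverse a $2$-bridging.

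The main obstacle will be proving the existence of such a reduction whenever $G\neq H$. The key is to take an $H$-subdivision $S$ of $G$ with a minimum number of edges (or vertices) and analyse a vertex or edge of $G$ outside the branch vertices of $S$. If some edge of $G$ lies outside $S$, one attempts case (i); if not, every edge of $G$ is in $S$ and every non-branch vertex of $S$ has degree $2$ in $G$, so we can use $3$-connectivity of $G$ (and Menger's theorem) to locate a degree-$3$ vertex $u$ or an adjacent pair $u_1,u_2$ where a bridging reversal is possible. The subtle part is arguing that the resulting smaller graph is still $3$-connected, which uses the structure of ears in $3$-connected graphs; preservation of the $H$-subdivision after the reduction is then routine, since the reversal only modifies $S$ locally by contracting paths of length $2$ through new degree-$3$ vertices.
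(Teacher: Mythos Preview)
The paper does not prove this lemma; it is cited as a known result of Kelmans and used as a black box, so there is no proof in the paper to compare against.

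Your backward direction is fine. The forward direction, however, has a real gap in its case analysis. Your dichotomy is: (A) some edge of $G$ lies outside $S$, so attempt edge deletion; (B) every edge of $G$ lies in $S$, so look for a bridging reversal. But case (B) is vacuous once $G\neq H$: if $E(G)=E(S)$ then (since $G$ is connected) $V(G)=V(S)$, and any non-branch vertex of $S$ would have degree $2$ in $G$, contradicting $3$-connectivity; hence $S$ has no subdivision vertices and $G=H$. Thus all of the work must take place in case (A), and there you have not addressed what happens when $G\setminus e$ fails to be $3$-connected for \emph{every} $e\notin E(S)$. This situation genuinely occurs: take $H=K_4$ and $G=W_4$; then $W_4\setminus e$ has a degree-$2$ vertex for every edge $e$, so no edge deletion preserves $3$-connectivity, and one must instead reverse a $1$-bridging at a degree-$3$ rim vertex. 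The substantive content of Kelmans' theorem is precisely the analysis of which reversal is guaranteed to be available and why the reduced graph remains simple and $3$-connected; your outline places the bridging reversals in the wrong branch of the dichotomy and does not supply that analysis.
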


\begin{lemma}
\label{lem:extwheel}
Let $G$ be a planar graph with an edge $e=ab$ such that $G/e$ is isomorphic to $W_{n}$ for some $n\geq 4$ and $\deg_{G}(a),\deg_{G}(b)\geq 3$. Let $X$ be a subset of $E(G)-\{e\}$ with $|X|=4$. Then $G$ has a minor $H$ containing $X\cup\{e\}$ such that $H/e$ is isomorphic to $W_{m}$ for some $m\leq 7$ and both ends of $e$ have degree at least $3$.
\end{lemma}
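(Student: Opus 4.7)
The plan is to prove the lemma by induction on $n$, the base case $n \leq 7$ being trivial (take $H = G$ with $m = n \leq 7$). For the structural setup, I first observe that since $\deg_{W_n}(v^*) = \deg_G(a) + \deg_G(b) - 2$ where $v^*$ is the contracted vertex in $G/e \cong W_n$, and since $\deg_G(a), \deg_G(b) \geq 3$ gives $\deg_{W_n}(v^*) \geq 4$, the vertex $v^*$ must be the hub of $W_n$ (the rim-vertex alternative would force $\deg_G(a)+\deg_G(b) = 5$, a contradiction). Hence each rim vertex $t_i$ of $W_n$ is joined in $G$ to exactly one of $a, b$ by a ``spoke'' $s_i$. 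Let $A = N_G(a) \setminus \{b\}$ and $B = N_G(b) \setminus \{a\}$; planarity of $G$ is essential here, forcing $A$ and $B$ to be two contiguous arcs of the rim cycle (otherwise the spokes from $a$ and $b$ would interleave, preventing a planar embedding). After relabeling, write $A = \{t_1, \ldots, t_p\}$, $B = \{t_{p+1}, \ldots, t_n\}$ with $p = |A| \geq 2$, $n - p = |B| \geq 2$, and let $r_j = t_j t_{j+1}$ denote the rim edges indexed cyclically.

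For the inductive step ($n \geq 8$), I would find a rim vertex $t_i$ and an adjacent rim edge $r \in \{r_{i-1}, r_i\}$ with $s_i \notin X$ and $r \notin X$, and then take $G' := (G \setminus s_i)/r$. This operation merges $t_i$ with its neighbor along $r$; the merged vertex retains a single spoke (either $s_{i-1}$ or $s_{i+1}$), ensuring $G'/e \cong W_{n-1}$ as a simple wheel. Since we only delete or contract edges outside $X \cup \{e\}$, we have $X \cup \{e\} \subseteq E(G')$; $G'$ is planar as a minor of $G$; and provided $t_i$ is chosen from an arc of size at least $3$, the degrees $\deg_{G'}(a), \deg_{G'}(b) \geq 3$ are preserved. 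Applying the induction hypothesis to $G'$ supplies a minor $H$ of $G'$ (hence of $G$) containing $X \cup \{e\}$ with $H/e \cong W_m$ for some $m \leq 7$ and $\deg_H(a), \deg_H(b) \geq 3$.

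The main obstacle is establishing that a valid reduction exists at each inductive step. Since $n \geq 8$, at least one of $|A|, |B|$ is at least $4$; WLOG $|A| \geq 4$, so the $A$-arc has at least $|A| - 2 \geq 2$ internal rim vertices $t_2, \ldots, t_{p-1}$. Because $|X| = 4$, at most four internal $A$-vertices can have $s_i \in X$, and configurations forcing both adjacent rim edges to lie in $X$ require consecutive rim edges of $X$, which occurs for only a few vertices. Generically, some internal $A$-vertex $t_i$ satisfies both conditions and we perform an internal merge within the $A$-arc. In the degenerate cases where $X$ is concentrated in the $A$-arc (say all four edges of $X$ are $A$-spokes), the $B$-arc has $|B| = n - |A|$ free vertices with no $X$-spokes and no $X$-rim edges, so we reduce the $B$-arc instead, provided $|B| \geq 3$. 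If instead $|B| = 2$, a ``boundary merge'' at $t_1$ or $t_p$---contracting $r_n$ or $r_p$ and deleting the appropriate spoke to reassign the merged vertex to the correct arc---can be performed, and such a merge preserves $|A|, |B| \geq 2$. A finite case analysis, leveraging the fact that $|X| = 4$ is small compared to $n \geq 8$ and that planarity restricts the arc structure, verifies that a valid reduction always exists; iterating reduces $n$ to $7$, yielding the desired $H$.
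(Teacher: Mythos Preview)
Your approach is essentially the paper's: induction on $n$, identifying the contracted vertex as the hub via the degree count, using planarity to force $N_G(a)\setminus\{b\}$ and $N_G(b)\setminus\{a\}$ to be contiguous arcs of the rim, and reducing by deleting a spoke and contracting an adjacent rim edge. Where your final paragraph gestures at an ad hoc case analysis, the paper does it cleanly by pairing each rim edge $t_it_{i+1}$ with the spoke $st_{i+1}$ into $n$ disjoint pairs $A_i$, so that $|X|=4$ kills at most four pairs; a short split into $k\ge 6$ (reduce inside the larger arc, where $k-1\ge 5$ pairs are available) versus $k\in\{4,5\}$ (reduce anywhere, since $n-k\ge 3$ keeps both degrees $\ge 3$) replaces your ``finite case analysis'' entirely.
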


\begin{proof}
We proceed by induction on $|E(G)|$. We may assume that $n\geq 8$. 
Recall that $t_{1}$, $t_{2}$, $\ldots$, $t_{n}$, $s$ are the vertices of $W_{n}$ and $t_{n+1}:=t_{1}$ and, for each $i\in\{1,\ldots,n\}$, $e_{2i-1}=t_{i}t_{i+1}$ and $e_{2i}=st_{i+1}$. By relabelling edges, we may assume that $G/e=W_{n}$.

Since $\deg_{G}(a),\deg_{G}(b)\geq 3$, the vertex of $W_{n}$ obtained by contracting $e$ is  the center $s$. Hence, $G\setminus\{a,b\}$ is a cycle $C$ and $(N_{G}(a)\cup N_{G}(b))-\{a,b\}=V(C)$, and $N_{G}(a)\cap N_{G}(b)=\emptyset$. It is easy to check that $G$ is $3$-connected. For two vertices $x,y\in N_{G}(a)-\{b\}$, if each subpath of $C$ from $x$ to $y$ has a neighbor of $b$ in $G$, then $G$ has a minor isomorphic to $K_{3,3}$, contradicting our assumption. So $C[N_{G}(a)-\{b\}]$ and $C[N_{G}(b)-\{a\}]$ are paths. By symmetry, we may assume that $N_{G}(a)-\{b\}=\{t_{1},t_{2},\ldots,t_{k}\}$ and $N_{G}(b)-\{a\}=\{t_{k+1},\ldots,t_{n}\}$ for some $2\leq k\leq n-2$. For each $i\in\{1,2,\ldots,n\}$, let $A_{i}=\{e_{2i-1},e_{2i}\}$.

By symmetry, we may assume that $k\geq n-k$. Then $k\geq 4$. If $k\geq 6$, then there exists $i\in\{1,\ldots,k-1\}$ such that $A_{i}\cap X=\emptyset$ because $|X|=4$. Let $G'=G\setminus e_{2i}/e_{2i-1}$. Then $G'/e$ is isomorphic to $W_{n-1}$ with $\deg_{G'}(a),\deg_{G'}(b)\geq 3$. So by induction hypothesis, $G'$ has a minor $H$ containing $X\cup\{e\}$ such that $H/e$ is isomorphic to $W_{m}$ for some $m\leq 7$ and both ends of $e$ have degree at least $3$ and so does $G$. 

Hence, we may assume that $k\in\{4,5\}$ and $n-k\geq 8-5=3$. Since $|X|=4$, there exists $j\in\{1,\ldots,n\}$ such that $A_{j}\cap X=\emptyset$. Let $G'=G\setminus e_{2j}/e_{2j-1}$. Then $G'/e$ is isomorphic to $W_{n-1}$ with $\deg_{G'}(a),\deg_{G'}(b)\geq 3$. So by induction hypothesis, $G'$ has a minor $H$ containing $X\cup\{e\}$ such that $H/e$ is isomorphic to $W_{m}$ for some $m\leq 7$ and both ends of $e$ have degree at least $3$ and $H$ is also a minor of $G$. 
\end{proof}

\begin{lemma}
\label{lem:wheelkelman}
Let $G$ be a simple $3$-connected graph which is isomorphic to a graph obtained from $W_{n}$ for some $n\geq 4$ by adding an edge, $1$-bridging, or $2$-bridging. If $G$ is not isomorphic to a wheel graph and $X$ is a subset of $E(G)$ with $|X|=4$, then $G$ has a simple $3$-connected minor $H$ containing $X$ such that $H$ is not isomorphic to a wheel graph and $|E(H)|\leq 16$.
\end{lemma}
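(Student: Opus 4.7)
We proceed by strong induction on $|E(G)|$. In the base case $|E(G)|\le 16$ we take $H:=G$. So assume $|E(G)|\ge 17$; our goal is to construct a proper minor $G'$ of $G$ that is simple, $3$-connected, not isomorphic to a wheel, obtainable from $W_{n-1}$ by the same type of Kelmans operation, and containing $X$, with $|E(G')|=|E(G)|-2$. Applying the induction hypothesis to $G'$ yields the required minor $H$, which is also a minor of $G$.

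The reduction mimics the one in the proof of Lemma~\ref{lem:extwheel}: pick $k\in\{1,\dots,n\}$ with $A_k:=\{e_{2k-1},e_{2k}\}\subseteq E(G)\setminus X$, and set $G':=G\setminus e_{2k}/e_{2k-1}$, merging $t_k$ and $t_{k+1}$ and reducing the underlying wheel to $W_{n-1}$. We call $k$ \emph{bad} if $A_k$ meets $X$ (at most $|X|=4$ indices), if $A_k$ contains an edge of $W_n$ deleted by the Kelmans operation (at most $2$ indices in the $2$-bridging case, at most $1$ otherwise), or if the reduction creates a parallel edge (at most $2$ additional indices in the adding-edge case, when the chord $f$ has rim-distance exactly~$2$; at most $2$ additional indices in the $1$-bridging case, when the merge identifies two of the three bridge-neighbors $\{v,x,y\}$ of $u$; none in the $2$-bridging case beyond the invalid indices, since the only pairs that could create parallels at $u_1$ or $u_2$ are exactly the endpoints of the deleted edges). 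The total count is at most $6$ in every case. Since $|E(G)|\ge 17$ forces $n\ge 8$ in the adding-edge and $1$-bridging cases and $n\ge 7$ in the $2$-bridging case, an admissible $k$ always exists.

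We then verify the remaining properties of $G'$. Simplicity and the preservation of the Kelmans structure on $W_{n-1}$ follow immediately from the choice of $k$. Three-connectivity is preserved since the rim deletion-contraction is the standard wheel reduction used in Lemma~\ref{lem:extwheel}, and the bridge or chord structure is carried along unchanged. That $G'$ is not isomorphic to a wheel is a degree-sequence check: the only configuration in which $G'$ becomes a wheel is when $G$ itself was a wheel (for instance, a $1$-bridging with $v=s$ on a deleted rim edge coincides with $W_{n+1}$), and such configurations are ruled out by hypothesis. The main obstacle is the careful bookkeeping of bad indices in the $1$-bridging case and the verification that the reduced graph is not a wheel; both reduce to combining the structural hypotheses on $G$ with a degree-sequence analysis of how the bridging and chord data interact with the merged rim pair.
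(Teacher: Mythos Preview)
Your approach is the same as the paper's: induct on $|E(G)|$, and when $|E(G)|\ge 17$ perform the rim reduction $G':=G\setminus e_{2k}/e_{2k-1}$ for an index $k$ avoiding $X$ and the Kelmans data, then apply the induction hypothesis to $G'$. Two small corrections: in the $1$-bridging case your bad-index tally is $4+1+2=7$, not $6$ (still fine since $n\ge 8$); and your final paragraph reads as a to-do list rather than a proof, whereas the paper dispatches these checks by first using the hypothesis ``$G$ is not a wheel'' to force $v\neq s$, then fixing $v=t_1$ by symmetry and excluding $j\in\{1,h,n\}$, after which simplicity, $3$-connectivity, and ``$G'$ not a wheel'' are immediate.
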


\begin{proof}
We prove the lemma by induction on $|E(G)|$. We may assume that $|E(G)|>16$.
By relabelling edges, we may assume that $G$ is a graph obtained from $W_{n}$ by adding an edge, $1$-bridging, or $2$-bridging.
 
Recall that $t_{1}$, $t_{2}$, $\ldots$, $t_{n}$, $s$ are the vertices of $W_{n}$ and $t_{n+1}:=t_{1}$. For $i\in\{1,2,\ldots,n\}$, let $A_{i}=\{t_{i}t_{i+1},st_{i+1}\}$. 

If $G$ is obtained from $W_{n}$ by adding an edge $e=xy$, then $x,y\neq s$ because $G$ is simple. By symmetry, we may assume that $x=t_{1}$ and $y=t_{k}$ for $3\leq k\leq\frac{n}{2}+1$. Then, since $|X|=4$ and $n\geq 8$, there exists $j\in\{3,\ldots,n\}$ such that $A_{j}\cap X=\emptyset$. Let $G'=G/t_{j}t_{j+1}\setminus st_{j+1}$. Then $G'$ is a simple $3$-connected graph which contains $X$ and is isomorphic to a graph obtained from $W_{n-1}$ by adding an edge. By induction hypothesis, $G'$ has a simple $3$-connected minor $H$ containing $X$ such that $H$ is not isomorphic to a wheel graph and $|E(H)|\leq 16$, and so does $G$.

If $G$ is obtained from $W_{n}$ by $1$-bridging for a vertex $v$ and an edge $e=xy$ which is not incident with $v$, then $v\neq s$ since $G$ is not isomorphic to a wheel graph. By symmetry, we may assume that $v=t_{1}$. Let $e\in A_{h}$ for some $h\in\{1,\ldots,n\}$. Then there exists $j\in\{1,\ldots,n\}-\{1,h,n\}$ such that $A_{j}\cap X=\emptyset$ because $n\geq 8$ and $|X|=4$. Let $G'=G/t_{j}t_{j+1}\setminus st_{j+1}$.  Then $G'$ is a simple $3$-connected graph which contains $X$ and is isomorphic to a graph obtained from $W_{n-1}$ by $1$-bridging. So by induction hypothesis, $G'$ has a simple $3$-connected minor $H$ containing $X$ such that $H$ is not isomorphic to a wheel graph and $|E(H)|\leq 16$, and so does $G$.

Now suppose that $G$ is obtained from $W_{n}$ by $2$-bridging for two edges $f$, $g$. 
By rotational symmetry, we may assume that $f\in A_{1}$ and $g\in A_{k}$ for some $k\in\{1,2,\ldots,n\}$. Then, there exists $j\in\{1,2,\ldots,n\}-\{1,k\}$ such that $A_{j}\cap X=\emptyset$ since $|X|=4$ and $n\geq 7$. Let $G'=G/t_{j}t_{j+1}\setminus st_{j+1}$.  Then $G'$ is a simple $3$-connected graph which contains $X$ and is isomorphic to a graph obtained from $W_{n-1}$ by $2$-bridging. So by induction hypothesis, $G'$ has a simple $3$-connected minor $H$ containing $X$ such that $H$ is not isomorphic to a wheel graph and $|E(H)|\leq 16$, and so does $G$.
\end{proof} 

\begin{lemma}
\label{lem:notwheel}
Let $G$ be a simple $3$-connected planar graph which is not isomorphic to a wheel graph and $X$ be a subset of $E(G)$ with $|X|=4$. If $G$ has a minor which contains $X$ and is isomorphic to a wheel graph, then there is a simple $3$-connected minor $H$ of $G$ containing $X$ such that $|E(H)|\leq 16$ and $H$ is not isomorphic to a wheel graph.
\end{lemma}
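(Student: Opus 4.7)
The plan is to use Lemma~\ref{lem:Truemper2} to build a chain of simple $3$-connected minors from a wheel-minor of $G$ containing $X$ up to $G$ itself, locate the first non-wheel on this chain, and then reduce it further via Lemma~\ref{lem:wheelkelman}.

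First, I would fix a minor $W^*$ of $G$ that contains $X$ and is isomorphic to a wheel graph, which exists by hypothesis. Since both $W^*$ and $G$ are simple $3$-connected, Lemma~\ref{lem:Truemper2} supplies a sequence of simple $3$-connected graphs $G_{0}=W^{*},G_{1},\ldots,G_{m}=G$ such that $G_{i}$ is a minor of $G_{i+1}$ and $|E(G_{i+1})|-|E(G_{i})|\leq 3$ for each $i$. Because $W^{*}$ is a minor of every $G_{i}$, one can identify $X\subseteq E(W^{*})$ with a subset of $E(G_{i})$, so that each $G_{i}$ contains $X$.

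Next, since $G_{m}=G$ is not isomorphic to a wheel while $G_{0}=W^{*}$ is, there is a smallest index $j\geq 1$ with $G_{j}$ not isomorphic to a wheel. Then $G_{j-1}$ is a wheel $W_{n}$ for some $n\geq 3$, and $G_{j}$ is a simple $3$-connected non-wheel graph in which $W_{n}$ is a minor, with $|E(G_{j})|-|E(W_{n})|\leq 3$. The main step is to show that $G_{j}$ is isomorphic to a graph obtained from some wheel $W_{n'}$ (with $n'\geq 4$) by one of the three operations of adding an edge, $1$-bridging, or $2$-bridging, so that Lemma~\ref{lem:wheelkelman} applies. Because $G_{j}$ has at most three more edges than the minor $W_{n}$, the model of $W_{n}$ inside $G_{j}$ is highly constrained: each branch set has very few extra vertices or internal edges, and the number of excess edges in $G_{j}$ beyond those realizing $W_{n}$ is also small. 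A short case analysis on these models (distinguishing whether the excess arises from added non-adjacent rim edges, from a single subdivided spoke or rim edge, or from a pair of subdivided edges joined by a new edge) identifies $G_{j}$ as the image of exactly one Kelmans operation applied to a suitable wheel $W_{n'}$ with $n'\in\{n-2,n-1,n\}$. In configurations where this fails directly, I would instead shrink $G_{j}$ further using Lemma~\ref{lem:Brylawski} or a single contraction--deletion pair while preserving $3$-connectivity, $X$, and the non-wheel property, eventually landing in the desired form.

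Once $G_{j}$ (or a small further reduction of it) is recognized as a graph obtained from a wheel by one of those three operations, Lemma~\ref{lem:wheelkelman} produces a simple $3$-connected minor $H$ containing $X$ with $|E(H)|\leq 16$ and $H$ not isomorphic to a wheel; this $H$ is also a minor of $G$, giving the conclusion. The main obstacle I anticipate is the case analysis in the middle step, where I must certify that a $3$-connected graph obtained as a slightly larger minor of a wheel is itself obtainable by a single Kelmans operation from some wheel; controlling the interaction between contracted branch sets and the at-most-three extra edges is the delicate point.
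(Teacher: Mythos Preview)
Your Truemper-chain strategy is natural, but the middle step has a genuine gap. The claim that the first non-wheel $G_j$ is obtained from some wheel by exactly one of the three Kelmans operations is false in general, and the case analysis you sketch does not cover the decisive configuration. For $n\ge 8$, let $G_j$ consist of an $n$-cycle $t_1\cdots t_n$ together with two new adjacent vertices $a,b$, where $a\sim t_1,t_2,t_3$ and $b\sim t_4,\ldots,t_n$. This $G_j$ is simple, $3$-connected, planar, not a wheel, and $G_j/ab\cong W_n$; so $G_{j-1}=W_n$, $G_j$ is a legal Truemper step with $|E(G_j)|-|E(W_n)|=1$. A vertex/edge/degree-sequence count rules out every single Kelmans operation: adding an edge to $W_{n+1}$ and $1$-bridging $W_n$ give the wrong edge count, while $2$-bridging $W_{n-1}$ matches $(|V|,|E|)$ but yields one vertex of degree $n-1$ with all others of degree $3$, whereas $G_j$ has vertices of degrees $4$ and $n-2$. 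Since $|E(G_j)|=2n+1>16$, you cannot simply take $H=G_j$. Note that this configuration comes from the \emph{branch set of the hub} being nontrivial, a case your list (``added rim edges, subdivided spoke or rim edge, pair of subdivided edges'') omits; it is also exactly where planarity is needed, and your argument never uses planarity.

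Your fallback (``shrink $G_j$ further'') is precisely where the real work lies, and carrying it out amounts to proving Lemma~\ref{lem:extwheel}, which you do not invoke. The paper's proof is organized around two ingredients absent from your plan: it fixes a \emph{maximal} wheel minor $W_n$ of $G$ containing $X$, and then analyzes the model of the hub. Either the hub's branch set has two vertices of $T$-degree at least~$3$, and Lemma~\ref{lem:extwheel} (using planarity) produces the desired small $H$ directly; or the model is a genuine $W_n$-subdivision in $G$, whence Lemma~\ref{lem:kelmans} expresses $G$ itself as a finite sequence of Kelmans operations from $W_n$, and the very first step gives a $3$-connected simple graph $F$ (not a wheel, by maximality of $W_n$) to which Lemma~\ref{lem:wheelkelman} applies. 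The Truemper chain does not deliver this single-step structure, and without Lemmas~\ref{lem:kelmans} and~\ref{lem:extwheel} (or equivalent arguments) your proof does not close.
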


\begin{proof}
Let $W$ be a minor of $G$ which contains $X$ and is isomorphic to a wheel graph with maximum $|V(W)|$. By relabelling edges, we may assume that $W=W_{n}$ for some $n\geq 3$. If $n=3$, then by Lemma~\ref{lem:Truemper2}, there is a simple $3$-connected minor $H$ of $G$ such that $W_{3}$ is a minor of $H$ and $|E(H)|\leq |E(W_{3})|+3\leq 9$. By maximality of $W_{3}$, $H$ is not isomorphic to a wheel. 

So we may assume that $n\geq 4$. Let $s$ be the center of $W_{n}$. Since $W_{n}$ is a minor of $G$ containing $X$, there is a function $\alpha$ that maps $V(W_{n})$ to vertex-disjoint connected subgraphs of $G$ and maps each edge $xy$ of $W_{n}$ to an edge joining $\alpha(x)$ and $\alpha(y)$ in $G$ such that $\alpha(xy)\in X$ if and only if $xy\in X$. We choose $\alpha$ with minimum $\sum_{v\in V(W_{n})}|E(\alpha(v))|$. Then, for every vertex $v$ of $W_{n}$, $\alpha(v)$ is a tree and 
\begin{equation}
\label{equ:leaf}
\text{every leaf of $\alpha(v)$ is an end of $\alpha(e)$ for some edge $e$ of $W_{n}$.}
\end{equation}
So the number of leaves of $\alpha(v)$ is at most $\deg_{W_{n}}(v)$ for each $v\in V(W_{n})$. Therefore, for each vertex $v\neq s$ of $W_{n}$, $\alpha(v)$ is a subdivision of a star. 

Let $T$ be a connected subgraph of $G$ whose edge set is $\bigcup_{v\in V(W_{n})}E(\alpha(v))\cup\bigcup_{xy\in E(W_{n})}\alpha(xy)$.
Suppose that $\alpha(s)$ has two vertices $u_{1}$, $u_{2}$ such that $\deg_{T}(u_{1})$, $\deg_{T}(u_{2})\geq 3$. 
Let $g$ be the edge of a path of $\alpha(s)$ from $u_{1}$ to $u_{2}$. Let $H'$ be a graph obtained from $T$ by contracting $(\bigcup_{v\in V(W_{n})}E(\alpha(v)))-\{g\}$. Since $G$ is planar, $H'$ is planar. Since $H'/g=W_{n}$, there are two ends $a$, $b$ of $g$ such that $H'\setminus\{a,b\}$ is a cycle $C$, $(N_{H'}(a)\cup N_{H'}(b))-\{a,b\}=V(C)$, and $N_{H'}(a)\cap N_{H'}(b)=\emptyset$. By~\eqref{equ:leaf} and the fact that $\deg_{T}(u_{1}),\deg_{T}(u_{2})\geq 3$, we have $\deg_{H'}(a), \deg_{H'}(b)\geq 3$. Therefore, by Lemma~\ref{lem:extwheel}, $G$ has a minor~$H$ containing $X\cup\{g\}$ such that $H/g$ is isomorphic to $W_{m}$ for $m\leq 7$ and both ends of $g$ have degree at least $3$. This graph $H$ is what we are looking for because $H$ is $3$-connected and $|E(H)|\leq 15$.

So $\alpha(s)$ has at most $1$ vertex $v$ such that $\deg_{T}(v)\geq 3$. Then $T$ is a subdivision of $W_{n}$. 

Then, by Lemma~\ref{lem:kelmans}, $G$ is obtained from $W_{n}$ by a finite sequence of adding an edge, $1$-bridging, and $2$-bridging. 
Let $F$ be a simple $3$-connected graph obtained from $W_n$
by applying the first operation in the sequence.
Since $W_n$ is a largest minor containing $X$ isomorphic to a wheel graph,
$F$ is not isomorphic to a wheel graph.
So by Lemma~\ref{lem:wheelkelman}, $F$ has a simple $3$-connected minor $H$ containing $X$ such that $H$ is not isomorphic to a wheel graph and $|E(H)|\leq 16$.
\end{proof}

Now we are ready to prove Theorem~\ref{thm:exclude}.

\begin{proof}[Proof of Theorem~\ref{thm:exclude}]
Let $(T,\rho)$ be a canonical tree decomposition of $M$.
By Lemma~\ref{starlike}, $M$ is starlike or $|E(M)|\leq 47$. Suppose that $M$ is starlike. Let $v$ be the internal vertex of $T$ and $x_{1}$, $x_{2}$, $x_{3}$, $x_{4}$ be leaves of $T$. Suppose that $\rho(v)$ is isomorphic to $M(W_{k})$ for some $k\geq 3$. If $k\leq 4$, then, by Lemma~\ref{Reducingleaf}, we have
\[
|E(M)|=\sum_{i=1}^{4}|E(\rho(x_{i}))|+|E(\rho(v))|-8\leq 40+8-8=40.
\]

So we may assume that $k\geq 5$. Then there is a minor $N$ of $\rho(v)$ containing $E(T)$ such that $N$ is isomorphic to $M(W_{k-1})$. For each vertex $w$ of $T$, let
\[
\rho_{1}(w)=
\begin{cases}
\rho(w) & \text{if $w\in V(T)-\{v\}$,} \\
N & \text{if $w=v$.}
\end{cases}
\]
Then $\rho_{1}(T)$ is a proper minor of $M$ and is delta-graphic. Since $(T,\rho_{1})$ is starlike, $\rho_{1}(T)\notin\mathcal{G}\cup\mathcal{G}^{*}$ and $(T,\rho_{1})$ does not satisfy \ref{item:t1}, \ref{item:t2}, or \ref{item:t4}. So by Proposition~\ref{prop:deltagraphictree}, $(T,\rho_{1})$ is a wheel decomposition and so is $(T,\rho)$, which implies that $M$ is delta-graphic, contradicting our assumption.

So $\rho(v)$ is not isomorphic to a wheel graph. For each $i\in\{1,2,3,4\}$, let $e_{i}=vx_{i}$. Since $\rho(v)$ is not isomorphic to $U_{1,3}$ or $U_{2,3}$, we have $r(\rho(v)),r^{*}(\rho(v))\geq 3$ by Lemma~\ref{lem:rk3}. So by Lemma~\ref{MK4minor}, $\rho(v)$ has a minor $N_{1}$ containing $e_{1}$, $e_{2}$, $e_{3}$ isomorphic to $M(K_{4})$. Then, by applying Lemma~\ref{lem: bixby}, $\rho(v)$ has a $3$-connected minor $N_{2}$ containing $E(T)$ such that $N_{1}$ is a minor of $N_{2}$ and $|E(N_{2})|\leq 6+4=10$. 
If $|E(\rho(v))|\leq 16$, then we have 
\[
|E(M)|=\sum_{i=1}^{4}|E(\rho(x_{i}))|+|E(\rho(v))|-8\leq 40+16-8=48. 
\]
Suppose that $|E(\rho(v))|>16$. Then $N_{2}$ is a proper minor of $\rho(v)$. So by Lemma~\ref{lem:properwheel}, $N_{2}$ is isomorphic to a wheel matroid.

Let $G$ be a simple $3$-connected planar graph such that $M(G)=\rho(v)$ and $G'$ be a simple $3$-connected minor of $G$ with $M(G')=N_{2}$. Since $N_{2}$ is $3$-connected, $G'$ is isomorphic to a wheel graph by Lemma~\ref{lem:whitney}.
So by Lemma~\ref{lem:notwheel}, $G$ has a simple $3$-connected minor $G''$ containing $E(T)$ such that $|E(G'')|\leq 16$ and $G''$ is not isomorphic to a wheel graph. So $\rho(v)$ has a proper minor $M(G'')$ containing $E(T)$ which is not isomorphic to a wheel graph, contradicting Lemma~\ref{lem:properwheel}.
\end{proof}


\end{document}